\newcommand*{\claimproof}[1]{\noindent\textit{Proof of Claim~\textup{\ref{#1}}.}\enspace}
\newcommand*{\claimqed}{\hfill$\lozenge$}
\newcommand*{\coeff}{\beta_3}
\newcommand*{\const}{A}
\newcommand*{\cub}{\f{u}}
\newcommand*{\cubder}{\f{m}}
\newcommand*{\err}{\f{e}}
\newcommand*{\dist}{\f{d}}
\newcommand*{\Cub}{\f{U}}
\newcommand*{\Cubder}{\f{M}}
\newcommand*{\Dist}{\f{D}}
\newcommand*{\uo}[1]{u_{#1}^{\mathrm{out}}}
\newcommand*{\ui}[1]{u_{#1}^{\mathrm{in}}}
\newcommand*{\Ui}[1]{U_{#1}^{\mathrm{in}}}
\newcommand*{\uapp}{u^{\mathrm{app}}}
\title{The inviscid limit of viscous Burgers at \break nondegenerate shock formation }
\author{Sanchit Chaturvedi}
\address{SC: Department of Mathematics, Stanford University, 450 Jane Stanford Way, Building 380, Stanford, CA 94305, USA}
\email{\tt sanchat@stanford.edu}
\author{Cole Graham}
\address{CG: Division of Applied Mathematics, Brown University, 182 George St, Providence, RI 02906, USA}
\email{\tt cole\_graham@brown.edu}
\begin{document}

\begin{abstract}
  We study the vanishing viscosity limit of the one-dimensional Burgers equation near nondegenerate shock formation.
  We develop a matched asymptotic expansion that describes small-viscosity solutions to arbitrary order up to the moment the first shock forms.
  The inner part of this expansion has a novel structure based on a fractional spacetime Taylor series for the inviscid solution.
  We obtain sharp vanishing viscosity rates in a variety of norms, including $L^\infty$.
  Comparable prior results break down in the vicinity of shock formation.
  We partially fill this gap.
\end{abstract}


\maketitle

\section{Introduction}

We study the one-dimensional viscous Burgers equation
\begin{equation}
  \label{eq:Burgers-viscous}
  \partial_t u^\nu = \nu \partial_x^2 u^\nu - u^\nu \partial_x u^\nu, \quad (t, x) \in \R^2
\end{equation}
indexed by the viscosity $\nu > 0$.
We are interested in the vanishing viscosity limit $\nu \to 0^+$.
Formally setting $\nu = 0$, we obtain
\begin{equation}
  \label{eq:Burgers-inviscid}
  \partial_t u^0 = - u^0 \partial_x u^0.
\end{equation}
This is the inviscid Burgers equation, the prototypical hyperbolic conservation law.
Inviscid Burgers famously develops shocks---if we start \eqref{eq:Burgers-inviscid} from generic smooth initial data $\mr{u}$, then the solution will form a shock in finite time.
In contrast, for fixed $\nu > 0$, the dissipation in the viscous model is strong enough to prevent shocks: solutions of \eqref{eq:Burgers-viscous} remain smooth for all time.
There is thus a marked qualitative difference between the viscous and inviscid equations.
In this article, we focus on the moment when this difference first manifests.
We study the vanishing viscosity limit $u^\nu \to u^0$ up to the time $u^0$ first forms a shock.
We develop an asymptotic expansion for $u^\nu$ as $\nu \to 0^+$ and thereby prove sharp vanishing viscosity rates in a variety of norms.

\subsection{Background}
The rich nature of the vanishing viscosity limit has long attracted a great deal of attention.
Here, we merely graze its expansive history.

Despite its name, the Burgers equation \eqref{eq:Burgers-viscous} was introduced by Bateman \cite{Bateman_1915}, who used its traveling wave solutions to illustrate the subtlety of the small-viscosity regime in fluids.
In a similar spirit, Burgers considered \eqref{eq:Burgers-viscous} as a toy model for turbulence and discussed the propagation of shocks in the $\nu \to 0$ limit.

Hopf provided the first thorough mathematical treatment of the Burgers equation.
In \cite{Hopf_1950}, he used the eponymous Cole--Hopf transformation (discovered independently by Cole \cite{Cole_1951}) to convert \eqref{eq:Burgers-viscous} to the linear heat equation.
Taking $\nu \to 0$ in this transformation, Hopf derived a variational formula for a particular weak solution of the inviscid problem \eqref{eq:Burgers-inviscid}.

In general, weak solutions of \eqref{eq:Burgers-inviscid} are not unique.
However, a certain ``entropy condition'' inspired by the second law of thermodynamics selects a unique weak solution, known as the entropy solution.
Remarkably, this entropy solution coincides with the inviscid limit of \eqref{eq:Burgers-viscous} studied by Hopf.
The vanishing viscosity limit is thus deeply intertwined with the well-posedness of the \emph{inviscid} equation \eqref{eq:Burgers-inviscid}.

The Cole--Hopf transformation is powerful, but it has a major limitation: it relies on the precise algebraic structure of the Burgers equation.
In particular, it has no analogue for general viscous scalar conservation laws, which have the form
\begin{equation}
  \label{eq:SCL-viscous}
  \partial_t w^\nu = \nu \partial_x^2 w^\nu - \partial_x(f \circ w^\nu)
\end{equation}
for typically convex $f \colon \R \to \R$.
Nonetheless, the Burgers example suggests that $w^\nu$ converges to the unique entropy solution of the corresponding inviscid law as $\nu \to 0.$
This general phenomenon was confirmed by Ole\u{\i}nik~\cite{Oleinik_1957}, who used a finite difference scheme to overcome the lack of the Cole--Hopf transformation.

The above results on the inviscid limit are purely qualitative.
Kru\v{z}kov~\cite{Kruzkov_1965} established the first \emph{quantitative} rate of convergence: $\|w^\nu - w^0\|_{L_x^1} \lesssim \sqrt{\nu}$.
This rate holds in a wide class of advection--diffusion equations.
However, when $f$ is strictly convex, the nonlinear advection in \eqref{eq:SCL-viscous} \emph{accelerates} the convergence.
Goodman and Xin~\cite{GoodXin_1992} showed that the $L^1$ error in the viscous approximation of piecewise-smooth solutions is $\m{O}_\eps(\nu^{1-\eps})$.
Tang and Teng~\cite{TT_1997} subsequently refined this estimate to $\m{O}(\nu \log \nu^{-1})$.
Using the Cole--Hopf transformation, Wang~\cite{Wang_1998} has shown that this improved rate is sharp for viscous Burgers \eqref{eq:Burgers-viscous}.
In fact, if the initial data is smooth, $\|u^\nu - u^0\|_{L_x^1} \asymp \nu \log \nu^{-1}$ precisely when $u^0$ develops a new shock.
(See Section~\ref{subsec:notation} for our definition of relations such as $\m{O},$ $\lesssim$, and $\asymp$.)
This sharp case partially motivates our detailed study of the vanishing viscosity limit around shock formation.

As these results indicate, the bulk of the vanishing viscosity literature for conservation laws focuses on $L^1$ error.
However, both $u^\nu$ and $u^0$ are typically much more regular (e.g., of bounded variation).
We thus wish to quantify ${u^\nu - u^0}$ in stronger norms that reflect this regularity.
Goodman and Xin pioneered such estimates---they described $u^\nu$ with great precision near isolated shocks in systems of hyperbolic conservation laws \cite{GoodXin_1992}.
Their methods and results are the primary motivation for our work.
In \cite{GoodXin_1992}, Goodman and Xin developed a complete matched asymptotic expansion for the viscous solution $u^\nu$ when $u^0$ has a weak well-developed Lax shock (i.e., a small, nonzero jump discontinuity satisfying certain spectral conditions).
In a tour de force, Yu extended this work to include nonlinear wave phenomena arising from discontinuous initial data \cite{Yu_1999}.

Suppose $u^0$ solves \eqref{eq:Burgers-inviscid} with smooth initial data.
Typically, $u^0$ will develop a shock at a finite time $t_* > 0$.
Fix a small time-step $\tau > 0$.
Before time $t_* - \tau$, the inviscid solution $u^0$ is uniformly smooth.
Thus in the pre-shock period $[0, t_* - \tau]$, the viscous approximation \eqref{eq:Burgers-viscous} can be sharply treated by standard perturbation theory.
In particular, it is straightforward to check that $\|u^\nu - u^0\| \lesssim_\tau \nu$ in a variety of norms.
However, such estimates deteriorate as $\tau \to 0$.

On the other hand, \cite{GoodXin_1992} and \cite{Yu_1999} describe the viscous approximation $u^\nu$ in great detail at times after $t_* + \tau$, provided the shock remains isolated and weak.
Again, however, this description degenerates as $\tau \to 0$.
Therefore, neither approach treats the vanishing viscosity limit throughout the crucial period $[t_* - \tau, t_* + \tau]$ of shock formation.
In this article, we fill the first half of this gap: we characterize $u^\nu$ for $\nu \ll 1$ on the entire period $[0, t_*]$.
We develop an asymptotic description of $u^\nu$ that remains valid up to the first moment of shock formation, and thus prove sharp rates of convergence for $u^\nu - u^0$ in, for example, $L^\infty$.
Moreover, we make no use of the Cole--Hopf transformation, so our approach extends to scalar conservation laws \eqref{eq:SCL-viscous} with strictly convex flux.
We leave the fascinating second stage of shock formation $[t_*, t_* + \tau]$ to future work.
\subsection{Setup}
We now discuss the structure of $u^0$ and $u^\nu$ in greater detail.
Suppose $u^0$ solves \eqref{eq:Burgers-inviscid} with initial condition $u^0(0, \anon) = \mr{u}$.
We can treat the inviscid equation \eqref{eq:Burgers-inviscid} with the method of characteristics so long as $u^0$ remains smooth.
Along characteristics, the slope $\partial_x u^0$ solves the Riccati equation $\dot{y} = -y^2$.
Thus negative slopes successively steepen until they reach $-\infty$ in finite time.
Suppose ${\min \partial_x \mr{u} < 0}$.
Then $u^0$ will first develop a shock on the characteristic corresponding to the steepest point of $\mr{u}$, namely $\mr{x} \coloneqq \argmin \partial_x \mr{u}.$

Because $\partial_x u^0$ is minimized along this critical characteristic, $\partial_x^2 u^0$ always vanishes there.
Hence the local behavior of the first shock is determined by the higher-order structure of $\mr{u}$ at $\mr{x}$.
In this article, we assume that the first shock is \emph{nondegenerate}, meaning $\partial_x^3 \mr{u}(\mr{x}) > 0$.
Precisely, we make the following assumptions on our initial data $\mr{u} \colon \R \to \R$.
\begin{enumerate}[label = \textnormal{(H\arabic*)}, leftmargin = 4em, labelsep = 1em, itemsep= 7pt, topsep = 1ex]
\item
  \label{hyp:smooth}
  $\mr{u}$ is smooth;

\item
  \label{hyp:compact-support}
  There exist $\mr{c} \in \R$ and $\mr{L} > 0$ such that $\mr{u} \equiv \mr{c}$ on $[-\mr{L},\mr{L}]^c$;

\item
  \label{hyp:unique-min}
  $\min \partial_x \mr{u} < 0$ and \;$\mr{x} = \argmin \partial_x \mr{u}$\; is unique;

\item
  \label{hyp:nondegenerate}
  $\partial_x^3 \mr{u}(\mr{x}) > 0$.
\end{enumerate}
Define
\begin{equation*}
  t_0 \coloneqq \left(\min \partial_x \mr{u}\right)^{-1} < 0.
\end{equation*}
We shift time to begin at $t_0 < 0$.
For the remainder of the paper, let $u^0$ denote the unique entropy solution to the inviscid equation \eqref{eq:Burgers-inviscid} with
\begin{equation*}
  u^0(t_0, \anon) = \mr{u}.
\end{equation*}
The method of characteristics shows that $u^0$ is smooth until time $t_* = 0$, when it develops an infinite slope at position
\begin{equation*}
  x_* \coloneqq \mr{x} - \mr{u}(\mr{x}) t_0.
\end{equation*}
After a Galilean transformation of spacetime, we are free to assume that
\begin{enumerate}[resume*]
\item
  \label{hyp:gauge}
  \hfil $\displaystyle\mr{x} = \mr{u}(\mr{x}) = 0,$
\end{enumerate}
so that in addition $x_* = 0.$
Thus, we begin with a smooth initial profile $\mr{u}$ at the negative time $t_0$, and the inviscid solution $u^0$ develops its first shock at the origin in spacetime.
The hypotheses \ref{hyp:smooth}--\ref{hyp:nondegenerate} ensure that this first shock is nondegenerate and isolated.
In particular, $u^0(0, \anon)$ is smooth away from $x = 0$.
We depict this nondegenerate shock formation in Figure~\ref{fig:formation}.
\begin{figure}
  \centering
  \includegraphics[width=0.6\linewidth]{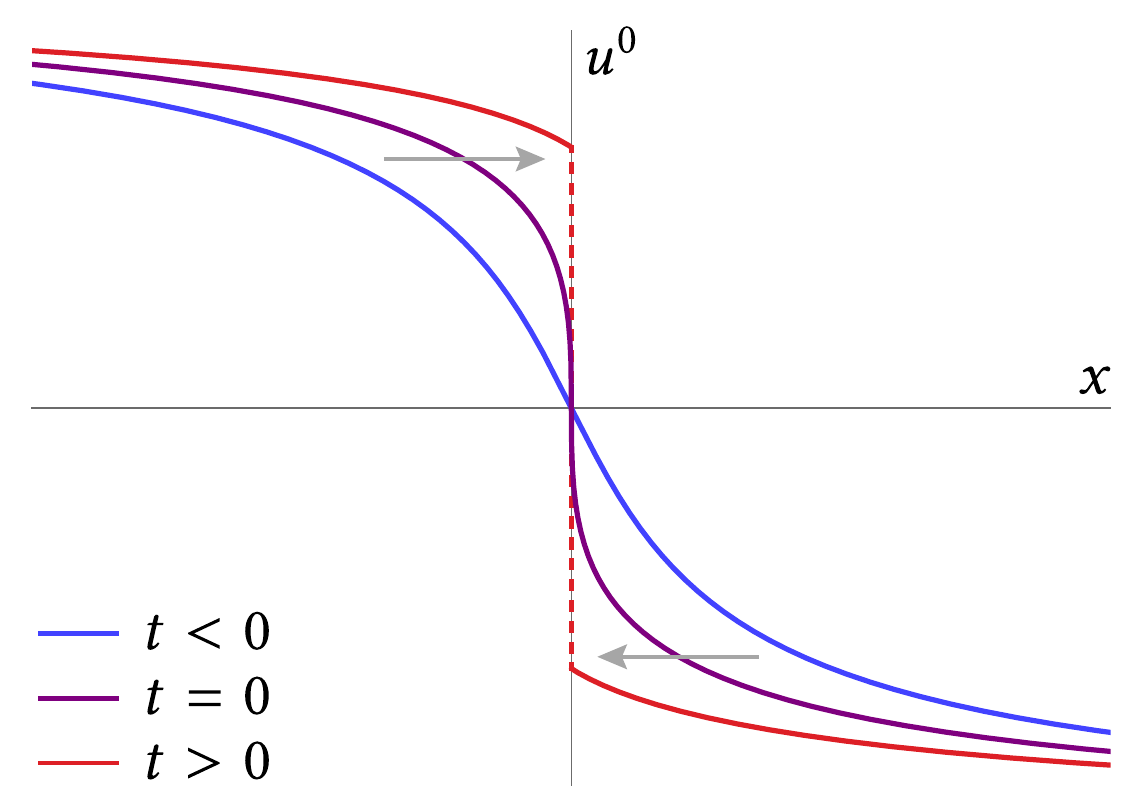}
  \caption{\tbf{Nondegenerate inviscid shock formation near the origin.}
    At times $t < 0$ shortly before shock formation, the solution $u^0(t, \anon)$ (in blue) is smooth.
    At the moment $t = 0$ of shock formation, the solution (in purple) is continuous but not smooth---it has a spatial cusp of regularity $\m{C}^{1/3}$ at $x = 0$.
    After shock formation, the entropy solution (in red) has developed a jump discontinuity---the shock.
  }
  \label{fig:formation}
\end{figure}

We view $u^0$ as the endpoint of a family $(u^\nu)_{\nu \geq 0}$ parameterized by the viscosity.
For simplicity, we use the same initial data for the entire family:
\begin{equation}
  \label{eq:Burgers-viscous-init}
  u^\nu(t_0, \anon) = \mr{u} \ForAll \nu > 0.
\end{equation}
This is not essential---our methods can treat more general initial data that varies smoothly in $\nu$.
However, we do not pursue this direction here.

Our principal contribution is a matched asymptotic expansion linking a traditional ``outer expansion'' with an ``inner expansion'' of a novel form that captures the viscous dynamics of \eqref{eq:Burgers-viscous} near the origin in spacetime, where $u^0$ develops its first shock.
The outer expansion treats the viscous term in \eqref{eq:Burgers-viscous} as a small perturbation and expands in powers of $\nu$:
\begin{equation}
  \label{eq:outer-rough}
  u^\nu \approx \uo{0}+\nu\uo{1}+\nu^2\uo{2}+\ldots \quad \text{as } \nu \to 0.
\end{equation}
The leading term $\uo{0}$ solves the inviscid problem \eqref{eq:Burgers-inviscid}, so in fact $\uo{0} = u^0$.
The corrections $\uo{k}$ for $k \geq 1$ solve linear transport equations forced by $\partial_x^2\uo{k-1}$.

This expansion should be valid wherever the viscous term $\nu \partial_x^2 u^\nu$ is much smaller than the advection $u^\nu \partial_x u^\nu$.
When $\nu \ll 1$, this relation only fails near the origin, where $u^\nu$ develops large derivatives and viscous effects dominate.
Using the method of characteristics, we show that $u^0$ resembles a certain inverse cubic function near the origin.
For instance, at $t = 0$ we have a cubic cusp:
\begin{equation}
  \label{eq:cubic-slice}
  u^0(0, x) \sim -Cx^{1/3} \quad \text{as } x \to 0.
\end{equation}
Using this inverse cubic as an ansatz for $u^\nu$, we can formally check that viscous effects matter when $\abs{t} \lesssim \nu^{1/2}$ and $\abs{x} \lesssim \nu^{3/4}$.
In Section~\ref{sec:overview}, we define a ``distance'' $\dist(t, x) \asymp \abs{t}^{1/2} + \abs{x}^{1/3}$ that quantifies the proximity of $(t, x)$ to the origin $(0,0)$.
With this notation, the viscosity matters where $\dist \lesssim \nu^{1/4}$.
We thus expect the outer expansion \eqref{eq:outer-rough} to hold on the complement $\{\dist \gg \nu^{1/4}\}$.

A different approach becomes necessary when $\dist \lesssim \nu^{1/4}$.
There, \eqref{eq:cubic-slice} suggests that  $u^0 \lesssim \nu^{1/4}$.
This motivates the following ``inner coordinates:''
\begin{equation}
  \label{eq:inner-coords-intro}
  T \coloneqq \nu^{-1/2}t, \quad X \coloneqq \nu^{-3/4}x, \quad U \coloneqq \nu^{-1/4}u.
\end{equation}
We wish to understand the structure of the blown-up solution  $U^\nu(T, X)$ as $\nu \to 0$.

Recall that the outer expansion is valid where $\dist \gg \nu^{1/4}$, which corresponds to large $T$ or $X$.
Thus $U^\nu$ should match the outer expansion at infinity.
Moreover, because the outer expansion holds at the ``macroscopic'' scale $\dist \gtrsim 1$, we are free to restrict our analysis of $U^\nu$ to $\{\dist \ll 1\}$.
Thus it suffices to match $U^\nu$ with the outer expansion on the intermediate zone where $(T, X) \to \infty$ but $(t, x) \to 0$.
That is, we relate the far field of $U^\nu$ to the behavior of the outer expansion near the origin.

In a traditional matched expansion, this correspondence is expressed through Taylor series for the outer expansion~\cite{Fife_1988}.
However, our outer terms are not analytic at the origin: they exhibit fractional behavior like \eqref{eq:cubic-slice}.
To capture this structure, we define a notion of functional homogeneity that is adapted to the scaling in \eqref{eq:inner-coords-intro}; see Definition~\ref{def:homog} below.
We then show that each outer term $\uo{k}$ admits an expansion in homogeneous functions near the origin.
For example,
\begin{equation}
  \label{eq:homog-expansion-intro}
  \uo{0} = u^0 \sim u_{0,0} + u_{0,1} + u_{0,2} + \ldots \quad \text{as } (t, x) \to (0, 0)
\end{equation}
for certain homogeneous functions $u_{0,\ell}(t, x)$ satisfying $|u_{0,\ell}| \leq C_\ell \dist^{\ell + 1}$.
In particular, $u_{0,0}$ is the inverse cubic alluded to above.
\begin{remark}
  Facets of the homogeneous expansion \eqref{eq:homog-expansion-intro} have appeared previously in the literature.
  The leading order $u_{0,0}$ is well known; see, for instance, \mbox{\cite[Proposition~9]{ColGhoMas_2020}}.
  The time-zero slices of the first three terms of \eqref{eq:homog-expansion-intro} were recently constructed in~\mbox{\cite[Lemma~4.11]{BucDriShkVic_2021}}.
\end{remark}
Recall that $U^\nu$ should match $u^0\sim u_{0,0} + u_{0, 1} + \ldots$ where $(T, X) \to \infty$ and $(t, x) \to 0$.
To use this correspondence, we express \eqref{eq:homog-expansion-intro} in the inner coordinates.
Our notion of homogeneity is compatible with the inner scaling, and we can compute $u_{0,\ell}(t, x) \leftrightarrow \nu^{\ell/4} u_{0,\ell}(T, X)$ under \eqref{eq:inner-coords-intro}.
Thus the far field of $U^\nu$ admits an asymptotic expansion in powers of $\nu^{1/4}$.
This motivates the following inner expansion:
\begin{equation}
  \label{eq:inner-rough}
  U^\nu = \Ui{0} + \nu^{1/4} \Ui{1} + \nu^{1/2} \Ui{2} + \ldots \quad \text{as } \nu \to 0.
\end{equation}
We construct functions $\Ui{\ell}(T, X)$ independent of $\nu$ such that \eqref{eq:inner-rough} holds locally uniformly in $(T, X)$.
In fact, \eqref{eq:inner-rough} holds on the entire ``microscopic'' regime $\{\dist \ll 1\}$.
We depict the domains of the inner and outer expansions in Figure~\ref{fig:expansions}.
\begin{figure}
  \centering
  \includegraphics[width=\linewidth]{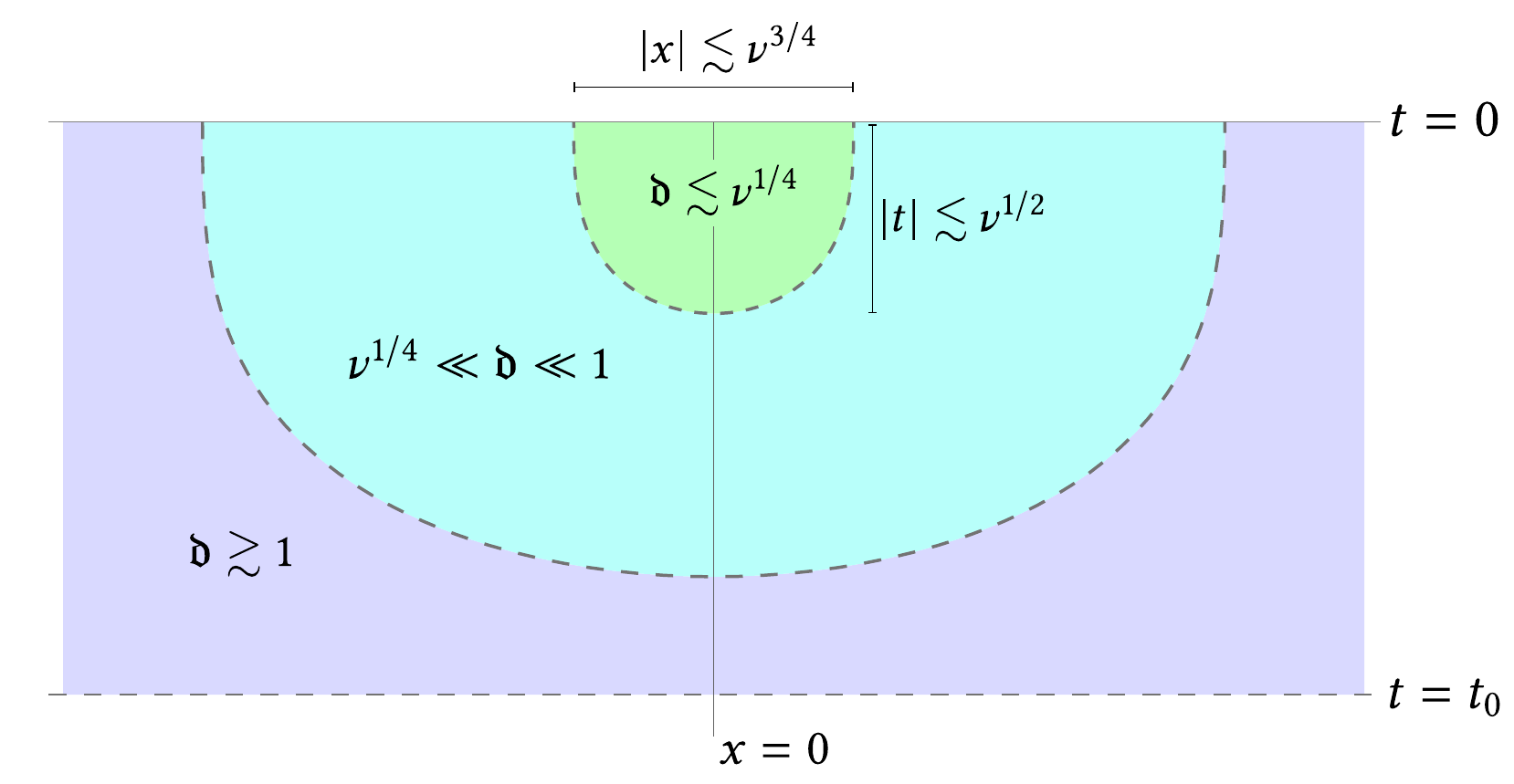}
  \caption{\tbf{Expansion domains.}
    We deploy the outer expansion on the darker blue region $\{\dist \gtrsim 1\}$ and the inner expansion on the green region $\{\dist \lesssim \nu^{1/4}\}.$
    Both are valid on the intermediate turquoise region $\{\nu^{1/4} \ll \dist \ll 1\}$, where we match the two expansions.
    These regions are deliberately imprecise.
    For a concrete choice of inner, outer, and matching regions, see Figure~\ref{fig:IMO} in Section~\ref{sec:overview}.
  }
  \label{fig:expansions}
\end{figure}

The leading term $\Ui{0}$ in \eqref{eq:inner-rough} is an ancient solution of viscous Burgers with unit viscosity.
The higher-order terms solve linear advection--diffusion equations.
We emphasize that these are PDEs, not ODEs.
In contrast to many matched asymptotic expansions, including that of Goodman and Xin \cite{GoodXin_1992}, time retains an essential role in our inner expansion.
\begin{remark}
  The leading term $\Ui{0}$ has a certain universal character.
  It is independent of $\mr{u}$ up to a one-parameter scaling action.
  Moreover, we expect to find $\Ui{0}$ at leading inner order in any strictly convex scalar conservation law \eqref{eq:SCL-viscous}.
  We discuss this universality further in Appendix~\ref{sec:inner-term}.
\end{remark}
Using the matched expansions \eqref{eq:outer-rough} and \eqref{eq:inner-rough}, we construct approximate solutions of \eqref{eq:Burgers-viscous} that incorporate both expansions in their respective domains of validity.
If we include sufficiently many terms from each expansion, standard energy estimates imply that the approximate solution is arbitrarily close to the true solution $u^\nu$.

\subsection{Results}
We now state an informal version of our main result.
For a precise form, see Theorem~\ref{thm:main} below.
\begin{theorem}
  \label{thm:rough}
  Let $u^\nu$ solve \eqref{eq:Burgers-viscous} and \eqref{eq:Burgers-viscous-init} with data $\mr{u}$ satisfying \mbox{\ref{hyp:smooth}--\ref{hyp:nondegenerate}}.
  Given $K \in \Z_{\geq 0},$ there exists an approximate solution $\uapp_{[K]}$ incorporating $K + 1$ terms from each expansion and $C(K, \mr{u}) > 0$ such that for all $\nu \in (0, 1]$,
  \begin{equation*}
    \big\|u^\nu - \uapp_{[K]}\big\|_{L^\infty([t_0, 0) \times \R)} \leq C(K, \mr{u}) \nu^{(K + 2)/5}.
  \end{equation*}
  Moreover, as $\nu \to 0$,
  \begin{equation}
    \label{eq:1/2-regular}
    \sup_{t\in[t_0,0)}\big\|u^\nu(t, \anon)\big\|_{\m{C}^{1/2}} \to \infty \quad\text{while}\quad \sup_{t\in[t_0,0)}\big\|u^\nu(t, \anon) - \uapp_{[K]}(t, \anon)\big\|_{\m{C}^{1/2}} \to 0.
  \end{equation}
\end{theorem}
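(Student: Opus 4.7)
The plan is to build the approximate solution $\uapp_{[K]}$ by gluing the first $K+1$ terms of the outer expansion \eqref{eq:outer-rough} with the first $K+1$ terms of the inner expansion \eqref{eq:inner-rough} via a smooth partition of unity. Picking a cutoff $\chi$ equal to $1$ on $\{\dist \leq \nu^{1/4 - \delta_1}\}$ and supported in $\{\dist \leq \nu^{1/4 - \delta_2}\}$ for small $0 < \delta_1 < \delta_2$, I would set
\begin{equation*}
  \uapp_{[K]}(t, x) \coloneqq \chi(t, x) \textstyle\sum_{\ell=0}^{K} \nu^{(\ell+1)/4} \Ui{\ell}(T, X) + \bigl(1 - \chi(t, x)\bigr) \textstyle\sum_{k=0}^{K} \nu^{k} \uo{k}(t, x).
\end{equation*}
The homogeneous expansion \eqref{eq:homog-expansion-intro} of each outer term and the prescribed far-field asymptotics of each inner term (matched to the corresponding homogeneous pieces under the scaling \eqref{eq:inner-coords-intro}) guarantee that these two truncations agree to leading order on the overlap $\{\nu^{1/4 - \delta_1} \leq \dist \leq \nu^{1/4 - \delta_2}\}$. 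The PDE residual
\begin{equation*}
  R^\nu \coloneqq \partial_t \uapp_{[K]} - \nu \partial_x^2 \uapp_{[K]} + \uapp_{[K]} \partial_x \uapp_{[K]}
\end{equation*}
then splits into three contributions---inner truncation where $\chi = 1$, outer truncation where $\chi = 0$, and cutoff commutators in the overlap---each of which is small. The linear equations obeyed by the $\Ui{\ell}$ and $\uo{k}$ control the first two contributions, and the matching controls the third; an appropriate choice of $\delta_1, \delta_2$ yields $\|R^\nu\|_{L^\infty} \lesssim \nu^{\alpha(K)}$ for some $\alpha(K)$ growing linearly in $K$.

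The error $\err^\nu \coloneqq u^\nu - \uapp_{[K]}$ vanishes at $t = t_0$ and solves
\begin{equation*}
  \partial_t \err^\nu - \nu \partial_x^2 \err^\nu + \partial_x\bigl(\uapp_{[K]} \err^\nu\bigr) + \err^\nu \partial_x \err^\nu = -R^\nu.
\end{equation*}
The main obstacle is the zeroth-order coefficient $\partial_x \uapp_{[K]}$, which reaches $-C\nu^{-1/2}$ near the forming shock. The naive $L^2$ energy identity yields
\begin{equation*}
  \tfrac{1}{2} \tfrac{d}{dt} \|\err^\nu\|_{L^2}^2 + \nu \|\partial_x \err^\nu\|_{L^2}^2 + \tfrac{1}{2} \int \bigl(\partial_x \uapp_{[K]}\bigr)(\err^\nu)^2 \, dx = -\int \err^\nu R^\nu \, dx,
\end{equation*}
and the sign-indefinite zeroth-order term forces a Gronwall factor $e^{C\nu^{-1/2}|t_0|}$, which is fatal; the maximum principle fails by the same mechanism. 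I would circumvent this by rescaling to inner coordinates \eqref{eq:inner-coords-intro}: writing $E^\nu(T, X) \coloneqq \nu^{-1/4} \err^\nu(t, x)$, the error equation becomes a perturbation of the linearization of unit-viscosity Burgers about $\Ui{0}$, with coefficients uniformly bounded in $\nu$. The task then reduces to a stability estimate on the long interval $T \in [\nu^{-1/2} t_0, 0)$, which I would obtain by a weighted energy method exploiting the sign structure of $\partial_X \Ui{0}$ (strictly negative at the critical point, owing to the strict convexity of the Burgers flux) and the fact that $\Ui{0}$ relaxes to its prescribed cubic far-field; a continuity bootstrap absorbs the cubic self-interaction $E^\nu \partial_X E^\nu$ once $\|E^\nu\|_{L^\infty}$ is controlled. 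Returning to $(t, x)$-coordinates and applying a Sobolev embedding to pass from the energy norm to $L^\infty$ yields $\|\err^\nu\|_{L^\infty} \lesssim \nu^{(K+2)/5}$; the $1/5$ exponent reflects the interplay of the residual truncation order, the inner-coordinate stability estimate, and the Sobolev loss from $L^2$ to $L^\infty$.

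Finally, for \eqref{eq:1/2-regular}, the divergence of $\sup_{t \in [t_0, 0)} \|u^\nu(t, \anon)\|_{\m{C}^{1/2}}$ is forced by the cubic cusp \eqref{eq:cubic-slice}: smoothing the cusp on the viscous scale $\nu^{3/4}$ forces the $\m{C}^{1/2}$ seminorm to grow at least like $\nu^{-1/8}$ as $t \to 0^-$. For the difference $\err^\nu$, a derivative-level version of the inner-coordinate stability estimate yields a polynomially controlled $L^\infty$ bound on $\partial_x \err^\nu$; interpolating this with the $L^\infty$ bound on $\err^\nu$ from the previous stage gives $\|\err^\nu(t, \anon)\|_{\m{C}^{1/2}} \to 0$ uniformly in $t \in [t_0, 0)$.
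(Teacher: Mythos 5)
Your global outline (glue truncated inner and outer expansions with a cutoff, write an equation for the error, prove an energy estimate, interpolate to $L^\infty$) matches the paper, but the central analytic step is handled incorrectly. You claim the naive $L^2$ energy identity forces a Gr\"onwall factor $e^{C\nu^{-1/2}|t_0|}$; this is not true, and recognizing why is the key mechanism of the proof. The slope $\partial_x \uapp_{[K]}$ is not uniformly of size $\nu^{-1/2}$ on $[t_0,0)$: Proposition~\ref{prop:est-der-u0} shows $\|\partial_x\uapp_{[K]}(t,\anon)\|_{L^\infty} = |t|^{-1} + \m{O}(|t|^{-1/2}+\nu|t|^{-3})$ for $|t|\geq\nu^{1/2}$, and $\m{O}(\nu^{-1/2})$ only on the short final window $|t|<\nu^{1/2}$. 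The time integral $\int_{t_0}^{\f t}\|\partial_x\uapp\|_{L^\infty}$ is therefore $\log(|\f t|\vee\nu^{1/2})^{-1}+\m{O}(1)$, so Gr\"onwall gives only a \emph{polynomial} factor $(|\f t|\vee\nu^{1/2})^{-1}$, which the $\nu^{2(K+1)}$ from the forcing overwhelms for large $K$ (Lemmas~\ref{lem:eng-est-v}--\ref{lem:L2-est-v-der}). You do not need to rescale to inner coordinates, and in fact your proposed remedy does not work: in the blown-up frame the same logarithm reappears, since $\|\partial_X\Cub(T,\anon)\|_{L^\infty}=|T|^{-1}$, and ``exploiting the sign of $\partial_X \Ui{0}$'' points the wrong way --- in the energy identity a negative slope \emph{amplifies} $\|E\|_{L^2}^2$ (the zeroth-order term $\tfrac12\int(\partial_X\Ui{0})E^2$ is nonpositive), so strict convexity of the flux is the source of the growth you must control, not a stabilizing structure.

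A second, compounding gap is the cutoff scale. You place the transition near $\dist\sim\nu^{1/4-\delta}$. In the matching zone the residual is $\m{O}(\dist^{K+2}+\nu^{K+1}\dist^{-(4K+3)})$ (Proposition~\ref{prop:matching}); at $\dist\sim\nu^{1/4-\delta}$ the second term is $\nu^{1/4+(4K+3)\delta}$, which tends to $\nu^{1/4}$ as $\delta\to0$ and \emph{does not improve with $K$}. To obtain the advertised rate $\nu^{(K+2)/5}$ one must balance the two terms, which forces $\dist\sim\nu^{1/5}$; this is precisely why the paper takes $\alpha=1/5$ in Section~\ref{subsec:approx}. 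Your closing heuristic that the $1/5$ arises from ``Sobolev loss'' is not where it comes from.
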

\begin{remark}
  \label{rem:more-regular}
  The bounds \eqref{eq:1/2-regular} imply that the remainder $u^\nu - \uapp_{[K]}$ is quantitatively more regular than $u^\nu$ itself.
  Thus the approximate solution captures the most irregular part of $u^\nu$.
  In fact, the difference $u^\nu - \uapp_{[K]}$ should become progressively smoother as $K$ increases.
  We expect it to remain uniformly bounded in $\m{C}_x^{(K+2)/3}$ for all $K \in \Z_{\geq 0}$.
  A full proof requires estimates of higher derivatives.
  To simplify this article, we confine our efforts to $H^1$; we therefore only estimate H\"older norms up to $\m{C}^{1/2}$, as in \eqref{eq:1/2-regular}.
  However, our methods readily extend to higher orders of regularity.
\end{remark}
\begin{remark}
  We anticipate that a detailed analysis based solely on the Cole--Hopf transformation would yield similar results.
  However, unlike Cole--Hopf, the approach developed here naturally extends to other strictly convex scalar conservation laws.
\end{remark}
\begin{remark}
  For convenience, we treat smooth initial data, which allows us to state Theorem~\ref{thm:rough} to arbitrary order.
  Naturally, the conclusions of the theorem remain true to some bounded order provided $\mr{u}$ has sufficiently many derivatives.
  For instance, Theorem~\ref{thm:rough} holds for all $K \leq \bar{K}$ if we replace \ref{hyp:smooth} by $\mr{u} \in \mathcal{C}^{L}(\R)$ for $L \geq 3 \bar{K} + 30$.
  We do not attempt to optimize the dependence of $L$ on $\bar{K}$.
\end{remark}
Our inner and outer expansions are relatively explicit, so we can use Theorem~\ref{thm:rough} to resolve concrete questions regarding the vanishing viscosity limit $u^\nu \to u^0$ on $[t_0, 0) \times \R$.
For instance, we obtain the sharp rate of convergence in $L^\infty$.
\begin{corollary}
  \label{cor:inviscid-limit}
  Let $u^\nu$ solve \eqref{eq:Burgers-viscous} and \eqref{eq:Burgers-viscous-init} with data $\mr{u}$ satisfying \mbox{\ref{hyp:smooth}--\ref{hyp:nondegenerate}}.
  Then there exists a constant $C(\mr{u}) \geq 1$ such that for all $\nu \in (0, 1]$,
  \begin{equation}
    \label{eq:inviscid-limit}
    C(\mr{u})^{-1} \nu^{1/4} \leq \big\|u^\nu - u^0\big\|_{L^\infty([t_0, 0) \times \R)} \leq C(\mr{u}) \nu^{1/4}.
  \end{equation}
\end{corollary}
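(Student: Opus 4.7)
The plan is to derive the upper bound from Theorem~\ref{thm:rough} at $K = 0$, and to extract the lower bound by evaluating $u^\nu - u^0$ at a carefully chosen point of the inner zone.

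\textbf{Upper bound.} Theorem~\ref{thm:rough} with $K = 0$ gives $\|u^\nu - \uapp_{[0]}\|_{L^\infty([t_0, 0) \times \R)} \lesssim \nu^{2/5}$. Since $2/5 > 1/4$, it suffices to show $\|\uapp_{[0]} - u^0\|_{L^\infty} \lesssim \nu^{1/4}$. Recall that $\uapp_{[0]}$ is a cutoff blend of the outer leading term $\uo{0} = u^0$ with the inner leading term $\nu^{1/4}\,\Ui{0}(T, X)$. Outside the inner zone $\{\dist \gtrsim \nu^{1/4}\}$, $\uapp_{[0]}$ coincides with $u^0$ up to matching errors of size $o(\nu^{1/4})$, produced by the decay $\Ui{0} - u_{0,0} \to 0$ at infinity. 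Inside the inner zone $\{\dist \lesssim \nu^{1/4}\}$, both $u^0$ and $\uapp_{[0]}$ are separately of size $O(\nu^{1/4})$: the former via $|u_{0,0}| \lesssim \dist \lesssim \nu^{1/4}$ from \eqref{eq:homog-expansion-intro}, and the latter since $(T, X)$ remains in the compact set $\{|T|^{1/2} + |X|^{1/3} \lesssim 1\}$ on which the smooth function $\Ui{0}$ is bounded.

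\textbf{Lower bound.} Pick $(T_*, X_*) \in (-\infty, 0) \times \R$ with $\delta \coloneqq \Ui{0}(T_*, X_*) - u_{0,0}(T_*, X_*) \neq 0$; its existence is addressed below. Set $(t_\nu, x_\nu) \coloneqq (\nu^{1/2} T_*, \nu^{3/4} X_*)$, which lies in $[t_0, 0) \times \R$ for all $\nu$ sufficiently small. Theorem~\ref{thm:rough} in rescaled coordinates yields
\begin{equation*}
    u^\nu(t_\nu, x_\nu) = \nu^{1/4} \Ui{0}(T_*, X_*) + O(\nu^{2/5}).
\end{equation*}
Meanwhile, the homogeneous expansion \eqref{eq:homog-expansion-intro}, the homogeneity of $u_{0,0}$ under the inner scaling, and the bounds $|u_{0,\ell}(t_\nu, x_\nu)| \lesssim \nu^{(\ell+1)/4}$ for $\ell \geq 1$ give
\begin{equation*}
    u^0(t_\nu, x_\nu) = \nu^{1/4} u_{0,0}(T_*, X_*) + O(\nu^{1/2}).
\end{equation*}
Subtracting, $|u^\nu(t_\nu, x_\nu) - u^0(t_\nu, x_\nu)| \geq \tfrac{1}{2}|\delta|\,\nu^{1/4}$ for $\nu$ sufficiently small, proving the lower bound.

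\textbf{Main obstacle.} The crux is verifying $\Ui{0} \not\equiv u_{0,0}$ on $\{T < 0\}$. If these two functions agreed throughout this half-plane, then $u_{0,0}$ would satisfy both inviscid and unit-viscosity Burgers there, forcing $\partial_X^2 u_{0,0} \equiv 0$ and hence linearity of $u_{0,0}$ in $X$ for all $T < 0$. Taking $T \to 0^-$ would then contradict the cubic cusp $u_{0,0}(0, X) \sim -C X^{1/3}$ of \eqref{eq:cubic-slice}. Hence a suitable $(T_*, X_*) \in \{T < 0\}$ exists by continuity.
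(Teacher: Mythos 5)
Your proof is correct and takes essentially the same route as the paper's: compare $u^\nu$ to the intermediary $\uapp_{[0]}$ using Theorem~\ref{thm:rough} with $K=0$, then show that $\uapp_{[0]}$ differs from $u^0$ by size exactly $\asymp \nu^{1/4}$, the lower bound resting on $\Ui{0}\not\equiv\Cub$. The only variations are cosmetic---the paper bounds $\|\uapp_{[0]}-u^0\|_{L^\infty}\asymp\nu^{1/4}$ in one step and argues $\Ui{0}\neq\Cub$ from smoothness of $\Ui{0}$ versus the cusp of $\Cub$ at the origin, while you evaluate at a fixed inner point $(T_*,X_*)$ and deduce $\Ui{0}\neq\Cub$ from $\partial_X^2\Cub\not\equiv 0$; also, the matching error at $\dist\asymp\nu^{1/4}$ is $O(\nu^{1/4})$ rather than $o(\nu^{1/4})$ as you wrote, but this does not affect the conclusion.
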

\begin{remark}
  This estimate is saturated near the origin, where $\dist \lesssim \nu^{1/4}$.
  The $\nu^{1/4}$-scale in \eqref{eq:inviscid-limit} reflects the scale of $u$ in \eqref{eq:inner-coords-intro}.
\end{remark}

\subsection{Related works}
Our investigation of shock formation in weakly viscous Burgers joins an exceedingly broad literature on vanishing viscosity and shock formation in conservation laws.
We describe several related efforts, though we do not attempt to be exhaustive.
For an overview of the theory of hyperbolic conservation laws and shocks, we direct the reader to Dafermos~\cite{Dafermos} and Liu~\cite{Liu_2021} as well as the references therein.

\subsubsection*{Vanishing viscosity in 1D}
As the pioneering work of Hopf~\cite{Hopf_1950} and Ole\u{\i}nik~\cite{Oleinik_1957} indicates, vanishing viscosity has long been intertwined with the well-posedness theory of conservation laws.
The behavior of the vanishing viscosity limit is best understood in one spatial dimension.
Ole\u{\i}nik considered strictly convex scalar conservation laws, and showed that the vanishing viscosity limit coincides with the global entropy solution of the inviscid problem.
We note that other regularizations of non-convex laws can yield non-entropy weak solutions in the ``inviscid'' limit~\mbox{\cite{JacMcKShe_1995, BedLeF_2002}.}

Well-posedness for \emph{systems} of hyperbolic conservation laws is much more subtle.
Thanks to the influential work of Glimm~\cite{Glimm_1965}, such systems are known to admit global weak solutions given data that are small in BV.
The relationship between these solutions and vanishing viscosity limits remained open for decades.
This question was finally resolved in the breakthrough paper~\cite{BiaBre_2005} of Bianchini and Bressan, who showed that viscous solutions converge uniquely to Glimm solutions as the viscosity tends to zero.
Their approach requires positive viscosity in each component of the system.
However, many equations of interest like compressible Navier--Stokes involve viscosity in only one component.
The nature of vanishing viscosity limits of Navier--Stokes from general small-BV data remains open.
For recent progress in this direction, see, e.g., \cite{KanVas_2020}.

As noted earlier, the vanishing viscosity limit is typically expressed as convergence in $L^1$.
Stronger pointwise results are known in a handful of cases.
We have already highlighted the asymptotic expansions of Goodman and Xin~\cite{GoodXin_1992} and Yu~\cite{Yu_1999} for hyperbolic conservation laws with weak isolated Lax shocks.
In \cite{Rousset_2003}, Rousset proved the same result under the weaker (sharp) assumption that the viscous shock in the inner expansion is linearly stable.

\subsubsection*{Vanishing viscosity in higher dimensions}
Kru{\v{z}}kov studied scalar conservation laws in multiple spatial dimensions in \cite{Kruzkov_1970}.
Using an $L^1$ modulus of continuity, he showed that viscous solutions converge in $L^1$ to inviscid solutions as $\nu \to 0$.
Much less is known about \emph{systems} in multiple dimensions.
To our knowledge, no general inviscid weak theory is known to be compatible with vanishing viscosity limits.
In fact, it is known that the compressible Euler equations are ill-posed in BV \cite{rauch1986bv}.
Nonetheless, as in one dimension, stronger local results are available for certain shock solutions.
Majda has proven local existence for shocks satisfying certain stability conditions; this seminal work is compiled in \cite{Majda_1983}.
In \cite{GuMeWiZu_2005} and \cite{GuMeWiZu_2006}, Gu\`{e}s, M\'{e}tivier, Williams, and Zumbrun show that such solutions coincide with the vanishing viscosity limit.
In doing so, they prove a full asymptotic expansion \`{a} la Goodman and Xin.

\subsubsection*{Multidimensional shock formation}
Shock formation in multiple spatial dimensions has become a particularly rich subject.
Early works on singularity formation include an indirect proof \cite{Sideris_1985} and a symmetry-reduced construction of non-degenerate shock formation in the compressible Euler equations \cite{Alinhac_1993}.
In a breakthrough, Alinhac established non-degenerate shock formation in a class of quasilinear wave equations including irrotational isentropic Euler~\mbox{\cite{Ali99_2, Ali99_1}}.

Christodoulou revisited this problem in \cite{Chris_2007}.
By combining powerful geometric ideas with robust energy estimates, he established shock formation from small data for irrotational isentropic relativistic Euler.
For a similar treatment of nonrelativistic Euler, see \cite{ChrMia14}.
Christodoulou's methods apply to a class of shock singularities that is more general than the non-degenerate forms treated by Alinhac.
Moreover, on a large open subset of initial data, his approach yields a complete description of the maximal smooth development of the solution in spacetime.
Control of this maximal development enabled Christodoulou to study a closely related system after shock formation~\cite{Chris_2019}.
Recently, Luk and Speck have deployed these geometric methods to prove the stability of planar shocks in the presence of vorticity and entropy up to the first time of singularity \cite{LukSpe_2018, LukSpe_2021}.
For related results including multi-speed problems, see \cite{MiaYu_2017, SpHoLuWo_2016, Speck_2018}.

In a parallel program, Buckmaster, Shkoller, and Vicol have examined the local structure of non-degenerate shock formation from well-prepared initial data~\mbox{\cite{BucShkVic_2019, BucShkVic_2020, BucShkVic_2022}}.
The authors use modulation theory in self-similar variables to study the first point of shock formation in great detail.
They show that the shock profile resembles a self-similar solution of the Burgers equation.
This solution, denoted $u_{0,0}$ in \eqref{eq:homog-expansion-intro}, features prominently in our own work.
Like the present article, these works treat non-degenerate modes of shock formation.
In the opposite direction, Buckmaster and Iyer have constructed degenerate shock formation in 2D polytropic Euler \cite{BucIye_2020}.

\subsubsection*{Post-shock evolution}
In one dimension, small-BV weak solutions are global in time~\cite{Glimm_1965}, so one can study both shock formation and subsequent shock interactions.
In contrast, multidimensional compressible Euler is ill-posed in BV spaces \cite{rauch1986bv}.
Nonetheless, in the physically relevant two and three dimensions, one might hope that solutions can be weakly extended somewhat past their first singularities.
This is known in symmetry-reduced settings: weak solutions have been constructed after shock formation in compressible Euler under spherical~\mbox{\cite{ChrLis16, yin04}} and azimuthal~\cite{BucDriShkVic_2021} symmetry.
In the absence of symmetry, Christodoulou considered a restricted problem wherein the Euler system is forced to remain isentropic and irrotational \cite{Chris_2019}.
In full generality, this ``shock development problem'' for compressible fluids remains open.

\subsubsection*{Singularity formation in related models}
Inviscid Burgers develops shocks in finite time, but they can be prevented through sufficiently strong dissipation or dispersion.
Examples include viscous Burgers and the KdV equation.
In this work, we study the emergence of a singularity as the dissipative coefficient $\nu$ tends to $0$.
In a different direction, one can intrinsically weaken the dissipative or dispersive operator.

The fractional Burgers and KdV equations are essential examples; they feature fractional dissipation and dispersion, respectively~\cite{Whitham}.
Several independent groups~\mbox{\cite{KiNaSh_2008, DongDuLi_2009, AliDroVov_2007}} concurrently established gradient blow-up in fractal Burgers with weak dissipation (and well-posedness under stronger dissipation).
Nonetheless, the precise nature of the singularity remained unclear, and less was known for fractional KdV.
Recently, a flurry of activity rooted in modulation theory has resolved these questions.
Detailed singular descriptions are now known for fractal Burgers \cite{ChiMorPan_2021}, Burgers--Hilbert (a weak case of fractional KdV) \cite{Yang_2020, yang22}, and a broader class of dissipative and dispersive models \cite{OhPas_2021}.

In a somewhat different direction, Collot, Ghoul, and Masmoudi have constructed non-degenerate gradient blow-up in a two-dimensional Burgers equation with solely transverse viscosity \cite{ColGhoMas_2020}.
Like us, \cite{ColGhoMas_2020} makes use of the self-similar Burgers solution $u_{0,0}$.

\subsection{Future directions}
Next, we discuss several problems that may be susceptible to methods developed in this article.

\subsubsection*{Shock development}
As noted earlier, previous results do not provide a uniform pointwise description of $u^\nu$ throughout the time interval $[t_* - \tau, t_* + \tau]$, where $t_*$ denotes the time of shock formation and $\tau$ is a small positive constant.
(In this article, we have shifted time so that $t_* = 0$.)
Our main result addresses the first half of this period: $[t_* - \tau, t_*]$.
However, the structure of $u^\nu$ in the second half $[t_*, t_* + \tau]$ remains open.
In this period, the inviscid solution $u^0$ develops a shock that grows from zero strength.
Presently, it is not known precisely how $u^\nu$ approximates this emergent shock.
It seems likely that a tripartite expansion is required in this setting, corresponding to a region far from the shock, a region close to a \emph{well-developed} shock, and a region close to shock formation.
Thus, a fusion of our approach and that of Goodman and Xin~\cite{GoodXin_1992} seems appropriate.
We leave this intricate case to future study.

\subsubsection*{Systems in 1D}
Hyperbolic systems of conservation laws are notoriously more challenging than their scalar counterparts.
In particular, they resist analysis by the comparison principle, which we use extensively to control the inner expansion.
We believe the machinery developed here may aid the study of shock formation in systems, but new ideas seem to be called for.

\subsubsection*{Higher-dimensional systems}
Taking a broader view, one would like to understand the interaction between viscosity and shock formation in compressible fluids in higher dimensions.
The convergence of compressible Navier--Stokes to Euler is an important test bed.
Once a suitable shock is well-developed, the vanishing viscosity limit is known in detail thanks to \cite{GuMeWiZu_2005}.
However, the behavior around shock formation remains open.

Christodoulou has shown that solutions to an Euler-like PDE can be weakly extended somewhat past the first singularity~\cite{Chris_2019}.
It is desirable on physical grounds to do the same for Euler and to identify the inviscid limit of Navier--Stokes.
As a first step, one could perturb solutions that form planar shocks as in \cite{LukSpe_2018, LukSpe_2021} and incorporate small viscosity.
One could likewise study the setting of Buckmaster, Shkoller, and Vicol~\mbox{\cite{BucShkVic_2019, BucShkVic_2020, BucShkVic_2022}} in the presence of small viscosity.
Their shock formation resembles Burgers, so the approach developed here may serve as a stepping stone in this direction.

\subsection{Organization}
We begin with an overview of our proof strategy in Section~\ref{sec:overview}.
In Section~\ref{sec:outer}, we construct the outer expansion \eqref{eq:outer-rough} and control the homogeneous series \eqref{eq:homog-expansion-intro}.
We treat the inner expansion in Section~\ref{sec:inner}.
In Section~\ref{sec:approx}, we construct approximate solutions and prove the main theorem.
Finally, we collect some interesting calculations concerning the leading inner term $\Ui{0}$ in Appendix~\ref{sec:inner-term}.

\section*{Acknowledgements}
It is our pleasure to thank Jonathan Luk for his encouragement and guidance at every stage.
We are likewise grateful to Tai-Ping Liu, Jonathan Goodman, and Felipe Hern\'{a}ndez for their thoughtful comments.
SC is supported by the National Science Foundation through the grant DMS-2005435.
CG is supported by the NSF Mathematical Sciences Postdoctoral Research Fellowship program through the grant DMS-2103383.

\section{Proof overview}
\label{sec:overview}
In this section, we describe our broad strategy and introduce notation used in the remainder of the paper.
\subsection{Notation}
\label{subsec:notation}
Throughout, constants are implicitly allowed to depend on the initial data, often through $\mr{u}'(0)$ and $\mr{u}'''(0)$.
We say that functions $f$ and $g$ satisfy $f \lesssim g$ if $\abs{f} \leq C \abs{g}$ for some constant $C > 0$ that may depend on $\mr{u}$ but not on the arguments of $f$ and $g$.
We also use the notation $f = \m{O}(g)$ for $f \lesssim g$.
We write $f \asymp g$ if $f \lesssim g$ and $g \lesssim f$.
If the constant $C$ in these relations depends on another parameter $\gamma$, we write $f \lesssim_\gamma g$ or $f = \m{O}_\gamma(g)$.
Finally, we write $f = \smallO(g)$ or $f \ll g$ in the limit $a \to b$ if $f(a)/g(a) \to 0$ as $a \to b$.

\subsection{Inviscid structure}
\label{subsec:hodograph}
To leading order, we know that $u^\nu$ converges to $u^0$.
Moreover, the viscous perturbation should be well behaved wherever $u^0$ is smooth.
Recall that we have shifted time to begin at $t_0 < 0$, so that $u^0$ first forms a shock at the origin in spacetime.
Thus the real challenge arises near the origin, where $u^0$ develops an infinite slope.
As a first step, we analyze the behavior of $u^0$ near the origin.

Because our shock formation is nondegenerate, $u^0(t, x)$ is continuous and decreasing in $x$ near $x = 0$ for each $t \in [t_0, 0]$.
Given $\eps > 0$ and $t \in [t_0, 0]$, let $\m{A}_t(\eps)$ denote the connected component containing $x = 0$ of the open preimage $u^0(t, \anon)^{-1}\big((-\eps, \eps)\big)$.
Define
\begin{equation*}
  \eps_0 \coloneqq \sup\left\{\eps > 0 \mid -\partial_x u^0(0, x) \geq 1\, \text{ for all }\, x \in \m{A}_0(\eps)\right\}.
\end{equation*}
Note that $\eps_0 > 0$ because $-\partial_xu^0(0, 0) = +\infty$.
Then let
\begin{equation}
  \label{eq:hodograph-domain}
  \m{A}_t \coloneqq \m{A}_t(\eps_0), \quad \m{A} \coloneqq \big\{(t, x) \in [t_0, 0] \times \R \mid x \in \m{A}_t\big\}, \And \m{B} \coloneqq (-\eps_0, \eps_0).
\end{equation}
For each $t \in [t_0, 0]$, the function $u^0(t, \anon) \colon \m{A}_t \to \m{B}$ is a continuous bijection.
Let $\omega(t, \anon) \colon \m{B} \to \m{A}_t$ denote its inverse.
If we apply $\partial_t$ to the identity $u^0(t, \omega(t, y)) = y$, the chain rule and \eqref{eq:Burgers-inviscid} yield
\begin{equation}
  \label{eq:hodo-ODE}
  \partial_t \omega(t, y) = y \quad \text{in } (t_0, 0) \times \m{B}.
\end{equation}
This so-called \emph{hodograph transformation} is closely related to the method of characteristics.
Integrating \eqref{eq:hodo-ODE} from $t = t_0$, we obtain
\begin{equation}
  \label{eq:hodograph}
  \omega(t, y) = \mr{\omega}(y) + (t - t_0) y,
\end{equation}
where $\mr{\omega}$ denotes the inverse of $\mr{u} \colon \m{A}_{t_0} \to \m{B}$.

The identity \eqref{eq:hodograph} allows us to control $u^0$ and its derivatives near $x = 0$.
Indeed, \ref{hyp:smooth}--\ref{hyp:gauge} and the chain rule imply that
\begin{equation*}
  \mr{\omega}(0) = 0, \quad \mr{\omega}'(0) = t_0 < 0, \quad \mr{\omega}''(0) = 0, \quad \mr{\omega}'''(0) = t_0^4 \mr{u}'''(0) > 0.
\end{equation*}
Thus by Taylor's theorem,
\begin{equation}
  \label{eq:hodograph-init-Taylor-3}
  \mr{\omega}(y) = t_0 y - \coeff y^3 + \m{O}(y^4) \quad \text{for } \coeff \coloneqq \frac{1}{6} t_0^4 \partial_x^3 \mr{u}(0) > 0.
\end{equation}
Then \eqref{eq:hodograph} yields
\begin{equation}
  \label{eq:hodograph-Taylor}
  \omega(t, y) = ty - \coeff y^3 + \m{O}(y^4).
\end{equation}
The first two terms dominate the behavior of $u^0$ where it is small.
We let $\cub$ denote the unique real solution of
\begin{equation}
  \label{eq:cubic}
  t\cub - \coeff \cub^3 = x.
\end{equation}
Then \eqref{eq:hodograph-Taylor} implies that $u^0 \sim \cub$ as $x \to 0$.

\subsection{Outer expansion}
We can now discuss the outer expansion for $u^\nu$.
Away from the origin, $\partial_x^2 u^0$ is bounded and the viscous term in \eqref{eq:Burgers-viscous} is a small perturbation.
Applying standard perturbation theory, we propose an expansion for $u^\nu$ in powers of $\nu$:
\begin{equation}
  \label{eq:outer-sum}
  u^\nu \sim \sum_{k=0}^\infty \nu^k \uo{k}.
\end{equation}
Recall that $u^\nu(t_0, \anon) = \mr{u}$ for all $\nu > 0$.
If \eqref{eq:outer-sum} holds at $t = t_0$, we must have $\uo{0}(t_0, \anon) = \mr{u}$ and $\uo{k}(t_0, 0) = 0$ for all $k \geq 1$.

Moreover, we can find evolution equations for $\uo{k}$ by formally substituting \eqref{eq:outer-sum} in \eqref{eq:Burgers-viscous} and collecting terms of like order.
At leading (unit) order, $\uo{0}$ solves inviscid Burgers.
Since $\uo{0} = \mr{u}$, the uniqueness of the entropy solution implies that $\uo{0} = u^0$.
For $k \geq 1$, we find
\begin{equation}
  \label{eq:ukout-ev}
  \partial_t \uo{k} = -\partial_x \left(\uo{0}\uo{k}\right) - \sum_{k' = 1}^{k - 1} \uo{k'}\partial_x \uo{k-k'} + \partial_x^2 \uo{k-1}, \quad \uo{k}(t_0, \anon) = 0.
\end{equation}
This is a linear passive-scalar equation with advection $\uo{0}$ and forcing involving the viscous term $\partial_x^2 \uo{k-1}$ and nonlinear cross-terms $\uo{k'}\partial_x \uo{k-k'}$ from earlier in the expansion.

These passive-scalar equations can be integrated quasi-explicitly along the characteristics of $\uo{0}$.
We can thus describe $\uo{k}$ with great precision if we understand the inviscid solution $\uo{0}$ in sufficient detail.

The outer expansion \eqref{eq:outer-sum} should only be valid where the viscous term in \eqref{eq:Burgers-viscous} is much smaller than the other terms, i.e., where
\begin{equation}
  \label{eq:outer-criterion}
  |\nu \partial_x^2 u^\nu| \ll \abs{u^\nu \partial_x u^\nu}.
\end{equation}
This condition fails near the origin in spacetime.
To quantify its failure, we use $\cub$ as a proxy for $u^\nu$.
Define
\begin{equation*}
  \cubder(t, x) \coloneqq \big[\abs{t} + 3 \coeff \cub(t,x)^2\big]^{-1}.
\end{equation*}
Then the chain rule yields
\begin{equation}
  \label{eq:cub-derivs-spatial}
  \partial_x \cub = -\cubder \And \partial_x^2 \cub = -6 \coeff \cub \cubder^3.
\end{equation}
Plugging $\cub$ into \eqref{eq:outer-criterion}, we see that the outer expansion should be valid wherever $|\nu \partial_x^2 \cub| \ll \abs{\cub \partial_x \cub}$.
Using \eqref{eq:cub-derivs-spatial}, this is equivalent to
\begin{equation}
  \label{eq:outer-criterion-cubder}
  \nu \cubder^2 \ll 1.
\end{equation}
For later convenience, we define
\begin{equation*}
  \dist(t, x) \coloneqq \cubder(t, x)^{-1/2} = \big[\abs{t} + 3 \coeff \cub(t, x)^2\big]^{1/2}.
\end{equation*}
We think of $\dist$ as a ``distance to the origin'' that is naturally adapted to our problem.
We note, however, that it is not a metric in the technical sense.
Rewriting \eqref{eq:outer-criterion-cubder}, we see that the outer expansion \eqref{eq:outer-sum} should be valid where
\begin{equation}
  \label{eq:outer-criterion-dist}
  \dist \gg \nu^{1/4}.
\end{equation}
In Lemma~\ref{lem:cubic} below, we show that
\begin{equation*}
  \dist(t, x) \asymp \max\left\{\abs{t}^{1/2}, \, \abs{x}^{1/3}\right\} \quad \text{in } \R_- \times \R.
\end{equation*}
Thus \eqref{eq:outer-criterion-dist} is equivalent to the disjunction
\begin{equation}
  \label{eq:outer-criterion-coords}
  \abs{t} \gg \nu^{1/2} \quad \text{or} \quad \abs{x} \gg \nu^{3/4}.
\end{equation}

For future convenience, we collect these definitions in one place:
\begin{definition}
  \label{def:cubic}
  The profile $\cub$ is the unique real solution of $t\cub - \coeff \cub^3 = x.$
  Its slope $\cubder$ is $-\partial_x \cub  = \big(\abs{t} + 3 \coeff \cub^2\big)^{-1}$ and the distance $\dist$ is $\cubder^{-1/2} = \big(\abs{t} + 3 \coeff \cub^2\big)^{1/2}.$
\end{definition}
\noindent
These functions are ubiquitous in the remainder of the paper.
In the following lemma, we collect some of their properties.
\begin{lemma}
  \label{lem:cubic}
  On $\R_- \times \R$, we have
  \begin{equation}
    \label{eq:cubic-bounds}
    \abs{\cub} \asymp \big(\abs{x}\abs{t}^{-1}\big) \vee \abs{x}^{1/3}, \quad \cubder \asymp \big(\abs{t} + \cub^2\big)^{-1}, \And \dist \asymp \big(\abs{x} \vee \abs{t}^{3/2}\big)^{1/3}.
  \end{equation}
  Moreover, for all $t < 0$ and $x \neq 0$, $\cubder \leq \abs{\cub} \abs{x}^{-1}$.
\end{lemma}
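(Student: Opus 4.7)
Since $t < 0$, the defining equation $t\cub - \coeff\cub^3 = x$ rearranges to $|t|\cub + \coeff\cub^3 = -x$, so $\cub$ and $x$ have opposite signs. By the odd symmetry $(x,\cub)\mapsto(-x,-\cub)$ of \eqref{eq:cubic}, I may assume $x \geq 0$ and set $v \coloneqq -\cub \geq 0$, so that
\[
  F(v) \coloneqq |t|\,v + \coeff\,v^3 = x,
\]
with $F$ strictly increasing on $[0,\infty)$. My plan is to extract the two-sided bounds from this implicit characterization; the argument is essentially algebraic and requires no PDE input.

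First I would address the size of $\cub = -v$. The two non-negative summands in $F(v)$ independently give $v \leq x/|t|$ and $v \leq (x/\coeff)^{1/3}$, hence an upper bound by the smaller of these. Conversely, $F(v) \leq 2\max(|t|v,\coeff v^3)$ forces $\max(|t|v,\coeff v^3) \geq x/2$; in whichever regime this occurs, the resulting matching lower bound pairs with the already-established dominant upper bound. Thus $|\cub| \asymp \min\bigl\{|x|/|t|,\, |x|^{1/3}\bigr\}$, which is the intended content of the first claim (modulo what appears to be a typographical swap of $\vee$ and $\wedge$ in the stated lemma).

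Next, the trivial comparison $|t| + \cub^2 \asymp |t| + 3\coeff\cub^2$, with implicit constants depending on the fixed positive $\coeff$, immediately yields $\cubder \asymp (|t| + \cub^2)^{-1}$. For $\dist^2 = |t| + 3\coeff\cub^2$ I would substitute the estimate on $\cub^2$ from the previous step and split on whether $|x| \lesssim |t|^{3/2}$ or $|x| \gtrsim |t|^{3/2}$. In the former regime the linear term dominates in $F(v) = x$, so $v \asymp x/|t|$ and $\cub^2 \lesssim |t|$, giving $\dist \asymp |t|^{1/2}$; in the latter the cubic dominates, so $v \asymp x^{1/3}$ and $\cub^2 \gtrsim |t|$, giving $\dist \asymp |x|^{1/3}$. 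Combining the two regimes through a max produces the claimed $\dist \asymp (|x| \vee |t|^{3/2})^{1/3}$.

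Finally, the pointwise inequality $\cubder \leq |\cub|/|x|$ is essentially a restatement of the cubic identity. Rearranging $F(v)=x$ gives $|\cub|(|t| + \coeff\cub^2) = |x|$, whence
\[
  \frac{|\cub|}{|x|} \;=\; \frac{1}{|t| + \coeff\cub^2} \;\geq\; \frac{1}{|t| + 3\coeff\cub^2} \;=\; \cubder,
\]
using only $\coeff\cub^2 \geq 0$. The whole argument is thus algebraic; the one place requiring care is the case split between linear- and cubic-dominant regimes of $F$, but this is routine bookkeeping rather than a real obstacle.
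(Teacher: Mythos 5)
Your argument is correct and follows essentially the same route as the paper's: both proofs work directly from the algebraic identity $\abs{\cub}\bigl(\abs{t}+\coeff\cub^2\bigr)=\abs{x}$ and split into the regimes $\abs{t}\gtrless\coeff\cub^2$ to isolate the linear-dominant and cubic-dominant behaviors. Your flag of the $\vee$ in the first relation of \eqref{eq:cubic-bounds} is well taken: the cases $\abs{t}\geq\coeff\cub^2$ and $\abs{t}\leq\coeff\cub^2$ yield $\abs{\cub}\asymp\abs{x}\abs{t}^{-1}$ and $\abs{\cub}\asymp\abs{x}^{1/3}$ respectively, and since $\abs{x}\abs{t}^{-1}\leq\abs{x}^{1/3}$ in the first regime and the reverse in the second, the combined bound is indeed $\abs{\cub}\asymp\bigl(\abs{x}\abs{t}^{-1}\bigr)\wedge\abs{x}^{1/3}$, i.e., a minimum rather than the maximum $\vee$ appearing in the printed lemma (and this is the form consistent with the way the lemma is invoked later, e.g.\ to conclude $\abs{\Cub}\lesssim_T\abs{X}^{1/3}$).
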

\begin{proof}
  Because $t < 0$, we have
  \begin{equation}
    \label{eq:cubic-abs}
    \abs{\cub}\big(\abs{t} + \coeff \cub^2\big) = \abs{x}.
  \end{equation}
  Suppose $\abs{t} \geq \coeff \cub^2$.
  Then \eqref{eq:cubic-abs} yields
  \begin{equation}
    \label{eq:cubic-small-x}
    \frac{1}{2}\abs{x}\abs{t}^{-1} \leq \abs{\cub} \leq \abs{x}\abs{t}^{-1}.
  \end{equation}
  On the other hand, if $\abs{t} \leq \coeff \cub^2$, then \eqref{eq:cubic-abs} implies that
  \begin{equation}
    \label{eq:cubic-large-x}
    (2\coeff)^{-1/3} \abs{x}^{1/3} \leq \abs{\cub} \leq \coeff^{-1/3} \abs{x}^{1/3}.
  \end{equation}
  Together, \eqref{eq:cubic-small-x} and \eqref{eq:cubic-large-x} imply the first part of \eqref{eq:cubic-bounds}.
  
  Next, $\cubder \asymp \big(\abs{t} + \cub^2\big)^{-1}$ is immediate from Definition~\ref{def:cubic}.
  It follows that
  \begin{equation*}
    \dist = \cubder^{-1/2} \asymp \big(\abs{t} + \cub^2\big)^{1/2}.
  \end{equation*}
  Hence if $\abs{t} \geq \coeff \cub^2$, we obtain $\dist \asymp \abs{t}^{1/2}$.
  On the other hand, if $\abs{t} \leq \coeff \cub^2$, \eqref{eq:cubic-large-x} yields $\dist \asymp \abs{\cub} \asymp \abs{x}^{1/3}$.
  This completes the proof of \eqref{eq:cubic-bounds}.

  Finally, the definition of $\cubder$ and \eqref{eq:cubic-abs} imply that
  \begin{equation*}
    \cubder = \frac{1}{\abs{t} + 3 \coeff \cub^2} \leq \frac{\abs{\cub}}{\abs{\cub}\big(\abs{t} + \coeff \cub^2\big)} = \frac{\abs{\cub}}{\abs{x}}.\qedhere
  \end{equation*}
\end{proof}

\subsection{Inner expansion}

When $\dist \lesssim \nu^{1/4}$, we must reckon with the viscosity in \eqref{eq:Burgers-viscous}.
To focus on this region, we perform a blow-up about the origin by introducing the ``inner coordinates'' \eqref{eq:inner-coords-intro}, which we recall here:
\begin{equation}
  \label{eq:inner-var}
  T \coloneqq \nu^{-1/2} t, \quad  X \coloneqq \nu^{-3/4} x, \And U \coloneqq \nu^{-1/4} u.
\end{equation}
These scaling exponents are motivated by \eqref{eq:outer-criterion-coords}.
The blown-up solution
\begin{equation*}
  U^\nu(T, X) \coloneqq \nu^{-1/4} u^\nu(\nu^{1/2} T, \nu^{3/4}X)
\end{equation*}
satisfies viscous Burgers with \emph{unit} viscosity in the blown-up domain $(\nu^{-1/2}t_0, 0) \times \R$:
\begin{equation}
  \label{eq:viscous-Burgers-unit}
  \partial_T U^\nu = - U^\nu \partial_X U^\nu + \partial_X^2 U^\nu.
\end{equation}
We observe that this domain approaches the past half-plane $\R_- \times \R$ as $\nu \to 0$.
We postulate that $(U^\nu)_{\nu > 0}$ converges locally uniformly in the $(T, X)$ coordinates to a special solution $\Ui{0}$ of \eqref{eq:viscous-Burgers-unit} defined on $\R_- \times \R$.

Now consider the regime $\nu^{1/4} \ll \dist \ll 1$.
In this region, $(t, x) \to (0, 0)$ while $(T, X) \to \infty$ as $\nu \to 0$.
Because $\dist \gg \nu^{1/4}$, the outer expansion is valid, and $u^\nu \sim \uo{0} = u^0$.
On the other hand, $(t, x) \to 0$ implies that $u^0 \sim \cub$.
Thus $u^\nu \sim \cub$ in this regime.
We expect $\Ui{0}$ to reflect this behavior.
If
\begin{equation*}
  \Cub(T, X) \coloneqq \nu^{-1/4} \cub(\nu^{1/2} T, \nu^{3/4}X) = \cub(T, X)
\end{equation*}
denotes the blown-up inverse cubic, we should have $\Ui{0} \sim \Cub$ at infinity.
That is, matching with the outer expansion imposes a far-field condition on $\Ui{0}$.

In Section~\ref{sec:inner}, we construct an ancient solution $\Ui{0}$ of \eqref{eq:viscous-Burgers-unit} in $\R_- \times \R$ that resembles $\Cub$ at infinity.
See Proposition~\ref{prop:Ul} for details.
This solution admits an explicit integral representation via the Cole--Hopf transformation.
We discuss its structure further in Appendix~\ref{sec:inner-term}.

The special solution $\Ui{0}$ only involves the \emph{leading} order $\cub$ of $\uo{0}$ near the origin.
The full inner expansion must include higher-order information.
To express these higher orders, we introduce a notion of functional homogeneity that respects the natural scaling of the problem.
\begin{definition}
  \label{def:homog}
  Given $r \in \R$, a function $f \colon \R_- \times \R \to \R$ is \emph{$r$-homogeneous} if
  \begin{equation*}
    f\big(\lambda^2 t, \lambda^3 x\big) = \lambda^r f(t, x)
  \end{equation*}
  for all $t \in \R_-$, $x \in \R$, and $\lambda \in \R$.
\end{definition}
\noindent
We note that this definition strays from standard terminology, as we scale spacetime by $\big(\lambda^2 t, \lambda^3 x\big)$ rather than $(\lambda t, \lambda x)$.
\begin{example}
  Using Definition~\ref{def:cubic}, we can check that $\cub$ and $\dist$ are $1$-homogeneous while $\cubder$ is $-2$-homogeneous.
\end{example}
We show that $\uo{0}$ admits a homogeneous expansion about the origin:
\begin{equation}
  \label{eq:u0-homog-sum}
  \uo{0}(t, x) \sim \sum_{\ell \geq 0} u_{0,\ell}(t, x) \quad \text{as } (t, x) \to (0, 0)
\end{equation}
for certain $(\ell + 1)$-homogeneous function $u_{0,\ell}$.
In particular, $u_{0,0} = \cub$.
For a precise statement, see Lemma~\ref{lem:u0-out-homog}.
Due to their increasing degrees of homogeneity, the terms in \eqref{eq:u0-homog-sum} vanish to successively higher order at the origin.
Quantitatively,
\begin{equation}
  \label{eq:homog-bound}
  \abs{u_{0,\ell}} \lesssim_\ell \dist^{\ell + 1},
\end{equation}
recalling that the distance $\dist$ from Definition~\ref{def:cubic} vanishes precisely at $(t, x) = (0, 0)$.

The change-of-variables $(t, x) = (\nu^{1/2}T, \nu^{3/4}X)$ in \eqref{eq:inner-var} exhibits the scaling in Definition~\ref{def:homog} with $\lambda = \nu^{1/4}$.
By $(\ell + 1)$-homogeneity, $u_{0,\ell}(t, x) = \nu^{(\ell + 1)/4}u_{0, \ell}(T, X)$.
Accounting for the scaling $U = \nu^{-1/4} u$ in \eqref{eq:inner-var}, we see that $u_{0,\ell}(t, x)$ corresponds to a term of order $\nu^{\ell/4}$ in the inner coordinates.

Recall that $\uo{0}$ prescribes the far field of $U^\nu$ in the inner coordinates.
We have just argued that this field admits an expansion in powers of $\nu^{1/4}$ corresponding to \eqref{eq:u0-homog-sum}.
Extending this structure to the entire inner region $\{\dist \ll 1\}$, we are led to the inner expansion:
\begin{equation}
  \label{eq:inner-sum}
  U^\nu \sim \sum_{\ell \geq 0} \nu^{\ell/4} \Ui{\ell}.
\end{equation}
Plugging \eqref{eq:inner-sum} into \eqref{eq:viscous-Burgers-unit} and collecting like terms, we obtain the evolution equations
\begin{equation}
  \label{eq:Ul-ev-overview}
  \partial_T \Ui{\ell} = -\frac{1}{2}\sum_{\ell' + \ell'' = \ell} \partial_X(\Ui{\ell'} \Ui{\ell''}) + \partial_X^2 \Ui{\ell}.
\end{equation}
We now return to the original coordinates; define
\begin{equation*}
  \ui{\ell}(t, x) \coloneqq \nu^{(\ell + 1)/4} \Ui{\ell}\big(\nu^{-1/2}t, \nu^{-3/4}x\big).
\end{equation*}
Then we find an expansion
\begin{equation}
  \label{eq:inner-sum-outer-coords}
  u^\nu \sim \sum_{\ell \geq 0} \ui{\ell}
\end{equation}
that is valid when $\dist \ll 1$.
Matching between $u^0$ and \eqref{eq:inner-sum-outer-coords} imposes the condition $\ui{\ell} \sim u_{0,\ell}$ where $\nu^{1/4} \ll \dist \ll 1$.
By \eqref{eq:homog-bound}, $\ui{\ell}$ is roughly of size $\dist^{\ell + 1}$.
Therefore the series \eqref{eq:inner-sum-outer-coords} represents a true hierarchy of scales when $\dist \ll 1$.

We have now described two expansions for $u^\nu$: an outer expansion \eqref{eq:outer-sum} on ${\{\dist \gg \nu^{1/4}\}}$ and an inner expansion \eqref{eq:inner-sum-outer-coords} on $\{\dist \ll 1\}$.
If both are valid, the two expansions must be ``compatible'' in some sense on the overlap region ${\{\nu^{1/4} \ll \dist \ll 1\}}$.
To express this compatibility, we extend the homogeneous expansion \eqref{eq:u0-homog-sum} to all $k \geq 0$:
\begin{equation}
  \label{eq:outer-homog-expansion}
  \uo{k} \sim \sum_{\ell \geq 0} u_{k,\ell} \quad \text{when } \dist \ll 1
\end{equation}
for certain $(-4k+\ell+1)$-homogeneous functions $u_{k,\ell}$.
We then construct our inner expansion so that
\begin{equation}
  \label{eq:inner-homog-expansion}
  \ui{\ell} \sim \sum_{k \geq 0} \nu^k u_{k,\ell} \quad \text{when } \dist \gg \nu^{1/4}.
\end{equation}
For precise statements, see Propositions~\ref{prop:uk-homogeneous} and \ref{prop:Ul} below.

We can visualize our two expansions via the grid of terms $(\nu^k u_{k,\ell})_{k,\ell \geq 0}$ in Figure~\ref{fig:grid}.
The outer expansion $\uo{k}$ sums the \emph{rows} of the grid and the inner expansion $\ui{\ell}$ sums the \emph{columns}.
\begin{figure}
  \centering
  \begin{tikzpicture}[scale = 1]
    \tikzstyle{every node}=[font=\small]

    \def\k{3}
    \def\Xeps{0.6}
    \def\Yeps{0.6}

    \fill [green, opacity = 0.2, rounded corners=5pt] (- 0.7, -2 + 0.5) rectangle (\k + 1 + 0.5, -2 - 0.5);
    \fill [blue, opacity = 0.1, rounded corners=5pt] (1 - 0.5, 0.5) rectangle (1 + 0.5, -\k - 1 - 0.5);

    \foreach \x in {0,1,...,\k}
    {\node at (\x, 0) {$ u_{0, \x}$};
      \node at (\x, -1) {$\nu u_{1, \x}$};
      \foreach \y in {2,...,\k}
      {\node at (\x, -\y) {$\nu^{\y} u_{\y, \x}$};}
      \node at (\x, -\k-0.9) {$\vdots$};}
    
    \foreach \y in {0,...,\k}
    {\node at (\k + 1, -\y) {$\cdots$};}

    \node at (\k + 1, -\k - 0.8) {$\ddots$};

    \node[green!40!black] at (-1.4, -2) {$\nu^2 \uo{2}$};
    \node[blue!60!black] at (1, 1) {$\ui{1}$};
  \end{tikzpicture}
  \caption{
    \tbf{Asymptotic structure of the inner and outer expansions:} $\nu^2\uo{2}$ is asymptotically equivalent to the sum of the green row, while $\ui{1}$ is equivalent to the sum of the blue column.
  }
  \label{fig:grid}
\end{figure}
This figure explains how the outer and inner expansions \eqref{eq:outer-sum} and \eqref{eq:inner-sum-outer-coords} can be simultaneously valid in the intermediate region $\nu^{1/4} \ll \dist \ll 1$.
There, $u^\nu$ is asymptotically equivalent to the sum of \emph{all} terms in Figure~\ref{fig:grid}.
The two asymptotic expansions simply express two orders of summation.
We emphasize, however, that these asymptotic identities are only simultaneously valid when $\nu^{1/4} \ll \dist \ll 1$.

\subsection{An approximate solution}
\label{subsec:approx}
Next, we combine our two compatible expansions to construct approximate solutions of \eqref{eq:Burgers-viscous}.
Given $\alpha \in (0, 1)$, define the inner region
\begin{equation*}
  I \coloneqq \big\{(t, x) \in [t_0, 0) \times \R \mid \dist(t, x) <  \nu^{\alpha}\big\},
\end{equation*}
the outer region
\begin{equation*}
  O \coloneqq \big\{(t, x) \in [t_0, 0) \times \R \mid \dist(t, x) > 2\nu^{\alpha}\big\},
\end{equation*}
and the matching zone
\begin{equation*}
  M \coloneqq \big\{(t, x) \in [t_0, 0) \times \R \mid\nu^{\alpha}\leq \dist(t, x) \leq 2\nu^{\alpha}\big\}.
\end{equation*}
These regions are chosen so that the effect of viscosity on $u^\nu$ is negligible in $O$ but relevant in $I$.
As we shall see in Section~\ref{sec:approx}, the optimal choice for our scheme is $\al = 1/5$.
We depict these regions in Figure~\ref{fig:IMO}.
\begin{figure}
  \centering
  \includegraphics[width=\linewidth]{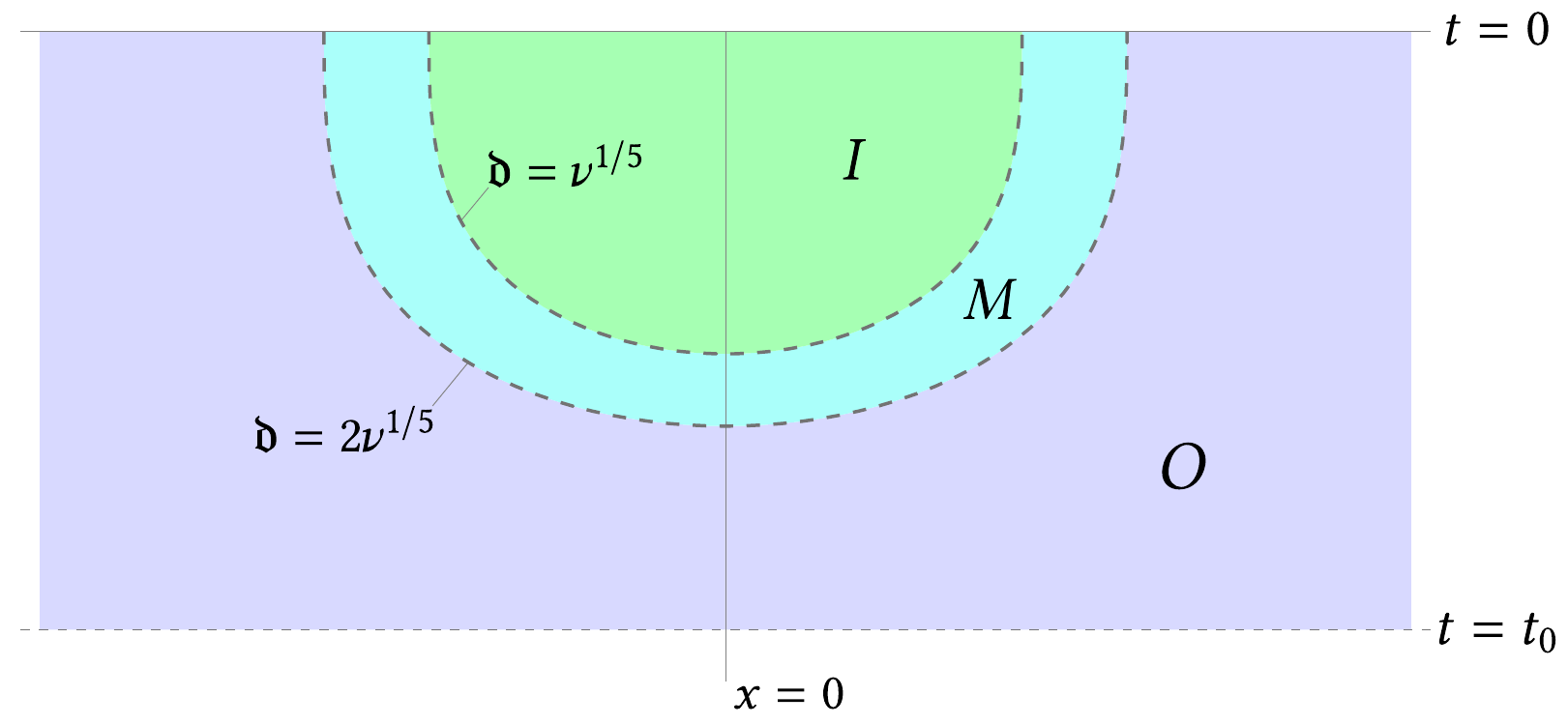}
  \caption{\tbf{The inner, outer, and matching zones.}
    These regions are precisely delineated versions of those in Figure~\ref{fig:expansions}.
  }
  \label{fig:IMO}
\end{figure}

Now let $\vartheta \in \m{C}^\infty(\R_{\geq 0})$ satisfy $0 \leq \vartheta \leq 1$, $\vartheta|_{[0, 1]} \equiv 1$, and $\vartheta|_{[2, \infty)} \equiv 0$.
We define the smooth cutoff
\begin{equation}\label{eq:theta-def}
  \theta(t, x) \coloneqq \vartheta\big(\dist(t, x)\nu^{-\alpha}\big),
\end{equation}
which interpolates between $1$ on $I$ and $0$ on $O$.

Fix $K \in \Z_{\geq 0}$ and define the partial sums
\begin{equation*}
  \uo{[K]} \coloneqq \sum_{k = 0}^K \nu^k \uo{k} \And \ui{[K]} \coloneqq \sum_{\ell = 0}^K \ui{\ell}.
\end{equation*}
We define an approximate solution $\uapp_{[K]}$ of \eqref{eq:Burgers-viscous} by
\begin{equation*}
  \uapp_{[K]} \coloneqq \theta \ui{[K]} + (1 - \theta) \uo{[K]}.
\end{equation*}
The difference $v \coloneqq u^\nu - \uapp_{[K]}$ satisfies a PDE of the form
\begin{equation*}
  \partial_t v = -\partial_x(\uapp_{[K]} v) - v \partial_x v + \nu\partial_x^2 v + E, \quad v(t_0, \anon) = 0,
\end{equation*}
for some forcing $E$ satisfying $\norm{E}_{L_t^\infty H_x^1} \lesssim \nu^{(2K-3)/10}$.
Exploiting the small but positive diffusion, we close an estimate on $v$ in $L_t^\infty H_x^1$ and conclude that
\begin{equation*}
  \norm{v}_{L_t^\infty H_x^1} \lesssim \nu^{(K-6)/5}.
\end{equation*}
Choosing $K$ large, we see that $v$ is small in $L_t^\infty H_x^1$, and hence in $L_{t, x}^\infty$.
Thus $u^\nu \approx \uapp$, as desired.
This analysis can be carried out in arbitrarily high Sobolev norms, so we obtain a complete description of $u^\nu$ to all orders.

\section{The outer expansion}
\label{sec:outer}
In this section, we study the terms in \eqref{eq:outer-sum}.
We begin with the observation that $\uo{k}$ is trivial outside a compact set.
Recall $\mr{c}$ from \ref{hyp:compact-support}.
\begin{lemma}
  \label{lem:compact}
  There exists $L > 0$ such that on $[t_0, 0) \times [-L, L]$, $\uo{0} = \mr{c}$ and $\uo{k} = 0$ for all $k \geq 1$.
\end{lemma}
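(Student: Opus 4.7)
The plan is to propagate the constant-state structure of $\mr{u}$ along inviscid characteristics, combined with an induction on the hierarchy \eqref{eq:ukout-ev} for the higher outer terms. By hypothesis \ref{hyp:compact-support}, $\mr{u} \equiv \mr{c}$ outside $[-\mr{L}, \mr{L}]$. Introduce the tilted spacetime region $E \coloneqq \bigl\{(t, x) \in [t_0, 0) \times \R : |x - (t - t_0)\mr{c}| > \mr{L}\bigr\}$. First I would establish that $\uo{0} \equiv \mr{c}$ and $\uo{k} \equiv 0$ for all $k \geq 1$ throughout $E$; then I would take $L > \mr{L} + |t_0|\sup|\mr{u}|$, under which the triangle inequality $|x - (t - t_0)\mr{c}| \geq |x| - |t_0|\sup|\mr{u}| > \mr{L}$ for $|x| > L$ and $t \in [t_0, 0)$ places the lemma's domain inside $E$, so the two identities transfer to it.

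For $\uo{0} = u^0$, the method of characteristics for \eqref{eq:Burgers-inviscid} closes the first claim. A characteristic from $(t_0, x_0)$ with $|x_0| > \mr{L}$ satisfies $\dot x \equiv \mr{u}(x_0) = \mr{c}$, yielding the straight line $x = x_0 + (t - t_0)\mr{c}$ along which $u^0 \equiv \mr{c}$. The union of these lines over $|x_0| > \mr{L}$ is exactly $E$, so $u^0 \equiv \mr{c}$ on $E$.

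For $k \geq 1$, I would argue by induction on $k$. Assuming $\uo{j} \equiv 0$ on $E$ for $1 \leq j < k$, openness of $E$ together with smoothness of each $\uo{j}$ forces all source terms in \eqref{eq:ukout-ev} --- both $\partial_x^2 \uo{k-1}$ and the cross-terms $\uo{k'}\partial_x \uo{k-k'}$ --- to vanish identically on $E$. Since $\uo{0} \equiv \mr{c}$ there, the equation \eqref{eq:ukout-ev} reduces on $E$ to the constant-coefficient transport $\partial_t \uo{k} + \mr{c}\partial_x \uo{k} = 0$, whose characteristics are straight lines of slope $\mr{c}$ that foliate $E$ by construction. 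Integrating along them with the trivial initial data $\uo{k}(t_0, \cdot) = 0$ yields $\uo{k} \equiv 0$ on $E$, closing the induction.

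The main delicacy is the choice of the enlarged region $E$ for the induction. The slab determined by the lemma's bound on $|x|$ is not invariant under the constant-slope transport --- a characteristic can drift inward by up to $|t_0||\mr{c}|$ --- so a direct induction on that slab would not close. The tilted set $E$ is both the maximal spacetime region on which $\uo{0}$ is the constant $\mr{c}$ and is exactly preserved by the associated characteristic flow; this structural compatibility is what drives the induction cleanly, and the containment arranged by the choice of $L$ then transfers the identities to the lemma's stated region.
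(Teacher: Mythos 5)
Your proof is correct and follows essentially the same route as the paper's own: you define the same tilted region (which the paper calls $\m{D}$), propagate $\uo{0} \equiv \mr{c}$ along straight characteristics, and induct on $k$ by observing that \eqref{eq:ukout-ev} collapses on this region to a constant-coefficient transport equation with zero data. The only cosmetic difference is that the paper takes the sharper $L = |\mr{c}t_0| + \mr{L}$ where you use the looser $\sup|\mr{u}|$ in place of $|\mr{c}|$; both close, and both you and the paper are implicitly reading the lemma's $[-L,L]$ as $[-L,L]^c$, consistent with the source of the constant state being outside $[-\mr{L},\mr{L}]$.
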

\begin{proof}
  By \ref{hyp:compact-support}, $\mr{u}(x) = \mr{c}$ for all $\abs{x} \geq \mr{L}$.
  Define the region
  \begin{equation*}
    \m{D} \coloneqq \big\{(t, x) \in [t_0, 0) \times \R \mid |x - \mr{c}(t - t_0)| \geq \mr{L}\big\}.
  \end{equation*}
  Using the method of characteristics, we see that $\uo{0} = \mr{c}$ on $\m{D}$.

  We claim that $\uo{k} = 0$ on $\m{D}$ for all $k \geq 1$.
  We show this by induction.
  Fix $k \geq 1$ and suppose we have shown the claim for all $1 \leq k' < k$.
  Note that this assumption is vacuous in the base case $k = 1$.
  Using \eqref{eq:ukout-ev}, we see that
  \begin{equation*}
    \partial_t \uo{k} = - \partial_x\big(\uo{0} \uo{k}\big) \quad \text{in } \m{D}.
  \end{equation*}
  Now $\uo{k}$ starts from $0$ and $\m{D}$ is a union of characteristics of $\uo{0}$.
  Hence $\uo{k} = 0$ in $\m{D}$, as desired.
  The claim follows from induction.
  To obtain the lemma, we take $L \coloneqq |\mr{c}t_0| + \mr{L}$. 
\end{proof}
\subsection{Homogeneous algebra}
In the remainder of the section, we focus on the behavior of the outer expansion \eqref{eq:outer-sum} near $x = 0$.
We use the inverse cubic $\cub$ and its slope $\cubder$ from Definition~\ref{def:cubic} as building blocks.
It is therefore essential that we understand the derivatives of these functions in some detail.

Recall that the functions $\cub$ and $\cubder$ are homogeneous of degrees $1$ and $-2$, respectively.
In light of the scaling in Definition~\ref{def:homog}, the derivatives $\partial_t$ and $\partial_x$ reduce homogeneity by $2$ and $3,$ respectively.
In particular, we can use Definition~\ref{def:cubic} and the chain rule to compute
\begin{equation}
  \label{eq:derivs}
  \partial_x \cub = -\cubder, \quad \partial_t \cub = \cub \cubder, \quad \partial_x \cubder = 6 \coeff \cub \cubder^3, \quad \partial_t \cubder = \cubder^2(1 - 6 \coeff \cub^2 \cubder).
\end{equation}
It follows that the space of polynomials in $\cub$ and $\cubder$ is closed under differentiation.

Let $\m{P}$ denote the $\R$-algebra of $0$-homogeneous real polynomials in $\cub$ and $\cubder$.
By $0$-homogeneity, $\m{P}$ is generated as an algebra by $\cub^2 \cubder$.
That is, $\m{P} = \R\big[\cub^2 \cubder\big]$.
Given $a, b \in \Z_{\geq 0},$ we then define
\begin{equation*}
  \m{P}(a, b) = \cub^a \cubder^b \m{P},
\end{equation*}
which is a real vector space of $(a-2b)$-homogeneous functions.
Using \eqref{eq:derivs}, we see that
\begin{equation}
  \label{eq:deriv-map}
  \partial_x \colon \m{P}(a, b) \to \m{P}(a - 1, b + 1) \And \partial_t \colon \m{P}(a, b) \to \m{P}(a, b + 1)
\end{equation}
when $a \geq 1$ and $b \geq 0$.

We would like to extend this pattern to $a \leq 0$, but something new happens: $\partial_x \cubder \in \m{P}(1, 3)$.
Thus $\partial_x \colon \m{P}(0, 1) \to \m{P}(1,3)$.
In light of \eqref{eq:deriv-map}, we therefore \emph{define} $\m{P}(-1, 2) \coloneqq \m{P}(1, 3)$.
More generally, we extend our notation to negative indices as follows:
\begin{equation*}
  \m{P}(-a, b) \coloneqq \m{P}(-a + 2 \ceil{a/2}, b + \ceil{a/2}) \For a,b \in \Z_{\geq 0}.
\end{equation*}
Thus we have formally replaced $\cub^{-a}$ by $\cub^{-a} (\cub^2 \cubder)^{\ceil{a/2}}$ so that the result is still a polynomial in $\cub$ and $\cubder$.
With this convention, \eqref{eq:deriv-map} holds for all $a \in \Z$ and $b \in \Z_{\geq 0}$.

We now introduce some bounds adapted to this algebra.
We first note that homogeneous functions are determined by their values on the quarter-sphere
\begin{equation*}
  \s{Q} \coloneqq \big\{\abs{t}^2 + \abs{x}^2 = 1,\, t < 0,\, x \geq 0\big\}.
\end{equation*}
Because
\begin{equation*}
  0 < \inf_{\s{Q}} \dist < \sup_{\s{Q}} \dist < \infty
\end{equation*}
and $\dist$ is $1$-homogeneous, we have
\begin{equation}
  \label{eq:dist-bd}
  \abs{f} \leq C(f) \dist^r \quad \text{on } \R_- \times \R
\end{equation}
for every $r$-homogeneous function $f$ that is bounded on $\s{Q}$.
In particular, \eqref{eq:dist-bd} holds if $f$ is continuous in $(-\infty,0] \times \R \setminus \{(0,0)\}$.
For this reason, we frequently express estimates in terms of powers of $\dist$.

Next, we introduce an $(a-2b)$-homogeneous function $\err(a, b)$ to control polynomials in $\cub$ and $\cubder$.
For $a, b \in \Z$, let
\begin{equation}\label{eq:poly-err}
  \err(a, b) \coloneqq
  \begin{cases}
    \abs{\cub}^a \cubder^b & \text{if } a \geq 0,\\
    \cubder^{b + \abs{a}/2} & \text{if } a < 0.
  \end{cases}
\end{equation}
If $p \in \m{P}(a, b)$, then
\begin{equation}
  \label{eq:err-bd}
  \abs{p} \leq C(p) \err(a, b) \quad \text{on } \R_- \times \R
\end{equation}
for some $C(p) > 0$.
The functions $\err(a, b)$ are refined versions of $\dist^{a - 2b}$ that track the order of vanishing at $x = 0$ when $t < 0$.
As it happens, this vanishing is not relevant when $a < 0$, so then we simply let $\err(a, b) = \dist^{a - 2b}$.

\subsection{The inviscid solution}

We can now justify the expansion \eqref{eq:u0-homog-sum} for $\uo{0}$ near the origin.
In fact, we will verify the expansion wherever $\abs{x} \ll 1$.
We recall the identity \eqref{eq:hodograph} for the local spatial inverse $\omega$ of $u^0 = \uo{0}$.
Because the inverse $\mr{\omega}$ of $\mr{u}$ is smooth on the domain $\m{B}$ defined in \eqref{eq:hodograph-domain}, it admits a Taylor expansion.
Refining \eqref{eq:hodograph-init-Taylor-3}, we have
\begin{equation*}
  \mr{\omega}(y) = t_0 y - \coeff y^3 + \sum_{m=4}^M \beta_m y^m + \m{O}_M\big(y^{M + 1}\big) \quad \text{as } y \to 0
\end{equation*}
for each $M \in \Z_{\geq 4}$ and some coefficients $\beta_m \in \R$.
Using \eqref{eq:hodograph}, we find
\begin{equation}
  \label{eq:inverse-power-series}
  \omega(t, y) = t y - \coeff y^3 + \sum_{m=4}^M \beta_m y^m + \m{O}_M\big(y^{M + 1}\big) \quad \text{as } y \to 0
\end{equation}
for each $M \in \Z_{\geq 4}$.
We also permit $M = 3$ in \eqref{eq:inverse-power-series} under the convention that the empty sum is zero.

As noted earlier, this expansion suggests that $\uo{0} \sim \cub$ where both are small.
\begin{lemma}
  \label{lem:sing-IFT}
  We have $\uo{0} = \cub + \m{O}\big(\abs{x}^{1/3}|\cub|\big)$ on $[t_0, 0) \times [-1, 1]$.
\end{lemma}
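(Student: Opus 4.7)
The plan is to couple the hodograph identity $x = \omega(t, \uo{0})$ with the defining cubic $x = t\cub - \coeff \cub^3$ for $\cub$. First restrict to a small neighborhood $\{|x| \leq \delta\}$ of the origin, uniform in $t \in [t_0, 0)$, on which $\uo{0}(t, x) \in \m{B}$ so that the hodograph applies. Such a $\delta$ exists because $\uo{0}(t, 0) = 0$ for all $t$ (by \ref{hyp:gauge} and \eqref{eq:hodograph}), and a direct estimate on $\omega(t, \pm \eps_0)$ gives a uniform lower bound. Outside this neighborhood but within $[-1, 1]$, both $|\uo{0}|$ and $|\cub|$ are uniformly bounded while $|x|^{1/3}|\cub|$ is bounded below, so the conclusion is immediate upon enlarging the implicit constant.

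Inside the neighborhood, inserting the Taylor expansion \eqref{eq:inverse-power-series} with $M = 3$ into the hodograph identity gives $x = t\uo{0} - \coeff(\uo{0})^3 + \m{O}((\uo{0})^4)$. Subtracting this from $x = t\cub - \coeff \cub^3$ and factoring out $\uo{0} - \cub$ yields
\begin{equation*}
    (\uo{0} - \cub)\bigl[t - \coeff\bigl((\uo{0})^2 + \uo{0}\cub + \cub^2\bigr)\bigr] = \m{O}\bigl((\uo{0})^4\bigr).
\end{equation*}
The identity $y^2 + yz + z^2 = \tfrac{1}{2}(y+z)^2 + \tfrac{1}{2}(y^2 + z^2) \geq \tfrac{1}{2}\cub^2$, together with $t < 0$ and $\coeff > 0$, shows that the bracketed factor has magnitude at least $|t| + \tfrac{\coeff}{2}\cub^2 \asymp \dist^2$ by Definition~\ref{def:cubic}. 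Hence
\begin{equation*}
    |\uo{0} - \cub| \lesssim (\uo{0})^4 \dist^{-2}.
\end{equation*}

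A continuity bootstrap along each time slice---starting from $\uo{0}(t, 0) = 0 = \cub(t, 0)$ and exploiting the monotonicity of $\uo{0}(t, \cdot)$ near $x = 0$---upgrades this to $|\uo{0}| \leq 2|\cub|$, whence $|\uo{0} - \cub| \lesssim \cub^4 \dist^{-2}$. A case split via Lemma~\ref{lem:cubic} finishes the proof: when $\dist \asymp |\cub|$, we have $|\cub| \asymp |x|^{1/3}$, so both sides of the desired inequality are $\asymp \cub^2$; when $\dist \asymp |t|^{1/2}$, we have $|\cub| \asymp |x|/|t|$ and $|x| \lesssim |t|^{3/2}$, which reduces the target $\cub^4 \dist^{-2} \lesssim |x|^{1/3} |\cub|$ to $|x|^{8/3} \lesssim |t|^4$, valid in that regime.

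The main obstacle is the bootstrap step $|\uo{0}| \leq 2|\cub|$, which must be uniform as $t \uparrow 0$ despite the divergence of $\partial_x \uo{0}$ there. The uniform lower bound on $\delta$ established in the first paragraph, combined with continuity of $\uo{0}(t, \cdot)$ on each slice and the vanishing at $x = 0$, should close a standard open/closed argument.
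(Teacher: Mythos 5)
Your proof is correct, and it takes a genuinely different route from the paper's. The paper proves Lemma~\ref{lem:sing-IFT} via a ``distortion'' argument: it introduces a shift $h(t,x)$ satisfying $\cub(t, x + h) = \uo{0}(t,x)$, bounds $|h| \lesssim |x|^{4/3}$ using \eqref{eq:power-quartic}, and then converts this to a bound on $\uo{0} - \cub$ via the mean value theorem together with the estimate $|\partial_x \cub| \leq |\cub|/|x|$ from Lemma~\ref{lem:cubic}. Your approach instead subtracts the exact cubic relation for $\cub$ from the approximate one for $\uo{0}$, factors the cubic difference, and bounds the cofactor $|t| + \coeff\big((\uo{0})^2 + \uo{0}\cub + \cub^2\big) \gtrsim \dist^2$ from below---which is precisely the technique the paper itself uses in the very next result, Corollary~\ref{cor:u01}, where it writes the same factored form as $\big[|t| + \coeff(3\cub^2 + 3\cub v + v^2)\big] v$ with $v = \uo{0} - \cub$. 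You have thus collapsed two stages of the paper's argument into one, at the cost of having to close the $|\uo{0}| \leq 2|\cub|$ bootstrap by a continuity argument rather than inheriting it from \eqref{eq:cub-compare}. Your bootstrap does close: from the factored estimate one gets $|\uo{0} - \cub| \lesssim \cub^2$ (using $\dist^2 \geq 3\coeff\cub^2$), which is strictly smaller than $|\cub|$ once $\delta$ is shrunk so that $|\cub| \ll 1$, and the starting point at $x=0$ for each fixed slice $t < 0$ follows since $\partial_x \uo{0}(t, 0) = \partial_x \cub(t, 0) = 1/t$, giving $\uo{0}/\cub \to 1$ as $x \to 0$. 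Your final case split against Lemma~\ref{lem:cubic} is also correct: in the regime $\dist \asymp |t|^{1/2}$, the reduction to $|x|^{8/3} \lesssim |t|^4$ is exactly $(|x| \lesssim |t|^{3/2})^{8/3}$, and in the regime $\dist \asymp |\cub| \asymp |x|^{1/3}$ both $\cub^4 \dist^{-2}$ and $|x|^{1/3}|\cub|$ are $\asymp \cub^2$. What the paper's route buys is that $\uo{0} \asymp \cub$ comes for free from the hodograph structure without bootstrapping; what your route buys is that it avoids introducing $h$ and the associated mean-value estimate entirely and is arguably easier to iterate to higher order (indeed the paper adopts exactly your algebra for the higher-order expansion).
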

\begin{remark}
  The rectangle $[t_0, 0) \times [-1, 1]$ is simply a convenient bounded domain about $\{x = 0\}$.
\end{remark}
\begin{proof}
  Recall \eqref{eq:hodograph-domain} and $\uo{0} = u^0$.
  By construction, $\uo{0} \colon \m{A} \to \m{B}$ and $\uo{0}(t, \anon)$ has inverse $\omega(t, \anon)$.
  We initially focus on $(t, x) \in \m{A}$.
  
  For $y\in\m{B}$, \eqref{eq:inverse-power-series} implies that we can bound $\omega(t, y)$ from above and below by multiples of $ty - \coeff y^3$.
  It follows that
  \begin{equation}
    \label{eq:cub-compare}
    \uo{0} \asymp \cub \quad \text{on } \m{A}.
  \end{equation}
  Now, $\abs{\cub}$ is increasing in $t$, so
  \begin{equation}
    \label{eq:1/3-bd}
    |\uo{0}(t, x)| \lesssim \abs{\cub(t, x)} \leq \abs{\cub(0, x)} \asymp \abs{x}^{1/3}.
  \end{equation}

  Because $\uo{0}$ and $\cub$ are spatial bijections near $x=0$, there exists a unique shift $h(t, x)$ such that
  \begin{equation}
    \label{eq:distortion}
    \cub(t, x + h(t, x)) = \uo{0}(t, x)
  \end{equation}
  for sufficiently small $x$.
  That is, for $\abs{x} \leq c_0$ for some $c_0 > 0$.
  We will control this ``distortion'' $h$.
  In the following calculations, we assume $(t, x) \in \m{A} \cap \{\abs{x} \leq c_0\}$ unless stated otherwise.

  By \eqref{eq:inverse-power-series},
  \begin{equation}
    \label{eq:power-quartic}
    \abs{\omega(t, y) - (t y - \coeff y^3)} \lesssim \abs{y}^4 \ForAll y \in \m{B}.
  \end{equation}
  Since $x = \omega(t, \uo{0}(t, x))$, \eqref{eq:1/3-bd} and \eqref{eq:power-quartic} yield
  \begin{equation}
    \label{eq:diff-4/3}
    \abs{x - \big[t \uo{0}(t, x) - \coeff \uo{0}(t, x)^3\big]} \lesssim \uo{0}(t, x)^4 \lesssim  \abs{x}^{4/3}.
  \end{equation}
  Now Definition~\ref{def:cubic}, \eqref{eq:distortion}, and \eqref{eq:diff-4/3} imply
  \begin{equation}
    \label{eq:small-distortion}
    \abs{h} = \abs{x - (x + h)} = \abs{x - \big[t \cub(t, x + h) - \coeff \cub(t, x + h)^3\big]} \lesssim \abs{x}^{4/3}.
  \end{equation}
  By reducing $c_0$, we can assume that $\abs{h} \leq \abs{x}/2$ when $\abs{x} \leq c_0.$
  Then $\abs{z} \asymp \abs{x}$ for all $z$ between $x$ and $x + h$.
  By Lemma~\ref{lem:cubic},
  \begin{equation*}
    \abs{\partial_x \cub(t, z)} = \cubder(t, z) \leq \frac{\abs{\cub(t, z)}}{\abs{z}} \lesssim \frac{\cub(t, x)}{\abs{x}}
  \end{equation*}
  for such $z$.
  Hence by the mean value theorem and \eqref{eq:small-distortion},
  \begin{equation*}
    \abs{\cub(t, x + h) - \cub(t, x)} \lesssim \frac{\abs{\cub(t, x)}}{\abs{x}} \abs{h} \lesssim \abs{x}^{1/3} \abs{\cub(t, x)}.
  \end{equation*}
  Now \eqref{eq:distortion} yields
  \begin{equation}
    \label{eq:sing-IFT}
    \abs{\uo{0} - \cub} \lesssim \abs{x}^{1/3}|\cub| \quad \text{on } \m{A} \cap \{\abs{x} \leq c_0\}.
  \end{equation}
  Finally, $\uo{0} - \cub$ is bounded on the rectangle $[t_0, 0) \times [-1, 1]$.
  Since $\abs{x}^{1/3} \abs{\cub} \gtrsim 1$ when $\abs{x} \geq c_0,$ \eqref{eq:sing-IFT} holds on the full domain $[t_0, 0) \times [-1, 1]$.
\end{proof}
This lemma quickly yields stronger results.
\begin{corollary}
  \label{cor:u01}
  We have $\uo{0} = \cub + \beta_4 \cub^4 \cubder + \m{O}\big(\abs{\cub}^5 \cubder\big)$ on $[t_0, 0) \times [-1, 1]$.
\end{corollary}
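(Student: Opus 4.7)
The strategy is to refine the distortion argument from the proof of Lemma~\ref{lem:sing-IFT} by invoking the Taylor expansion \eqref{eq:inverse-power-series} at the next order $M=4$. That expansion supplies the quartic term $\beta_4 y^4$ that is responsible for the explicit leading contribution $\beta_4\cub^4\cubder$ in the claimed formula.

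More concretely, on the subregion $\m{A}\cap\{|x|\leq c_0\}$ where Lemma~\ref{lem:sing-IFT} operates, I would define the distortion $h$ by $\cub(t,x+h)=\uo{0}(t,x)$ as in that proof. Since the identity $\cub(t,z)=y$ is equivalent to $z=ty-\coeff y^3$ (by Definition~\ref{def:cubic}) and $x=\omega(t,\uo{0})$, comparison with \eqref{eq:inverse-power-series} yields the closed form
\[
h=-\beta_4\uo{0}^4+\m{O}(\uo{0}^5),
\]
a substantial refinement of the bound $|h|\lesssim|x|^{4/3}$ in \eqref{eq:small-distortion}. To convert $h$ back to $\eta\coloneqq\uo{0}-\cub=\cub(t,x+h)-\cub(t,x)$, I would integrate $\partial_\xi\cub=-\cubder$ from $x$ to $x+h$ and control the oscillation of $\cubder$ on this interval via $\partial_x\cubder=6\coeff\cub\cubder^3$, obtaining
\[
\eta=-\cubder(t,x)\,h+\m{O}\bigl(|\cub|\cubder^3 h^2\bigr)=\beta_4\cubder\uo{0}^4+\m{O}(\cubder\uo{0}^5)+\m{O}\bigl(|\cub|\cubder^3\uo{0}^8\bigr).
\]

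Because Lemma~\ref{lem:sing-IFT} together with \eqref{eq:cub-compare} gives $\uo{0}\asymp\cub$ in this subregion, the second error is $\m{O}(|\cub|^5\cubder)$ directly, and the third collapses to $\m{O}(\cub^9\cubder^3)=\m{O}\bigl((\cub^2\cubder)^2\cdot|\cub|^5\cubder\bigr)$, which is $\m{O}(|\cub|^5\cubder)$ by the fundamental budget $\cub^2\cubder\lesssim 1$ (immediate from $\cubder=(|t|+3\coeff\cub^2)^{-1}$). Expanding $\uo{0}^4=\cub^4+4\cub^3\eta+\m{O}(\cub^2\eta^2)$, the linear-in-$\eta$ term $4\beta_4\cub^3\cubder\cdot\eta$ has coefficient of size $\m{O}(|\cub|)$ and is absorbed into the left-hand side once $|\cub|$ is sufficiently small, while the quadratic-and-higher terms in $\eta$ are handled with Lemma~\ref{lem:sing-IFT} and the same budget. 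This produces $\eta=\beta_4\cub^4\cubder+\m{O}(|\cub|^5\cubder)$ on $\m{A}\cap\{|x|\leq c_0\}$; on the complement within $[t_0,0)\times[-1,1]$, both $\eta-\beta_4\cub^4\cubder$ and $|\cub|^5\cubder$ are bounded above and below respectively, so the estimate extends trivially exactly as at the end of Lemma~\ref{lem:sing-IFT}.

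The main obstacle is bookkeeping: every nonlinear residue (Taylor remainder, second-order MVT error, binomial expansion of $\uo{0}^4$) must be repackaged as $\m{O}(|\cub|^5\cubder)$ via the budget $\cub^2\cubder\lesssim 1$, which exchanges powers of $\cub^2$ for inverse powers of $\cubder$. None of the individual computations is difficult, but the hierarchy of errors needs to be respected and the absorption of the linear-in-$\eta$ term requires that the preliminary bound $\eta=\m{O}(\cub^4\cubder)$ implicit in the new formula for $h$ is tight enough to beat the lemma's cruder $\m{O}(|x|^{1/3}|\cub|)$.
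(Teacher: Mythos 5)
Your proposal is correct, but the route is genuinely different from the paper's. The paper stays entirely algebraic: it sets $v\coloneqq\uo{0}-\cub$, plugs $\uo{0}=\cub+v$ into \eqref{eq:inverse-power-series}, uses $t\cub-\coeff\cub^3=x$ to cancel the leading part, and obtains the identity $\big[\abs{t}+\coeff(3\cub^2+3\cub v+v^2)\big]v=\beta_4(\cub+v)^4+\m{O}(|\cub|^5)$. A preliminary pass ($|v|\leq|\cub|/2$, bracket $\asymp\cubder^{-1}$) gives $v\lesssim\cub^4\cubder$, after which every cross term collapses under $\cub^2\cubder\lesssim1$, leaving $\cubder^{-1}v=\beta_4\cub^4+\m{O}(|\cub|^5)$. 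Your version instead carries the distortion picture of Lemma~\ref{lem:sing-IFT} one order further: you read off the closed form $h=-\beta_4\uo{0}^4+\m{O}(\uo{0}^5)$ from \eqref{eq:inverse-power-series}, then transfer back to $\eta=\uo{0}-\cub$ by Taylor-expanding $\cub$ in $x$ (picking up the second-order remainder $\m{O}(|\cub|\cubder^3h^2)$ via $\partial_x\cubder=6\coeff\cub\cubder^3$). Both routes live on the same budget $\cub^2\cubder\lesssim1$; the paper's is a bit shorter because it never has to control the oscillation of $\cubder$ along the spatial interval $[x,x+h]$, while yours is perhaps conceptually cleaner in isolating exactly where the new $\beta_4y^4$ term enters (namely, in $h$). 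One small simplification: your ``absorption into the left-hand side'' of the linear-in-$\eta$ term is unnecessary. Once you have $\eta\lesssim\cub^4\cubder$ (which, as you note, already follows from $h=\m{O}(\cub^4)$ and $\eta=-\cubder h+\m{O}(|\cub|\cubder^3h^2)$), the term $4\beta_4\cub^3\cubder\,\eta$ is directly $\m{O}(\cub^7\cubder^2)=\m{O}\big(|\cub|^5\cubder\cdot\cub^2\cubder\big)=\m{O}(|\cub|^5\cubder)$; you do not need to rearrange and divide. The extension to the full rectangle and the handling of the remaining quadratic residues are as you describe and match the paper's closing remark.
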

\begin{proof}
  Let $v \coloneqq \uo{0} - \cub$.
  Plugging $\uo{0}$ into \eqref{eq:inverse-power-series} and using \eqref{eq:cub-compare}, we find
  \begin{equation*}
    x - \big[t \uo{0} - \coeff (\uo{0})^3\big] \lesssim \cub^4.
  \end{equation*}
  Replacing $\uo{0}$ by $\cub + v$ and using Definition~\ref{def:cubic}, this implies that
  \begin{equation}
    \label{eq:cubic-expand}
    \left[\abs{t} + \coeff \big(3\cub^2 + 3 \cub v + v^2\big)\right]v \lesssim \cub^4.
  \end{equation}
  By Lemma~\ref{lem:sing-IFT}, we can assume that $\abs{v} \leq \abs{\cub}/2$ provided $\abs{x} \leq c_0$ for some $c_0 > 0$.
  Hence
  \begin{equation*}
    \cub^2 \leq 3\cub^2 + 3 \cub v + v^2 \leq 5 \cub^2.
  \end{equation*}
  Then Lemma~\ref{lem:cubic} yields
  \begin{equation*}
    \abs{t} + \coeff \big(3\cub^2 + 3 \cub v + v^2\big) \asymp \cubder^{-1}.
  \end{equation*}
  Rearranging \eqref{eq:cubic-expand}, we obtain
  \begin{equation}
    \label{eq:diff-quad}
    v \lesssim \cub^4\cubder \lesssim \cub^2.
  \end{equation}
  Keeping one more term in \eqref{eq:inverse-power-series} and following the same reasoning, we find
  \begin{equation*}
    \left[\abs{t} + \coeff (3\cub^2 + 3\cub v + v^3)\right]v = \beta_4 (\cub + v)^4 + \m{O}\big(\abs{\cub}^5\big).
  \end{equation*}
  Finally, \eqref{eq:diff-quad} allows us to simplify:
  \begin{equation*}
    \cubder^{-1} v = \big(\abs{t} + 3 \coeff \cub^2\big) v = \beta_4 \cub^4 + \m{O}\big(\abs{\cub}^5\big).
  \end{equation*}
  The corollary follows for $\abs{x} \leq c_0$.
  Again, because $\abs{\cub} \gtrsim 1$ when $\abs{x} \geq c_0$, the bound extends to the full rectangle $[t_0, 0) \times [-1, 1]$.
\end{proof}
We can iterate this argument to show that $\uo{0}$ admits an expansion about $x = 0$ of the form
\begin{equation*}
  \uo{0} \sim \sum_{\ell = 0}^\infty u_{0, \ell}
\end{equation*}
for certain $(\ell + 1)$-homogeneous functions $u_{0,\ell}$ that are polynomials in $\cub$ and $\cubder$.
For instance, $u_{0, 0} = \cub$ and $u_{0, 1} = \beta_4 \cub^4 \cubder$.
Using identities like
\begin{equation*}
  \partial_x \uo{0}(t, x) = \partial_y \omega(t, \uo{0}(t, x))^{-1},
\end{equation*}
we can in turn derive analogous expansions for the derivatives of $\uo{0}$.

We denote the partial homogeneous sum by
\begin{equation*}
  u_{0, [\ell]} \coloneqq \sum_{\ell' = 0}^\ell u_{0, \ell'}.
\end{equation*}
Then variations on the proof of Corollary~\ref{cor:u01} yield:
\begin{lemma}
  \label{lem:u0-out-homog}
  For each $\ell \geq 1$, we have $u_{0, \ell} \in \m{P}(\ell + 3, 1)$.
  Moreover, for each $i, \ell \in \Z_{\geq 0}$,
  \begin{equation*}
    \abs{\partial_x^i\big(\uo{0} - u_{0, [\ell]}\big)} \lesssim_{i, \ell} \err(\ell - i + 4, i + 1) \quad \text{in } [t_0, 0) \times [-1, 1].
  \end{equation*}
\end{lemma}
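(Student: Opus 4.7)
The plan is a two-stage induction: first constructing the $u_{0, \ell}$ and proving the $i = 0$ remainder bound by iterating the hodograph fixed-point argument that underlies Lemma~\ref{lem:sing-IFT} and Corollary~\ref{cor:u01}, and then deducing the $i \geq 1$ estimates by implicit differentiation of $x = \omega(t, u^0)$.

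For Stage~1, the cases $\ell = 0, 1$ are Lemma~\ref{lem:sing-IFT} and Corollary~\ref{cor:u01}, giving $u_{0, 0} = \cub$ and $u_{0, 1} = \beta_4 \cub^4 \cubder \in \m{P}(4, 1)$. Suppose inductively that $u_{0, j} \in \m{P}(j + 3, 1)$ is defined for $1 \leq j < \ell$ and that $\abs{u^0 - u_{0, [\ell-1]}} \lesssim \err(\ell + 3, 1)$. Writing $\tilde u \coloneqq u_{0, [\ell-1]} - \cub$ and $r \coloneqq u^0 - u_{0, [\ell-1]}$, I substitute $u^0 = \cub + \tilde u + r$ and $x = t\cub - \coeff \cub^3$ into $x = \omega(t, u^0)$, using \eqref{eq:inverse-power-series} with $M \gg \ell$. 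Collecting the linear-in-$r$ coefficient $-\cubder^{-1}$ on the left-hand side and multiplying through by $-\cubder$ yields an identity of the form
\[
  r = F_\ell(\cub, \cubder) + \m{O}\bigl(\cub\cubder\, r\bigr) + \m{O}\bigl(\cubder\, r^2\bigr) + \m{O}\bigl(\err(\ell + 5, 1)\bigr),
\]
where $F_\ell$ is a polynomial in $\cub$ and $\cubder$ assembled from the Taylor coefficients $\beta_m$ and the prior corrections $u_{0, 1}, \ldots, u_{0, \ell-1}$. I would define $u_{0, \ell}$ to be the $(\ell+1)$-homogeneous component of $F_\ell$. The product rule $\m{P}(a_1, b_1)\cdot\m{P}(a_2, b_2) \subset \m{P}(a_1+a_2, b_1+b_2)$ together with the inclusion $\m{P}(a+2, b+1) \subset \m{P}(a, b)$ (from $\cub^2\cubder \in \m{P}$) and the fact that the normalizing $\cubder$ supplies at least one factor of $\cubder$ to every term imply $u_{0, \ell} \in \m{P}(\ell + 3, 1)$, while all remaining components of $F_\ell$ lie in $\err(\ell + 5, 1)$. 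A contraction argument as in Corollary~\ref{cor:u01}, using $\abs{r} \lesssim \cub^2$ to absorb the $r^2$-term and $\abs{\cub\cubder} \lesssim 1$ for the $r$-term, then yields $\abs{u^0 - u_{0, [\ell]}} \lesssim \err(\ell + 4, 1)$.

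For Stage~2, I would induct on $i$ at fixed $\ell$. Differentiating $x = \omega(t, u^0)$ once in $x$ gives $\partial_y\omega(t, u^0)\,\partial_x u^0 = 1$, and further differentiation produces Fa\`{a} di Bruno expressions for $\partial_x^i u^0$ as rational functions of $\partial_y^k\omega(t, u^0)$ for $0 \leq k \leq i$. Expanding each $\partial_y^k\omega(t, u^0)$ via \eqref{eq:inverse-power-series} and substituting $u^0 = u_{0, [\ell]} + r_\ell$ with $\abs{r_\ell} \lesssim \err(\ell + 4, 1)$ give a polynomial-in-$\cub, \cubder$ principal part, which must coincide with $\partial_x^i u_{0, [\ell]}$ by the mapping property \eqref{eq:deriv-map} applied to Stage~1. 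The residual is controlled by linearizing the rational expression around $u_{0, [\ell]}$: the extra factors of $\cubder$ accumulating from each differentiation (via $\partial_x \cub = -\cubder$ and $\partial_x\cubder \in \m{P}(1, 3)$) combine with the $\err(\ell + 4, 1)$ bound on $r_\ell$ to yield the claimed $\err(\ell - i + 4, i + 1)$. Finally, as at the end of the proofs of Lemma~\ref{lem:sing-IFT} and Corollary~\ref{cor:u01}, bounds valid for $\abs{x} \leq c_0$ extend to $[t_0, 0) \times [-1, 1]$ by smoothness of $u^0$ on $\{\abs{x} \geq c_0\}$.

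The principal obstacle is the algebraic bookkeeping in Stage~1: showing that the polynomial $F_\ell$ splits cleanly into a single $(\ell + 1)$-homogeneous piece in $\m{P}(\ell + 3, 1)$ plus a remainder of order $\err(\ell + 5, 1)$. This requires systematically tracking how the Taylor coefficients $\beta_m$ and the prior corrections $u_{0, j}$ combine in the expansion of $\omega(t, \cub + \tilde u + r)$, verifying that each contribution to $u_{0, \ell}$ acquires exactly one explicit factor of $\cubder$ from the hodograph normalization, and checking that every other contribution has homogeneity at least $\ell + 2$, hence is absorbed into $\err(\ell + 5, 1)$.
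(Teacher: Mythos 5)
The paper supplies no formal proof of this lemma, only the sketch in the paragraph preceding it (iterate the hodograph fixed-point argument from Corollary~\ref{cor:u01} for the $i = 0$ bound, then exploit the implicit relation $\partial_x \uo{0} = \partial_y\omega(t, \uo{0})^{-1}$ for $i \geq 1$), and your proposal is an execution of exactly that plan. One small imprecision: your intermediate claim that the residual components of $F_\ell$ lie in $\err(\ell + 5, 1)$ overshoots by one degree of homogeneity — the next-order contributions sit at $\err(\ell + 4, 1)$, which is also all the closing contraction needs — but this does not change the structure of the argument.
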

In particular, we obtain direct control on $\uo{0}$:
\begin{corollary}
  \label{cor:u0-out-der}
  For each $i\in \Z_{\geq 0}$, we have $|\partial_x^i\uo{0}| \lesssim_i \err(- i +1, i)$ in $[t_0, 0) \times \R.$
\end{corollary}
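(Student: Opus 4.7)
The plan is to apply Lemma~\ref{lem:u0-out-homog} with $\ell = 0$ on a neighborhood of the origin, combine with the polynomial algebra of $\cub$ and $\cubder$ to control $\partial_x^i \cub$, and handle the far field separately using Lemma~\ref{lem:compact} together with smoothness of $\uo{0}$ away from the origin.

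On the rectangle $[t_0, 0) \times [-1, 1]$, Lemma~\ref{lem:u0-out-homog} with $\ell = 0$ gives
\[
\partial_x^i \uo{0} = \partial_x^i \cub + \m{O}\bigl(\err(-i + 4, i + 1)\bigr),
\]
since $u_{0, 0} = \cub$. Now $\cub \in \m{P}(1, 0)$, so iterating the differentiation rule \eqref{eq:deriv-map} yields $\partial_x^i \cub \in \m{P}(1 - i, i)$, whence $|\partial_x^i \cub| \lesssim \err(-i + 1, i)$ by \eqref{eq:err-bd}. It then remains to verify that the error satisfies $\err(-i + 4, i + 1) \lesssim \err(-i + 1, i)$ on the rectangle. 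The two ingredients are the bound $|\cub|^2 \cubder \leq (3\coeff)^{-1}$, immediate from Definition~\ref{def:cubic}, and the boundedness of $|\cub|$ and $\dist$ on the rectangle (by continuity on the compact closure). The asymmetric definition of $\err$ at the sign change of its first argument produces a small number of subcases, each of which reduces to a nonnegative power of $|\cub|^2 \cubder$ times a bounded power of $\dist$.

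Outside this rectangle I would argue as follows. On $[t_0, 0) \times \{|x| \geq L\}$, with $L$ from Lemma~\ref{lem:compact}, $\partial_x^i \uo{0}$ vanishes for $i \geq 1$ and $|\uo{0}| \leq |\mr{c}|$, while Lemma~\ref{lem:cubic} gives $|\cub| \gtrsim |x|^{1/3} \geq L^{1/3}$; both cases of the desired bound are then immediate. On the intermediate strip $[t_0, 0) \times \{1 \leq |x| \leq L\}$, the method of characteristics together with \ref{hyp:unique-min} and \ref{hyp:gauge} shows that $\uo{0}$ is smooth there: the unique singularity of $u^0$ on $[t_0, 0] \times \R$ lies at the origin, and no characteristic reaching this strip emanates from $\mr{x} = 0$ (which is stationary because $\mr{u}(0) = 0$). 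Hence $|\partial_x^i \uo{0}|$ is uniformly bounded on this compact-in-$x$ region, while $|\cub|$ and $\cubder$ are bounded above and below there, so $\err(-i + 1, i) \gtrsim 1$ and the bound follows.

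The only nontrivial technical point is the error-absorption $\err(-i + 4, i + 1) \lesssim \err(-i + 1, i)$ on the inner rectangle; it is routine, but requires a short case-check owing to the piecewise definition of $\err(a, b)$ at $a = 0$.
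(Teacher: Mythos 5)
Your proof is correct and follows essentially the same route as the paper's: apply Lemma~\ref{lem:u0-out-homog} at $\ell = 0$ on $[t_0,0) \times [-1,1]$, bound $\partial_x^i\cub$ via $\m{P}(-i+1,i)$ and \eqref{eq:err-bd}, absorb the error $\err(-i+4,i+1) \lesssim \err(-i+1,i)$ using $\cub^2\cubder \lesssim 1$ and the boundedness of $\dist$, and then use Lemma~\ref{lem:compact} for the far field. The only real difference is that you spell out the intermediate compact strip $\{1 \leq |x| \leq L\}$ (smoothness of $\uo{0}$ away from the origin, and $\err$ bounded below there), which the paper's proof compresses into the remark that $\partial_x^i\uo{0}$ is compactly supported for $i \geq 1$ and that $|\cub|$ grows at infinity for $i = 0$; your version makes that step explicit rather than leaving it to the reader.
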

\begin{proof}
  We use Lemma~\ref{lem:u0-out-homog} with $\ell = 0$, noting that $u_{0, [0]} = u_{0, 0 } = \cub$:
  \begin{equation*}
    \abs{\partial_x^i \uo{0}} \lesssim_{i} \abs{\partial_x^i \cub} + \err(\ell - i + 4, i + 1) \quad \text{in } [t_0, 0) \times [-1, 1].
  \end{equation*}
  By \eqref{eq:deriv-map}, $\partial_x^i \cub \in \m{P}(-i + 1, i)$.
  Thus by \eqref{eq:err-bd}, $|\partial_x^i \cub| \lesssim_i \err(-i + 1, i)$.
  Also,
  \begin{equation*}
    \err(- i + 4, i + 1) \lesssim \err(-i + 1, i)
  \end{equation*}
  in $[t_0, 0) \times [-1, 1]$, so
  \begin{equation}
    \label{eq:u0-out-der}
    \abs{\partial_x^i \uo{0}} \lesssim_{i} \err(-i + 1, i) \quad \text{in } [t_0, 0) \times [-1, 1].
  \end{equation}
  By Lemma~\ref{lem:compact}, $\uo{0}$ is constant outside a compact set.
  Hence when $i \geq 1$, $\partial_x^i \uo{0}$ is compactly supported and \eqref{eq:u0-out-der} holds in the entire space $[t_0, 0) \times \R$.
  When $i = 0$, $\err(1, 0) = \abs{\cub}$ grows at infinity, so again \eqref{eq:u0-out-der} holds in the entire space.
\end{proof}
We occasionally require a more precise bound on $\partial_x \uo{0}$.
\begin{corollary}
  \label{cor:u0-out-x}
  For all $t \in [t_0, 0)$,
  \begin{equation}
    \label{eq:u00-x}
    \norm{\partial_x \cub(t, \anon)}_{L^\infty} = \frac{1}{\abs{t}}.
  \end{equation}
  Moreover,
  \begin{equation}
    \label{eq:u0-out-x}
    \norm{\partial_x\uo{0}(t, \anon)}_{L^\infty} = \frac{1}{\abs{t}}+\m{O}\left(\abs{t}^{-1/2}\right).
  \end{equation}
\end{corollary}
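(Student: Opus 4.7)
The plan is to prove the two identities in turn: the first falls immediately out of Definition~\ref{def:cubic}, and the second follows by decomposing $\uo{0} = \cub + (\uo{0} - \cub)$ and controlling the correction with Lemma~\ref{lem:u0-out-homog}.

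For \eqref{eq:u00-x}, Definition~\ref{def:cubic} yields $\partial_x \cub = -\cubder = -(\abs{t} + 3\coeff \cub^2)^{-1}$ directly. Since $\cub^2 \geq 0$ with equality precisely at $x = 0$ (as $\cub$ is the unique real root of $t \cub - \coeff \cub^3 = 0$), the supremum $1/\abs{t}$ is attained exactly at $x = 0$.

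For \eqref{eq:u0-out-x}, I would split $\partial_x \uo{0} = \partial_x \cub + \partial_x(\uo{0} - \cub)$ and estimate the second piece separately on the compact window $[t_0, 0) \times [-1, 1]$ and on its complement. On the window, Lemma~\ref{lem:u0-out-homog} with $\ell = 0$ and $i = 1$ (using $u_{0, [0]} = u_{0,0} = \cub$) gives $\abs{\partial_x(\uo{0} - \cub)} \lesssim \err(3, 2) = \abs{\cub}^3 \cubder^2$. A one-variable optimization---setting $a = \cub^2$ and maximizing $a^{3/2}/(\abs{t} + 3\coeff a)^2$ over $a \geq 0$, with extremum at $a = \abs{t}/\coeff$---shows $\sup_x \err(3, 2)(t, \anon) \lesssim \abs{t}^{-1/2}$. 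Off the window, Corollary~\ref{cor:u0-out-der} with $i = 1$ combined with Lemma~\ref{lem:cubic} (which forces $\cub^2 \gtrsim 1$ for $\abs{x} \geq 1$, hence $\cubder \lesssim 1$) gives $\abs{\partial_x \uo{0}} \lesssim 1$, and this constant bound sits inside $\m{O}(\abs{t}^{-1/2})$ since $\abs{t}^{-1/2}$ is bounded below on $[t_0, 0)$. Together these upper bounds yield $\norm{\partial_x \uo{0}(t, \anon)}_{L^\infty} \leq 1/\abs{t} + \m{O}(\abs{t}^{-1/2})$.

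For the matching lower bound, I would evaluate at $x = 0$: since $\cub(t, 0) = 0$ makes $\err(3, 2)(t, 0)$ vanish, Lemma~\ref{lem:u0-out-homog} forces $\partial_x \uo{0}(t, 0) = \partial_x \cub(t, 0) = -1/\abs{t}$ exactly, whence $\norm{\partial_x \uo{0}(t, \anon)}_{L^\infty} \geq 1/\abs{t}$. Combined with the upper bound, this gives \eqref{eq:u0-out-x}. The main obstacle is the one-variable optimization of $\err(3, 2)$, which pins down the critical scale $\cub \asymp \abs{t}^{1/2}$ governing the correction; everything else is a mechanical consequence of results already in hand.
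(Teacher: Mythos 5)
Your proposal is correct and follows essentially the same route as the paper: both establish \eqref{eq:u00-x} directly from Definition~\ref{def:cubic} and then derive \eqref{eq:u0-out-x} from Lemma~\ref{lem:u0-out-homog} with $i=1$, $\ell=0$ by bounding $\err(3,2)\lesssim\abs{t}^{-1/2}$. The only cosmetic deviations are that you optimize $\abs{\cub}^3\cubder^2$ explicitly rather than invoking the chain $\err(3,2)\lesssim\dist^{-1}\lesssim\abs{t}^{-1/2}$, and that you pin down the exact identity $\partial_x\uo{0}(t,0)=-1/\abs{t}$ at $x=0$ where the paper is content with the reverse triangle inequality---both fully valid.
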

\begin{proof}
  The first bound \eqref{eq:u00-x} follows from
  \begin{equation*}
    \abs{\partial_x \cub(t, x)} = \cubder(t, x) = \big[\abs{t} + 3\coeff \cub^2(t, x)\big]^{-1}
  \end{equation*}
  and $\cub(t, 0) = 0$.
  Applying Lemma~\ref{lem:u0-out-homog} with $i = 1$ and $\ell = 0$, we find
  \begin{equation}
    \label{eq:u0-out-precise}
    \abs{\partial_x \uo{0} - \partial_x \cub} \lesssim \err(3, 2) \lesssim \dist^{-1} \lesssim \abs{t}^{-1/2} \quad \text{in } [t_0, 0) \times [-1,1].
  \end{equation}
  Since $\uo{0}$ is smooth away from the origin, this bound actually holds on the whole space.
  Then \eqref{eq:u0-out-x} follows from \eqref{eq:u00-x}.
\end{proof}
\subsection{First viscous corrector}
Next, we consider the leading viscous corrector $\uo{1}$, which solves
\begin{equation}
  \label{eq:u1-out}
  \partial_t \uo{1} = -\partial_x\big(\uo{0}\uo{1}\big) + \partial_x^2 \uo{0}, \quad \uo{1}(t_0, \anon) = 0.
\end{equation}
We claim that $\uo{1}$ also admits a homogeneous expansion
\begin{equation}
  \label{eq:u1-out-expansion}
  \uo{1} \sim \sum_{\ell = 0}^\infty u_{1, \ell}
\end{equation}
about $x =0 $ with $(\ell - 3)$-homogeneous terms $u_{1, \ell}$.
To explain this degree of homogeneity, we note that $u_{1,\ell}$ roughly corresponds to $\int \!\partial_x^2 u_{0,\ell} \ds t$.
Each spatial derivative reduces homogeneity by $3$, while the time integral increases it by $2$.
Since $u_{0,\ell}$ is $(\ell + 1)$-homogeneous, it is natural to expect $u_{1,\ell}$ to be homogeneous of degree $(\ell + 1) - 3 - 3 + 2 = \ell - 3$.

The justification of \eqref{eq:u1-out-expansion} is somewhat more involved, as we do not have an implicit equation for $\uo{1}$.
Instead, we must proceed from the PDE \eqref{eq:u1-out}.
If we formally plug \eqref{eq:u1-out-expansion} into \eqref{eq:u1-out} and collect terms of like homogeneity, we arrive at a putative PDE for $u_{1, \ell}$:
\begin{equation}
  \label{eq:u-ell}
  \partial_t u_{1, \ell} = - \partial_x(\cub u_{1, \ell}) - \sum_{\ell' = 1}^{\ell} \partial_x(u_{0, \ell'} \, u_{1, \ell - \ell'}) + \partial_x^2 u_{0, \ell}.
\end{equation}
This equation has the form
\begin{equation}
  \label{eq:homog-PDE}
  \partial_t u_{1, \ell} = - \partial_x(\cub u_{1, \ell}) + F_\ell
\end{equation}
for an $(\ell-5)$-homogeneous forcing $F_\ell$ determined by the expansion for $\uo{0}$ and by earlier terms $\{u_{1,\ell'}\}_{\ell' < \ell}$.
We will prove the following by induction:
\begin{lemma}
  \label{lem:forcing-poly}
  For each $\ell \in \Z_{\geq 0},$ we have $F_\ell \in \m{P}(\ell + 1, 3)$.
\end{lemma}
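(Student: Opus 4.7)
The plan is to prove the claim by strong induction on $\ell$, carried out in tandem with the auxiliary fact that $u_{1,\ell'} \in \m{P}(\ell' + 1, 2)$ for every $\ell' < \ell$. This auxiliary space is the natural candidate: its homogeneity degree $(\ell'+1) - 2 \cdot 2 = \ell' - 3$ agrees with the expected homogeneity of $u_{1,\ell'}$ predicted by \eqref{eq:u1-out-expansion}. For the base case $\ell = 0$, the sum in the definition of $F_0$ is empty, so $F_0 = \partial_x^2 u_{0,0} = \partial_x^2 \cub = -6 \coeff \cub \cubder^3$ by \eqref{eq:cub-derivs-spatial}, and this lies in $\m{P}(1, 3)$ immediately.

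For the inductive step, fix $\ell \geq 1$ and suppose both claims hold below $\ell$. By Lemma~\ref{lem:u0-out-homog}, $u_{0,\ell} \in \m{P}(\ell+3, 1)$; two applications of the derivation rule \eqref{eq:deriv-map} yield $\partial_x^2 u_{0,\ell} \in \m{P}(\ell+1, 3)$. For each cross term with $1 \leq \ell' \leq \ell$, the product satisfies
\begin{equation*}
  u_{0,\ell'} \, u_{1,\ell-\ell'} \in \m{P}(\ell'+3, 1) \cdot \m{P}(\ell-\ell'+1, 2) \subseteq \m{P}(\ell+4, 3),
\end{equation*}
and differentiating via \eqref{eq:deriv-map} lands it in $\m{P}(\ell+3, 4)$. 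The crucial observation is the inclusion $\m{P}(\ell+3, 4) \subseteq \m{P}(\ell+1, 3)$: indeed, the generator factors as $\cub^{\ell+3} \cubder^4 = (\cub^{\ell+1} \cubder^3)(\cub^2 \cubder)$, and $\cub^2 \cubder \in \m{P}$. Summing all contributions shows $F_\ell \in \m{P}(\ell+1, 3)$, completing the induction for the stated claim.

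The main obstacle is not this algebraic bookkeeping but rather the auxiliary claim on $u_{1,\ell}$ itself, which must be established in a companion argument (presumably the next lemma of the section). One would integrate the linear transport equation \eqref{eq:homog-PDE} along the characteristics $\dot{x} = \cub$, on which $\cub$ is constant, reducing to a scalar ODE $\dot{u}_{1,\ell} = \cubder \, u_{1,\ell} + F_\ell$ with zero data at $t_0$, and then verify that the resulting solution closes in the polynomial algebra $\m{P}(\ell+1, 2)$. This polynomial closure under the characteristic flow—rather than the forcing-side recursion proved above—is where the delicate algebra lives.
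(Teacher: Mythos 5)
Your algebraic chain for the inductive step is mostly sound, but the auxiliary claim you carry through the induction --- that $u_{1,\ell'}\in\m{P}(\ell'+1,2)$ --- is false for $\ell'\geq 1$. Recall from the discussion preceding the lemma (see~\eqref{eq:special} and~\eqref{eq:u1-structure}) that
\begin{equation*}
  u_{1,\ell} = u_{1,\ell}^F + C_\ell\, v_\ell, \qquad u_{1,\ell}^F\in\m{P}(\ell+1,2),\quad v_\ell=\cub^{\ell-1}\cubder.
\end{equation*}
The forced component $u_{1,\ell}^F$ does lie in $\m{P}(\ell+1,2)$, but the unforced component $v_\ell=\cub^{\ell-1}\cubder$ does not: dividing by the generator $\cub^{\ell+1}\cubder^2$ gives $\cub^{-2}\cubder^{-1}=(\cub^2\cubder)^{-1}$, which is not a polynomial in $\cub^2\cubder$. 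Since the coefficient $C_\ell$ is generically nonzero (it is chosen to match the initial data; only $C_0=0$ is forced), the correct membership is the strictly weaker $u_{1,\ell}\in\m{P}(\ell-1,1)$. This is precisely the statement recorded in~\eqref{eq:u1-structure}. Your induction as stated would therefore break down at the first $\ell'\geq 1$ with $C_{\ell'}\neq 0$.

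The closing sketch of a ``companion argument'' compounds the error. You propose integrating the transport equation along characteristics \emph{with zero data at $t_0$}. But $u_{1,\ell}$ is not defined as the solution with zero data; integrating from $t_0$ breaks the scaling symmetry and yields a non-homogeneous function. Instead, $u_{1,\ell}$ is selected from a one-parameter family of homogeneous solutions: the particular integral $u_{1,\ell}^F$ is obtained by integrating from $t=-\infty$ (where the $\cubder^{-2}$-integrability, guaranteed by the inductive hypothesis $F_\ell\in\m{P}(\ell+1,3)$, is exactly what is used), and the coefficient $C_\ell$ of the homogeneous solution $v_\ell$ is then tuned to cancel successive Taylor coefficients of the expansion at $(t_0,0)$. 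There is no ``delicate algebra'' left for a companion lemma: once the decomposition $u_{1,\ell}=u_{1,\ell}^F+C_\ell v_\ell$ is in hand, $u_{1,\ell}\in\m{P}(\ell-1,1)$ is immediate.

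The repair is painless and actually simplifies your computation. Replace the false auxiliary with $u_{1,\ell-\ell'}\in\m{P}(\ell-\ell'-1,1)$ (using the paper's convention $\m{P}(-a,b)=\m{P}(-a+2\lceil a/2\rceil,\, b+\lceil a/2\rceil)$ to handle $\ell'=\ell$). Then
\begin{equation*}
  u_{0,\ell'}\,u_{1,\ell-\ell'}\in\m{P}(\ell'+3,1)\cdot\m{P}(\ell-\ell'-1,1)\subseteq\m{P}(\ell+2,2),
\end{equation*}
and one application of~\eqref{eq:deriv-map} gives $\partial_x(u_{0,\ell'}u_{1,\ell-\ell'})\in\m{P}(\ell+1,3)$, with no need for the extra inclusion $\m{P}(\ell+3,4)\subseteq\m{P}(\ell+1,3)$. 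Together with $\partial_x^2 u_{0,\ell}\in\m{P}(\ell+1,3)$, this closes the induction.
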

Before proving the lemma, we discuss our approach to \eqref{eq:homog-PDE}.
The key is a change of coordinates from $(t, x)$ to $(t, \cub)$.
We refer to the latter as ``dynamic coordinates.''
We denote their partial derivatives by $\bar{\partial}_t$ and $\bar{\partial}_{\cub}$.
Because $\cub$ solves \eqref{eq:Burgers-inviscid}, the chain rule yields
\begin{equation*}
  \partial_t = (\partial_tt) \bar{\partial}_t + (\partial_t \cub) \bar{\partial}_\cub = \bar{\partial}_t - (\cub \partial_x \cub) \bar{\partial}_\cub \And \partial_x = (\partial_x \cub) \bar{\partial}_\cub.
\end{equation*}
Rearranging, we see that
\begin{equation}
  \label{eq:dynamic-deriv}
  \bar{\partial}_t = \partial_t + \cub \partial_x \And \bar{\partial}_\cub = -\cubder^{-1} \partial_x.
\end{equation}
Thus \eqref{eq:homog-PDE} becomes
\begin{equation}
  \label{eq:ODE-PDE}
  \bar\partial_t u_{1, \ell} = \cubder u_{1, \ell} + F_\ell.
\end{equation}
Recall that $\cubder^{-1} = -t + 3 \coeff \cub^2$.
It follows that $\bar\partial_t(\cubder^{-1}) = -1$.
Dividing \eqref{eq:ODE-PDE} by $\cubder$ and rearranging, we find
\begin{equation*}
  \bar\partial_t (\cubder^{-1} u_{1, \ell}) = \cubder^{-1} F_\ell.
\end{equation*}
We integrate this in $t$, holding $\cub$ constant.
By Lemma~\ref{lem:forcing-poly}, $\cubder^{-1} F_\ell \in \m{P}(\ell + 1, 2)$.
Crucially, $\cubder^2 \sim t^{-2}$ is integrable as $t \to -\infty$.
We can compute
\begin{equation*}
  \int_{-\infty}^t \cubder(s, \cub)^{a} \d s = \frac{1}{a - 1} \cubder(t, \cub)^{a - 1}.
\end{equation*}
In light of Lemma~\ref{lem:forcing-poly}, we obtain a special solution of \eqref{eq:homog-PDE}:
\begin{equation}
  \label{eq:special}
  u_{1, \ell}^F \coloneqq \cubder \int_{-\infty}^t \cubder(s, \cub)^{-1} F_\ell(s, \cub) \d s \in \m{P}(\ell + 1, 2).
\end{equation}

We emphasize that the three powers of $\cubder$ in $F_\ell \in \m{P}(\ell + 1, 3)$ are essential.
The above argument breaks down if a term in $F_\ell$ only includes two powers of $\cubder$, due to a logarithmic divergence in the integral.
For this reason, we keep careful track of powers of $\cubder$ throughout the section.

We have made progress, but we have so far ignored the initial data $\uo{1}(t_0, \anon) = 0$.
We wish to construct a solution of \eqref{eq:homog-PDE} that also reflects this condition.
Any two solutions of \eqref{eq:homog-PDE} differ by a solution $v$ of the \emph{unforced} equation
\begin{equation}
  \label{eq:unforced}
  \partial_t v = -\partial_x(\cub v).
\end{equation}
In the dynamic coordinates $(t, \cub)$, this becomes
\begin{equation*}
  \bar\partial_t v = \cubder v.
\end{equation*}
Arguing as above, we find the general solution: $v = f(\cub) \cubder$ for $f$ smooth.
The $(\ell-3)$-homogeneous solutions are all multiples of
\begin{equation*}
  v_\ell \coloneqq \cub^{\ell - 1} \cubder.
\end{equation*}
Thus all homogeneous solutions of \eqref{eq:homog-PDE} have the form
\begin{equation}
  \label{eq:u1-structure}
  u_{1, \ell} = u_{1, \ell}^F + C_\ell v_\ell \in \m{P}(\ell - 1, 1).
\end{equation}
We can now prove Lemma~\ref{lem:forcing-poly}.
\begin{proof}[Proof of Lemma~\textup{\ref{lem:forcing-poly}}]
  We proceed by induction.
  In the base case $\ell = 0$, we have
  \begin{equation*}
    F_0 = \partial_x^2 \cub = -6 \coeff \cub \cubder^3 \in \m{P}(1, 3)
  \end{equation*}
  as claimed.
  Next, fix $\ell \in \N$ and suppose $F_{\ell'} \in \m{P}(\ell' + 1, 3)$ for all $\ell' < \ell$.
  Recall that
  \begin{equation}
    \label{eq:forcing-structure}
    F_\ell = -\sum_{\ell' = 1}^\ell \partial_x(u_{0, \ell'} u_{1, \ell - \ell'}) + \partial_x^2 u_{0, \ell}.
  \end{equation}
  We must therefore control the structure of $u_{0, \ell'}$ and $u_{1, \ell- \ell'}$.

  Since $\ell' \geq 1$, Lemma~\ref{lem:u0-out-homog} yields $u_{0, \ell'} \in \m{P}(\ell' + 3, 1)$.
  Next, we construct $u_{1,\ell-\ell'}$ using the method described above, so
  \begin{equation*}
    u_{1,\ell-\ell'} = \cubder \int_{-\infty}^t \cubder^{-1} F_{\ell-\ell'} \d s + C v_{\ell-\ell'}
  \end{equation*}
  for some $C \in \R$.
  Using the inductive hypothesis that $F_{\ell-\ell'} \in \m{P}(\ell - \ell' + 1, 3)$, our calculations above show that $u_{1,\ell-\ell'} \in \m{P}(\ell - \ell' - 1, 1)$, as in \eqref{eq:u1-structure}.
  Combining these observations about $u_{0,\ell'}$ and $u_{1,\ell- \ell'}$, \eqref{eq:deriv-map} implies that $\partial_x(u_{0, \ell'} u_{1, \ell - \ell'}) \in \m{P}(\ell + 1, 3)$.
  Similarly, \eqref{eq:deriv-map} yields $\partial_x^2 u_{0, \ell} \in \m{P}(\ell + 1, 3).$
  By \eqref{eq:forcing-structure}, we have $F_\ell \in \m{P}(\ell + 1, 3)$, as desired.
  The lemma now follows from induction.
\end{proof}
To uniquely determine $u_{1,\ell}$, we must choose the free parameter $C_\ell$ in \eqref{eq:u1-structure}.
When $\ell = 0$, the unforced solution $v_\ell = \cub^{-1} \cubder$ is not smooth on $\R_- \times \R$; it is singular at $x = 0$.
Our initial data is smooth, so the expansion \eqref{eq:u1-out-expansion} should not contain such a term.
Thus smoothness demands the choice $C_0 = 0$, and we let $u_{1, 0} = u_{1, 0}^F$.
This is the \emph{unique} smooth homogeneous solution of
\begin{equation*}
  \partial_t u_{1, 0} = - \partial_x(\cub u_{1, 0}) + F_0 = - \partial_x(\cub u_{1, 0}) + \partial_x^2 \cub.
\end{equation*}

In contrast, $v_\ell$ is smooth when $\ell \geq 1$.
Thus \eqref{eq:u-ell} admits a $1$-parameter family of smooth homogeneous solutions.
To select one, we try to match the initial data for $\uo{1}$ near $x = 0$ as closely as possible.
Since $\uo{1}(t_0, \anon) = 0$, we want the formal homogeneous sum \eqref{eq:u1-out-expansion} to vanish to all orders in $x$ at $(t_0, 0)$.
We choose $C_\ell$ in \eqref{eq:u1-structure} to arrange this.

At fixed $t < 0$, we can explicitly compute
\begin{equation}
  \label{eq:cubic-x-0}
  \cub(t, x) \sim x t^{-1} \And m \to \abs{t}^{-1} \quad \text{as } x \to 0.
\end{equation}
So
\begin{equation}
  \label{eq:vl-asymp}
  v_\ell(t_0, x) = \cub(t_0, x)^{\ell - 1} \cubder(t_0, x) \sim (-1)^{\ell - 1} \abs{t_0}^{-\ell} x^{\ell - 1}  \quad \text{as } x \to 0.
\end{equation}
We wish to approximate $\uo{1}$ by $\sum_{\ell' \geq 0} u_{1, \ell'}$ when $\abs{x} \ll 1$, so we choose the coefficient $C_\ell$ of $v_\ell$ to cancel the $x^{\ell - 1}$ term in the Taylor series of $\sum_{\ell' \geq 0} u_{1, \ell'}(t_0, \anon)$ around $x = 0$.
Define the operator
\begin{equation*}
  T_\ell[f] \coloneqq \frac{1}{(\ell - 1)!} \partial_x^{\ell - 1}f(t_0, 0).
\end{equation*}
Then $T_\ell$ extracts the coefficient of $x^{\ell - 1}$ in the Taylor series of $f$ at time $t_0$.
By \eqref{eq:special} and \eqref{eq:vl-asymp}, $u_{1,\ell}^F$ is divisible by $\cub^{\ell + 1}$ and $\cub^{\ell + 1} \asymp_t x^{\ell + 1}$ near $x = 0$.
It follows that $T_\ell[u_{1,\ell}^F] = 0$.
On the other hand, \eqref{eq:vl-asymp} implies that $T_{\ell}[v_\ell] = (-1)^{\ell-1}\abs{t_0}^{\ell}$.

Applying $T_\ell$ to the full homogeneous expansion, we find
\begin{equation*}
  T_\ell\left[\sum_{\ell' \geq 0} u_{1, \ell'}\right] = C_\ell (-1)^{\ell-1}\abs{t_0}^{\ell} + T_\ell \left[\sum_{0 \leq \ell' < \ell} u_{1, \ell'}\right].
\end{equation*}
We inductively choose
\begin{equation*}
  C_\ell = (-1)^{\ell} \abs{t_0}^{-\ell} T_\ell \left[\sum_{0 \leq \ell' < \ell} u_{1, \ell'}\right]
\end{equation*}
so that
\begin{equation*}
  T_\ell\left[\sum_{\ell' \geq 0} u_{1, \ell'}\right] = 0.
\end{equation*}
Then we have fixed $C_\ell$, and thus $u_{1, \ell}$, for all $\ell \in \Z_{\geq 0}$.
Given $\ell \in \Z_{\geq 0}$, define the partial sum
\begin{equation*}
  u_{1,[\ell]} \coloneqq \sum_{\ell' = 0}^\ell u_{1, \ell'}.
\end{equation*}
Our construction ensures that
\begin{equation}
  \label{eq:initial-match}
  \abs{u_{1,[\ell]}(t_0, x)} \lesssim_\ell \abs{x}^{\ell}.
\end{equation}
We will show that $u_{1, [\ell]}$ approximates $\uo{1}$ near $x = 0$:
\begin{lemma}
  \label{lem:u1-homogeneous}
  For each $i, \ell \in \Z_{\geq 0}$, $u_{1, \ell} \in \m{P}(\ell - 1, 1)$ and
  \begin{equation}
    \label{eq:u1-homogeneous}
    \abs{\partial_x^i(\uo{1} - u_{1, [\ell]})} \lesssim_{i, \ell} \err(\ell - i, i + 1) \quad \text{in } [t_0, 0) \times [-1, 1].
  \end{equation}
\end{lemma}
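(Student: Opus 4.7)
The algebraic claim $u_{1,\ell} \in \m{P}(\ell - 1, 1)$ falls out of the construction. By Lemma~\ref{lem:forcing-poly}, $F_\ell \in \m{P}(\ell+1,3)$, so the explicit formula \eqref{eq:special} gives $u_{1,\ell}^F \in \m{P}(\ell + 1, 2)$. A one-line check of generators shows $\m{P}(\ell + 1, 2) \subset \m{P}(\ell - 1, 1)$ (each generator $\cub^{\ell+1+2j}\cubder^{2+j}$ equals $\cub^{\ell-1+2(j+1)}\cubder^{1+(j+1)}$), and since also $v_\ell = \cub^{\ell-1}\cubder \in \m{P}(\ell - 1, 1)$, the sum $u_{1,\ell} = u_{1,\ell}^F + C_\ell v_\ell$ lies in $\m{P}(\ell - 1, 1)$.

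For the approximation bound, set $w_{[\ell]} \coloneqq \uo{1} - u_{1,[\ell]}$ and subtract the sum over $0 \le \ell' \le \ell$ of \eqref{eq:u-ell} from \eqref{eq:u1-out}. After cancelling the terms involving $\cub\,u_{1,[\ell]}$ and the cross products indexed by $\ell'' + k \le \ell$, one obtains a linear transport PDE
\begin{equation*}
  \partial_t w_{[\ell]} + \partial_x\bigl(\uo{0}\, w_{[\ell]}\bigr) = \partial_x^2 r_0 - \partial_x\bigl(r_0\, u_{1,[\ell]}\bigr) - \partial_x R_\ell, \qquad w_{[\ell]}(t_0, \anon) = - u_{1,[\ell]}(t_0, \anon),
\end{equation*}
where $r_0 \coloneqq \uo{0} - u_{0,[\ell]}$ and $R_\ell$ collects the homogeneous products $u_{0,\ell''}u_{1,k}$ with $\ell'' + k > \ell$ and $\ell'', k \le \ell$. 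Applying Lemma~\ref{lem:u0-out-homog} to $r_0$ and the first part of the present lemma to each $u_{1,k}$ produces $\err$-bounds on each summand of the forcing that carry at least three factors of $\cubder$. The Taylor-functional choice of the constants $C_{\ell'}$ yields \eqref{eq:initial-match}, so the initial data already sits inside the target bound.

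Next I would integrate the equation by passing to the hodograph variable $y = \uo{0}(t, x)$. Setting $\phi(t,y) \coloneqq w_{[\ell]}(t,\omega(t,y))$ and using $\uo{0}(t,\omega) \equiv y$ together with $\partial_t \omega = y$, the PDE transforms into the ODE
\begin{equation*}
  \partial_t\bigl(\phi\, \partial_y \omega\bigr)(t,y) = \bigl(G_\ell\, \partial_y\omega\bigr)(t, \omega(t, y)),
\end{equation*}
where $G_\ell$ is the forcing above. Since $\partial_y\omega(t,y) = t - 3\coeff y^2 + \m{O}(y^3)$ matches $-\cubder^{-1}$ along the curve $y = \cub$, and since $\cubder^{-1}$ is linear in $t$ on a fixed-$y$ slice, integrating a factor $\cubder^{a}$ in $s$ reduces its exponent by one. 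Using the three-$\cubder$ lower bound on $G_\ell$, one integration and the compensating $1/\partial_y\omega \sim -\cubder$ out front yield exactly the $\err(\ell, 1)$ target for $|w_{[\ell]}|$ when $i = 0$.

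Finally, the derivative estimates in \eqref{eq:u1-homogeneous} follow by differentiating the same PDE $i$ times and running the hodograph integration on each. The main technical obstacle throughout is the bookkeeping of $\cubder$ exponents: losing a single power would make a characteristic integral diverge logarithmically as $t \to 0^-$, breaking the scheme entirely. The closure properties of the algebras $\m{P}(a,b)$ under $\partial_x$ and $\partial_t$ from \eqref{eq:deriv-map}, combined with the three-power rigidity of $F_\ell$ supplied by Lemma~\ref{lem:forcing-poly}, are precisely what preserve this critical threshold at every stage.
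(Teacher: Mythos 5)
Your outline follows the same route as the paper: derive the transport equation for $w := \uo{1} - u_{1,[\ell]}$ with forcing $G_\ell$, pass to the hodograph coordinate $y = \uo{0}(t,x)$, integrate with the natural integrating factor, and use the three-$\cubder$ rigidity from Lemma~\ref{lem:forcing-poly} to prevent a logarithmic divergence; the algebraic claim via \eqref{eq:special}--\eqref{eq:u1-structure} and the use of \eqref{eq:initial-match} for the initial data also match.

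One step you gloss over is genuinely structural, not cosmetic: when you differentiate the PDE $i$ times, the resulting forcing for $z = \partial_x^i w$ (compare $G_{i,\ell}$ in \eqref{eq:diff-deriv-ev}) contains the products $\partial_x^{i-i'+1}\uo{0}\,\partial_x^{i'}w$ for $i' < i$, i.e.\ the very quantities being bounded at lower orders. Thus ``running the hodograph integration on each'' does not close independently at order $i$; the lemma for $i$ requires the lemma for all $i' < i$ as input to control the forcing \eqref{eq:G-deriv}, and the argument must be set up as an induction on $i$. Relatedly, you implicitly identify $\partial_y\omega$ with $-\cubder^{-1}$: the exact integrating-factor identity $\ti\partial_t m_0^{-1} = -1$ holds with $\uo{0}$ held fixed (so $m_0 = -\partial_x\uo{0}$), and passing the resulting bounds to expressions in $\cub$ and $\cubder$ uses the equivalence $m_0 \asymp \cubder$ established in \eqref{eq:slope-solution}. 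Once the induction on $i$ is stated and the $m_0 \asymp \cubder$ transfer is made explicit, your argument coincides with the paper's proof.
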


\begin{proof}
  Fix $\ell\in \Z_{\geq 0}$.
  The difference $w \coloneqq \uo{1} - u_{1, [\ell]}$ solves
  \begin{equation}
    \label{eq:diff-ev}
    \partial_t w = - \partial_x\left[\uo{0}\left(w + u_{1, [\ell]}\right)\right] + \partial_x^2 \uo{0} - \partial_t u_{1, [\ell]}.
  \end{equation}
  Using \eqref{eq:u-ell}, we write
  \begin{equation*}
    \partial_t u_{1, [\ell]} = -\sum_{0 \leq \ell' + \ell'' \leq \ell} \partial_x(u_{0,\ell'} u_{1, \ell''}) + \partial_x^2 u_{0, [\ell]}.
  \end{equation*}
  Rearranging \eqref{eq:diff-ev}, we obtain
  \begin{equation}
    \label{eq:diff-ev-simple}
    \partial_t w = -\partial_x(\uo{0} w) + G_\ell
  \end{equation}
  for
  \begin{equation*}
    G_\ell \coloneqq - \partial_x\left[(\uo{0} - u_{0, [\ell]}) u_{1, [\ell]}\right] - \sum_{\substack{1 \leq \ell', \ell'' \leq \ell\\ \ell' + \ell'' > \ell}} \partial_x(u_{0,\ell'} u_{1, \ell''}) + \partial_x^2(\uo{0} - u_{0, [\ell]}).
  \end{equation*}
  Recall from \eqref{eq:u1-structure} that $u_{1, \ell'} \in \m{P}(\ell' - 1, 1)$.
  Using \eqref{eq:deriv-map}, \eqref{eq:err-bd}, and Lemma~\ref{lem:u0-out-homog}, a calculation shows that
  \begin{equation}
    \label{eq:G}
    \abs{\partial_x^i G_\ell} \lesssim_{i, \ell} \err(\ell - i + 2, i + 3) \ForAll i \in \Z_{\geq 0}.
  \end{equation}
  
  We now prove \eqref{eq:u1-homogeneous} by induction.
  Fix $i \in \Z_{\geq 0}$ and suppose we have shown \eqref{eq:u1-homogeneous} for all $i' < i$, noting that this assumption is vacuous in the base case $i = 0$.
  Differentiating \eqref{eq:diff-ev-simple}, the derivative $z \coloneqq \partial_x^i w$ satisfies
  \begin{equation}
    \label{eq:diff-deriv-ev}
    \partial_t z = -\uo{0} \partial_x z - (i + 1)(\partial_x \uo{0}) z + G_{i,\ell}
  \end{equation}
  for
  \begin{equation*}
    G_{i, \ell} \coloneqq - \sum_{i'=2}^{i + 1} \binom{i + 1}{i'}\partial_x^{i'}\uo{0} \partial_x^{i+1-i'}w + \partial_x^i G_\ell.
  \end{equation*}
  Note that $i + 1 - i' < i$ in the sum, so $G_{i, \ell}$ only involves lower derivatives of $w$.
  Using \eqref{eq:u1-homogeneous} for $i + 1 - i'$ (the inductive hypothesis) as well as Corollary~\ref{cor:u0-out-der} and \eqref{eq:G}, we see that
  \begin{equation}
    \label{eq:G-deriv}
    \abs{G_{i, \ell}} \lesssim_{i, \ell} \err(\ell - i + 2, i + 3).
  \end{equation}
  To integrate the advection equation \eqref{eq:diff-deriv-ev}, we introduce dynamic coordinates $(t, \uo{0})$.
  These are valid on $\m{A}$ defined in \eqref{eq:hodograph-domain}.
  Let $\ti{\partial}_t$ denote partial differentiation in $t$ with $\uo{0}$ held constant.
  Noting that $\partial_x \uo{0} < 0$ on $\m{A}$, \eqref{eq:diff-deriv-ev} becomes
  \begin{equation}
    \label{eq:deriv-dynamic}
    \ti\partial_t z = (i + 1)\abs{\partial_x\uo{0}} z + G_{i, \ell}.
  \end{equation}
  
  Before proceeding, we relate $\uo{0}$ to $\cub$.
  By Lemma~\ref{lem:u0-out-homog},
  \begin{equation*}
    \abs{\uo{0} - \cub} \lesssim \cub^4 \cubder \lesssim \cub^2.
  \end{equation*}
  Moreover, because $\partial_x \uo{0} < 0$ on $\m{A}$, $\partial_x \cub < 0$, and $\uo{0} = \cub = 0$ at $x = 0$, we have
  \begin{equation*}
    \op{sgn}\uo{0} = \op{sgn} \cub = \op{sgn} x \quad \text{on } \m{A}.
  \end{equation*}
  It follows that
  \begin{equation}
    \label{eq:solution-equiv}
    \uo{0} \asymp \cub \quad \text{on } \m{A}.
  \end{equation}
  
  We show a similar relationship for the spatial derivative.
  Let ${m_0 \coloneqq -\partial_x \uo{0} > 0}$ on $\m{A}$.
  Recall from Section~\ref{subsec:hodograph} that $\omega(t,\uo{0}(t, x)) = x$.
  Differentiating, we find $m_0 \partial_y \omega = 1.$
  Now, the Taylor expansion \eqref{eq:hodograph-Taylor} for $\omega$ is valid in $\m{C}^1$, so we can differentiate it to obtain
  \begin{equation*}
    m_0(t, \uo{0})^{-1} = \abs{t} + 3 \coeff (\uo{0})^2 + \m{O}\big(|\uo{0}|^3\big).
  \end{equation*}
  When $\uo{0}$ is small, Lemma~\ref{lem:cubic} and \eqref{eq:solution-equiv} yield
  \begin{equation}
    \label{eq:slope-solution}
    m_0(t, \uo{0}) \asymp \left[\abs{t} + (\uo{0})^2\right]^{-1} \asymp \left[\abs{t} + \cub^2\right]^{-1} \asymp \cubder.
  \end{equation}
  On the other hand, wherever $\uo{0} \gtrsim 1$ on $\m{A}$, both $m_0$ and $\cubder$ are bounded away from $0$ and $\infty$, so again $m_0 \asymp \cubder$.
  We therefore have
  \begin{equation}
    \label{eq:dynamic-equiv}
    \uo{0} \asymp \cub \And m_0 \asymp \cubder \quad \text{on } \m{A}.
  \end{equation}

  We now tackle \eqref{eq:deriv-dynamic}.
  Because $\uo{0}$ satisfies inviscid Burgers, we can compute $\ti\partial_t m_0 = m_0^2.$
  Hence
  \begin{equation*}
    \ti\partial_t m_0^{-(i + 1)} = -(i + 1) m_0^{-i}.
  \end{equation*}
  We can thus use $m_0^{-(i + 1)}$ as an integrating factor in \eqref{eq:deriv-dynamic}.
  We obtain a special solution:
  \begin{equation}
    \label{eq:deriv-special}
    z^F \coloneqq m_0^{i + 1} \int_{t_0}^t m_0^{-(i + 1)} G_{i, \ell} \d s.
  \end{equation}
  We emphasize that $\uo{0}$ is held constant in this integral.
  Recalling \eqref{eq:poly-err}, we write
  \begin{equation*}
    \err(\ell - i + 2, i + 3) = \abs{\cub}^\alpha \cubder^\beta
  \end{equation*}
  for integers $\alpha \geq 0$ and $\beta \geq i + 3$ depending on $i$ and $\ell$.
  Then \eqref{eq:G-deriv}, \eqref{eq:slope-solution}, and \eqref{eq:dynamic-equiv} yield
  \begin{equation*}
    \big|m_0^{-(i + 1)} G_{i, \ell}\big| \lesssim_{i,\ell} \abs{\uo{0}}^\al \left[\abs{t} + (\uo{0})^2\right]^{-(\beta-i-1)}.
  \end{equation*}
  We integrate the right side in $t$ while holding $\uo{0}$ fixed.
  Because $\beta - i - 1 \geq 2$, we do not pick up a logarithm in $t$; \eqref{eq:deriv-special} implies:
  \begin{equation*}
    \abs{z^F} \lesssim_{i,\ell} \abs{\uo{0}}^\al m_0^{i + 1} \left[\abs{t} + (\uo{0})^2\right]^{-(\beta-i-2)}.
  \end{equation*}
  Using \eqref{eq:poly-err}, \eqref{eq:slope-solution} and \eqref{eq:dynamic-equiv} again, this becomes
  \begin{equation}
    \label{eq:deriv-special-bd}
    \abs{z^F} \lesssim_{i,\ell} \abs{\uo{0}}^\al m_0^{\beta-1} \lesssim_{i,\ell} \abs{\cub}^\al \cubder^{\beta - 1} = \err(\ell - i + 2, i + 2).
  \end{equation}
  
  Next, we incorporate the initial data
  \begin{equation*}
    z(t_0, \anon) = \partial_x^i w(t_0, \anon) = -\partial_x^i u_{1,[\ell]}(t_0,\anon).
  \end{equation*}
  Here we have used $\uo{1}(t_0, \anon) = 0$.
  Recall the inverse $\mr{\omega}$ of $\mr{u}$ and define
  \begin{equation}
    \label{eq:unforced-multiple}
    f(y) \coloneqq -\abs{\partial_y\mr{\omega}(y)}^{i + 1} \left[z^F\big(t_0, \mr{\omega}(y)\big) + \partial_x^i u_{1,[\ell]}\big(t_0, \mr{\omega}(y)\big)\right] \For y \in \m{B}.
  \end{equation}
  By the chain rule, $\big[(\partial_y\mr{\omega})\circ\mr{u}\big]^{-1} = \partial_x\mr{u} = m_0(t_0,\anon)$.
  Hence we can substitute
  \begin{equation*}
    y = \mr{u}(x) = \uo{0}(t_0, x)
  \end{equation*}
  in \eqref{eq:unforced-multiple} to obtain
  \begin{equation*}
    f\big(\uo{0}(t_0, x)\big) \abs{m_0(t_0, x)}^{i + 1} = -z^F(t_0, x) - \partial_x^i u_{1, [\ell]}(t_0, x) \ForAll x \in \m{A}_{t_0}.
  \end{equation*}
  We then define
  \begin{equation}
    \label{eq:unforced-def}
    v \coloneqq f(\uo{0}) m_0^{i + 1},
  \end{equation}
  which solves the unforced equation
  \begin{equation}
    \label{eq:unforced-ev}
    \ti{\partial}_t v = (i + 1) m_0 v, \quad v(t_0, \anon) = -z^F(t_0, \anon) - \partial_x^i u_{1, [\ell]}(t_0, \anon).
  \end{equation}

  Now, \eqref{eq:cubic-x-0} and \eqref{eq:deriv-special-bd} imply that
  \begin{equation*}
    \abs{z^F(t_0, x)} \lesssim_{i, \ell}  \err(\ell - i + 2, i + 2)(t_0, x) \lesssim_{i, \ell} \abs{x}^{(\ell - i + 2) \vee 0}.
  \end{equation*}
  Also, \eqref{eq:deriv-map} and \eqref{eq:initial-match} yield
  \begin{equation*}
    \abs{\partial_x^i u_{1,[\ell]}(t_0, x)} \lesssim_{i, \ell} \abs{x}^{(\ell - i) \vee 0} \For x \in \m{A}_{t_0}.
  \end{equation*}
  Therefore,
  \begin{equation}
    \label{eq:forcing-x-0}
    \abs{z^F + \partial_x^i u_{1, [\ell]}} \lesssim_{i,\ell}\abs{x}^{(\ell - i) \vee 0}
  \end{equation}
  when $t = t_0$.
  Now, $\partial_y\mr{\omega} \asymp 1$ on $\m{B}$.
  Thus \eqref{eq:unforced-multiple} and \eqref{eq:forcing-x-0} yield
  \begin{equation*}
    \abs{f(y)} \lesssim_{i,\ell} \abs{y}^{(\ell - i)\vee 0}.
  \end{equation*}
  Hence \eqref{eq:dynamic-equiv} and \eqref{eq:unforced-def} imply that
  \begin{equation}
    \label{eq:deriv-unforced-bd}
    \abs{v} \lesssim_{i,\ell} \err\big((\ell-i) \vee 0, i + 1\big).
  \end{equation}
  Recall that $z^F$ solves \eqref{eq:deriv-dynamic}.
  In light of \eqref{eq:unforced-ev}, we see that both $z$ and $z^F + v$ solve \eqref{eq:deriv-dynamic} with initial data $-\partial_x^iu_{1,[\ell]}(t_0, \anon)$.
  By uniqueness, we have $z = z^F + v$.
  Finally, \eqref{eq:deriv-special-bd}, \eqref{eq:deriv-unforced-bd}, and $\cub^2 \cubder \lesssim 1$ yield
  \begin{equation*}
    \abs{\partial_x^i w} = \abs{z} \lesssim_{i, \ell} \err(\ell - i + 2, i + 2) + \err\big((\ell-i) \vee 0, i + 1\big) \lesssim_{i, \ell} \err(\ell - i, i + 1) \quad \text{on } \m{A}.
  \end{equation*}
  Because $\uo{1}$ and $u_{1,[\ell]}$ are smooth away from the origin in spacetime, this bound also holds in $[t_0, 0) \times [-1, 1]$.
  This proves \eqref{eq:u1-homogeneous} for $i$.
  The lemma follows from induction.
\end{proof}
\begin{remark}
  There is another possible approach to Lemma~\ref{lem:u1-homogeneous} due to the following curious observation: $-\partial_x^2\uo{0}\big(\partial_x \uo{0}\big)^{-2}$ solves the PDE in \eqref{eq:u1-out}.
  It does not satisfy the initial condition, but this can be fixed by adding an appropriate solution $f(\uo{0}) \partial_x \uo{0}$ of the unforced equation $\partial_t v = -\partial_x(\uo{0}v)$.
  We proceed much as in \eqref{eq:unforced-multiple}.
  Recalling the inverse $\mr{\omega}$ of $\mr{u}$, let
  \begin{equation*}
    f \coloneqq \frac{(\partial_x^2 \mr{u})\circ \mr{\omega}}{(\partial_x \mr{u})^3\circ \mr{\omega}}.
  \end{equation*}
  Then
  \begin{equation*}
    \uo{1} = -\frac{\partial_x^2\uo{0}}{(\partial_x \uo{0})^2} + f(\uo{0}) \partial_x \uo{0}
  \end{equation*}
  satisfies \eqref{eq:u1-out}.
  Using Lemma~\ref{lem:u0-out-homog}, we can construct the homogeneous expansion for $\uo{1}$ and prove Lemma~\ref{lem:u1-homogeneous} by elementary series expansion.
  We believe that this exact formula is an artifact of the Cole--Hopf transformation.
  More general scalar conservation laws do not seem to admit such identities, so we adopt a more robust strategy above.
\end{remark}

\subsection{Higher viscous correctors}
To close, we consider the higher-order correctors $\uo{k}$ for $k \geq 2$.
Each satisfies \eqref{eq:ukout-ev}, which we write as
\begin{equation*}
  \partial_t \uo{k} = -\frac{1}{2}\sum_{k' + k'' = k} \partial_x(\uo{k'}\uo{k''}) + \partial_x^2 \uo{k - 1}.
\end{equation*}
We expect each $\uo{k}$ to admit a homogeneous expansion of the form
\begin{equation*}
  \uo{k} \sim \sum_{\ell = 0}^\infty u_{k, \ell}
\end{equation*}
about $x = 0$.
After formal substitution, we can check that the terms in this series should satisfy
\begin{equation}
  \label{eq:ukl-ev}
  \partial_t u_{k, \ell} = - \frac{1}{2}\sum_{\substack{k' + k'' = k\\ \ell' + \ell'' = \ell}} \partial_x(u_{k', \ell'} u_{k'', \ell''}) + \partial_x^2 u_{k-1, \ell}.
\end{equation}
Structurally, this is very similar to the equation for $u_{1, \ell}$.
Define $u_{k, [\ell]} \coloneqq \sum_{\ell' = 0}^\ell u_{k, \ell'}$.
\begin{proposition}
  \label{prop:uk-homogeneous}
  For all $i, k, \ell \in \Z_{\geq 0},$ $u_{k, \ell} \in \m{P}(- 4k + \ell + 3, 1)$ and
  \begin{equation*}
    \abs{\partial_x^i(\uo{k} - u_{k, [\ell]})} \lesssim_{i,k,\ell} \err(-4k + \ell - i + 4, i + 1) \quad \text{in } [t_0, 0) \times [-1, 1].
  \end{equation*}
\end{proposition}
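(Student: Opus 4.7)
The plan is to induct on $k$, with the base cases $k = 0$ (Lemma~\ref{lem:u0-out-homog} and Corollary~\ref{cor:u0-out-der}) and $k = 1$ (Lemma~\ref{lem:u1-homogeneous}) already in hand. Fix $k \geq 2$ and assume the proposition for all $k' < k$. I construct the terms $u_{k,\ell}$ by a secondary induction on $\ell$, following the strategy used for $\uo{1}$. Splitting off the $(k',\ell') = (0,0)$ piece in \eqref{eq:ukl-ev} writes it as $\partial_t u_{k,\ell} + \partial_x(\cub\, u_{k,\ell}) = F_{k,\ell}$, where $F_{k,\ell}$ collects $\partial_x^2 u_{k-1,\ell}$ together with the remaining cross-terms $\partial_x(u_{k',\ell'} u_{k'',\ell''})$. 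The outer hypothesis places $u_{k-1,\ell} \in \m{P}(-4k+\ell+7,1)$, so \eqref{eq:deriv-map} yields $\partial_x^2 u_{k-1,\ell} \in \m{P}(-4k+\ell+5,3)$; the cross-terms live in the same space, using the outer hypothesis when $k' < k$ and the inner hypothesis when $k' = k$ but $\ell' < \ell$. Hence $F_{k,\ell} \in \m{P}(-4k+\ell+5,3)$, and the three powers of $\cubder$ are precisely what make $\cubder^{-1} F_{k,\ell}$ integrable as $t \to -\infty$ along curves where $\cub$ is held constant. This produces the special solution $u_{k,\ell}^F \coloneqq \cubder \int_{-\infty}^t \cubder^{-1} F_{k,\ell}\,\d s \in \m{P}(-4k+\ell+5,2) \subseteq \m{P}(-4k+\ell+3,1)$, exactly as in \eqref{eq:special}. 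The homogeneous unforced solutions of degree $-4k+\ell+1$ are multiples of $\cub^{-4k+\ell+3}\cubder$, which is a bona fide polynomial in $\cub,\cubder$ precisely when $\ell \geq 4k-3$. In that regime, adjust the free constant via a Taylor operator $T_\ell$ so that $T_\ell[u_{k,[\ell]}(t_0,\anon)] = 0$, with the lower-order matching conditions $T_m[u_{k,[\ell]}(t_0,\anon)] = 0$ for $m < \ell$ inherited from the inner induction. For $\ell < 4k-3$, take $u_{k,\ell} \coloneqq u_{k,\ell}^F$.

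With $u_{k,[\ell]}$ constructed, estimate $w \coloneqq \uo{k} - u_{k,[\ell]}$ by imitating the proof of Lemma~\ref{lem:u1-homogeneous}. The function $w$ satisfies $\partial_t w + \partial_x(\uo{0} w) = G_{k,\ell}$, where $G_{k,\ell}$ bundles the remainders from $\uo{0} - \cub$ (controlled by Lemma~\ref{lem:u0-out-homog}), the tail cross-terms involving $\uo{k'} - u_{k',[\ell]}$ for $k' < k$ (controlled by the outer hypothesis), and the viscous forcing $\partial_x^2(\uo{k-1} - u_{k-1,[\ell]})$. A bookkeeping calculation tracking homogeneity yields $|\partial_x^i G_{k,\ell}| \lesssim \err(-4k+\ell-i+6, i+3)$, which specializes to \eqref{eq:G} when $k=1$. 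Then transport $\partial_x^i w$ along the characteristics of $\uo{0}$ in dynamic coordinates $(t,\uo{0})$ using the integrating factor $m_0^{-(i+1)}$, bound the resulting special solution via \eqref{eq:dynamic-equiv}--\eqref{eq:slope-solution}, and fit the initial data by adding a homogeneous correction of the form $f(\uo{0})m_0^{i+1}$; conclude by a further induction on $i$, exactly as in the proof of Lemma~\ref{lem:u1-homogeneous}.

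The main obstacle is the bookkeeping across three nested inductions (on $k$, $\ell$, and $i$) together with the careful tracking of the algebra $\m{P}(a,b)$ when $a$ is negative. A more subtle point concerns the regime $\ell < 4k-3$, where no smooth unforced homogeneous solution is available and one therefore cannot adjust the initial data of $u_{k,[\ell]}$ at $(t_0,0)$: the partial sum need not vanish to the desired order at $x=0$. One must verify that the intrinsic size of $u_{k,\ell}^F$ at $t = t_0$ is already compatible with the target $\err$-bound on $w(t_0,\anon)$, exploiting the fact that the error bound itself degrades in exactly the same way as $\ell$ decreases. Preserving exactly three powers of $\cubder$ in $F_{k,\ell}$ throughout is the algebraic thread that keeps the construction finite: any accidental loss of a $\cubder$ would produce a logarithmic divergence in the dynamic-coordinate integral.
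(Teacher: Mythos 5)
Your proof is correct and follows the same route the paper indicates—the paper's own proof is simply the assertion that the argument for Lemma~\ref{lem:u1-homogeneous} "readily adapts" under induction on $k$, and you have fleshed out that adaptation faithfully: the same algebra $\m{P}(a,b)$, the same dynamic-coordinate integrating factor $m_0^{-(i+1)}$, the same insistence on three powers of $\cubder$ for integrability, and the same dichotomy between $\ell \geq 4k-3$ (matchable initial data via an unforced multiple of $\cub^{-4k+\ell+3}\cubder$) and $\ell < 4k-3$ (universal, $u_{k,\ell} = u_{k,\ell}^F$, as in Remark~\ref{rem:indep}). Your observation that the $\err$-bound degrades in step with the loss of matching freedom when $\ell < 4k-3$ correctly accounts for the subtlety the paper leaves implicit.
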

\begin{proof}
  The proof of Lemma~\ref{lem:u1-homogeneous} readily adapts to this setting and the proposition follows from induction on $k$.
  We omit the repeated details.
\end{proof}
In particular, we can control the size of $\uo{k}$.
\begin{corollary}
  \label{cor:uk-der}
  For all $i, k \in \Z_{\geq 0}$, $|\partial_x^i(\uo{k})| \lesssim_{i,k} \dist^{-3i-4k+1}.$
\end{corollary}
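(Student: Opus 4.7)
My plan is to deduce Corollary~\ref{cor:uk-der} directly from Proposition~\ref{prop:uk-homogeneous} applied with $\ell = 0$, by writing $\partial_x^i \uo{k} = \partial_x^i u_{k, 0} + \partial_x^i(\uo{k} - u_{k, 0})$ and bounding each piece separately in terms of $\dist$, first on the bounded rectangle $[t_0, 0) \times [-1, 1]$ and then extending to the rest of the half-space via Lemma~\ref{lem:compact}.

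For the leading term, Proposition~\ref{prop:uk-homogeneous} gives $u_{k, 0} \in \m{P}(-4k + 3, 1)$, and then \eqref{eq:deriv-map} places $\partial_x^i u_{k, 0} \in \m{P}(-4k - i + 3, i + 1)$. The estimate \eqref{eq:err-bd} yields $|\partial_x^i u_{k, 0}| \lesssim_{i, k} \err(-4k - i + 3, i + 1)$. Inspecting the piecewise definition \eqref{eq:poly-err}, and using $|\cub| \lesssim \dist$ together with $\cubder = \dist^{-2}$ from Definition~\ref{def:cubic}, both branches collapse to $\err(a, b) \lesssim \dist^{a - 2b}$, so $|\partial_x^i u_{k, 0}| \lesssim \dist^{-4k - 3i + 1}$. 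For the remainder, Proposition~\ref{prop:uk-homogeneous} directly gives $|\partial_x^i(\uo{k} - u_{k, 0})| \lesssim \err(-4k - i + 4, i + 1) \lesssim \dist^{-4k - 3i + 2}$; since $\dist$ is bounded above on the rectangle, the extra factor of $\dist$ is harmless and this is absorbed into $\dist^{-4k - 3i + 1}$.

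To extend to all of $[t_0, 0) \times \R$, I would invoke Lemma~\ref{lem:compact}: when $i + k \geq 1$, the derivative $\partial_x^i \uo{k}$ vanishes outside a compact set $\{|x| \leq L\}$. On the annular region $1 \leq |x| \leq L$, the inviscid profile $\uo{0}$ is smooth (so each $\uo{k}$ is as well, by the transport structure of \eqref{eq:ukout-ev}), $\dist \asymp 1$, and $|\partial_x^i \uo{k}|$ is uniformly bounded, so the inequality is trivial there. The only remaining case is $i = k = 0$, in which the maximum principle gives $|\uo{0}| \leq \|\mr{u}\|_{L^\infty}$, and since $\dist \geq |t_0|^{1/2}$ is bounded below on the entire half-space, this constant is $\lesssim \dist$. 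All the real work has been done in Proposition~\ref{prop:uk-homogeneous}; the only point requiring care is verifying that the two exponents $-4k - 3i + 1$ and $-4k - 3i + 2$ collapse to the former on the bounded domain, so I do not anticipate any genuine obstacle.
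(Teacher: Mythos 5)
Your proposal mirrors the paper's own (omitted) argument: apply Proposition~\ref{prop:uk-homogeneous} with $\ell = 0$, use the $\m{P}$-algebra and \eqref{eq:deriv-map} to bound $\partial_x^i u_{k,0}$ by $\err$ and then by $\dist^{-3i-4k+1}$, absorb the remainder's extra power of $\dist$ on the bounded rectangle, and extend via Lemma~\ref{lem:compact}. The paper explicitly flags this proof as ``similar to the proof of Corollary~\ref{cor:u0-out-der},'' which is precisely the route you take. Your calculation of the exponents and the verification that both branches of \eqref{eq:poly-err} give $\err(a,b) \lesssim \dist^{a-2b}$ are correct.

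One genuine error: you assert that $\dist \geq \abs{t_0}^{1/2}$ is ``bounded below on the entire half-space.'' This is false. By Definition~\ref{def:cubic}, $\dist(t,x) = \big(\abs{t} + 3\coeff\cub(t,x)^2\big)^{1/2}$, which tends to $0$ as $(t,x) \to (0,0)$; indeed, $\dist(t,0) = \abs{t}^{1/2}$, and $t$ ranges up to $0$. The inequality $|\uo{0}| \lesssim \dist$ near the origin must be obtained not from a crude lower bound on $\dist$ but from Proposition~\ref{prop:uk-homogeneous} itself, exactly as you do correctly in the rectangle part of your argument $\big(|\uo{0}| \lesssim |\cub| + \err(4,1) \lesssim \dist\big)$. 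The invocation of a lower bound on $\dist$ is only legitimate on the extension region $\{\abs{x} \geq 1\}$, where Lemma~\ref{lem:cubic} gives $\dist \asymp \abs{x}^{1/3} \gtrsim 1$. Since that is in fact the only region where you need it, the conclusion survives, but the stated justification is wrong and should be restricted to $\{\abs{x} \geq 1\}$.

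A minor imprecision, not fatal: for $k=0$ you cite Proposition~\ref{prop:uk-homogeneous} to place $u_{0,0}$ in $\m{P}(3,1)$, but $u_{0,0} = \cub$ lies in $\m{P}(1,0)$, not $\m{P}(3,1)$ (the latter would require $\cub^{-2}\cubder^{-1} \in \m{P}$, which fails). The resulting $\dist$ exponent after differentiation is identical in either case, so your final bound is unaffected; the paper itself handles $k=0$ separately in Corollary~\ref{cor:u0-out-der} precisely for this reason.
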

\begin{proof}
  This bound is a consequence of \eqref{eq:deriv-map}, \eqref{eq:dist-bd}, Proposition~\ref{prop:uk-homogeneous}, and the triangle inequality.
  The argument is similar to the proof of Corollary~\ref{cor:u0-out-der}; we omit the details.
\end{proof}
\begin{remark}
  We work with smooth data for convenience, but Proposition~\ref{prop:uk-homogeneous} is true, up to a point, under bounded regularity.
  Precisely, one can check that the proposition holds for the values $i,k,\ell \in \Z_{\geq 0}$ when $\mr{u} \in \m{C}^{i + 2k + \ell + 4}$.
  Thus, the construction of the outer expansion demands ever greater regularity as $k$ and $\ell$ increase.
  The dependence on $\ell$ is clear, since $\ell$ corresponds to the Taylor series of $\mr{u}$.
  The regularity also depends on $k$ because $\uo{k}$ is forced by $\partial_x^2 \uo{k-1}$.
  We thus need two additional derivatives as $k$ increments.
  This is not surprising: we are attempting to approximate a second-order equation by first-order equations, so we lose derivatives.
\end{remark}
\begin{remark}
  \label{rem:indep}
  For simplicity, we have assumed that $u^\nu(t_0, \anon) = \mr{u}$ is independent of $\nu$.
  However, we could naturally allow $u^\nu(t_0, \anon)$ to vary smoothly in $\nu$.
  Then $\uo{k}$ would have nontrivial initial data corresponding to the $\nu^k$ term in $u^\nu(t_0, \anon)$ expanded in $\nu$ about $\nu = 0$.
  This data influences the structure of the homogeneous components $u_{k,\ell}$.

  Indeed, we can adapt the construction of $u_{1,\ell}$ to form $u_{k, \ell}$.
  Let $u_{k,\ell}^F$ denote the particular solution of \eqref{eq:ukl-ev} constructed as in \eqref{eq:special}.
  Recalling \eqref{eq:u1-structure}, we can write
  \begin{equation*}
    u_{k,\ell} = u_{k,\ell}^F + C_{k,\ell} \cub^{-4k + \ell + 3}\cubder.
  \end{equation*}
  Here $\cub^{-4k + \ell + 3}\cubder$ solves the unforced problem \eqref{eq:unforced} and $C_{k,\ell} \in \R$ is chosen so that $u_{k,\ell}(t_0, \anon)$ agrees with the initial data $\uo{k}(t_0, \anon)$ to spatial order $-4k + \ell + 3$ at $x = 0$.
  
  This selection principle for $C_{k,\ell}$ is nonsensical when $-4k + \ell + 3 < 0$, since the initial data is smooth.
  For such indices, we must therefore take $C_{k,\ell} = 0$ and $u_{k,\ell} = u_{k,\ell}^F$.
  Thus $u_{k,\ell}$ is independent of the initial data if and only if $-4k + \ell + 3 < 0$, i.e., if $\ell \leq 4(k - 1)$.
  These terms are \emph{universal}, depending only on $\coeff$.
  We have already observed this phenomenon when $k = 1$ and $\ell = 0$.
  As we shall see, the distinction between universal and data-dependent homogeneous components has consequences for the uniqueness of terms in the inner expansion.
\end{remark}

\section{The inner expansion}\label{sec:inner}
We now turn to the inner expansion.
Recall the inner variables
\begin{equation*}
  T \coloneqq \nu^{-1/2} t \And  X \coloneqq \nu^{-3/4} x
\end{equation*}
defined in \eqref{eq:inner-var}.
This is simply a $\lambda$-rescaling $(T, X) \mapsto (\lambda^2 T, \lambda^3 X) = (t, x)$ with $\lambda = \nu^{1/4}$.
Thus if $f$ is $\ell$-homogeneous, we have
\begin{equation*}
  f(t, x) = \nu^{\ell/4} f(T, X).
\end{equation*}
In particular, we make use of the following rescaled homogeneous functions:
\begin{align}
  \Cub(T, X) &\coloneqq \nu^{-1/4} \cub(\nu^{1/2} T, \nu^{3/4}X),\label{eq:Cub}\\
  \Cubder(T, X) &\coloneqq \nu^{1/2} \cubder(\nu^{1/2} T, \nu^{3/4}X),\nonumber\\
  \Dist(T, X) &\coloneqq \nu^{-1/4} \dist(\nu^{1/2} T, \nu^{3/4}X).\label{eq:Dist}
\end{align}
These functions are simply $\cub$, $\cubder$, and $\dist$ evaluated at $(T, X)$ rather than $(t, x)$.
We employ distinct notation to indicate that these functions are implicitly evaluated in the inner coordinates.

As noted in Section~\ref{sec:overview}, the blow-up $U^\nu(T, X) \coloneqq \nu^{-1/4} u^\nu(\nu^{1/2} T, \nu^{3/4}X)$ solves viscous Burgers \eqref{eq:viscous-Burgers-unit} with \emph{unit} viscosity in the blown-up domain $(\nu^{-1/2}t_0, 0) \times \R$.
Recalling \eqref{eq:inner-sum}, we expect $U^\nu$ to admit an expansion of the form
\begin{equation*}
  U^\nu \sim \sum_{\ell = 0}^\infty \nu^{\ell/4} \Ui{\ell} \quad \text{ when } \nu \ll 1 \text{ and } \Dist \ll \nu^{-1/4}.
\end{equation*}
Each term $\Ui{\ell}$ is defined on $\R_- \times \R$.
Plugging this sum into \eqref{eq:viscous-Burgers-unit}, we formally obtain the evolution equations \eqref{eq:Ul-ev-overview} for $\Ui{\ell}$.
For convenience, we restate them here:
\begin{equation}
  \label{eq:Ul-ev}
  \partial_T \Ui{\ell} = -\frac{1}{2}\sum_{\ell' + \ell'' = \ell} \partial_X(\Ui{\ell'} \Ui{\ell''}) + \partial_X^2 \Ui{\ell}.
\end{equation}
Naturally, the leading term satisfies viscous Burgers:
\begin{equation}
  \label{eq:U0-ev}
  \partial_T \Ui{0} = - \Ui{0} \partial_X \Ui{0} + \partial_X^2 \Ui{0}.
\end{equation}
When $\ell \geq 1,$ \eqref{eq:Ul-ev} is linear and can be written as a passive scalar equation with diffusion:
\begin{equation*}
  \partial_T \Ui{\ell} = -\partial_X(\Ui{0}\Ui{\ell})-\sum_{\ell' = 1}^{\ell - 1} \Ui{\ell'} \partial_X\Ui{\ell - \ell'} + \partial_X^2 \Ui{\ell}.
\end{equation*}
We emphasize that the sum only involves $\Ui{\ell'}$ for $0 < \ell' < \ell$, so it is simply a forcing term.

Each $\Ui{\ell}$ is an eternal solution of its respective equation, so it does not have ``initial data'' per se.
Rather, we choose the far-field behavior of $\Ui{\ell}$ to match the outer expansion \eqref{eq:outer-sum}.
We do so through the homogeneous components $u_{k, \ell}$ in \eqref{eq:outer-homog-expansion}.
Recall that $u_{k, \ell}$ is $(\ell - 4k + 1)$-homogeneous.
Thus in the inner coordinates, the term $\nu^k u_{k, \ell}$ in the outer expansion becomes
\begin{equation*}
  \nu^k u_{k, \ell}(\nu^{1/2}T, \nu^{3/4}X) = \nu^{k + (\ell - 4k + 1)/4}u_{k, \ell}(T, X) = \nu^{(\ell + 1)/4} u_{k, \ell}(T, X).
\end{equation*}
On the other hand,
\begin{equation*}
  u^\nu(\nu^{1/2}T, \nu^{3/4}X) = \nu^{1/4} U^\nu(T, X) \sim \sum_{\ell \geq 0} \nu^{(\ell + 1)/4} \Ui{\ell}(T, X).
\end{equation*}
Hence $u_{k, \ell}$ should contribute to $\Ui{\ell}$.
To keep the inner notation separate, we define
\begin{equation*}
  U_{k, \ell}(T, X) \coloneqq \nu^{k - (\ell + 1)/4} u_{k, \ell}(\nu^{1/2}T, \nu^{3/4}X).
\end{equation*}
Then $\Ui{\ell}$ should somehow incorporate the homogeneous components $\{U_{k,\ell}\}_{k \geq 0}$.
For fixed $k \geq 0$, $U_{k + 1, \ell}$ is $(\ell - 4k - 3)$-homogeneous, so it decays faster than $U_{k, \ell}$ at infinity.
We therefore conjecture that $\Ui{\ell}$ admits an asymptotic expansion about infinity of the form
\begin{equation}
  \label{eq:inner-at-infinity}
  \Ui{\ell} \sim \sum_{k = 0}^\infty U_{k, \ell} \quad \text{as } \Dist \to \infty \text{ in } \R_- \times \R.
\end{equation}
We introduce the corresponding partial sum
\begin{equation*}
  U_{[k], \ell} = \sum_{k'=0}^k U_{k', \ell}.
\end{equation*}
Using \eqref{eq:ukl-ev}, we can check that $U_{[k],\ell}$ satisfies
\begin{equation}
  \label{eq:Ukl-sum-ev}
  \partial_T U_{[k],\ell} = -\frac{1}{2} \sum_{\substack{k' + k'' \leq k \\ \ell' + \ell'' = \ell}} \partial_X(U_{k',\ell'}U_{k'',\ell''}) + \partial_X^2 U_{[k-1],\ell}
\end{equation}
under the convention that $U_{[-1],\ell} = 0$ when $k = 0$.

To prove the expansion \eqref{eq:inner-at-infinity}, we argue that $U_{[k],\ell}$ is a good approximate solution of \eqref{eq:Ul-ev} at infinity.
More concretely, we study the evolution equation for the difference $\Ui{\ell} - U_{[k],\ell}$.
We construct super- and subsolutions of this equation that are appropriately small, and conclude that there exists a solution $\Ui{\ell}$ of \eqref{eq:Ul-ev} close to $U_{[k],\ell}$ at infinity.

Throughout, we find it technically convenient to construct one pair $\pm W_{\text{early}}$ of super- and subsolutions on the deep past $\{T < T_{\text{early}}\}$, and another pair $\pm W_{\text{late}}$ on $\{T_{\text{early}} \leq T < 0\}$, for some $T_{\text{early}} \ll -1$.
The early pair $\pm W_{\text{early}}$ yields the solution $\Ui{\ell}$ through diagonalization.
However, our construction of $W_{\text{early}}$ involves a Riccati equation that blows up at finite negative time.
Before this blow-up, we hand our estimates off to $\pm W_{\text{late}}$, which ensure that $\Ui{\ell}$ behaves as expected on the final bounded time interval.

The evolution equations for the differences $\Ui{\ell} - U_{[k],\ell}$ have a common form.
We can therefore construct our super- and subsolutions with one lemma.
In the following, let $\m{Q} \coloneqq (-1, 0) \times (-1, 1) \subset \R_- \times \R$ denote a rectangle of spacetime near the origin and let $\partial^* \m{Q} \coloneqq [-1, 0) \times \{\pm 1\}$ denote the portion of the boundary that parabolically influences the complement $\m{Q}^c \coloneqq (\R_- \times \R) \setminus \m{Q}$.
\begin{lemma}
  \label{lem:inner-super-sub}
  Let $i,k, \ell \in \Z_{\geq 0}$ satisfy $3i + 4k \geq \ell$ and fix $\const > 0$.
  In the following, we refer to functions $b,c,G \colon \R_- \times \R \to \R$ and $g,h\colon \R \to \R$.
  \begin{enumerate}[label = \textup{(\roman*)}, itemsep = 2pt, leftmargin = 18pt]
  \item
    \label{item:inner-super-sub-early}
    Consider the equation
    \begin{equation}
      \label{eq:inner-super-sub-early}
      \begin{aligned}
        \hspace{10pt}\partial_T V = -\Cub \partial_X V - (i + 1) (\partial_X \Cub) V + \big[b + &g(V)\big]\partial_X V\\
        &+ \big[c + h(V)\big]V + \partial_X^2 V + G.
      \end{aligned}
    \end{equation}
    Suppose
    \begin{equation}
      \begin{gathered}
        \abs{b} \leq \const \Dist^{-3}, \quad \abs{c} \leq \const \Dist^{-6}, \quad \abs{G} \leq \const \Dist^{-(3i + 4k - \ell + 5)},\\
        \text{and } \quad \abs{g(s)} + \abs{h(s)} \leq \const \abs{s}.
      \end{gathered}
      \label{eq:early-cond}
    \end{equation}
    Then there exist constants $C_{\mathrm{early}} > 0$ and $T_{\mathrm{early}} < 0$ as well as a smooth function $W_{\mathrm{early}} \colon \{T < T_{\mathrm{early}}\} \to \R_+$ such that $W_{\mathrm{early}}$ (resp. $-W_{\mathrm{early}}$) is a supersolution (resp. subsolution) of \eqref{eq:inner-super-sub-early} and
    \begin{equation*}
      W_{\mathrm{early}} \leq C_{\mathrm{early}} \Dist^{-(3i + 4k - \ell + 3)}.
    \end{equation*}
    The constants $C_{\mathrm{early}}$ and $T_{\mathrm{early}}$ and the function $W_{\mathrm{early}}$ depend on $i, k, \ell$, and $\const$.

  \item
    \label{item:inner-super-sub-late}
    Consider the equation
    \begin{equation}
      \label{eq:inner-super-sub-late}
      \partial_T V = -\Cub \partial_X V - (i + 1) (\partial_X \Cub) V + b\partial_X V + cV + \partial_X^2 V + G.
    \end{equation}
    Suppose
    \begin{equation}
      \label{eq:late-cond}
      \abs{b} + \abs{c} \leq \const \And \abs{G} \leq \const \Dist^{-(3i + 4k - \ell + 5)} \quad \text{in } \{-\const < T < 0\} \setminus \m{Q}.
    \end{equation}
    Then there exist $C_{\mathrm{late}} > 0$ and smooth $W_{\mathrm{late}} \colon \{-\const < T < 0\} \setminus \m{Q} \to \R_+$ such that $W_{\mathrm{late}}$ (resp. $-W_{\mathrm{late}}$) is a supersolution (resp. subsolution) of \eqref{eq:inner-super-sub-late},
    \begin{equation*}
      W_{\mathrm{late}} > \const \Dist^{-(3i + 4k - \ell + 3)} \quad \text{on } \{T = -\const\} \cup \partial^* \m{Q},
    \end{equation*}
    and
    \begin{equation*}
      W_{\mathrm{late}} \leq C_{\mathrm{late}} \Dist^{-(3i + 4k - \ell + 3)} \quad \text{on } \{-\const < T < 0\} \setminus \m{Q}.
    \end{equation*}
    Both $C_{\mathrm{late}}$ and $W_{\mathrm{late}}$ depend on $i, k, \ell$, and $\const$.
  \end{enumerate}
\end{lemma}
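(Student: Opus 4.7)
The plan is to build positive smooth functions $W_{\mathrm{early}}$ and $W_{\mathrm{late}}$ by direct algebraic ansatz in $\Cub$, $\Cubder$, and $T$ so that, writing
\begin{equation*}
  \m{L}V := \partial_T V + \Cub\,\partial_X V + (i+1)(\partial_X \Cub)\,V - \partial_X^2 V,
\end{equation*}
the supersolution inequality $\m{L}W_{\bullet}\geq$ (RHS of \eqref{eq:inner-super-sub-early} or \eqref{eq:inner-super-sub-late}) holds pointwise on the corresponding region. Since $W_{\bullet}>0$ and $|g|+|h|\lesssim|\cdot|$, the subsolution property of $-W_{\bullet}$ follows by the odd symmetry of the right-hand side in $V$.

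My first step is to compute $\m{L}$ applied to monomials in $\Cub$ and $\Cubder$ using Definition~\ref{def:cubic} and the identities $\partial_T\Cub=\Cub\Cubder$, $\partial_X\Cub=-\Cubder$, $\partial_X\Cubder=6\coeff\Cub\Cubder^3$, $\bar\partial_T\Cubder=\Cubder^2$, where $\bar\partial_T:=\partial_T+\Cub\partial_X$ is the dynamic-coordinate derivative already used in \eqref{eq:dynamic-deriv}. The result is
\begin{equation*}
  \m{L}(\Cub^a\Cubder^b) = \bigl[b-(i+1)\bigr]\Cub^a\Cubder^{b+1} + (\text{strictly higher-order }\Cubder\text{-corrections}).
\end{equation*}
Setting $n:=3i+4k-\ell+3\geq 3$, the candidate $W=A\Cubder^{n/2}$ has effective homogeneity $-n$ (so $W\asymp\Dist^{-n}$), and the leading contribution to $\m{L}W$ is $A[\tfrac{n}{2}-(i+1)]\Cubder^{n/2+1}$, which matches the forcing scale $|G|\lesssim\Dist^{-(n+2)}$.

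For part~(i) on $\{T<T_{\mathrm{early}}\}$, the smallness of $\Cubder\leq|T|^{-1}$ renders the higher-order $\m{L}$-corrections and the perturbations $|b\partial_X W|+|cW|\lesssim\Dist^{-(n+6)}$ together with $|g(W)\partial_X W|+|h(W)W|\lesssim\Dist^{-2n}$ all strictly negligible compared to the leading $\Dist^{-(n+2)}$ contribution. When $n>2(i+1)$, taking $W_{\mathrm{early}}:=A\Cubder^{n/2}$ with $A$ and $|T_{\mathrm{early}}|$ large closes the inequality directly. When $n\leq 2(i+1)$---the resonant case---the leading coefficient $\tfrac{n}{2}-(i+1)\leq 0$ and the naive ansatz fails. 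I would then augment by exploiting the backbone $\bar\partial_T V=(i+1)\Cubder V$, whose general smooth solution in dynamic coordinates has the form $f(\Cub)\Cubder^{i+1}$; taking $W_{\mathrm{early}}=A\,f(\Cub)\Cubder^{i+1}+A'\Cubder^{n/2}$ with $f$ smooth, positive, convex near $\Cub=0$, and sized so that $f(\Cub)\Cubder^{i+1}\lesssim\Dist^{-n}$ everywhere in the early region, one harvests a positive $\m{L}$-contribution from $-\partial_X^2[f\Cubder^{i+1}]$, while the cushion $A'\Cubder^{n/2}$ absorbs residual terms near $\Cub=0$ in the $|T|$-dominated regime.

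For part~(ii) on the bounded region $\{-\const<T<0\}\setminus\m{Q}$, every algebraic quantity is uniformly bounded above and $\Dist$ is bounded away from zero. The ansatz $W_{\mathrm{late}}:=A e^{\Lambda(T+\const)}\Dist^{-n}$ with large $A,\Lambda$ suffices: applying $\m{L}$ yields a dominant $\Lambda W_{\mathrm{late}}$ contribution from $\partial_T e^{\Lambda(T+\const)}$, and choosing $\Lambda$ larger than the $L^\infty$-norm of $|\m{L}(\Dist^{-n})|/\Dist^{-n}+|b|+|c|$ on the domain gives $\m{L}W_{\mathrm{late}}\geq b\partial_X W_{\mathrm{late}}+cW_{\mathrm{late}}+|G|$ pointwise; choosing $A$ large secures $W_{\mathrm{late}}>\const\Dist^{-n}$ on the parabolic boundary $\{T=-\const\}\cup\partial^*\m{Q}$. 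The main obstacle is the resonant case of part~(i): the backbone's intrinsic decay rate $|T|^{-(i+1)}$ is strictly faster than the required $|T|^{-n/2}$, so a pure power ansatz cannot simultaneously (i)~supersolve the full PDE, (ii)~remain bounded by $\Dist^{-n}$, and (iii)~remain large enough to absorb $|G|\sim\Dist^{-(n+2)}$. Reconciling these three constraints through a carefully tuned profile $f$ built in dynamic coordinates is the technical crux of the lemma; the remaining part~(ii) and the generic branch of part~(i) reduce to routine algebraic verifications.
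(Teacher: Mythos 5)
Your ansatz $W = A_0\,\Cubder^{n/2} = A_0\,\Dist^{-n}$ with $n \coloneqq 3i+4k-\ell+3$, and the leading-order balance via $\bar\partial_T \Dist^{-n} = \tfrac{n}{2}\Dist^{-(n+2)}$, coincide with the paper's: the decisive coefficient $\tfrac n2 - (i+1)$ is exactly the paper's $(p/2 - i - 1)$. The only structural difference in the generic branch is that the paper promotes the constant amplitude to $a(T)$ solving the Riccati equation $\dot a = C_0 a^2 |T|^{-3/2}$, which packages every subleading and quadratic term into one bound and yields the explicit choices $T_{\mathrm{early}} = -\max\{64\const^2 C_0^2, 1\}$, $a \le 4\const$. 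Your constant-amplitude version also closes, once one orders the quantifiers as ``pick $A_0\gtrsim\const$ so that $(\tfrac n2 - i - 1)A_0 - \const > 0$, then shrink $T_{\mathrm{early}}$ depending on $A_0$,'' since the terms you estimate by $\Dist^{-(n+6)}$, $\Dist^{-2n}$, $\Dist^{-(2n+3)}$ all sit at least a factor $\Dist^{-(n-2)} \le |T|^{-1/2}$ below the $\Dist^{-(n+2)}$ gain whenever $n\ge 3$. Part~(ii) is, as you say, a routine exponential-in-time barrier, matching the paper.

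The ``resonant case'' $n \le 2(i+1)$ that you treat as the technical crux never occurs, and the augmented dynamic-coordinate ansatz you sketch for it is unnecessary. The paper's proof opens with ``fix $i, k, \ell \in \Z_{\ge 0}$ with $k \ge \ell/4$,'' and uses this exactly where you would need it: ``$p \ge 3(i+1)$ because $4k \ge \ell$.'' Under $4k \ge \ell$ one has $n = 3i + (4k - \ell) + 3 \ge 3(i+1) > 2(i+1)$, so $\tfrac n2 - (i+1) \ge \tfrac{i+1}{2} \ge \tfrac12$ and the leading coefficient never degenerates. Every application of the lemma in the paper (Claims~\ref{claim:U0-early}, \ref{claim:U0-est}, \ref{claim:Vl-deriv-est-early}, and \ref{claim:Vl-deriv-est-late}) is made with $k \ge \ell/4$, so the degenerate branch is moot. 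You are right that the hypothesis as printed in the lemma statement, $3i + 4k \ge \ell$, is weaker than the $4k \ge \ell$ actually invoked in the proof; but the fix is to tighten the hypothesis, not to construct a more elaborate barrier. One small further point: the ``odd symmetry'' you cite for the subsolution property of $-W$ is not literal, since $g$, $h$, $G$ are arbitrary; the correct reason $-W$ is a subsolution is that the supersolution verification only uses the absolute-value bounds \eqref{eq:early-cond}, which are invariant under $V\mapsto -V$, and this is how the paper also reads it.
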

Before proving this lemma, we use it to construct and control $\Ui{\ell}$.
\begin{proposition}
  \label{prop:Ul}
  For each $\ell \in \Z_{\geq 0}$, there exists a unique solution $\Ui{\ell}$ of \eqref{eq:Ul-ev} on $\R^2$ such that for all $i,k\in \Z_{\geq 0}$,
  \begin{equation}
    \label{eq:Ul-est}
    \abs{\partial_X^i (\Ui{\ell} - U_{[k], \ell})} \lesssim_{i,k,\ell} \Dist^{-(3i+4k-\ell+3)} \quad \text{on } \m{Q}^c.
  \end{equation}
  Moreover, $\Ui{\ell}$ is the unique solution of \eqref{eq:Ul-ev} satisfying \eqref{eq:Ul-est} for \emph{some} $k \geq \ell/4$ and each $i \in \{0, 1\}$.
\end{proposition}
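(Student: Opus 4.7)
The plan is to induct on $\ell$, combining the barrier Lemma~\ref{lem:inner-super-sub} with an exhaustion-in-time construction. Fix $\ell$ and suppose that $\Ui{\ell'}$ has already been constructed for every $\ell' < \ell$. Fix $k \geq \lceil \ell/4 \rceil$ and write $V \coloneqq \Ui{\ell} - U_{[k],\ell}$ for the unknown error. Subtracting \eqref{eq:Ukl-sum-ev} from \eqref{eq:Ul-ev} and peeling off the leading advection by $\Cub = U_{0,0}$, one finds that $V$ satisfies an equation of the form
\begin{equation*}
    \partial_T V = -\Cub \partial_X V - (\partial_X \Cub) V + [b + g(V)] \partial_X V + [c + h(V)] V + \partial_X^2 V + G,
\end{equation*}
where $b, c, G$ are built from the previously constructed $\Ui{\ell'}$ and the homogeneous components $U_{k',\ell'}$, and where $g(V) \partial_X V + h(V) V$ equals $-V \partial_X V$ when $\ell = 0$ and vanishes when $\ell \geq 1$. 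Using the homogeneity of the $U_{k',\ell'}$ together with the inductive bounds on $\Ui{\ell'}$, I would verify that $b$, $c$, and $G$ obey the smallness conditions \eqref{eq:early-cond} and \eqref{eq:late-cond} for an appropriate $\const$.

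With this in hand, invoke Lemma~\ref{lem:inner-super-sub}(i) to obtain $\pm W_{\mathrm{early}}$ on $\{T < T_{\mathrm{early}}\}$ and Lemma~\ref{lem:inner-super-sub}(ii) to obtain $\pm W_{\mathrm{late}}$ on $\{-\const < T < 0\} \setminus \m{Q}$. For existence, solve the Cauchy problem for \eqref{eq:Ul-ev} on $(T_n, 0) \times \R$ from initial data $U_{[k], \ell}(T_n, \anon)$ and send $T_n \to -\infty$; when $\ell \geq 1$ this is a linear passive-scalar problem with advection by $\Ui{0}$, and when $\ell = 0$ it is the standard Cauchy problem for viscous Burgers. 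The comparison principle confines the associated error $V_n$ between $\pm W_{\mathrm{early}}$ on $\{T_n < T < T_{\mathrm{early}}\}$ and then between $\pm W_{\mathrm{late}}$ up to $T = 0$ after the hand-off at $T = T_{\mathrm{early}}$. Standard parabolic regularity yields local smoothness, so a diagonal/compactness argument produces a limit $\Ui{\ell}$ solving \eqref{eq:Ul-ev} and satisfying $|V| \lesssim \Dist^{-(4k - \ell + 3)}$ on $\m{Q}^c$. The higher-derivative bounds \eqref{eq:Ul-est} for $i \geq 1$ follow by differentiating the $V$-equation and reapplying Lemma~\ref{lem:inner-super-sub} with the larger $i$, since differentiation drops homogeneity by exactly $3$ in the way the lemma demands; inside $\m{Q}$ the smoothness of $\Ui{\ell}$ together with \eqref{eq:Ul-est} on $\partial^* \m{Q}$ yields the bounds there by parabolic interior estimates.

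For uniqueness, let $\Ui{\ell}$ and $\widetilde{U}$ both satisfy \eqref{eq:Ul-est} for some $k_0 \geq \ell/4$ and for $i \in \{0, 1\}$. Their difference $V$ solves a homogeneous equation of the form \eqref{eq:inner-super-sub-early} with $G \equiv 0$, with $g, h$ absorbing the Burgers nonlinearity when $\ell = 0$. A crucial observation is that when $G = 0$, any positive multiple $\eps W_{\mathrm{early}}$ remains a supersolution for every $\eps \in (0, 1]$, because the linear terms scale linearly in $\eps$ while the quadratic nonlinear contributions from $g, h$ become smaller. Since $|V| \lesssim \Dist^{-(4k_0 - \ell + 3)}$ at infinity, comparing $V$ against $\eps W_{\mathrm{early}}$ and sending $\eps \to 0$ forces $V \equiv 0$ on $\{T < T_{\mathrm{early}}\}$; forward-in-$T$ uniqueness for \eqref{eq:Ul-ev} then propagates this to the full domain.

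The main obstacle is the nonlinear case $\ell = 0$: the difference equation contains $V \partial_X V$, so the comparison argument cannot be carried out on a purely linear level. The resolution is already built into Lemma~\ref{lem:inner-super-sub}(i) through the slots for $g(V) \partial_X V$ and $h(V) V$, which are designed precisely to absorb such quadratic terms once $V$ is small enough at infinity. The remaining care lies in the bookkeeping: one must track homogeneity so that all hypotheses of Lemma~\ref{lem:inner-super-sub} are met for each $(i, k)$, and one must cleanly chain the early and late barriers through $T_{\mathrm{early}}$, which is why the lemma is formulated as two separate regimes in the first place.
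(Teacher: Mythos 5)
Your existence argument follows the paper's strategy closely: approximate from time $T = T_n$ with data $U_{[k],\ell}(T_n,\anon)$, sandwich the error between the two barriers of Lemma~\ref{lem:inner-super-sub} with a hand-off at $T_\mathrm{early}$, pass to a limit by parabolic compactness, and iterate on $i$ for higher derivatives. That part is sound.

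Your uniqueness argument, however, diverges from the paper and has a genuine gap. You propose to compare the difference $V$ of two candidate solutions against $\eps W_\mathrm{early}$ and send $\eps \to 0$. The barrier $W_\mathrm{early} = a(T)\Dist^{-p}$ with $i=0$ decays like $\Dist^{-(4k -\ell+3)}$, and Lemma~\ref{lem:inner-super-sub} requires $k \geq \ell/4$; meanwhile the a priori decay on $V$ coming from \eqref{eq:Ul-est} is also $\Dist^{-(4k_0-\ell+3)}$ with $k_0 \geq \ell/4$. To initialize the comparison on a strip $\{T_1 < T < T_\mathrm{early}\}$ with $T_1\to-\infty$, you need $|V(T_1,\cdot)| \leq \eps W_\mathrm{early}(T_1,\cdot)$, which forces $W_\mathrm{early}$ to decay \emph{strictly slower} than $V$, i.e., $k < k_0$. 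At the borderline $k_0 = \lceil \ell/4\rceil$ (in particular $\ell = 0$, $k_0 = 0$, the most important case, and every $\ell\equiv 0 \pmod 4$), there is no admissible $k$ with $k < k_0$ and $4k \geq \ell$. The ratio $|V|/W_\mathrm{early}$ is then bounded away from $0$ in the deep past, so the initial comparison can only be secured for $\eps$ bounded below, and no limit $\eps\to 0$ is possible. Since $\ell/4$ is exactly the sharp threshold the proposition advertises (cf.\ the remark following it), this is not a removable edge case.

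The paper handles uniqueness by a different mechanism: an $L^2$ energy estimate plus Gr\"onwall. One shows $\varphi(T) \coloneqq \|V(T,\cdot)\|_{L^2}^2 \lesssim |T|^{-3/2}$ (from $|V| \lesssim \Dist^{-3}$, which holds whenever $k_0 \geq \ell/4$, and the one-dimensional integral $\int \Dist^{-6}\,\dn X \lesssim |T|^{-3/2}$), then uses
\begin{equation*}
\dot\varphi \leq \|\partial_X\braket{U}\|_{L^\infty_X}\,\varphi, \qquad \|\partial_X\braket{U}(S,\cdot)\|_{L^\infty} = |S|^{-1} + \m{O}(|S|^{-3}),
\end{equation*}
so the Gr\"onwall factor from time $-N$ up to fixed $T$ grows only like $N/|T|$. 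The product $N^{-3/2} \cdot N \to 0$ as $N \to \infty$, forcing $\varphi \equiv 0$. This quantitative cancellation—polynomial Gr\"onwall growth against faster polynomial decay of the $L^2$ norm—is precisely what is lost by barriers that can only match, not beat, the pointwise decay. You should either switch to this energy argument for uniqueness, or find a way to exploit the spatial $L^2$ integrability that a pure pointwise maximum-principle argument does not see.

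One smaller remark: in the uniqueness step the $\ell=0$ nonlinearity does not in fact need the $g,h$ slots; writing the difference equation in terms of the midpoint $\braket{U} = (U^{(1)}+U^{(2)})/2$ already makes it linear in $V$, which is what the paper does.
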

\begin{remark}
  The leading term $\Ui{0}$ plays a particularly important role.
  We discuss its structure in more detail in Appendix~\ref{sec:inner-term}.
\end{remark}
\begin{remark}
  According to Proposition~\ref{prop:Ul}, once \eqref{eq:Ul-est} holds for some $k_0 \geq \ell/4$, it holds for \emph{every} $k \in \Z_{\geq 0}$.
  That is, if $\Ui{\ell}$ agrees with the homogeneous expansion \eqref{eq:inner-homog-expansion} to a certain finite order, it automatically agrees to every order.
  This indicates that the homogeneous series is rigid: if the first $\ceil{\ell/4}$ terms of \eqref{eq:inner-homog-expansion} are set, the higher-order terms are pre-determined.
  
  Remark~\ref{rem:indep} explains this rigidity.
  There, we observed that $u_{k,\ell}$, and hence $U_{k,\ell}$, is independent of the initial data when $k \geq \ell/4 + 1$.
  Such terms are \emph{universal}.
  Suppose \eqref{eq:Ul-est} holds for some $k_0 \geq \ell/4$.
  The homogeneous components of \eqref{eq:inner-homog-expansion} not represented in \eqref{eq:Ul-est} have the form $u_{k,\ell}$ for $k \geq k_0 + 1 \geq \ell/4 + 1$.
  Thus every homogeneous component ``omitted'' from \eqref{eq:Ul-est} is universal.
  It follows that the homogeneous expansion \eqref{eq:inner-homog-expansion} is indeed pre-determined past order $\ceil{\ell/4}$.
  
  Reasoning in the opposite direction, we see that the uniqueness condition in Proposition~\ref{prop:Ul} is sharp.
  If $k_0 < \ell /4$, there exists $k_0 < k < \ell/4 + 1$ such that $u_{k,\ell}$ is \emph{not} universal.
  Different choices of the initial data $u^\nu(t_0, \anon)$ can lead to different homogeneous components $u_{k,\ell}$.
  Thus there exist homogeneous expansions \eqref{eq:inner-homog-expansion} that agree up to order $k_0$ but differ thereafter.
  Each expansion corresponds to a distinct solution $\Ui{\ell}$ of \eqref{eq:Ul-ev}, so the estimate \eqref{eq:Ul-est} with $k_0 < \ell/4$ does \emph{not} ensure the uniqueness of $\Ui{\ell}$.
\end{remark}
\begin{corollary}
  \label{cor:Ul-size}
  For each $i,\ell \in \Z_{\geq 0}$, we have $\abs{\partial_X^i \Ui{\ell}} \lesssim_{i,\ell} \Dist^{-3i + \ell + 1}$ on $\m{Q}^c$ and $\abs{\partial_X^i \Ui{\ell}} \lesssim_{i,\ell} 1$ on $\m{Q}$.
\end{corollary}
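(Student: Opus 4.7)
The plan is to combine Proposition~\ref{prop:Ul} at $k = 0$ with the algebraic bounds on the leading homogeneous term $U_{0,\ell}$ to control $\partial_X^i \Ui{\ell}$ on $\m{Q}^c$, and then appeal to parabolic regularity to treat the compact region $\m{Q}$.

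For the estimate on $\m{Q}^c$, Proposition~\ref{prop:Ul} with $k = 0$ gives
\begin{equation*}
  \big|\partial_X^i(\Ui{\ell} - U_{0,\ell})\big| \lesssim_{i,\ell} \Dist^{-(3i - \ell + 3)} = \Dist^{\ell - 3i - 3}.
\end{equation*}
Lemma~\ref{lem:u0-out-homog} yields $u_{0,\ell} \in \m{P}(\ell + 3, 1)$ (with the base case $u_{0,0} = \cub \in \m{P}(1, 0)$), and iterating \eqref{eq:deriv-map} places $\partial_x^i u_{0,\ell}$ in a space of $(\ell + 1 - 3i)$-homogeneous polynomials in $\cub$ and $\cubder$. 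These functions are continuous on the quarter-sphere $\s{Q}$, so by \eqref{eq:dist-bd} we have $|\partial_x^i u_{0,\ell}| \lesssim_{i,\ell} \dist^{\ell + 1 - 3i}$ on $\R_- \times \R$. Rescaling through \eqref{eq:Cub}--\eqref{eq:Dist} converts this to $|\partial_X^i U_{0,\ell}| \lesssim_{i,\ell} \Dist^{\ell + 1 - 3i}$. Lemma~\ref{lem:cubic} shows $\Dist \gtrsim 1$ on $\m{Q}^c$, so the remainder $\Dist^{\ell - 3i - 3}$ is dominated by the main term $\Dist^{\ell - 3i + 1}$, and the triangle inequality completes the bound on $\m{Q}^c$.

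For the bound on $\m{Q}$, I would argue by induction on $\ell$ via parabolic regularity applied to \eqref{eq:Ul-ev}. The base case $\ell = 0$ uses smoothness of $\Ui{0}$, which admits an explicit Cole--Hopf representation (see Appendix~\ref{sec:inner-term}). For $\ell \geq 1$, equation \eqref{eq:Ul-ev} is linear in $\Ui{\ell}$ with drift $\Ui{0}$ and forcing $-\frac{1}{2}\sum_{\ell' + \ell'' = \ell,\, \ell', \ell'' \geq 1} \partial_X(\Ui{\ell'}\Ui{\ell''})$ built from strictly lower-order terms. The inductive hypothesis gives smooth, bounded drift and forcing on a neighborhood of $\overline{\m{Q}}$, while the $\m{Q}^c$ bound just established supplies bounded data on the parabolic boundary of $\m{Q}$. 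Standard parabolic interior Schauder estimates then yield $|\partial_X^i \Ui{\ell}| \lesssim_{i,\ell} 1$ on $\m{Q}$ for every $i \in \Z_{\geq 0}$. The main subtlety is the seamless passage between the far-field decay estimates and the interior bounds, but this is only bookkeeping; since $\Ui{\ell}$ is uniquely determined by Proposition~\ref{prop:Ul} and independent of $\nu$, any finite derivative bound on the compact set $\overline{\m{Q}}$ automatically has the desired form.
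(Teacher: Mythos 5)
Your proposal is correct and follows essentially the same route as the paper: on $\m{Q}^c$ the paper also applies Proposition~\ref{prop:Ul} with $k = 0$, bounds $\partial_X^i U_{0,\ell}$ by homogeneity, and absorbs the error via $\Dist \gtrsim 1$; on $\m{Q}$ the paper simply invokes parabolic interior regularity (it says ``parabolic Harnack inequality,'' but the substance is the same as your Schauder/induction step, and as you observe, it really reduces to the observation that $\Ui{\ell}$ is a fixed $\nu$-independent smooth function on the compact set $\overline{\m{Q}}$).
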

\begin{proof}
  Fix $i,\ell \in \Z_{\geq 0}$ and recall that $U_{[0],\ell} = U_{0,\ell}$.
  Because $\partial_X^i U_{0,\ell}$ is $(\ell + 1 - 3i)$-homogeneous, we have $\abs{\partial_X^i U_{[0],\ell}} \lesssim_{i,\ell} \Dist^{-3i + \ell + 1}$ on $\R_- \times \R$.
  On $\m{Q}^c$, this dominates the error in \eqref{eq:Ul-est} with $k = 0$.
  Thus the first part of the corollary follows from Proposition~\ref{prop:Ul} and the triangle inequality.
  Then the parabolic Harnack inequality yields $\sup_{\m{Q}} |\partial_X^i \Ui{\ell}| \lesssim_{i,\ell}.$
\end{proof}
\begin{proof}[Proof of Proposition~\textup{\ref{prop:Ul}}]
  We begin by recording some helpful estimates on the approximate solutions $U_{[k], \ell}$.
  Recall that $U_{[k],\ell} = \sum_{0\leq k'\leq k}U_{k',\ell}$ and $U_{k',\ell}$ is bounded away from the origin and $(-4k'+\ell+1)$-homogeneous.
  Each spatial derivative of a homogeneous function reduces its homogeneity by $3$.
  Adapting \eqref{eq:dist-bd}, each $r$-homogeneous function that is bounded away from the origin is bounded by a multiple of $\Dist^r$.
  From these observations, we can readily deduce the following bounds on $\m{Q}^c$:
  \begin{equation}
    \label{eq:Ukl-est}
    \begin{aligned}
      \abs{\partial_X^i U_{k,\ell}} &\lesssim_{i,k,\ell} \Dist^{-(3i + 4k - \ell - 1)},\\
      \abs{\partial_X^i U_{[k],\ell}} &\lesssim_{i,k,\ell} \Dist^{-(3i - \ell - 1)},\\
      \abs{\partial_X^i (U_{[k],0} - \Cub)} &\lesssim_{i,k} \Dist^{-(3i + 3)}.
    \end{aligned}
  \end{equation}

  We now make a series of interlocking inductive arguments to prove the existence portion of the proposition for all $i,k,\ell \in \Z_{\geq 0}$.
  In broad strokes, we fix $k$ and induct twice: first on $i$, then on $\ell$.
  Throughout, we take some care to distinguish between nonlinear and linear evolution equations.
  In particular, we treat the $\ell = 0$ and $\ell \geq 1$ cases separately.
  \medskip

  First fix $k \geq 0$ and consider $\ell = 0$.
  Then \eqref{eq:Ul-ev} is \eqref{eq:U0-ev}, i.e., viscous Burgers.
  Formally suppose $U$ solves \eqref{eq:U0-ev}.
  We show that $V \coloneqq U - U_{[k],0}$ solves an equation of the form \eqref{eq:inner-super-sub-early}.
  Using the super- and subsolutions from Lemma~\ref{lem:inner-super-sub}, we employ a diagonalization argument to \emph{construct} a particular solution $V^*$ of this equation that is suitably small.
  Then $\Ui{0} \coloneqq U_{[k],0} +  V^*$ will be the desired solution of \eqref{eq:U0-ev}.
  
  Recall that $U_{[k],0}$ solves \eqref{eq:Ukl-sum-ev}, which we write as
  \begin{equation*}
    \partial_T U_{[k],0} = -\frac{1}{2} \partial_X\big(U_{[k],0}^2\big) + \partial_X^2 U_{[k],0} - G_{0,0}
  \end{equation*}
  for
  \begin{equation}
    \label{eq:G0-def}
    G_{0,0} \coloneqq -\frac{1}{2} \sum_{\substack{k',k'' \leq k \\ k' + k'' > k}}\partial_X(U_{k',0}U_{k'',0}) + \partial_X^2 U_{k,0}.
  \end{equation}
  Then if $U_0$ solves \eqref{eq:U0-ev}, $V_0 \coloneqq U - U_{[k], 0}$ solves
  \begin{equation}
    \label{eq:U0-diff-ev-prelim}
    \partial_T V_0 = -\partial_X (U_{[k],0} V_0) - \frac{1}{2} \partial_X\big(V_0^2\big) + \partial_X^2 V_0 + G_{0,0}.
  \end{equation}
  Now, for each $n \in \N$, let $V_0^{(n)}$ denote the solution of \eqref{eq:U0-diff-ev-prelim} on $(-n, 0) \times \R$ with initial data
  \begin{equation*}
    V_0^{(n)}(-n, \anon) = 0.
  \end{equation*}
  We inductively prove the following claim:
  \begin{claim}
    \label{claim:U0-early}
    For all $i \in \Z_{\geq 0}$, there exist constants $C_{i,0}^{\mathrm{early}} > 0$ and $T_{i,0}^{\mathrm{early}} < 0$ such that
    \begin{equation}
      \label{eq:U0-deriv-est}
      \abs{\partial_X^{i} V_0^{(n)}(T, X)} \leq C_{i,0}^{\mathrm{early}} \Dist(T, X)^{-(3i+4k+3)}
    \end{equation}
    for all $n \in \N$, $T \in \big(-n, T_{i,0}^{\mathrm{early}} \big)$, and $X \in \R$.
  \end{claim}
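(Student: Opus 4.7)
The plan is to induct on $i$, reducing at each stage to Lemma~\ref{lem:inner-super-sub}\ref{item:inner-super-sub-early} and invoking the parabolic comparison principle. The super- and subsolutions $\pm W_{\mathrm{early}}$ supplied by that lemma will sandwich $\partial_X^i V_0^{(n)}$ on $(-n, T_{i,0}^{\mathrm{early}}) \times \R$, where $T_{i,0}^{\mathrm{early}}$ is allowed to shrink as $i$ grows.

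For the base case $i = 0$, first split $U_{[k],0} = \Cub + (U_{[k],0} - \Cub)$ in the evolution \eqref{eq:U0-diff-ev-prelim} for $V \coloneqq V_0^{(n)}$. The result fits \eqref{eq:inner-super-sub-early} with $i = \ell = 0$, $b = -(U_{[k],0} - \Cub)$, $c = -\partial_X(U_{[k],0} - \Cub)$, $g(s) = -s$, $h \equiv 0$, and $G = G_{0,0}$. The third line of \eqref{eq:Ukl-est} (with $i \in \{0,1\}$) gives $\abs{b} \lesssim \Dist^{-3}$ and $\abs{c} \lesssim \Dist^{-6}$. A short homogeneity count applied to \eqref{eq:G0-def}---each summand is either a derivative of a product $U_{k',0}U_{k'',0}$ with $k' + k'' \geq k + 1$, or $\partial_X^2 U_{k,0}$---yields $\abs{G_{0,0}} \lesssim \Dist^{-(4k+5)}$, matching \eqref{eq:early-cond}. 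Lemma~\ref{lem:inner-super-sub}\ref{item:inner-super-sub-early} then produces $W_{\mathrm{early}}$ and $T_{\mathrm{early}}$. Since $V_0^{(n)}(-n, \anon) \equiv 0$ lies between $\pm W_{\mathrm{early}}(-n, \anon)$, parabolic comparison for the semilinear equation \eqref{eq:U0-diff-ev-prelim} yields $\abs{V_0^{(n)}} \leq W_{\mathrm{early}} \leq C_{\mathrm{early}} \Dist^{-(4k+3)}$ on $(-n, T_{\mathrm{early}}) \times \R$.

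For the inductive step $i \geq 1$, assume \eqref{eq:U0-deriv-est} for all smaller indices. Set $V_i \coloneqq \partial_X^i V_0^{(n)}$, differentiate \eqref{eq:U0-diff-ev-prelim} $i$ times, and apply Leibniz to isolate the principal linear terms in $V_i$. One obtains an equation of the form \eqref{eq:inner-super-sub-early} with $g = h = 0$, in which $b_i$ and $c_i$ absorb $-(U_{[k],0} - \Cub)$, $-V_0^{(n)}$, and their lower-order derivatives, while the forcing $G_i$ collects the cross products $(\partial_X^j V_0^{(n)})(\partial_X^{i+1-j} V_0^{(n)})$ for $1 \leq j \leq i-1$, terms $\partial_X^\alpha(U_{[k],0} - \Cub)\,V_{i'}$ with $\alpha \geq 2$ and $i' < i$, and $\partial_X^i G_{0,0}$. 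By the inductive hypothesis together with \eqref{eq:Ukl-est}, $\abs{b_i} \lesssim \Dist^{-3}$, $\abs{c_i} \lesssim \Dist^{-6}$, and $\abs{G_i} \lesssim \Dist^{-(3i + 4k + 5)}$, matching \eqref{eq:early-cond}. Lemma~\ref{lem:inner-super-sub}\ref{item:inner-super-sub-early} then supplies the super- and subsolutions, and linear parabolic comparison yields \eqref{eq:U0-deriv-est} for $i$.

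The main obstacle is justifying the comparison principle on the unbounded spatial domain $\R$: for the nonlinear base case, an a priori growth control on $V_0^{(n)}$ as $\abs{X} \to \infty$ is needed to exclude a pathology ``from infinity.'' This should follow from standard barrier arguments using slowly growing paraboloids, exploiting the zero initial data and the rapid decay of $G_{0,0}$ at spatial infinity. A secondary technical point is the Leibniz bookkeeping in the inductive step: one must verify that exactly $(i+1)(\partial_X \Cub)\,V_i$ emerges as the coefficient of $V_i$ after differentiating $\partial_X(\Cub V)$ a total of $i$ times, so that the remaining contributions $\partial_X^\alpha \Cub \cdot V_{i'}$ (with $i' < i$, $\alpha \geq 2$) are absorbable into the forcing $G_i$ at the claimed order of decay.
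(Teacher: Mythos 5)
Your overall strategy matches the paper's: induct on $i$, rewrite the equation for $\partial_X^i V_0^{(n)}$ in the form \eqref{eq:inner-super-sub-early}, invoke Lemma~\ref{lem:inner-super-sub}\ref{item:inner-super-sub-early}, and close with the comparison principle starting from $V_0^{(n)}(-n,\anon) = 0$. Your base case $i = 0$ (with $g(s) = -s$, $h \equiv 0$) and your treatment of $i \geq 2$ are correct.

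However, there is a genuine gap at $i = 1$. You assert that for all $i \geq 1$ the equation for $Z = \partial_X^i V_0^{(n)}$ fits the lemma with $g = h = 0$, absorbing $-(i+1)\partial_X V_0^{(n)}$ into the zeroth-order coefficient $c_i$. When $i = 1$ this is precisely $\partial_X V_0^{(n)} = Z$, so the Leibniz term $-(i+1)(\partial_X V_0^{(n)})Z$ becomes $-2Z^2$, which is quadratic in the unknown. It cannot go into $c_1$ (you have no a priori control on $Z$ at stage $i=1$; that is what you are trying to prove), and your description of the forcing $G_1$ — cross products $(\partial_X^j V_0^{(n)})(\partial_X^{2-j}V_0^{(n)})$ for $1 \le j \le 0$ — is vacuous, so this term is simply dropped. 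The paper handles it by taking the lemma's nonlinear slot: $b = -\big(U_{[k],0}-\Cub+V_0^{(n)}\big)$, $c = -2\partial_X(U_{[k],0}-\Cub)$, $g = 0$, and $h(s) = -2s$, which is permitted since $\abs{h(s)} \le 2\abs{s}$. You need the same device for $i=1$; only for $i \ge 2$ is $\partial_X V_0^{(n)}$ genuinely of lower order than $Z$, making $g = h = 0$ legitimate.

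Your secondary check (that the coefficient of $Z$ arising from $\partial_X^{i+1}(\Cub V)$ is $(i+1)(\partial_X\Cub)$) and your remark about the comparison principle on the unbounded spatial domain are both correct and consistent with the paper.
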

  \begin{remark}
    This estimate is vacuous when $n \leq \big|T_{i,0}^{\mathrm{early}}\big|$.
    We elide this point below.
  \end{remark}
  \noindent
  \claimproof{claim:U0-early}We begin with the $i = 0$ case.
  Recall that $U_{[k],0} \sim U_{0,0} = \Cub$ at infinity.
  Extracting this leading order, we write \eqref{eq:U0-diff-ev-prelim} as
  \begin{equation}
    \label{eq:U0-diff-ev}
    \begin{aligned}
      \partial_T V_0 = -\Cub\partial_XV_0 - (\partial_X\Cub) V_0 - (U_{[k],0} - &\Cub + V_0) \partial_X V_0\\
      &- \partial_X(U_{[k],0} - \Cub)V_0 + \partial_X^2 V_0 + G_{0,0}.
    \end{aligned}
  \end{equation}
  Using \eqref{eq:Ukl-est}, we can check that
  \begin{align*}
    \abs{U_{[k],0} - \Cub} &\lesssim_k \Dist^{-3},\\
    \abs{\partial_X(U_{[k],0} - \Cub)} &\lesssim_k \Dist^{-6},\\
    \abs{G_{0,0}} &\lesssim_k \Dist^{-(4k+5)}.
  \end{align*}
  Therefore \eqref{eq:U0-diff-ev} satisfies the hypotheses \eqref{eq:early-cond} in Lemma~\ref{lem:inner-super-sub}\ref{item:inner-super-sub-early} with
  \begin{gather*}
    b = -(U_{[k],0} - \Cub), \quad c = -\partial_X(U_{[k],0} - \Cub), \quad G = G_{0,0}, \quad g(s) = -s, \quad h = 0.
  \end{gather*}
  The lemma yields a signed super/subsolution $\pm W_{0,0}^{\mathrm{early}}$ of \eqref{eq:U0-diff-ev} such that
  \begin{equation*}
    W_{0,0}^{\mathrm{early}} \leq C_{0,0}^{\mathrm{early}} \Dist^{-(4k+3)} \quad \text{on } \big\{T < T_{0,0}^{\mathrm{early}}\big\}
  \end{equation*}
  for some $C_{0,0}^{\mathrm{early}} > 0$ and $T_{0,0}^{\mathrm{early}} < 0$.
  Because $W_{0,0}^{\mathrm{early}} > 0$,
  \begin{equation*}
    -W_{0,0}^{\mathrm{early}}(-n, \anon) < 0 = V_0^{(n)}(-n, \anon) < W_{0,0}^{\mathrm{early}}(-n, \anon).
  \end{equation*}
  Thus by the comparison principle,
  \begin{equation*}
    \big|V_0^{(n)}(T, X)\big| \leq W_{0,0}^{\mathrm{early}}(T, X) \leq C_{0,0}^{\mathrm{early}} \Dist^{-(4k+3)}(T, X)
  \end{equation*}
  for all $n\in \N$ and $(T, X) \in \big(-n, T_{0,0}^{\mathrm{early}}\big) \times \R$.
  This establishes Claim~\ref{claim:U0-early} for $i = 0$.

  Next, consider $i \geq 1$ and suppose we have verified the claim for all $i' < i$.
  We show it for $i$ as well.
  Differentiating \eqref{eq:U0-diff-ev-prelim}, $Z \coloneqq \partial_X^i V_0^{(n)}$ satisfies
  \begin{equation}
    \label{eq:U0-deriv-ev-prelim}
    \begin{aligned}
      \partial_T Z = -U_{[k],0}\partial_X Z - (i + 1) (\partial_XU_{[k],0})Z - V_0^{(n)}\partial_X Z - (i+&1)(\partial_X V_0^{(n)})Z\\
      &+ \partial_X^2 Z + G_{i,0}.
    \end{aligned}
  \end{equation}
  for
  \begin{align*}
    G_{i,0} \coloneqq - \sum_{i' = 0}^{i-1} \binom{i + 1}{i'} \big(\partial_X^{i-i'+1} U_{[k],0}\big)\big(\partial_X^{i'}V_0^{(n)}\big) - \frac{1}{2}\sum_{i' = 2}^{i-1} \binom{i + 1}{i'} \big(\partial_X^{i-i'+1} V_0^{(n)}\big)&\big(\partial_X^{i'}V_0^{(n)}\big)\\
                                                                                                                                                                                                                                  &+ \partial_X^i G_{0,0}.
  \end{align*}
  The sums only involve derivatives of $V_0^{(n)}$ of order less than $i$; they can be controlled using \eqref{eq:U0-deriv-est} for $i' < i$.
  Combining \eqref{eq:Ukl-est}, \eqref{eq:G0-def}, and \eqref{eq:U0-deriv-est}, we can check that
  \begin{equation}
    \label{eq:G0-est}
    \abs{G_{i,0}} \lesssim_{i, k} \Dist^{-(3i + 4k + 5)}.
  \end{equation}
  Extracting the leading part of $U_{[k], 0}$, we can write \eqref{eq:U0-deriv-ev-prelim} as
  \begin{equation}
    \label{eq:U0-deriv-ev}
    \begin{aligned}
      \partial_T Z = -\Cub \partial_X Z - (i + 1) &(\partial_X \Cub) Z -\left(U_{[k],0} - \Cub + V_0^{(n)}\right)\partial_X Z \\
      &- (i+1)\partial_X\left(U_{[k],0} - \Cub + V_0^{(n)}\right) Z + \partial_X^2 Z + G_{i,0}.
    \end{aligned}
  \end{equation}

  We claim that this equation satisfies the assumptions \eqref{eq:early-cond} in Lemma~\ref{lem:inner-super-sub}\ref{item:inner-super-sub-early}.
  When $i = 1$, we have $\partial_X V_0^{(n)} = Z$, so
  \begin{equation}
    \label{eq:U0-diff-1-deriv-ev}
    \begin{aligned}
      \partial_T Z = -\Cub \partial_X Z - 2(\partial_X \Cub) Z &-\left(U_{[k],0} - \Cub + V_0^{(n)}\right)\partial_X Z \\
      &- 2\left[\partial_X\left(U_{[k],0} - \Cub\right) + Z\right] Z + \partial_X^2 Z + G_{1,0}.
    \end{aligned}
  \end{equation}
  Using \eqref{eq:Ukl-est}, Claim~\ref{claim:U0-early} for $i = 0$, and \eqref{eq:G0-est}, we have
  \begin{align*}
    \abs{U_{[k],0} - \Cub + V_0^{(n)}} &\lesssim_{k} \Dist^{-3},\\
    \abs{\partial_X\left(U_{[k],0} - \Cub\right)} &\lesssim_k \Dist^{-6},\\
    \abs{G_{1,0}} &\lesssim_k \Dist^{-(4k + 8)}.
  \end{align*}
  Thus \eqref{eq:U0-diff-1-deriv-ev} satisfies \eqref{eq:early-cond} with $b = -\big(U_{[k],0} - \Cub + V_0^{(n)}\big),$ $c = -2\partial_X\left(U_{[k],0} - \Cub\right)$, $G = G_{1,0}$, $g = 0$, and $h(s) = -2s$.
  As when $i = 0$, the $i = 1$ case of Claim~\ref{claim:U0-early} follows from Lemma~\ref{lem:inner-super-sub}\ref{item:inner-super-sub-early}.

  Finally, if $i \geq 2$, \eqref{eq:U0-deriv-ev} is linear in $Z$.
  Using \eqref{eq:Ukl-est} and Claim~\ref{claim:U0-early} for $i' < i$, we have
  \begin{align*}
    \abs{U_{[k],0} - \Cub + V_0^{(n)}} &\lesssim_{k} \Dist^{-3},\\
    \abs{\partial_X\left(U_{[k],0} - \Cub + V_0^{(n)}\right)} &\lesssim_k \Dist^{-6},\\
    \abs{G_{1,0}} &\lesssim_k \Dist^{-(3i + 4k + 5)}.
  \end{align*}
  Taking $b = -\left(U_{[k],0} - \Cub + V_0^{(n)}\right)$, $c = - (i+1)\partial_X\left(U_{[k],0} - \Cub + V_0^{(n)}\right)$, $G = G_{i,0}$, and $g = h = 0$, we see that \eqref{eq:U0-diff-1-deriv-ev} satisfies \eqref{eq:early-cond}.
  As above, Claim~\ref{claim:U0-early} for $i$ follows from Lemma~\ref{lem:inner-super-sub}\ref{item:inner-super-sub-early}.
  By induction on $i$, we obtain the full claim.
  \claimqed

  Next, we take $n \to \infty$ and extract a convergent subsequence.
  Without loss of generality, we may assume that the sequence $\big(T_{i,0}^{\mathrm{early}}\big)_{i \geq 0}$ is decreasing.
  Now, Claim~\ref{claim:U0-early} implies that the sequence $\big(V_0^{(n)}\big)_{n \in \N}$ is bounded in $\m{C}_T^2 \cap \m{C}_X^3$ on each compact $K \subset \big(-\infty, T_{3,k,0}^{\mathrm{early}}\big) \times \R$.
  By Arzel\`{a}--Ascoli, we can extract a subsequence that converges in $\m{C}_T^1(K) \cap \m{C}_X^2(K)$ as $n \to \infty$.
  It follows that the limit satisfies \eqref{eq:U0-diff-ev-prelim} in $K^\circ$.
  Diagonalizing along an exhaustion of $\big(-\infty, T_{3,k,0}^{\mathrm{early}}\big) \times \R$, we can produce a single subsequence that converges locally uniformly to a limit $V_0^*$ solving \eqref{eq:U0-diff-ev-prelim} on $\big(-\infty, T_{3,k,0}\big) \times \R$.
  Because each term in the subsequence satisfies \eqref{eq:U0-deriv-est}, the limit satisfies the following for all $i \in \Z_{\geq 0}$:
  \begin{equation}
    \label{eq:U0-est-early}
    \abs{\partial_X^i V_0^*} \leq C_{i,0}^{\mathrm{early}} \Dist^{-(3i + 4k + 3)} \quad \text{on } \big(-\infty, T_{i,0}^{\mathrm{early}}\big) \times \R.
  \end{equation}
  
  We define $\Ui{0} \coloneqq U_{[k],0} + V_0^*$, which solves viscous Burgers \eqref{eq:U0-ev}.
  We claim that viscous Burgers is globally well posed for initial data that grow slower than $\abs{X}$ at infinity.
  Indeed, under the Cole--Hopf transform, such functions correspond to data for the heat equation that grow slower than $\exp\big(\abs{X}^2\big)$.
  Using Lemma~\ref{lem:cubic}, \eqref{eq:Ukl-est}, \eqref{eq:U0-est-early}, and the triangle inequality, we can check that $\abs{\Ui{0}(T, X)} \lesssim_T \abs{X}^{1/3}$ for $T < T_{0,0}^{\mathrm{early}}$.
  Therefore $\Ui{\ell}$ has a smooth extension to $\R^2$.
  In particular, $\abs{\partial_X^i \Ui{0}} \lesssim_{i} 1$ on the precompact set $\m{Q}$.
  We also extend the definition of $V_0^* = \Ui{0} - U_{[k],0}$ to $\R_- \times \R$.
  We don't go further because $U_{[k],0}$ is singular at the origin.

  To complete the existence portion of Proposition~\ref{prop:Ul} in the case $\ell = 0$, we must extend the bounds \eqref{eq:U0-est-early} to the domain $\m{Q}^c = (\R_- \times \R) \setminus \big([-1, 0) \times [-1, 1]\big).$
  We wish to use super- and subsolutions from Lemma~\ref{lem:inner-super-sub}\ref{item:inner-super-sub-late}, but there is one difficulty: the coefficients $b$ and $c$ in this part of the lemma must be bounded.
  Because \eqref{eq:U0-diff-ev-prelim} includes the nonlinear advection $\frac{1}{2}\partial_X\big(V_0^*\big)=V_0^*\partial_X V_0^*$, we first show that $V_0^*$ is bounded.
  To do so, we compare $\Ui{0}$ with a smooth cubic.
  Let $\bar\Cub(T, X) \coloneqq \Cub(T - 1, X)$ denote a temporal shift of the cubic profile $\Cub$ and define $\bar{V}_0 \coloneqq \Ui{0} - \bar{\Cub}$.
  This modified difference satisfies
  \begin{equation}
    \label{eq:shift-diff}
    \partial_T \bar{V}_0 = -\partial_X\big(\bar{\Cub} \,\bar{V}_0\big) - \bar{V}_0 \partial_X \bar{V}_0 + \partial_X^2 \bar{V}_0 + \partial_X^2 \bar{\Cub}.
  \end{equation}
  It is easy to check that $\bar{\Cub} - U_{[k],0}$ is bounded at time $\bar{T} \coloneqq T_{3,k,0}^{\mathrm{early}}$, and \eqref{eq:U0-est-early} implies the same for $\Ui{0} - U_{[k],0}$.
  By the triangle inequality,
  \begin{equation*}
    \big\|\bar{V}_0(\bar{T}, \anon)\big\|_{\m{C}^0} = \big\|\big(\Ui{0} - U_{[k],0}\big)(\bar{T},\anon) - \big(\bar{\Cub} - U_{[k],0}\big)(\bar{T},\anon)\big\|_{\m{C}^0} < \infty.
  \end{equation*}
  Now $\partial_X \bar{\Cub}$ and $\partial_X^2 \bar{\Cub}$ are uniformly bounded on $\R_- \times \R$, so $C \e^{\lambda (T - \bar{T})}$ is a supersolution of \eqref{eq:shift-diff} provided $C$ and $\lambda$ are sufficiently large.
  Likewise, $-C \e^{\lambda (T - \bar{T})}$ is a subsolution.
  Taking $C \geq \big\|\bar{V}_0(\bar{T}, \anon)\big\|_{\m{C}^0}$, the comparison principle implies that
  \begin{equation*}
    \big\|\bar{V}_0\big\|_{\m{C}^0((\bar{T}, 0) \times \R)} < \infty.
  \end{equation*}
  Again using the boundedness of $\bar\Cub - U_{[k], 0}$ in $\m{Q}^c$, the triangle inequality allows us to conclude that
  \begin{equation*}
    \big\|V_0^*\big\|_{\m{C}^0(\m{Q}^c)} = \big\|\bar{V}_0  + U_{[k],0} - \bar\Cub\big\|_{\m{C}^0(\m{Q}^c)}\leq \const_{0,0}
  \end{equation*}
  for some $\const_{0,0} > 0$.
  Moreover, standard parabolic estimates for \eqref{eq:shift-diff} imply that $\partial_X^i \bar{V}_0$ is bounded for each $i \in \N$.
  Since the derivatives of $\bar\Cub - U_{[k], 0}$ are bounded on $\m{Q}^c$, we see that there exists $\const_{i,0} > 0$ such that
  \begin{equation}
    \label{eq:U0-diff-bounded}
    \abs{\partial_X^i V_0^*} \leq \const_{i,0} \quad \text{on } \m{Q}^c
  \end{equation}
  for each $i \in \Z_{\geq 0}$.

  With these a priori estimates, we can extend the bounds \eqref{eq:U0-est-early} up to time $T = 0$.
  \begin{claim}
    \label{claim:U0-est}
    For each $i \in \Z_{\geq 0}$, there exists a constant $C_{i,0} > 0$ such that
    \begin{equation}
      \label{eq:U0-est}
      \abs{\partial_X^{i} (\Ui{0} - U_{[k],0})} \leq C_{i,0} \Dist^{-(3i+4k+3)} \quad \text{on } \m{Q}^c.
    \end{equation}
  \end{claim}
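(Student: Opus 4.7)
The plan is to close the induction begun in Claim~\ref{claim:U0-early} by applying the linear part~\ref{item:inner-super-sub-late} of Lemma~\ref{lem:inner-super-sub} on the remaining region $\{-\const \leq T < 0\} \setminus \m{Q}$, with $\const$ chosen just beyond $|T_{i,0}^{\mathrm{early}}|$. The key observation that unlocks the linear lemma is the uniform bound $|\partial_X^i V_0^*| \leq \const_{i,0}$ coming from \eqref{eq:U0-diff-bounded}: once $V_0^*$ and its derivatives can be treated as bounded-but-known coefficients, the Burgers nonlinearity gets absorbed into the linear structure of \eqref{eq:inner-super-sub-late}.

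For the base case $i=0$, I would rewrite \eqref{eq:U0-diff-ev-prelim} by extracting the leading cubic drift as
\begin{equation*}
\partial_T V_0^* = -\Cub \partial_X V_0^* - (\partial_X \Cub) V_0^* + b\, \partial_X V_0^* + c\, V_0^* + \partial_X^2 V_0^* + G_{0,0},
\end{equation*}
with $b = -(U_{[k],0} - \Cub + V_0^*)$ and $c = -\partial_X(U_{[k],0} - \Cub)$. The estimates \eqref{eq:Ukl-est} and \eqref{eq:U0-diff-bounded} confirm that $b$ and $c$ are uniformly bounded on $\m{Q}^c$, while $|G_{0,0}| \lesssim_k \Dist^{-(4k+5)}$ by \eqref{eq:G0-def}, verifying the hypotheses \eqref{eq:late-cond}. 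Lemma~\ref{lem:inner-super-sub}\ref{item:inner-super-sub-late} then furnishes a supersolution $W_{\mathrm{late}}$ with $W_{\mathrm{late}} \leq C_{\mathrm{late}} \Dist^{-(4k+3)}$ and $W_{\mathrm{late}} > \const \Dist^{-(4k+3)}$ on the parabolic boundary $\{T = -\const\} \cup \partial^* \m{Q}$. Enlarging $\const$ so that $W_{\mathrm{late}}$ dominates the Claim~\ref{claim:U0-early} bound at $T = -\const$ and the $\m{C}^0$ bound on $\partial^* \m{Q}$ (where $\Dist \asymp 1$ and any uniform bound on $V_0^*$ is automatically of the form $\Dist^{-(4k+3)}$), the parabolic comparison principle yields $|V_0^*| \leq W_{\mathrm{late}}$ throughout the late region.

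The inductive step for $i \geq 1$ runs the same way after differentiating the $V_0^*$ equation $i$ times. The resulting PDE for $Z = \partial_X^i V_0^*$ fits the template \eqref{eq:inner-super-sub-late}, with the quadratic contributions (active only when $i \leq 1$) folded into the bounded coefficients via \eqref{eq:U0-diff-bounded}, and with a forcing $G_{i,0}$ that, by the inductive hypothesis applied to $\partial_X^{i'} V_0^*$ for $i' < i$ together with \eqref{eq:Ukl-est}, satisfies $|G_{i,0}| \lesssim_{i,k} \Dist^{-(3i+4k+5)}$. Lemma~\ref{lem:inner-super-sub}\ref{item:inner-super-sub-late} delivers a supersolution of the required order $\Dist^{-(3i+4k+3)}$, and comparison on the parabolic boundary, seeded by Claim~\ref{claim:U0-early} at $T = -\const$ and by \eqref{eq:U0-diff-bounded} on $\partial^* \m{Q}$, closes the induction.

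The main obstacle lies in the base case: Burgers is genuinely nonlinear, so the linear part of Lemma~\ref{lem:inner-super-sub} would not apply were it not for the a priori bound \eqref{eq:U0-diff-bounded}, established in the preceding paragraph via the time-shifted comparison $\bar{V}_0 = \Ui{0} - \bar{\Cub}$. Trading the nonlinearity for coefficients that depend on the unknown is precisely what enables the linear comparison machinery to work, and is why this claim requires a strategy distinct from that of Claim~\ref{claim:U0-early}, which could not call upon such a bound in the deep past. A minor secondary technicality is justifying the comparison principle on the unbounded spatial strip; this is standard given the uniform boundedness of all ingredients, and if desired can be formally carried out by exhausting $\{|X| \leq R\}$ and passing to the limit using the far-field decay inherited from \eqref{eq:U0-est-early}.
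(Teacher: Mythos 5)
Your proposal is correct and matches the paper's argument essentially step for step: extract the cubic drift $\Cub$ so that the nonlinear advection $V_0^*\partial_X V_0^*$ becomes a linear drift with coefficient involving $V_0^*$ itself, invoke the a priori bound \eqref{eq:U0-diff-bounded} to verify the boundedness hypothesis \eqref{eq:late-cond}, use Lemma~\ref{lem:inner-super-sub}\ref{item:inner-super-sub-late} to produce the late-time supersolution, seed the comparison via \eqref{eq:U0-est-early} at $T = -\const$ and via \eqref{eq:U0-diff-bounded} on $\partial^*\m{Q}$, and close by induction on $i$. You also correctly pinpoint the conceptual hinge — that the nonlinearity can be absorbed into coefficients precisely because \eqref{eq:U0-diff-bounded} is already available in the late region, whereas no such bound exists in the deep past. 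The only minor presentational difference is your $i = 0$ decomposition $c = -\partial_X(U_{[k],0}-\Cub)$ (with the entire $-V_0^*\partial_X V_0^*$ folded into $b\,\partial_X V_0^*$), which is in fact a cleaner bookkeeping than the paper's unified formula \eqref{eq:U0-limit-deriv-ev}; both are valid and lead to the same estimates.
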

  \claimproof{claim:U0-est}We again proceed by induction.
  Fix $i \in \Z_{\geq 0}$ and suppose the claim holds for all $i' < i$, noting that this assumption is vacuous in the base case $i = 0$.
  Repeating our calculation for $\partial_X^i V_0^{(n)}$, we see that $Z \coloneqq \partial_X^i V_0^*$ satisfies
  \begin{equation}
    \label{eq:U0-limit-deriv-ev}
    \begin{aligned}
      \partial_T Z = -\Cub \partial_X Z - (i + 1) (\partial_X &\Cub) Z - (U_{[k],0} - \Cub + V_0^*) \partial_X Z\\
      - &(i + 1) \partial_X(U_{[k],0} - \Cub + V_0^*) Z + \partial_X^2 Z + G_{i,0}^*
    \end{aligned}
  \end{equation}
  for
  \begin{align*}
    G_{i,0}^* \coloneqq - \sum_{i' = 0}^{i-1} \binom{i + 1}{i'} \big(\partial_X^{i-i'+1} U_{[k],0}\big)\big(\partial_X^{i'}V_0^{*}\big) - \frac{1}{2}\sum_{i' = 2}^{i-1} \binom{i + 1}{i'} \big(\partial_X^{i-i'+1} V_0^{*}\big)\big(&\partial_X^{i'}V_0^*\big)\\
                                                                                                                                                                                                                                     &+ \partial_X^i G_{0,0}.
  \end{align*}
  By \eqref{eq:Ukl-est}, \eqref{eq:G0-def}, and \eqref{eq:U0-est} for $i' < i$, we have $\big|G_{i,0}^*\big| \lesssim_{i,k} \Dist^{-(3i+4k+5)}$.
  Likewise, \eqref{eq:Ukl-est} and \eqref{eq:U0-diff-bounded} imply that
  \begin{equation*}
    b \coloneqq - (U_{[k],0} - \Cub + V_0^*) \And c \coloneqq - (i + 1) \partial_X(U_{[k],0} - \Cub + V_0^*)
  \end{equation*}
  are bounded on $\m{Q}^c.$
  Thus \eqref{eq:U0-limit-deriv-ev} satisfies \eqref{eq:late-cond} in Lemma~\ref{lem:inner-super-sub}\ref{item:inner-super-sub-late}.
  Furthermore, \eqref{eq:U0-est-early} and \eqref{eq:U0-diff-bounded} imply the existence of $\const > \big|T_{i,0}^{\mathrm{early}}\big|$ such that
  \begin{equation}
    \label{eq:border-bd}
    \abs{Z} \leq \const \Dist^{-(3i+4k+3)} \quad \text{on } \{T = -\const\} \cup \partial^*\m{Q}.
  \end{equation}

  By Lemma~\ref{lem:inner-super-sub}\ref{item:inner-super-sub-late}, there exists $C_{i,0}^{\mathrm{late}} > 0$ and a signed super/subsolution $\pm W_{i,0}^{\mathrm{late}}$ of \eqref{eq:U0-limit-deriv-ev} on $\{-\const < T < 0\} \setminus \m{Q}$ such that $W_{i,0}^{\mathrm{late}} > \const \Dist^{-(3i + 4k + 3)}$ on $\{T = -\const\} \cup \partial^*\m{Q}$ and $W_{i,0}^{\mathrm{late}} \leq C_{i,0}^{\mathrm{late}} \Dist^{-(3i + 4k + 3)}$.
  By \eqref{eq:border-bd}, $\abs{Z} \leq W_{i,0}^{\mathrm{late}}$ on the parabolic boundary of $\{-\const < T < 0\} \setminus \m{Q}$.
  By the comparison principle (see, e.g., \cite[Corollary~2.5]{Lieberman}),
  \begin{equation*}
    \abs{Z} \leq W_{i,0}^{\mathrm{late}} \leq C_{i,0}^{\mathrm{late}} \Dist^{-(3i + 4k + 3)} \quad \text{on } \{-\const < T < 0\} \setminus \m{Q}.
  \end{equation*}
  In combination with \eqref{eq:U0-est-early}, we obtain Claim~\ref{claim:U0-est} for $i$.
  The full claim now follows by induction.
  \claimqed

  We have constructed $\Ui{0}$ satisfying the bounds in Proposition~\ref{prop:Ul}.
  This concludes the existence proof for $\ell = 0$.
  \medskip

  Next, we tackle $\ell \geq 1$.
  \begin{claim}
    \label{claim:Vl}
    For each $\ell \in \Z_{\geq 0}$ and $k \geq \ell/4$, there exists a solution $\Ui{\ell}$ of \eqref{eq:Ul-ev} such that for each $i \in \Z_{\geq 0}$,
    \begin{equation}
      \label{eq:Vl-est}
      \abs{\partial_X^i(\Ui{\ell} - U_{[k],\ell})} \lesssim_{i,k,\ell} \Dist^{-(3i + 4k - \ell + 3)} \quad \text{on } \m{Q}^c.
    \end{equation}
  \end{claim}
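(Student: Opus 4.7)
The plan is to induct on $\ell$, reducing to the base case $\ell = 0$ already handled above. Fix $\ell \geq 1$ and $k \geq \ell/4$, and assume by strong induction that $\Ui{\ell'}$ has been constructed for every $\ell' < \ell$ and satisfies \eqref{eq:Vl-est} for every valid auxiliary order. First I derive the evolution equation for $V_\ell \coloneqq \Ui{\ell} - U_{[k],\ell}$. Splitting $\Ui{\ell'} = U_{[k],\ell'} + V_{\ell'}^*$ for each $\ell' < \ell$, subtracting \eqref{eq:Ukl-sum-ev} from \eqref{eq:Ul-ev}, and isolating the advection by $\Ui{0}$, a direct computation produces the \emph{linear} inhomogeneous equation
\[
\partial_T V_\ell = -\partial_X(\Ui{0} V_\ell) + \partial_X^2 V_\ell + G_\ell,
\]
where $G_\ell$ collects three types of contributions: (a) the polynomial cross-defects $\partial_X(U_{k',\ell'}U_{k'',\ell''})$ with $\ell'+\ell''=\ell$, $k',k''\leq k$, and $k'+k''>k$, together with $\partial_X^2 U_{k,\ell}$; (b) the mixed terms $\partial_X(V_{\ell'}^* U_{[k],\ell''})$ and $\partial_X(U_{[k],\ell'} V_{\ell''}^*)$ for $0<\ell',\ell''<\ell$; and (c) the purely error-quadratic pieces $\partial_X(V_{\ell'}^* V_{\ell''}^*)$. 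Using \eqref{eq:Ukl-est} and the inductive hypothesis with auxiliary orders chosen $\geq k+1$, I will verify $|\partial_X^i G_\ell| \lesssim_{i,k,\ell} \Dist^{-(3i+4k-\ell+5)}$ on $\m{Q}^c$, which is exactly the forcing decay demanded by Lemma~\ref{lem:inner-super-sub}.

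Next, mirroring the $\ell=0$ construction, for each $n\in\N$ let $V_\ell^{(n)}$ solve the equation on $(-n,0)\times\R$ with $V_\ell^{(n)}(-n,\anon) = 0$. Differentiating $i$ times and writing $\Ui{0} = \Cub + (U_{[k],0} - \Cub + V_0^*)$, the derivative $Z = \partial_X^i V_\ell^{(n)}$ satisfies
\[
\partial_T Z = -\Cub \partial_X Z - (i+1)(\partial_X\Cub) Z + b\partial_X Z + cZ + \partial_X^2 Z + G_{i,\ell},
\]
where by Corollary~\ref{cor:Ul-size} and the already-established Claim~\ref{claim:U0-est} we have $|b|\lesssim \Dist^{-3}$ and $|c|\lesssim \Dist^{-6}$, while $G_{i,\ell}$ contains $\partial_X^i G_\ell$ together with products of lower derivatives of $V_\ell^{(n)}$ against derivatives of $\Ui{0}$. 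Because the equation is \emph{linear} in $V_\ell$, the hypotheses of Lemma~\ref{lem:inner-super-sub}\ref{item:inner-super-sub-early} apply with $g=h=0$. Inducting on $i$ and bracketing $Z$ between the signed super/subsolutions $\pm W_{\mathrm{early}}$ via the comparison principle yields $|\partial_X^i V_\ell^{(n)}| \leq C\,\Dist^{-(3i+4k-\ell+3)}$ on $(-n,T_{i,\ell}^{\mathrm{early}})\times\R$.

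A diagonal argument along an exhaustion (as in the $\ell=0$ case) extracts a local uniform limit $V_\ell^*$ satisfying the early-time bounds on the deep past. Because the limit equation is linear with advection of at most linear growth at spatial infinity, standard parabolic theory extends $V_\ell^*$ smoothly and uniquely to all of $\R_- \times \R$; this is where linearity substantially simplifies the $\ell\geq 1$ case, bypassing the nonlinear a priori argument with the shifted cubic $\bar\Cub$ used for $\ell=0$. To propagate the bound up to $T = 0$, I apply Lemma~\ref{lem:inner-super-sub}\ref{item:inner-super-sub-late} on $\{-\const<T<0\}\setminus\m{Q}$ for $\const \geq |T_{i,\ell}^{\mathrm{early}}|$: the ``late'' coefficients $b,c$ are bounded on $\m{Q}^c$ by Corollary~\ref{cor:Ul-size}, the parabolic-boundary data are controlled by the early-time estimate on $\{T=-\const\}$ and by interior parabolic regularity on $\partial^*\m{Q}$, and the signed super/subsolution $\pm W_{\mathrm{late}}$ closes the $\Dist^{-(3i+4k-\ell+3)}$ upper bound. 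Setting $\Ui{\ell} \coloneqq U_{[k],\ell} + V_\ell^*$ completes the inductive step.

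The main obstacle is the multi-level bookkeeping needed to establish the forcing bound $|\partial_X^i G_{i,\ell}|\lesssim \Dist^{-(3i+4k-\ell+5)}$: one must track how many orders are invoked for each $V_{\ell'}^*$, exploit the gain $k'+k''\geq k+1$ in the polynomial cross-defects, and organize the quadratic cross-terms $\Ui{\ell'}\Ui{\ell''}$ so that each splits into a clean polynomial piece absorbed by $U_{[k],\ell'}U_{[k],\ell''}$ plus controllable error products. Once this combinatorics is in hand, the remainder of the construction is formally parallel to the $\ell=0$ case, with the crucial simplification that no nonlinear Burgers-type term involving $V_\ell$ itself appears.
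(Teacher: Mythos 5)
Your proposal is correct and follows essentially the same route as the paper: derive the linear inhomogeneous equation $\partial_T V_\ell = -\partial_X(\Ui{0}V_\ell)+\partial_X^2 V_\ell + G_{0,\ell}$, approximate with $V_\ell^{(n)}$ started from zero data at time $-n$, induct on $i$ using Lemma~\ref{lem:inner-super-sub}\ref{item:inner-super-sub-early} (with $g=h=0$ since the equation is linear), extract a limit by diagonalization, extend by linear well-posedness, and close the late-time bound via Lemma~\ref{lem:inner-super-sub}\ref{item:inner-super-sub-late} using smoothness of $V_\ell^*$ on $\partial^*\m{Q}$; your observation that linearity bypasses the $\bar{\Cub}$ detour of the $\ell=0$ case matches the paper precisely. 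The only imprecisions are cosmetic: the forcing estimate $|\partial_X^i G_{0,\ell}|\lesssim\Dist^{-(3i+4k-\ell+5)}$ uses the inductive hypothesis \eqref{eq:Vl-est} at the \emph{same} auxiliary order $k$ (not $\geq k+1$), and the boundedness of the late coefficients $b=-(\Ui{0}-\Cub)$, $c$ on $\m{Q}^c$ comes from the already-established $\ell=0$ late estimate \eqref{eq:U0-est} together with \eqref{eq:Ukl-est} rather than from Corollary~\ref{cor:Ul-size} alone, which by itself only shows $|\Ui{0}|\lesssim\Dist$.
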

  \claimproof{claim:Vl}
  We verified the case $\ell = 0$ above.
  Fix $\ell \in \N$ and $k \geq \ell/4$.
  Suppose we have verified Claim~\ref{claim:Vl} for all $0 \leq \ell' <\ell$.
  Because $k \geq \ell/4 \geq \ell'/4$, we obtain solutions $\Ui{\ell'}$ of \eqref{eq:Ul-ev} satisfying \eqref{eq:Vl-est} for each $i \in \Z_{\geq 0}$ and $0 \leq \ell' < \ell$.
  We wish to verify the claim for $\ell$.

  To begin, we write \eqref{eq:Ukl-sum-ev} as
  \begin{equation*}
    \partial_T U_{[k],\ell} = -\frac{1}{2} \sum_{\ell' + \ell'' = \ell} \partial_X(U_{[k],\ell'}U_{[k],\ell''}) + \partial_X^2 U_{[k],\ell} - G_{0,\ell}^{(1)}
  \end{equation*}
  with
  \begin{equation}
    \label{eq:Gl1}
    G_{0,\ell}^{(1)} \coloneqq -\frac{1}{2} \sum_{\substack{k',k'' \leq k \\ k' + k'' > k\\ \ell' + \ell'' = \ell}}\partial_X(U_{k',\ell'}U_{k'',\ell''}) + \partial_X^2 U_{k,\ell}.
  \end{equation}
  Let $V_{\ell'} \coloneqq \Ui{\ell'} - U_{[k],\ell'}$ for each $0 \leq \ell' < \ell$.
  Suppose we have a solution $U_\ell$ of \eqref{eq:Ul-ev} and define $V_\ell \coloneqq U_\ell - U_{[k], \ell}$.

  Using \eqref{eq:Ul-ev}, elementary manipulations yield
  \begin{equation*}
    \partial_T V_\ell = -\sum_{\ell' + \ell'' = \ell} \partial_X(U_{[k],\ell'} V_{\ell''}) - \frac{1}{2} \sum_{\ell' + \ell'' = \ell} \partial_X(V_{\ell'}V_{\ell''}) + \partial_X^2 V_{\ell} + G_{0,\ell}^{(1)}.
  \end{equation*}
  We group the advection terms as follows:
  \begin{equation*}
    -\sum_{\ell' + \ell'' = \ell} \partial_X(U_{[k],\ell'} V_{\ell''}) - \frac{1}{2}\sum_{\ell' + \ell'' = \ell} \partial_X(V_{\ell'}V_{\ell''}) = -\partial_X(\Ui{0} V_\ell) + G_{0,\ell}^{(2)}
  \end{equation*}
  for
  \begin{equation}
    \label{eq:Gl2}
    G_{0,\ell}^{(2)} \coloneqq -\sum_{\ell'=0}^{\ell - 1} \partial_X(U_{[k],\ell - \ell'} V_{\ell'}) - \frac{1}{2} \sum_{\ell' = 1}^{\ell - 1} \partial_X(V_{\ell'}V_{\ell - \ell'}).
  \end{equation}

  Then if $G_{0,\ell} \coloneqq G_{0,\ell}^{(1)} + G_{0,\ell}^{(2)}$, we find
  \begin{equation}
    \label{eq:Vl-ev}
    \partial_T V_\ell = -\partial_X(\Ui{0} V_\ell) + \partial_X^2 V_\ell + G_{0,\ell}.
  \end{equation}
  Let $V_\ell^{(n)}$ denote the solution of \eqref{eq:Vl-ev} on $(-n, 0) \times \R$ with initial data
  \begin{equation*}
    V_\ell^{(n)}(-n, \anon) = 0.
  \end{equation*}
  We prove the following by induction:
  \begin{claim}
    \label{claim:Vl-deriv-est-early}
    For each $i \in \Z_{\geq 0}$, there exist constants $C_{i,\ell}^{\mathrm{early}} > 0$ and $T_{i,\ell}^{\mathrm{early}} < 0$ such that for all $n \in \N$,
    \begin{equation}
      \label{eq:Vl-deriv-est-early}
      \abs{\partial_X^i V_\ell^{(n)}} \leq C_{i,\ell}^{\mathrm{early}} \Dist^{-(3i + 4k - \ell + 3)} \quad \text{on } \big(-\infty, T_{i,\ell}^{\mathrm{early}}\big) \times \R.
    \end{equation}
  \end{claim}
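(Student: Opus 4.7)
My plan is to mirror the nested induction in the proof of Claim~\ref{claim:U0-early}, exploiting a key simplification: because \eqref{eq:Vl-ev} is \emph{linear} in $V_\ell$, every application of Lemma~\ref{lem:inner-super-sub}\ref{item:inner-super-sub-early} will use $g = h = 0$, and no nonlinear auxiliary term appears in the super/subsolution hypothesis. I would induct on $i$, with $\ell \geq 1$ and $k \geq \ell/4$ fixed, relying on the outer induction on $\ell$ (i.e., Claim~\ref{claim:Vl} for $\ell' < \ell$) to furnish bounds on $V_{\ell'}$ and its spatial derivatives.

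For the base case $i = 0$, I would write $\Ui{0} = \Cub + (\Ui{0} - \Cub)$ in \eqref{eq:Vl-ev} to cast it as \eqref{eq:inner-super-sub-early} with $b = -(\Ui{0} - \Cub)$, $c = -\partial_X(\Ui{0} - \Cub)$, and $G = G_{0,\ell}$. Claim~\ref{claim:U0-est} applied with $k$ replaced by $0$ gives $\abs{b} \lesssim \Dist^{-3}$ and $\abs{c} \lesssim \Dist^{-6}$. For $G_{0,\ell}^{(1)}$ in \eqref{eq:Gl1}, each summand is homogeneous of degree at most $\ell - 4k - 5$ because $k' + k'' \geq k + 1$; for $G_{0,\ell}^{(2)}$ in \eqref{eq:Gl2}, I would expand via Leibniz and combine \eqref{eq:Ukl-est} with the outer inductive bound $\abs{\partial_X^a V_{\ell'}} \lesssim \Dist^{-(3a + 4k - \ell' + 3)}$ for $a \in \{0,1\}$. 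This yields $\abs{G_{0,\ell}} \lesssim \Dist^{-(4k - \ell + 5)}$, so \eqref{eq:early-cond} holds. The numerical hypothesis $3i + 4k \geq \ell$ reduces to $k \geq \ell/4$, which is given. Lemma~\ref{lem:inner-super-sub}\ref{item:inner-super-sub-early} then supplies $W_{0,\ell}^{\mathrm{early}} \lesssim \Dist^{-(4k - \ell + 3)}$, and the zero initial data $V_\ell^{(n)}(-n,\anon) = 0$ together with the parabolic comparison principle finishes the base case.

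For the inductive step on $i$, I would differentiate \eqref{eq:Vl-ev} exactly $i$ times and peel off the top two Leibniz terms, obtaining an equation for $Z \coloneqq \partial_X^i V_\ell^{(n)}$ of the form
\begin{equation*}
  \partial_T Z = -\Cub \partial_X Z - (i+1)(\partial_X \Cub) Z - (\Ui{0} - \Cub) \partial_X Z - (i+1)\partial_X(\Ui{0} - \Cub) Z + \partial_X^2 Z + G_{i,\ell},
\end{equation*}
with $G_{i,\ell}$ containing $\partial_X^i G_{0,\ell}$ together with the Leibniz remainders $\binom{i+1}{i'}(\partial_X^{i'}\Ui{0})(\partial_X^{i+1-i'} V_\ell^{(n)})$ for $i' \geq 2$. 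These remainders involve only lower derivatives of $V_\ell^{(n)}$, so the inner inductive hypothesis combined with Corollary~\ref{cor:Ul-size} and the homogeneity of $G_{0,\ell}$ gives $\abs{G_{i,\ell}} \lesssim \Dist^{-(3i + 4k - \ell + 5)}$. A second application of Lemma~\ref{lem:inner-super-sub}\ref{item:inner-super-sub-early} (again with $g = h = 0$ and the factor $i+1$ absorbed into $c$) and the comparison principle close the induction.

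The main technical obstacle is the homogeneity bookkeeping in $G_{0,\ell}$. Each mixed product $U_{[k], \ell - \ell'} V_{\ell'}$ and its spatial derivative must saturate---but not exceed---the critical decay rate $\Dist^{-(4k - \ell + 5)}$; this is where the constraint $k \geq \ell/4$ and the precise match between the homogeneity of $U_{[k], \ell - \ell'}$ and the outer inductive bound on $V_{\ell'}$ conspire. The purely quadratic products $V_{\ell'} V_{\ell - \ell'}$ decay strictly faster and cause no issue. Apart from this careful accounting, the argument is a direct translation of the $\ell = 0$ scheme to a linear advection--diffusion setting.
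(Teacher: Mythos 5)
Your proposal reproduces the paper's argument essentially step for step: the same induction on $i$ with the outer hypothesis on $\ell' < \ell$, the same Leibniz differentiation isolating $Z = \partial_X^i V_\ell^{(n)}$, the same decomposition $\Ui{0} = \Cub + (\Ui{0}-\Cub)$ to cast the equation in the form \eqref{eq:inner-super-sub-early} with $g=h=0$, the same homogeneity accounting giving $\abs{G_{i,\ell}} \lesssim \Dist^{-(3i+4k-\ell+5)}$, and the same comparison with the signed barrier $\pm W_{i,\ell}^{\mathrm{early}}$ starting from zero data at $T=-n$. The only cosmetic difference is that you split off $i=0$ as a separate base case while the paper fuses it into the inductive step by letting the inner hypothesis be vacuous there; the verification of \eqref{eq:early-cond} and the use of the $\ell=0$ bound on $\Ui{0} - \Cub$ are identical.
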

  \claimproof{claim:Vl-deriv-est-early}
  Fix $i \in \Z_{\geq 0}$ and suppose we have verified Claim~\ref{claim:Vl-deriv-est-early} for all $i' < i$, noting that this hypothesis is vacuous in the base case $i = 0$.
  We wish to establish the claim for $i$.
  Differentiating \eqref{eq:Vl-ev}, we see that $Z \coloneqq \partial_X^i V_\ell^{(n)}$ satisfies
  \begin{equation}
    \label{eq:Vl-ev-prelim}
    \partial_T Z = -\Ui{0} \partial_X Z - (i + 1) (\partial_X \Ui{0}) Z + \partial_X^2 Z + G_{i,\ell}.
  \end{equation}
  for
  \begin{equation}
    \label{eq:Gikl}
    G_{i,\ell} \coloneqq - \sum_{i'=0}^{i-1} \binom{i+1}{i'} \big(\partial_X^{i-i'+1}\Ui{0}\big) \big(\partial_X^{i'}V_\ell^{(n)}\big) + \partial_X^i G_{0,\ell}.
  \end{equation}

  We claim that
  \begin{equation}
    \label{eq:Gikl-est}
    \abs{G_{i,\ell}} \lesssim_{i,k,l} \Dist^{-(3i + 4k - \ell + 5)}.
  \end{equation}
  To see this, recall from \eqref{eq:Gl1} and \eqref{eq:Gl2} that $G_{0,\ell}$ in \eqref{eq:Vl-ev} is composed of various terms involving homogeneous functions and $V_{\ell'}$ for $\ell' < \ell$.
  Using \eqref{eq:Ukl-est} and \eqref{eq:Vl-est}, a calculation shows that
  \begin{equation}
    \label{eq:Gl-est}
    \abs{\partial_X^i G_{0,\ell}} \lesssim_{i,k,\ell} \Dist^{-(3i + 4k - \ell + 5)}.
  \end{equation}
  Next, the sum in \eqref{eq:Gikl} involves $\Ui{0}$ and lower-order derivatives of $V_\ell^{(n)}$, which are controlled by Claim~\ref{claim:Vl-deriv-est-early} for $i' < i$.
  Combining the $\ell = 0$ case of \eqref{eq:Ul-est} and \eqref{eq:Ukl-est}, we have
  \begin{equation}
    \label{eq:U0-deriv-bd}
    \abs{\partial_X^{i-i'+1} \Ui{0}} \lesssim_{i,k} \Dist^{-(3i - 3i' + 2)} \quad \text{on } \m{Q}^c.
  \end{equation}
  Also, \eqref{eq:Vl-deriv-est-early} for $i' < i$ yields
  \begin{equation}
    \label{eq:Vl-deriv-bd}
    \abs{\partial_X^{i'}V_\ell^{(n)}} \lesssim_{i,k,\ell} \Dist^{-(3i' + 4k - \ell + 3)}.
  \end{equation}
  Combining \eqref{eq:Gikl} and \eqref{eq:Gl-est}--\eqref{eq:Vl-deriv-bd}, we obtain \eqref{eq:Gikl-est}.
  
  We now extract the leading part of $\Ui{0}$ in \eqref{eq:Vl-ev-prelim}:
  \begin{equation}
    \label{eq:Vl-ev-standard}
    \begin{aligned}
      \partial_T Z = -\Cub \partial_X Z - (i+1)(\partial_X \Cub) Z - &(\Ui{0} - \Cub) \partial_X Z\\
      &- (i+1) \partial_X(\Ui{0} - \Cub) Z + \partial_X^2 Z + G_{i,\ell}.
    \end{aligned}
  \end{equation}
  Combining \eqref{eq:Ukl-est} and \eqref{eq:U0-est}, we find
  \begin{equation}
    \label{eq:U0in-diff}
    \abs{\Ui{0} - \Cub} \lesssim_{k} \Dist^{-3} \And \abs{\partial_X(\Ui{0} - \Cub)} \lesssim_k \Dist^{-6} \quad \text{on } \m{Q}^c.
  \end{equation}
  By \eqref{eq:Gikl-est}, \eqref{eq:Vl-ev-standard} satisfies \eqref{eq:early-cond} in Lemma~\ref{lem:inner-super-sub}\ref{item:inner-super-sub-early}.
  By the lemma, there exist constants $C_{i,\ell}^{\mathrm{early}} > 0$ and $T_{i,\ell}^{\mathrm{early}} < 0$ and a signed super/subsolution $\pm W_{i,\ell}^{\mathrm{early}}$ of \eqref{eq:Vl-ev-standard} such that $W_{i,\ell}^{\mathrm{early}} \leq C_{i,\ell}^{\mathrm{early}} \Dist^{-(3i + 4k - \ell + 3)}$ on $\big(-\infty, T_{i,\ell}^{\mathrm{early}}\big) \times \R$.
  Since $Z$ starts from $0$ at time $-n$, the comparison principle implies \eqref{eq:Vl-deriv-est-early} for $i$.
  Now Claim~\ref{claim:Vl-deriv-est-early} follows by induction.
  \claimqed

  Assume without loss of generality that the sequence $(T_{i,\ell}^{\mathrm{early}})_{i \geq 0}$ is decreasing.
  Employing the diagonalization argument from the $\ell = 0$ case, we can extract a subsequence of $(V_\ell^{(n)})_{n \geq 1}$ converging locally uniformly as $n \to \infty$ to a solution $V_\ell^*$ of \eqref{eq:Vl-ev} on $\big(-\infty, T_{3,k,\ell}^{\mathrm{early}}\big) \times \R$ such that for all $i \in \Z_{\geq 0}$,
  \begin{equation}
    \label{eq:Vl-limit-est-early}
    \abs{\partial_X^i V_\ell^*} \leq C_{i,\ell}^{\mathrm{early}} \Dist^{-(3i + 4k - \ell + 3)} \quad \text{on } \big(-\infty, T_{i,\ell}^{\mathrm{early}}\big) \times \R.
  \end{equation}
  Moreover, the linear equation \eqref{eq:Ul-ev} is well posed on the space of functions of polynomial growth, so we can extend $\Ui{\ell} \coloneqq U_{[k],\ell} + V_\ell^*$ to a solution defined on $\R \times \R$.
  We can likewise extend $V_\ell^* \coloneqq \Ui{\ell} - U_{[k],\ell}$ to $\R_- \times \R$.

  It remains to extend the estimates \eqref{eq:Vl-limit-est-early} up to time $0$ in $\m{Q}^c$.
  We will inductively prove the following:
  \begin{claim}
    \label{claim:Vl-deriv-est-late}
    For each $i \in \Z_{\geq 0}$, there exists $C_{i,\ell} > 0$ such that
    \begin{equation}
      \label{eq:Vl-deriv-est-late}
      \abs{\partial_X^i V_\ell^*} \leq C_{i,\ell} \Dist^{-(3i + 4k - \ell + 3)} \quad \text{on } \m{Q}^c.
    \end{equation}
  \end{claim}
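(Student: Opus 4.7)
The plan is to mirror the strategy used for the $\ell = 0$ case in Claim~\ref{claim:U0-est}, inducting on $i$ and applying Lemma~\ref{lem:inner-super-sub}\ref{item:inner-super-sub-late} on the bounded region $\{-\const < T < 0\} \setminus \m{Q}$. The argument will in fact be simpler than for the leading case because \eqref{eq:Vl-ev} is linear in $V_\ell^*$: there is no analogue of the nonlinear advection $V_0^* \partial_X V_0^*$ that in \eqref{eq:U0-diff-ev-prelim} forced the auxiliary $L^\infty$ bound \eqref{eq:U0-diff-bounded} via the shifted-cubic device \eqref{eq:shift-diff}, so no preliminary control on $V_\ell^*$ itself is required.

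Fix $i \in \Z_{\geq 0}$ and assume \eqref{eq:Vl-deriv-est-late} for all $i' < i$. Differentiating \eqref{eq:Vl-ev} $i$ times as in the derivation of \eqref{eq:Vl-ev-prelim}, $Z \coloneqq \partial_X^i V_\ell^*$ satisfies
\begin{equation*}
  \partial_T Z = -\Ui{0}\partial_X Z - (i+1)(\partial_X\Ui{0}) Z + \partial_X^2 Z + G_{i,\ell}^*,
\end{equation*}
where $G_{i,\ell}^*$ is \eqref{eq:Gikl} with $V_\ell^{(n)}$ replaced by $V_\ell^*$. Extracting the leading cubic from $\Ui{0}$ puts this in the form \eqref{eq:inner-super-sub-late} with $b \coloneqq -(\Ui{0} - \Cub)$ and $c \coloneqq -(i+1)\partial_X(\Ui{0} - \Cub)$; by \eqref{eq:U0in-diff} together with the lower bound $\Dist \gtrsim 1$ on $\m{Q}^c$, both $b$ and $c$ are bounded there. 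Repeating the calculation behind \eqref{eq:Gikl-est}, now using the inductive hypothesis \eqref{eq:Vl-deriv-est-late} for $i' < i$, the already-proved $\ell = 0$ case of the claim to bound derivatives of $\Ui{0}$, and \eqref{eq:Gl-est}, I will obtain $|G_{i,\ell}^*| \lesssim_{i,k,\ell} \Dist^{-(3i + 4k - \ell + 5)}$. Thus the hypotheses \eqref{eq:late-cond} of Lemma~\ref{lem:inner-super-sub}\ref{item:inner-super-sub-late} hold.

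To feed this into the comparison principle I need $Z$-bounds on the parabolic boundary of $\{-\const < T < 0\} \setminus \m{Q}$. Choosing $\const > \big|T_{i,\ell}^{\mathrm{early}}\big|$, the slice $\{T = -\const\}$ lies in the early regime and \eqref{eq:Vl-limit-est-early} gives $|Z| \leq C_{i,\ell}^{\mathrm{early}} \Dist^{-(3i + 4k - \ell + 3)}$ there. On the lateral piece $\partial^*\m{Q} = [-1, 0) \times \{\pm 1\}$, which sits a positive distance from the spacetime origin, $\Ui{\ell}$ (globally smooth on $\R^2$) and $U_{[k],\ell}$ (smooth away from the origin) both extend smoothly to the closure $[-1, 0] \times \{\pm 1\}$, so $|Z| \lesssim 1 \lesssim \Dist^{-(3i + 4k - \ell + 3)}$ there since $\Dist \asymp 1$. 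Enlarging constants if needed, Lemma~\ref{lem:inner-super-sub}\ref{item:inner-super-sub-late} produces a signed super/subsolution $\pm W_{i,\ell}^{\mathrm{late}}$ that dominates $\pm Z$ on this parabolic boundary, and the comparison principle (\cite[Corollary~2.5]{Lieberman}) delivers \eqref{eq:Vl-deriv-est-late} on the bounded region. Combining with \eqref{eq:Vl-limit-est-early} closes the induction on $i$ and hence, by the outer induction on $\ell$, Claim~\ref{claim:Vl}.

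The main technical delicacy is making the $\partial^*\m{Q}$ bound quantitative in $i$, since $V_\ell^*$ was constructed only as a locally uniform limit of the $V_\ell^{(n)}$. This will follow either from the polynomial-growth well-posedness for the linear equation \eqref{eq:Vl-ev} that was already invoked to extend $\Ui{\ell}$ to $\R^2$, or equivalently from standard interior parabolic Schauder estimates applied in a small neighborhood of $\partial^*\m{Q}$ disjoint from the origin; I expect this to be the only bookkeeping subtlety, with the rest of the proof being a routine replay of the $\ell = 0$ comparison-principle template.
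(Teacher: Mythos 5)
Your proposal matches the paper's proof step for step: induct on $i$, differentiate \eqref{eq:Vl-ev} to reach the form \eqref{eq:inner-super-sub-late}, check \eqref{eq:late-cond} via \eqref{eq:U0in-diff} and the forcing bound, deduce the parabolic-boundary control from \eqref{eq:Vl-limit-est-early} at $T = -\const$ and smoothness on $\partial^*\m{Q}$, and close with the comparison principle against the signed super/subsolution from Lemma~\ref{lem:inner-super-sub}\ref{item:inner-super-sub-late}. Your observation that linearity removes the need for the shifted-cubic $L^\infty$ bootstrap of the $\ell=0$ case is exactly the paper's implicit simplification; the smoothness on $\partial^*\m{Q}$ that you flag as a delicacy is handled in the paper just as you suggest, by noting $\Ui{\ell}$ is globally smooth (via the polynomial-growth well-posedness already invoked) while $U_{[k],\ell}$ is smooth away from the origin.
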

  \claimproof{claim:Vl-deriv-est-late}Fix $i \in \Z_{\geq 0}$ and suppose we have verified Claim~\ref{claim:Vl-deriv-est-late} for all $i' < i$.
  As usual, this assumption is vacuous in the base case $i = 0$.
  We wish to establish the claim for $i$.
  The derivative $Z \coloneqq \partial_X^i V_\ell^*$ solves
  \begin{equation}
    \label{eq:Vl-limit-ev-standard}
    \begin{aligned}
      \partial_T Z = -\Cub \partial_X Z - (i+1)(\partial_X \Cub) Z - &(\Ui{0} - \Cub) \partial_X Z\\
      &- (i+1) \partial_X(\Ui{0} - \Cub) Z + \partial_X^2 Z + G_{i,\ell}^*
    \end{aligned}
  \end{equation}
  for
  \begin{equation*}
    G_{i,\ell}^* \coloneqq - \sum_{i'=0}^{i-1} \binom{i+1}{i'} \big(\partial_X^{i-i'+1}\Ui{0}\big)\big(\partial_X^{i'}V_\ell^*\big) + \partial_X^i G_{0,\ell}.
  \end{equation*}
  Combining \eqref{eq:Ukl-est}, \eqref{eq:U0-est}, \eqref{eq:Gl-est}, and \eqref{eq:Vl-deriv-est-late} for $i' < i$, we find
  \begin{equation*}
    \abs{G_{i,\ell}^*} \lesssim_{i,k,\ell} \Dist^{-(3i + 4k - \ell + 5)} \quad \text{on } \m{Q}^c.
  \end{equation*}
  Together with \eqref{eq:U0in-diff}, this shows that \eqref{eq:Vl-limit-ev-standard} satisfies \eqref{eq:late-cond} in Lemma~\ref{lem:inner-super-sub}\ref{item:inner-super-sub-late}.
  
  Now, $\Ui{\ell}$ is smooth in $\R\times \R$ while $U_{[k], \ell}$ is smooth locally uniformly in $\big((-\infty, 0] \times \R\big) \setminus \{(0, 0)\}$, so $V_\ell^*$ is uniformly smooth on $\partial^* \m{Q} = [-1,0) \times \{\pm 1\}$.
  By \eqref{eq:Vl-limit-est-early}, there exists $\const \in \big(\big|T_{i,\ell}^{\mathrm{early}}\big|,\infty\big)$ such that
  \begin{equation*}
    \abs{Z} \leq \const \Dist^{-(3i + 4k - \ell + 3)} \quad \text{on } \big(\{-\const\} \times \R\big) \cup \partial^* \m{Q}.
  \end{equation*}
  By Lemma~\ref{lem:inner-super-sub}\ref{item:inner-super-sub-late}, there exist $C_{i,\ell}^{\mathrm{late}} > 0$ and a signed super/subsolution $\pm W_{i,\ell}^{\mathrm{late}}$ of \eqref{eq:Vl-ev-standard} on $\{-\const < T < 0\} \setminus \m{Q}$ such that
  \begin{equation*}
    W_{i,\ell}^{\mathrm{late}} > \const \Dist^{-(3i + 4k - \ell + 3)} \quad \text{on } \big(\{-\const\} \times \R\big) \cup \partial^* \m{Q}
  \end{equation*}
  and
  \begin{equation*}
    W_{i,\ell}^{\mathrm{late}} < C_{i,\ell}^{\mathrm{late}} \Dist^{-(3i + 4k - \ell + 3)} \quad \text{on } \{-\const < T < 0\} \setminus \m{Q}.
  \end{equation*}
  By the comparison principle in $\{-\const < T < 0\} \setminus \m{Q}$,
  \begin{equation*}
    \abs{Z} \leq  W_{i,\ell}^{\mathrm{late}} < C_{i,\ell}^{\mathrm{late}} \Dist^{-(3i + 4k - \ell + 3)} \quad \text{on } \{-\const < T < 0\} \setminus \m{Q}. 
  \end{equation*}
  In light of \eqref{eq:Vl-limit-est-early}, we have proven Claim~\ref{claim:Vl-deriv-est-late} for $i$.
  The full claim then follows from induction on $\ell$.
  \claimqed
  \smallskip

  \noindent
  In turn, we have now verified Claim~\ref{claim:Vl} for $\ell$.
  The full form of Claim~\ref{claim:Vl} follows from induction.
  \claimqed
  \medskip

  For each pair $(k,\ell)$ with $k \geq \ell/4$, we have constructed a solution $\Ui{\ell}$ of \eqref{eq:Ul-ev} satisfying \eqref{eq:Ul-est} for all $i \in \Z_{\geq 0}$.
  We now show that this solution is unique.  

  First consider $\ell = 0$.
  Suppose $U^{(1)}$, $U^{(2)}$ satisfy \eqref{eq:U0-ev} and \eqref{eq:Ul-est} with $k_1,k_2\geq 0$ respectively for each $i \in \{0, 1\}$.
  The difference $V \coloneqq U^{(1)} - U^{(2)}$ satisfies
  \begin{equation}
    \label{eq:unique0-ev}
    \partial_T V = -\partial_X(\braket{U}V) + \partial_X^2 V,
  \end{equation}
  where $\braket{U} \coloneqq (U^{(1)}+ U^{(2)})/2$.
  Using \eqref{eq:Ul-est} for $i = 0$ and $k = k_1, k_2$ as well as \eqref{eq:Ukl-est},
  \begin{equation*}
    \abs{V} \leq \big|U^{(1)} - U_{[k_1],0}\big| + \big|U_{[k_1],0} - U_{[k_2],0}\big| + \big|U^{(2)} - U_{[k_2],0}\big| \lesssim_{k_1,k_2} \Dist^{-3} \quad \text{in } \m{Q}^c.
  \end{equation*}
  Thus
  \begin{equation*}
    \varphi(T) \coloneqq \norm{V(T, \anon)}_{L^2(\R)}^2 \lesssim_{k_1,k_2} \int_{\R} \Dist(T, X)^{-6} \d X.
  \end{equation*}
  Using Lemma~\ref{lem:cubic}, we find
  \begin{equation}
    \label{eq:unique-a-priori}
    \varphi(T) \lesssim_{k_1,k_2} \int_{\R} \Dist(T, X)^{-6} \d X \lesssim \abs{T}^{-3/2}.
  \end{equation}
  On the other hand, we can use \eqref{eq:unique0-ev} to control the growth of $\varphi$.
  Multiplying \eqref{eq:unique0-ev} by $V$ and integrating by parts, we see that
  \begin{equation*}
    \dot \varphi(T) \leq \norm{\partial_X \braket{U}(T, \anon)}_{L^\infty(\R)} \varphi(T).
  \end{equation*}
  Fix $T < 0$.
  By Gr\"onwall,
  \begin{equation}
    \label{eq:unique-Gronwall}
    \varphi(T) \leq \varphi(-N) \exp\left(\int_{-N}^{T} \norm{\partial_X \braket{U}(S, \anon)}_{L^\infty(\R)} \d S\right)
  \end{equation}
  for $N > \abs{T}$.
  Now, \eqref{eq:Ul-est} with $i=1$, \eqref{eq:Ukl-est}, and Lemma~\ref{lem:cubic} imply that
  \begin{equation*}
    \big\|\partial_X\big(U^{(j)} - \Cub\big)(S,\anon)\big\|_{L^\infty(\R)} \lesssim_{k_j} \norm{\Dist(S, \anon)^{-6}}_{L^\infty(\R)} \lesssim \abs{S}^{-3}
  \end{equation*}
  for each $j \in \{1, 2\}$.
  By Corollary~\ref{cor:u0-out-x}, we find
  \begin{equation*}
    \norm{\partial_X \braket{U}(S, \anon)}_\infty = \norm{\partial_X \Cub(S, \anon)}_{L^\infty(\R)} + \m{O}_{k_1,k_2}\big(\abs{S}^{-3}\big) = \abs{S}^{-1} + \m{O}_{k_1,k_2}\big(\abs{S}^{-3}\big).
  \end{equation*}
  Thus \eqref{eq:unique-a-priori} and \eqref{eq:unique-Gronwall} yield
  \begin{equation*}
    \varphi(T) \lesssim_{k_1,k_2} N^{-3/2} \exp\big(\log N - \log \abs{T} + \m{O}_{k_1,k_2}(1)\big) \lesssim_{k_1,k_2,T} N^{-1/2}.
  \end{equation*}
  The left side is independent of $N$, so taking $N \to -\infty$, we see that $\varphi(T) = 0$.
  That is, $U^{(1)} = U^{(2)}$.
  
  Finally, take $\ell \geq 1$.
  Suppose $U^{(1)}$, $U^{(2)}$ satisfy \eqref{eq:Ul-ev} and \eqref{eq:Ul-est} with ${k_1,k_2\geq \ell/4}$ respectively and $i \in \{0, 1\}$.
  The difference $V \coloneqq U^{(1)} - U^{(2)}$ satisfies
  \begin{equation*}
    \partial_T V = -\partial_X(\Ui{0}V) + \partial_X^2 V.
  \end{equation*}
  Using \eqref{eq:Ul-est} for $i = 0$ and $k = k_1, k_2$ as well as \eqref{eq:Ukl-est}, the triangle inequality yields
  \begin{align*}
    \abs{V} &\leq \big|U^{(1)} - U_{[k_1],\ell}\big| + \big|U_{[k_1],\ell} - U_{[k_2],\ell}\big| + \big|U^{(2)} - U_{[k_2],\ell}\big|\\
            &\lesssim_{k_1,k_2,\ell} \Dist^{-(4k_1 - \ell) - 3} + \Dist^{-[4(k_1\wedge k_2) - \ell] - 3} + \Dist^{-(4k_2 - \ell) - 3}
  \end{align*}
  in $\m{Q}^c$.
  Because $k_1 \wedge k_2 \geq \ell/4$, we have $\abs{V} \lesssim_{k_1,k_2,\ell} \Dist^{-3}$.
  As in \eqref{eq:unique-a-priori}, this implies that
  \begin{equation*}
    \varphi(T) \coloneqq \norm{V(T, \anon)}_{L^2(\R)}^2 \lesssim_{k_1,k_2,\ell} \abs{T}^{-3/2}.
  \end{equation*}
  Using \eqref{eq:Ul-est} with  $i = 1$ and $\ell = k = 0$ as well as Corollary~\ref{cor:u0-out-x}, we have
  \begin{equation*}
    \norm{\partial_X \Ui{0}(S, \anon)}_{L^\infty(\R)} = \abs{S}^{-1} + \m{O}\big(\abs{S}^{-3}\big).
  \end{equation*}
  Arguing as above, we see that $V = 0$.
  Thus there is a unique solution $\Ui{\ell}$ of \eqref{eq:Ul-ev} satisfying \eqref{eq:Ul-est} for each $i \in \{0, 1\}$ and some $k \geq \ell/4$.

  One minor technical point remains.
  Our proof of \eqref{eq:Ul-est} assumed that $k \geq \ell/4$.
  We must thus verify \eqref{eq:Ul-est} for $k < \ell/4$.
  Let $k_\ell \coloneqq \ceil{\ell/4}$, so that $\Ui{\ell}$ satisfies \eqref{eq:Ul-est} with $k_\ell$.
  Take $k < k_\ell$.
  Using \eqref{eq:Ukl-est}, we can easily check that the difference $U_{[k],\ell} - U_{[k_\ell],\ell}$ is dominated by the first non-shared term: $U_{k+1,\ell}$.
  Precisely,
  \begin{equation*}
    \abs{\partial_X^i(U_{[k],\ell} - U_{[k_\ell],\ell})} \lesssim_{i,\ell} \Dist^{-(3i + 4k - \ell + 3)} \quad \text{on } \m{Q}^c.
  \end{equation*}
  Thus by \eqref{eq:Ul-est} for $k_\ell$ and the triangle inequality,
  \begin{equation*}
    \abs{\partial_X^i(\Ui{\ell} - U_{[k],\ell})} \lesssim_{i,\ell} \Dist^{-(3i + 4k - \ell + 3)} \quad \text{in } \m{Q}^c
  \end{equation*}
  as desired.
  This concludes the proof of Proposition~\ref{prop:Ul}.
\end{proof}
We now construct the super- and subsolutions used in the previous proof.
\begin{proof}[Proof of Lemma~\textup{\ref{lem:inner-super-sub}}]
  \tbf{Part~\ref{item:inner-super-sub-early}.}~Fix $\const > 0$ and $i, k, \ell \in \Z_{\geq 0}$ with $k \geq \ell/4.$
  We consider the equation \eqref{eq:inner-super-sub-early} under the condition \eqref{eq:early-cond}.
  We construct a positive supersolution $W$ of the form
  \begin{equation*}
    W(T, X) = a(T) \Dist(T, X)^{-(3i + 4k - \ell + 3)}
  \end{equation*}
  for some $a \colon (-\infty, T_*) \to \R_+$ and $T_* < 0$.
  Define the nonlinear operator
  \begin{equation}
    \label{eq:NL-prelim}
    \begin{aligned}
      \op{NL}(V) \coloneqq \partial_T V + \Cub \partial_X V + (i + 1) (\partial_X \Cub) V - \big[b &+ g(V)\big]\partial_X V\\
      &- \big[c + h(V)\big]V - \partial_X^2 V - G.
    \end{aligned}
  \end{equation}
  We choose $a$ so that $\op{NL}(W) \geq 0$.

  Following \eqref{eq:dynamic-deriv}, define $\bar{\partial}_T \coloneqq \partial_T + \Cub \partial_X$.
  Recall from Definition~\ref{def:cubic} that
  \begin{equation*}
    \partial_X \Cub = -\Cubder = -\Dist^{-2}.
  \end{equation*}
  We can therefore write \eqref{eq:NL-prelim} as
  \begin{equation*}
    \op{NL}(V) \coloneqq \bar\partial_T V - (i + 1) \Dist^{-2} V - \big[b + g(V)\big]\partial_X V - \big[c + h(V)\big]V - \partial_X^2 V - G.
  \end{equation*}
  Setting $p \coloneqq 3i + 4k - \ell + 3$, we can use \eqref{eq:derivs} to compute
  \begin{equation}
    \label{eq:super-derivs}
    \begin{aligned}
      \bar{\partial}_T \Dist^{-p} &= \frac{p}{2} \Dist^{-(p + 2)},\\
      \partial_X \Dist^{-p} &= 3 \coeff p \Cub \Dist^{-(p + 4)},\\
      \partial_X^2 \Dist^{-p} &= -3 \coeff p \Dist^{-(p + 6)} + 9 \coeff^2 p (p + 4) \Cub^2 \Dist^{-(p+8)}.\\
    \end{aligned}
  \end{equation}
  Now, Definition~\ref{def:cubic} implies that $\abs{\Cub} \leq (3\coeff)^{-1/2} \Dist$, so we have
  \begin{equation*}
    \abs{\partial_X \Dist^{-p}} \leq \sqrt{3 \coeff} p \Dist^{-(p + 3)} \And \abs{\partial_X^2 \Dist^{-p}} \leq 3 \coeff p (p + 5) \Dist^{-(p + 6)}.
  \end{equation*}
  Combining these estimates with \eqref{eq:early-cond}, we have
  \begin{equation}
    \label{eq:super-ineq-prelim-1}
    \begin{aligned}
      \op{NL}(W) \geq \dot{a} \Dist^{-p} + &\left[\left(\frac{p}{2} - i - 1\right)a - \const\right]\Dist^{-(p + 2)}- \const \sqrt{3\coeff}p a^2 \Dist^{-(2p + 3)}\\
      &- \left[\const\big(\sqrt{3\coeff} p + 1\big) + 3\coeff p (p + 5)\right] a\Dist^{-(p + 6)} - \const a^2 \Dist^{-2p}.
    \end{aligned}
  \end{equation}
  Now, $p \geq 3(i + 1)$ because $4k \geq \ell$, so $p/2 - i - 1 \geq (i + 1)/2 \geq 1/2$.
  Let us assume that $a \geq 2\const$, so that
  \begin{equation*}
    \left(\frac{p}{2} - i - 1\right)a - \const \geq 0.
  \end{equation*}
  Also, assume that $T < T_* \leq -1$, so that $\Dist \geq \abs{T}^{1/2} > 1$.
  Recalling that $p \geq 3,$ we have
  \begin{equation*}
    \Dist^{-(p + 6)}, \Dist^{-(2p + 3)}, \Dist^{-2p} \leq \abs{T}^{-3/2} \Dist^{-p}.
  \end{equation*}
  With these bounds, we can condense \eqref{eq:super-ineq-prelim-1}:
  \begin{equation}
    \label{eq:super-ineq-prelim-2}
    \begin{aligned}
      \op{NL}(W) \geq \dot{a} \Dist^{-p} - &\left[\const\big(\sqrt{3\coeff} p + 1\big) + 3\coeff p (p + 5)\right] a \abs{T}^{-3/2}\Dist^{-p}\\
      &\hspace{3cm}- \const\big(\sqrt{3\coeff}p + 1\big) a^2 \abs{T}^{-3/2} \Dist^{-p}.
    \end{aligned}
  \end{equation}
  Finally, $a \geq 2\const$ implies that
  \begin{equation*}
    \begin{aligned}
      \Big[\const\big(\sqrt{3\coeff} p + 1\big) + 3\coeff p &(p + 5)\Big] \frac{1}{a} + \const\big(\sqrt{3\coeff}p + 1\big)\\
      &\leq \frac{1}{2}\big(\sqrt{3\coeff} p+ 1\big)+ \const\big(\sqrt{3\coeff}p + 1\big) + \frac{3\coeff p (p + 5)}{2\const} \eqqcolon C_0.
    \end{aligned}
  \end{equation*}
  Thus \eqref{eq:super-ineq-prelim-2} implies that
  \begin{equation*}
    \op{NL}(W) \geq \big(\dot{a} - C_0 a^2\abs{T}^{-3/2}\big) \Dist^{-p}.
  \end{equation*}
  We want $\op{NL}(W) \geq 0$, so let $a$ solve
  \begin{equation*}
    \dot{a} = C_0 a^2 \abs{T}^{-3/2}, \quad a(-\infty) = 2\const.
  \end{equation*}
  Using separation of variables, we compute:
  \begin{equation*}
    a(T) = \left(\frac{1}{2\const} - 2C_0 \abs{T}^{-1/2}\right)^{-1}.
  \end{equation*}
  Define
  \begin{equation*}
    T_{\mathrm{early}} \coloneqq -\max\big\{64 \const^2 C_0^2,\, 1\big\} \And C_{\mathrm{early}} \coloneqq 4\const.
  \end{equation*}
  Then our choice of $a$ ensures that $W_{\mathrm{early}} \coloneqq W$ is a smooth positive supersolution of \eqref{eq:inner-super-sub-early} on $(-\infty, T_{\mathrm{early}}) \times \R$ such that
  \begin{equation*}
    W_{\mathrm{early}} \leq a(T_*) \Dist^{-p} \leq C_{\mathrm{early}} \Dist^{-(3i + 4k - \ell + 3)}.
  \end{equation*}
  Moreover, we can easily check that $\op{NL}(-W_{\mathrm{early}}) \leq 0$, i.e., $-W_{\mathrm{early}}$ is a subsolution.
  This completes the proof of Lemma~\ref{lem:inner-super-sub}\ref{item:inner-super-sub-early}.
  \medskip

  \noindent
  \tbf{Part~\ref{item:inner-super-sub-late}.}~Again, fix $\const > 0$ and $i,k,\ell \in \Z_{\geq 0}$.
  We consider the linear equation \eqref{eq:inner-super-sub-late} under the condition \eqref{eq:late-cond}.
  Define the affine operator
  \begin{align*}
    \op{L}(V) &\coloneqq \partial_T V + \Cub \partial_X V + (i + 1) (\partial_X \Cub) V - b\partial_X V - cV - \partial_X^2 V - G\\
              &\,= \bar{\partial}_T V - (i + 1) \Dist^{-2} V - b\partial_X V - cV - \partial_X^2 V - G.
  \end{align*}
  Again, let $W \coloneqq a \Dist^{-p}$ with $p \coloneqq 3i + 4k - \ell + 3$ and $a \colon (-\const, 0) \to [2\const, \infty)$.
  As above, $a \geq 2\const$, \eqref{eq:super-derivs}, and \eqref{eq:late-cond} imply that
  \begin{equation*}
    \op{L}(W) \geq \dot{a} \Dist^{-p} - \sqrt{3\coeff}p\const a \Dist^{-(p + 3)} - \const a \Dist^{-p} - 3\coeff p(p + 5) \Dist^{-(p + 6)} - \const \Dist^{-(p + 2)}.
  \end{equation*}
  Let $\rho \coloneqq \inf_{\m{Q}^c} \Dist > 0$.
  Then
  \begin{equation*}
    \op{L}(W) \geq (\dot{a} - C_1 a) \Dist^{-p}
  \end{equation*}
  for
  \begin{equation*}
    C_1 \coloneqq \sqrt{3 \coeff} p \const \rho^{-3} + \const + 3\coeff p (p + 5) \rho^{-6} + \rho^{-2},
  \end{equation*}
  where we have used $\const \leq a/2 \leq a$ in the last term.
  Thus it suffices to choose
  \begin{equation*}
    a(T) \coloneqq 2\const \e^{C_1(T + \const)}.
  \end{equation*}
  This ensures that $\op{L}(W) \geq 0$ in $\{-\const < T < 0\} \setminus \m{Q}$ and $a \geq 2\const$ for $T \in (-\const, 0)$.
  Thus $W_{\mathrm{late}} \coloneqq W$ is a positive supersolution of \eqref{eq:inner-super-sub-late} in $\{-\const < T < 0\} \setminus \m{Q}$ such that
  \begin{equation*}
    W_{\mathrm{late}} \geq a(-\const) \Dist^{-p} \geq \const \Dist^{-(3i + 4k - \ell + 3)} \And W_{\mathrm{late}} \leq C_{\mathrm{late}} \Dist^{-(3i + 4k - \ell + 3)}
  \end{equation*}
  for $C_{\mathrm{late}} \coloneqq 2\const \e^{C_1 \const}$.
  Moreover, we can easily check that $-W$ is a subsolution.
  This completes the proof of Lemma~\ref{lem:inner-super-sub}\ref{item:inner-super-sub-late}.
\end{proof}
Given $\ell \in \Z_{\geq 0}$, we take $\Ui{\ell}$ as in Proposition~\ref{prop:Ul} and let
\begin{equation}
  \label{eq:Ui-convert}
  \ui{\ell}(t, x) \coloneqq \nu^{(\ell + 1)/4}\Ui{\ell}\big(\nu^{-1/2}t, \nu^{-3/4}x\big)
\end{equation}
The following is immediate from Corollary~\ref{cor:Ul-size} and \eqref{eq:Ui-convert}:
\begin{corollary}
  \label{cor:ul-size}
  For each $i,\ell \in \Z_{\geq 0}$, $|\partial_x^i \ui{\ell}| \lesssim_{i,\ell} \big(\dist \vee \nu^{1/4}\big)^{-3i + \ell + 1}$ on $[t_0, 0) \times [-1, 1]$.
\end{corollary}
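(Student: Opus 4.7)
The plan is to deduce the bound directly from Corollary~\ref{cor:Ul-size} via the rescaling that defines $\ui{\ell}$. First I would apply the chain rule to $\ui{\ell}(t, x) = \nu^{(\ell+1)/4} \Ui{\ell}(\nu^{-1/2}t, \nu^{-3/4}x)$ to obtain
\begin{equation*}
  \partial_x^i \ui{\ell}(t, x) = \nu^{(\ell + 1 - 3i)/4} (\partial_X^i \Ui{\ell})(T, X),
\end{equation*}
and recall from \eqref{eq:Dist} that $\Dist(T, X) = \nu^{-1/4} \dist(t, x)$.

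Next, I would split the domain $[t_0, 0) \times [-1, 1]$ into two zones according to whether $(T, X) \in \m{Q}$ or $(T, X) \in \m{Q}^c$. Noting that $\m{Q}$ pulls back to $\{-\nu^{1/2} < t < 0,\, |x| < \nu^{3/4}\}$, Lemma~\ref{lem:cubic} shows $\dist(t, x) \lesssim \nu^{1/4}$ on the inner zone, while $\Dist \geq \inf_{\m{Q}^c}\Dist > 0$ on the outer zone forces $\dist(t, x) \gtrsim \nu^{1/4}$ there. On $\m{Q}^c$, Corollary~\ref{cor:Ul-size} yields $|\partial_X^i \Ui{\ell}| \lesssim_{i,\ell} \Dist^{-3i+\ell+1}$; substituting, the powers of $\nu$ cancel exactly, leaving $|\partial_x^i \ui{\ell}| \lesssim_{i,\ell} \dist^{-3i + \ell + 1} \asymp (\dist \vee \nu^{1/4})^{-3i + \ell + 1}$. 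On $\m{Q}$, the corollary gives $|\partial_X^i \Ui{\ell}| \lesssim_{i,\ell} 1$, so $|\partial_x^i \ui{\ell}| \lesssim_{i,\ell} \nu^{(\ell + 1 - 3i)/4} \asymp (\nu^{1/4})^{-3i + \ell + 1} \asymp (\dist \vee \nu^{1/4})^{-3i+\ell+1}$.

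There is really no obstacle here; the whole purpose of the $\dist \vee \nu^{1/4}$ notation is to interpolate between the homogeneous far-field decay of $\partial_X^i \Ui{\ell}$ and its boundedness on the viscous plateau near the origin. The only subtlety worth flagging is that the exponent $-3i + \ell + 1$ may be of either sign, but in each region one of $\dist$ or $\nu^{1/4}$ dominates the other up to a universal constant, so the maximum transforms consistently regardless of the sign.
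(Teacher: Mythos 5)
Your proof is correct and follows precisely the route the paper has in mind: the paper merely asserts the corollary is "immediate from Corollary~\ref{cor:Ul-size} and \eqref{eq:Ui-convert}," which is exactly the rescaling-plus-case-split argument you carry out. The only detail worth noting (which you handle correctly) is that the pullback of $\m{Q}$ lies inside the rectangle $[t_0,0)\times[-1,1]$ because $\nu\leq 1\leq|t_0|$, so the dichotomy $\m{Q}$ versus $\m{Q}^c$ really does cover the stated domain.
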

We now define the inner partial sum
\begin{equation}
  \label{eq:inner-partial}
  \ui{[\ell]} \coloneqq \sum_{0 \leq \ell' \leq \ell} \ui{\ell'}.
\end{equation}
Likewise, given $k \in \Z_{\geq 0}$, define the weighted outer partial sum
\begin{equation}
  \label{eq:outer-partial}
  \uo{[k]} \coloneqq \sum_{0 \leq k' \leq k} \nu^{k'}\uo{k'}.
\end{equation}
Finally, define the weighted homogeneous partial sums
\begin{equation*}
  u_{[k],\ell} = \sum_{0 \leq k' \leq k} \nu^{k'} u_{k', \ell}
\end{equation*}
and
\begin{equation*}
  u_{[k, \ell]} \coloneqq \sum_{\substack{0 \leq k' \leq k\\ 0 \leq \ell' \leq \ell}} \nu^{k'} u_{k', \ell'} = \sum_{0 \leq k' \leq k} \nu^{k'} u_{k', [\ell]} = \sum_{0 \leq \ell' \leq \ell} u_{[k],\ell'}.
\end{equation*}
This sum represents the ``overlap'' between the partial sums $\uo{[k]}$ and $\ui{[\ell]}$.
Asymptotically, the difference between the partial sums is composed of ``leftover'' terms:
\begin{equation*}
  \uo{[k]} - \ui{[\ell]} \sim \sum_{\substack{0 \leq k' \leq k\\ \ell' > \ell}} \nu^{k'} u_{k', \ell'} - \sum_{\substack{k' > k\\ 0 \leq \ell' \leq \ell}} \nu^{k'} u_{k', \ell'}.
\end{equation*}
Consider the intermediate region $M$ from Section~\ref{sec:overview}, on which $\nu^{1/4} \ll \dist \ll 1$.
There, the terms $\nu^{k'} u_{k', \ell'}$ become smaller as $k'$ and $\ell'$ increase.
Thus the most significant terms in the difference $\uo{[k]} - \ui{[\ell]}$ are $u_{0,\ell + 1}$ and $\nu^{k + 1} u_{k + 1, 0}$.
We represent these relationships graphically in Figure~\ref{fig:overlap}.
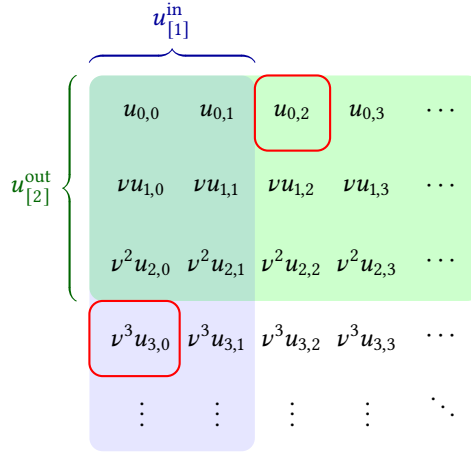
\begin{figure}
  \centering
  \begin{tikzpicture}[scale = 1]
    \tikzstyle{every node}=[font=\small]

    \def\k{3}
    \def\Xeps{0.6}
    \def\Yeps{0.6}

    \fill [green, opacity = 0.2, rounded corners=5pt] (- 0.7, 0.5) rectangle (\k + 1 + 0.5, -2 - 0.5);
    \fill [blue, opacity = 0.1, rounded corners=5pt] (- 0.7, 0.5) rectangle (1 + 0.5, -\k - 1 - 0.5);

    \draw [red, thick, rounded corners=5pt] (- 0.7, -3 - 0.5) rectangle (0.5, -2 - 0.5);
    \draw [red, thick, rounded corners=5pt] (2 - 0.5, - 0.5) rectangle (2 + 0.5, 0.5);

    \foreach \x in {0,1,...,\k}
    {\node at (\x, 0) {$ u_{0, \x}$};
      \node at (\x, -1) {$\nu u_{1, \x}$};
      \foreach \y in {2,...,\k}
      {\node at (\x, -\y) {$\nu^{\y} u_{\y, \x}$};}
      \node at (\x, -\k-0.9) {$\vdots$};}
    
    \foreach \y in {0,...,\k}
    {\node at (\k + 1, -\y) {$\cdots$};}

    \node at (\k + 1, -\k - 0.8) {$\ddots$};

    \draw [pen colour= {green!40!black}, thick, decorate, decoration = {calligraphic brace, raise = 5pt, amplitude = 5pt}] (-0.7, -2 - 0.5) --  (-0.7, 0.5) node[green!40!black, pos = 0.5, left = 10pt]{$\uo{[2]}$};
    \draw [pen colour = {blue!60!black}, thick, decorate, decoration = {calligraphic brace, raise = 5pt, amplitude = 5pt}] (-0.7, 0.5) --  (1 + 0.5, 0.5) node[blue!60!black, pos = 0.5, above = 10pt]{$\ui{[1]}$};
  \end{tikzpicture}
  \caption{
    \tbf{Overlap of partial sums.}
    The partial sums $\uo{[2]}$ and $\ui{[1]}$ represent sums of rows and columns, respectively.
    They overlap in the rectangular blue-green block of terms $u_{[2, 1]}$ (not labeled).
    The singly-shaded blue or green terms comprise the difference ${\uo{[2]} - \ui{[1]}}$.
    These are dominated by the two terms outlined in red, which represent the upper left extremes of the difference set.
  }
  \label{fig:overlap}
\end{figure}

Roughly, the leading error terms $u_{0,\ell + 1}$ and $\nu^{k + 1} u_{k + 1, 0}$ are of size $\dist^{-3i + \ell + 2}$ and $\nu^{k + 1} \dist^{-3i - 4k - 3}$, respectively.
This motivates the following estimate:
\begin{proposition}
  \label{prop:matching}
  For each $i, k, \ell \in \Z_{\geq0}$,
  \begin{equation*}
    \abs{\partial_x^i\left(\uo{[k]} - \ui{[\ell]}\right)} \lesssim_{i,k,\ell} \dist^{-3i + \ell + 2} + \nu^{k + 1} \dist^{-(3i + 4k + 3)}
  \end{equation*}
  on $\{\nu^{1/4} \leq \dist \leq 1\}.$
\end{proposition}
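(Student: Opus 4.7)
The plan is to insert the doubly-indexed homogeneous partial sum $u_{[k,\ell]}$ and write
\[
  \uo{[k]} - \ui{[\ell]} = \bigl(\uo{[k]} - u_{[k,\ell]}\bigr) - \bigl(\ui{[\ell]} - u_{[k,\ell]}\bigr).
\]
Each side of this decomposition is the sum of ``leftover'' pieces in Figure~\ref{fig:overlap}: the outer difference collects the singly-shaded green terms, and the inner difference collects the singly-shaded blue terms. I will bound them separately and then apply the triangle inequality.

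For the outer difference, I would write $\uo{[k]} - u_{[k,\ell]} = \sum_{k'\leq k} \nu^{k'}\bigl(\uo{k'} - u_{k',[\ell]}\bigr)$ and apply Proposition~\ref{prop:uk-homogeneous} term by term, then convert $\err$ to $\dist$ via \eqref{eq:err-bd} (or \eqref{eq:dist-bd}) to get
\[
  \nu^{k'}\bigl|\partial_x^i(\uo{k'} - u_{k',[\ell]})\bigr| \lesssim_{i,k',\ell}\nu^{k'} \dist^{-4k'+\ell-3i+2}.
\]
On $\{\dist \geq \nu^{1/4}\}$ the scaling $\nu^{k'}\dist^{-4k'} = (\nu \dist^{-4})^{k'}\leq 1$ absorbs every power of $\nu$, so after summing the $k+1$ terms I obtain a bound by $\dist^{\ell - 3i + 2}$, which is the first term in the proposition.

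For the inner difference, I would pass to the blown-up coordinates. Using the $(-4k'+\ell'+1)$-homogeneity of $u_{k',\ell'}$ one checks $\nu^{k'} u_{k',\ell'}(t,x) = \nu^{(\ell'+1)/4} U_{k',\ell'}(T,X)$, while \eqref{eq:Ui-convert} gives $\ui{\ell'}(t,x) = \nu^{(\ell'+1)/4}\Ui{\ell'}(T,X)$. Thus
\[
  \ui{[\ell]} - u_{[k,\ell]} = \sum_{\ell'\leq \ell}\nu^{(\ell'+1)/4}\bigl(\Ui{\ell'} - U_{[k],\ell'}\bigr)(T,X).
\]
Apply $\partial_x^i = \nu^{-3i/4}\partial_X^i$, use the bound of Proposition~\ref{prop:Ul} to control $|\partial_X^i(\Ui{\ell'} - U_{[k],\ell'})|$ by $\Dist^{-(3i+4k-\ell'+3)}$, and convert $\Dist = \nu^{-1/4}\dist$. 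The powers of $\nu$ collapse (the exponent is $(\ell'+1)/4 - 3i/4 + (3i+4k-\ell'+3)/4 = k+1$ independently of $\ell'$), yielding
\[
  \bigl|\partial_x^i(\ui{\ell'} - u_{[k],\ell'})\bigr| \lesssim_{i,k,\ell'}\nu^{k+1}\dist^{-(3i+4k-\ell'+3)}.
\]
On $\{\dist \leq 1\}$ the worst term in the $\ell'$-sum is $\ell' = 0$, giving the second term in the proposition. Adding the two bounds finishes the proof.

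The steps are essentially bookkeeping once the decomposition is set up; the only minor nuisance is checking that the matching region $\{\nu^{1/4}\leq \dist\leq 1\}$ sits inside both $[t_0,0)\times[-1,1]$ (needed for Proposition~\ref{prop:uk-homogeneous}, via $\dist\leq 1$ and Lemma~\ref{lem:cubic}) and the blown-up image of $\m{Q}^c$ (needed for Proposition~\ref{prop:Ul}, via $\dist\geq \nu^{1/4}$). The substantive content is really the observation that the exponent of $\nu$ in the inner sum is independent of $\ell'$, so the telescoping $\nu$-scaling between the inner terms and the homogeneous overlap collapses cleanly to $\nu^{k+1}$; combined with the absorption identity $\nu^{k'}\dist^{-4k'}\leq 1$ on the matching region, this is exactly what forces the two error terms in the proposition.
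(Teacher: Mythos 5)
Your proposal is correct and follows essentially the same route as the paper: insert $u_{[k,\ell]}$, split by the triangle inequality, bound the outer piece by Proposition~\ref{prop:uk-homogeneous} and the geometric sum $\sum (\nu\dist^{-4})^{k'}\lesssim 1$ on $\{\dist\geq\nu^{1/4}\}$, and bound the inner piece by rescaling Proposition~\ref{prop:Ul}'s estimate back to $(t,x)$, where the $\nu$-exponent collapses to $k+1$. The paper's proof performs exactly the same computation (including the observation that $\dist\leq 1$ makes $\ell'=0$ dominant in the inner sum), so there is nothing to reconcile.
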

\begin{proof}
  We use the triangle inequality to write
  \begin{equation}
    \label{eq:triangle}
    \abs{\partial_x^i\left(\uo{[k]} - \ui{[\ell]}\right)} \leq \abs{\partial_x^i\left(\uo{[k]} - u_{[k, \ell]}\right)} + \abs{\partial_x^i\left(\ui{[\ell]} - u_{[k,\ell]}\right)}.
  \end{equation}
  We control the first term using Proposition~\ref{prop:uk-homogeneous}:
  \begin{equation*}
    \abs{\partial_x^i\left(\uo{[k]} - u_{[k, \ell]}\right)} \leq \sum_{0 \leq k' \leq k} \nu^{k'} \abs{\partial_x^i\left(\uo{k'} - u_{k', [\ell]}\right)} \lesssim_{i,k,\ell} \dist^{-3i + \ell + 2} \sum_{0 \leq k' \leq k} (\nu \dist^{-4})^{k'}.
  \end{equation*}
  Because $\dist \geq \nu^{1/4}$, $\nu \dist^{-4} \lesssim 1$.
  It follows that
  \begin{equation}
    \label{eq:sum-outer}
    \abs{\partial_x^i\left(\uo{[k]} - u_{[k, \ell]}\right)} \lesssim_{i,k,\ell} \dist^{-3i + \ell + 2}.
  \end{equation}

  To control the second term in \eqref{eq:triangle}, we must convert our bounds on the inner solutions back to the original variables.
  We plug $T = \nu^{-1/2}t$ and $X = \nu^{-3/4}x$ into \eqref{eq:Ul-est} and use $\partial_X = \nu^{3/4} \partial_x$, $\Dist = \nu^{-1/4}\dist$, and
  \begin{equation*}
    U_{[k], \ell'} = \sum_{0 \leq k' \leq k} \nu^{k' - (\ell' + 1)/4} u_{k',\ell'} = \nu^{-(\ell' + 1)/4} u_{[k], \ell'}.
  \end{equation*}
  After a brief calculation, \eqref{eq:Ul-est} becomes
  \begin{equation}
    \label{eq:ul-est}
    \abs{\partial_x^i \left(\ui{\ell'} - u_{[k], \ell'}\right)} \lesssim_{i, k, \ell'} \nu^{k + 1} \dist^{-3i - 4k + \ell' - 3}.
  \end{equation}
  Therefore
  \begin{equation*}
    \abs{\partial_x^i\left(\ui{[\ell]} - u_{[k,\ell]}\right)} \leq \sum_{0 \leq \ell' \leq \ell} \abs{\partial_x^i \left(\ui{\ell'} - u_{[k], \ell'}\right)} \lesssim_{i,k,\ell} \nu^{k + 1} \dist^{-3i - 4k - 3} \sum_{0 \leq \ell' \leq \ell} \dist^{\ell'}.
  \end{equation*}
  Now $\dist \leq 1$, so
  \begin{equation}
    \label{eq:sum-inner}
    \abs{\partial_x^i\left(\ui{[\ell]} - u_{[k,\ell]}\right)} \lesssim_{i,k,\ell} \nu^{k + 1} \dist^{-3i - 4k - 3}.
  \end{equation}
  Combining \eqref{eq:sum-outer} with \eqref{eq:sum-inner}, \eqref{eq:triangle} implies the proposition.
\end{proof}

\section{The approximate solution}
\label{sec:approx}
We have now constructed complete inner and outer expansions.
We can therefore approximate the solution $u^\nu$ of \eqref{eq:Burgers-viscous} on the entire domain $\R \times [t_0, 0)$.
We briefly recall the strategy outlined in Section~\ref{subsec:approx}.
Given $\al \in (0, 1)$, we define the \emph{inner}, \emph{outer}, and \emph{matching} regions as follows:
\begin{equation*}
  I = \{\dist < \nu^\al\}, \quad O = \{\dist > 2 \nu^\al\}, \quad M = \{\nu^\al \leq \dist \leq 2 \nu^\al\}.
\end{equation*}
We will define an approximate solution $\uapp$ by truncating the inner and outer expansions in $I$ and $O$, respectively.
In the matching zone $M$, we interpolate between these truncations.
For simplicity, we use the same number of terms from the inner and outer expansions.
Given $K \in \Z_{\geq 0}$, we construct $\uapp_{[K]}$ from the partial sums $\ui{[K]}$ and $\uo{[K]}$ defined in \eqref{eq:inner-partial} and \eqref{eq:outer-partial}.

Let $\vartheta \in \m{C}_c^\infty([0, \infty))$ be a bump function such that $\vartheta|_{[0, 1]} \equiv 1$, $\vartheta|_{[2, \infty)} \equiv 0$, and $0 \leq \vartheta \leq 1$.
We define the cutoff
\begin{equation*}
  \theta(t, x) \coloneqq \vartheta\left(\dist(t, x) \nu^{-\al}\right)
\end{equation*}
so that $\theta|_I \equiv 1$ and $\theta|_{O} \equiv 0$.
We then define the approximate solution
\begin{equation}
  \label{eq:uapp-def}
  \uapp_{[K]} \coloneqq \theta \ui{[K]} + (1 - \theta) \uo{[K]}.
\end{equation}

Note that
\begin{equation*}
  \uapp_{[K]}(t_0, \anon) = \uo{[K]}(t_0, \anon) = \mr{u} = u^\nu(t_0, \anon).
\end{equation*}
That is, the approximate solution agrees with the true solution at the initial time $t_0$.
However, the functions diverge when $t > t_0$ because $\uapp_{[K]}$ is not an exact solution of \eqref{eq:Burgers-viscous}.
The ``error''
\begin{equation}
  \label{eq:error}
  E \coloneqq -\partial_t \uapp_{[K]} - \uapp_{[K]} \partial_x \uapp_{[K]} + \nu \partial_x^2 \uapp_{[K]}
\end{equation}
captures this inexactness.
The approximate solution $\uapp_{[K]}$ is governed by two parameters: $\al$ and $K$.
We choose these to make $E$ appropriately small.

On $I$ and $O$, the error $E$ is due to our truncation of the inner and outer expansions.
The error can thus be made arbitrarily small by taking $K$ large.
On $M$, there is an additional source of error---the cutoff interpolating $\ui{[K]}$ and $\uo{[K]}$.
To minimize this interpolation error, we want $\ui{[K]}$ and $\uo{[K]}$ to be as similar as possible in $M$.
By Proposition~\ref{prop:matching},
\begin{equation*}
  \big|\ui{[K]} - \uo{[K]}\big| \lesssim_K \dist^{K + 2} + \nu^{K + 1} \dist^{-4K -3}.
\end{equation*}
The right side is minimized when the two terms are of the same order, i.e., when $\dist \asymp \nu^{1/5}$.
We therefore choose $\al = 1/5$ in our definition of $I, O$, and $M$, so that the interpolation error is minimal in the matching zone.
We emphasize that this exponent originates in our choice to use equally many terms from the inner and outer expansions.
If we used unbalanced truncations, a different exponent would be optimal.

We can now state our main result.
\begin{theorem}
  \label{thm:main}
  Let $u^\nu$ solve \eqref{eq:Burgers-viscous} and \eqref{eq:Burgers-viscous-init}.
  Fix $K \in \Z_{\geq 0}$ and take $\uapp_{[K]}$ as in \eqref{eq:uapp-def}.
  Then there exists a constant $C(K, \mr{u}) > 0$ such that for all $\nu \in (0, 1]$, 
  \begin{equation}
    \label{eq:pointwise-v-gen}
    \big\|u^\nu- \uapp_{[K]}\big\|_{L^\infty([t_0, 0) \times \R)} \leq C(K, \mr{u}) \nu^{(K + 2)/5}.
  \end{equation}
  Moreover, for all $K \geq 0$,
  \begin{equation}
    \label{eq:more-regular}
    \sup_{t \in [t_0, 0)}\big\|u^\nu(t, \anon) - \uapp_{[K]}(t, \anon)\big\|_{\mathcal{C}^{1/2}(\R)} \to 0 \quad \textrm{as }\; \nu \to 0.
  \end{equation}
\end{theorem}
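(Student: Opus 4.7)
The strategy is to view $\uapp_{[K]}$ as an almost-solution, show the remainder $v \coloneqq u^\nu - \uapp_{[K]}$ satisfies a forced viscous Burgers-type equation with small right-hand side, and close a Sobolev energy estimate. Because the energy argument will lose a fixed power of $\nu$, I will actually perform it for $v_{K'} \coloneqq u^\nu - \uapp_{[K']}$ at a higher order $K' \gg K$ and then control $\uapp_{[K']} - \uapp_{[K]}$ by hand using the homogeneous size bounds. A direct calculation from \eqref{eq:Burgers-viscous} and \eqref{eq:uapp-def} shows
\begin{equation*}
  \partial_t v_{K'} = -\partial_x\bigl(\uapp_{[K']} v_{K'}\bigr) - v_{K'} \partial_x v_{K'} + \nu \partial_x^2 v_{K'} + E_{K'}, \quad v_{K'}(t_0,\anon) = 0,
\end{equation*}
for an explicit residual $E_{K'}$ measuring the inexactness of $\uapp_{[K']}$.

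The first task is to estimate $\|E_{K'}\|_{L^\infty_t H^1_x}$. On the inner region $I$, where $\theta \equiv 1$ and $\uapp_{[K']} = \ui{[K']}$, the residual reduces under the rescaling \eqref{eq:inner-var} to the tail of the $\Ui{\ell}$-hierarchy \eqref{eq:Ul-ev}, bounded via Proposition~\ref{prop:Ul} and Corollary~\ref{cor:Ul-size}. Symmetrically, on $O$ it is the tail of \eqref{eq:ukout-ev}, controlled by Proposition~\ref{prop:uk-homogeneous} and Corollary~\ref{cor:uk-der}. In the matching zone $M$, derivatives landing on the cutoff $\theta$ produce new contributions involving the mismatch $\ui{[K']} - \uo{[K']}$, which is controlled precisely by Proposition~\ref{prop:matching}: with $\alpha = 1/5$ the two summands $\dist^{K'+2}$ and $\nu^{K'+1}\dist^{-(4K'+3)}$ balance at $\dist \asymp \nu^{1/5}$, producing size $\nu^{(K'+2)/5}$. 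Translated to $H^1$ using the volumes of the three regions and the scaling of derivatives, I expect a bound of the form $\|E_{K'}\|_{L^\infty_t H^1_x} \lesssim \nu^{(2K'-3)/10}$.

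The second task is the $H^1$ energy estimate. Testing the equation for $v_{K'}$ against $v_{K'}$ and against $-\partial_x^2 v_{K'}$ in $L^2$ generates the coercive quantity $\nu\bigl(\|\partial_x v_{K'}\|_{L^2}^2 + \|\partial_x^2 v_{K'}\|_{L^2}^2\bigr)$, which absorbs the nonlinear contribution $v_{K'}\partial_x v_{K'}$ via one-dimensional Gagliardo--Nirenberg once $v_{K'}$ is a priori small (this smallness is installed through a standard continuity/bootstrap argument starting from the trivial datum at $t_0$). The linear drift coefficient $\partial_x \uapp_{[K']}$ is singular near the origin---it peaks at size $\nu^{-1/2}$ on the inner time strip of length $\nu^{1/2}$ (Corollary~\ref{cor:ul-size} with $i=1$, $\ell=0$) and grows like $|t|^{-1}$ in the outer region (Corollary~\ref{cor:u0-out-x})---but in both regimes its time integral is uniformly bounded in $\nu$, so Gr\"onwall converts the forcing bound into
\begin{equation*}
  \|v_{K'}\|_{L^\infty_t H^1_x} \lesssim \nu^{(K'-6)/5}.
\end{equation*}
The one-dimensional embedding $H^1(\R) \hookrightarrow L^\infty(\R) \cap \mathcal{C}^{1/2}(\R)$ promotes this into pointwise and H\"older control of $v_{K'}$.

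It remains to pass from $v_{K'}$ back to $v$. Writing $\uapp_{[K']} - \uapp_{[K]} = \theta\bigl(\ui{[K']} - \ui{[K]}\bigr) + (1-\theta)\bigl(\uo{[K']} - \uo{[K]}\bigr)$: on $\{\theta > 0\} \subset I \cup M$ one has $\dist \vee \nu^{1/4} \lesssim \nu^{1/5}$, so Corollary~\ref{cor:ul-size} yields $|\ui{\ell}| \lesssim \nu^{(\ell+1)/5}$ and the inner tail is $\m{O}(\nu^{(K+2)/5})$; on $\{\theta < 1\} \subset O \cup M$ one has $\dist \gtrsim \nu^{1/5}$, and Corollary~\ref{cor:uk-der} gives the same rate for the outer tail. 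Choosing $K' = K + 8$ makes $\|v_{K'}\|_{L^\infty_{t,x}} \lesssim \nu^{(K+2)/5}$, and the triangle inequality delivers \eqref{eq:pointwise-v-gen}. For the H\"older statement \eqref{eq:more-regular}, the same comparison is combined with the interpolation $\|f\|_{\mathcal{C}^{1/2}} \lesssim \|f\|_{L^\infty}^{1/2}\|\partial_x f\|_{L^\infty}^{1/2}$ applied term by term to each tail summand (for which $\ell \ge 1$ or $k \ge 1$), producing a positive power of $\nu$. The main obstacle, in my view, is the energy estimate: one must simultaneously handle the large drift coefficient $\partial_x \uapp$ near shock formation and the nonlinear term $v\partial_x v$ using only the vanishing viscosity as coercivity, while being sharp enough to lose only a bounded power of $\nu$ in the final rate---this is what forces the auxiliary shift from $K$ to $K'$.
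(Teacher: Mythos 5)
Your plan tracks the paper's proof very closely: the same error decomposition on $O$, $I$, $M$, the same $L^\infty_t H^1_x$ forcing bound $\nu^{(2K'-3)/10}$, the same $H^1$ energy estimate closed via a bootstrap on $\|v\|_{L^\infty_x}$, the same $L^\infty$/$\mathcal{C}^{1/2}$ promotion by one-dimensional Sobolev embedding, and the same final comparison between $\uapp_{[K']}$ and $\uapp_{[K]}$. You take $K'=K+8$ using the crude embedding $\|v\|_{L^\infty}\lesssim\|v\|_{H^1}$; the paper takes $K'=K+4$ by interpolating with $\|v\|_{L^\infty}\lesssim\|v\|_{L^2}^{1-s}\|\partial_x v\|_{L^2}^{s}+\|v\|_{L^2}$ for $s$ just above $\tfrac12$ (Proposition~\ref{prop:boot}). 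Your choice is less sharp but still works; that is not a substantive difference. Likewise, testing against $-\partial_x^2 v_{K'}$ is equivalent, after integration by parts, to the paper's testing of the differentiated equation against $\partial_x v_{K'}$.

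There is, however, a genuine error in the step you yourself flag as the crux. You assert that the time integral of $\|\partial_x\uapp_{[K']}\|_{L^\infty_x}$ is uniformly bounded in $\nu$, so that Gr\"onwall costs only an $O(1)$ factor. That is false. By Proposition~\ref{prop:est-der-u0} (built on Corollary~\ref{cor:u0-out-x}), $\|\partial_x\uapp_{[K']}(t,\anon)\|_{L^\infty_x}=|t|^{-1}+\m{O}\big(|t|^{-1/2}+\nu|t|^{-3}\big)$ for $|t|\geq\nu^{1/2}$, and
\begin{equation*}
  \int_{t_0}^{-\nu^{1/2}}\frac{\ds t}{|t|}\;=\;\log\lvert t_0\rvert\;+\;\tfrac{1}{2}\log\nu^{-1}\;\longrightarrow\;\infty\quad\text{as }\nu\to0^+.
\end{equation*}
The Gr\"onwall factor is therefore a nontrivial polynomial in $\nu^{-1}$: roughly $(|t|\vee\nu^{1/2})^{-1}$ for the $L^2$ bound and $(|t|\vee\nu^{1/2})^{-3}$ for the $H^1$ bound, because of the coefficient $3$ in front of $\|\partial_x\uapp\|_{L^\infty_x}$ in \eqref{eq:eng-est-der}. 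Notice your own numbers betray the issue: your claimed $H^1$ rate $\nu^{(K'-6)/5}=\nu^{(2K'-12)/10}$ is a factor $\nu^{-9/10}$ worse than your forcing rate $\nu^{(2K'-3)/10}$; if Gr\"onwall were truly $O(1)$ you would recover the forcing rate. Tracking this polynomial loss is precisely where the paper invests effort: the exact coefficient $1$ in front of $|t|^{-1}$ (Corollary~\ref{cor:u0-out-x}), the time-weighted Young splitting $|t|^{\eta}E^2+|t|^{-\eta}v^2$ in Lemma~\ref{lem:eng-est-v}, the explicitly time-dependent bounds in Lemmas~\ref{lem:L2-est-v} and~\ref{lem:L2-est-v-der}, and the choice $s\to\tfrac12^+$ in the interpolation to lean on the cheaper $L^2$ loss. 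Without these, the claimed $H^1$ rate is unsupported.
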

We can thus approximate $u^\nu$ to arbitrary precision in $\nu$ by including sufficiently many terms from the inner and outer expansions.
Furthermore, the difference $u^\nu - \uapp_{[K]}$ is uniformly $\frac{1}{2}$-H\"older.
\begin{remark}
  As noted in Remark~\ref{rem:more-regular}, we expect $u^\nu - \uapp_{[K]}$ to remain uniformly bounded in $\m{C}_x^{(K+2)/3}$ for all $K \in \Z_{\geq 0}$.
  This is the (quantitative) regularity of the leading terms $u_{0,K+1}$ and $\nu^{K + 1} u_{K+1, 0}$ in the difference $\uo{[K]} -\ui{[K]}$ on $M$.
\end{remark}
Theorem~\ref{thm:main} implies the informal Theorem~\ref{thm:rough} stated in the introduction.
\begin{proof}[Proof of Theorem~\textup{\ref{thm:rough}}]
  Only the first part of \eqref{eq:1/2-regular} remains to be shown.
  Due to the inverse cubic term $u_{0,0} = \cub$ on $O$, the approximate solution $\uapp_{[0]}$ is only uniformly bounded in $\mathcal{C}^{1/3}$ as $\nu \to 0$.
  That is, if $\beta > 1/3$,
  \begin{equation}
    \label{eq:app-irregular}
    \sup_{t \in [t_0, 0)}\big\|\uapp_{[0]}(t, \anon)\big\|_{\mathcal{C}^\beta(\R)} \to \infty \quad \textrm{as }\; \nu \to 0.
  \end{equation}
  Moreover, the higher-order terms in $\uapp_{[K]}$ are all smoother---it is straightforward to check that Corollaries~\ref{cor:uk-der} and \ref{cor:ul-size} as well as Lemma~\ref{lem:size-der-theta} below imply that
  \begin{equation*}
    \sup_{t \in [t_0, 0)}\big\|\big(\uapp_{[K]} - \uapp_{[0]}\big)(t, \anon)\big\|_{\mathcal{C}^{2/3}(\R)} \lesssim 1.
  \end{equation*}
  Hence \eqref{eq:app-irregular} holds with $\uapp_{[K]}$ in place of $\uapp_{[0]}$.
  Thus the first part of \eqref{eq:1/2-regular} follows from the second.
\end{proof}
We can also prove Corollary~\ref{cor:inviscid-limit}.
\begin{proof}[Proof of Corollary~\textup{\ref{cor:inviscid-limit}}]
  By Theorem~\ref{thm:main} and the triangle inequality,
  \begin{equation}
    \label{eq:true-app-diff}
    \big\|u^\nu - \uapp_{[0]}\big\|_{L^\infty([t_0, 0) \times \R)} \lesssim \nu^{2/5}.
  \end{equation}
  Recall the inviscid solution $u^0 = \uo{0}$.
  We claim that
  \begin{equation}
    \label{eq:app-inviscid-diff}
    \big\|\uapp_{[0]} - \uo{0}\big\|_{L^\infty([t_0, 0) \times \R)} \asymp \nu^{1/4}.
  \end{equation}
  Indeed, by \eqref{eq:uapp-def},
  \begin{equation*}
    \uapp_{[0]} - u^0 = (\ui{0} - \uo{0})\theta.
  \end{equation*}
  Applying Proposition~\ref{prop:matching},
  \begin{equation*}
    \abs{(\ui{0} - \uo{0})\theta} \lesssim \left(\dist^{2} + \nu \dist^{-3}\right)\theta
  \end{equation*}
  when $\dist \geq \nu^{1/4}$.
  Because $\theta$ is supported on $I \cup M$, the second term dominates, and
  \begin{equation}
    \label{eq:matching-off-innermost}
    \abs{(\ui{0} - \uo{0})\theta} \lesssim \nu^{1/4} \quad \text{when } \dist \geq \nu^{1/4}.
  \end{equation}
  When $\dist \leq \nu^{1/4}$, Proposition~\ref{prop:uk-homogeneous} implies that
  \begin{equation}
    \label{eq:triangle-inner}
    \abs{(\ui{0} - \uo{0})\theta} = \abs{\ui{0} - \uo{0}} = \abs{\ui{0} - \cub} + \m{O}\big(\nu^{1/2}\big).
  \end{equation}
  In the inner coordinates, $\Ui{0}$ and $\Cub$ are independent of $\nu$ and distinct---indeed, the former is smooth on $(-\infty, 0] \times \R$ and the latter is not.
  It follows that $\abs{\Ui{0} - \Cub} \asymp 1$ on $\{\Dist \leq 1\}$.
  When we return to the original coordinates, this difference is scaled by $\nu^{1/4}.$
  Thus \eqref{eq:triangle-inner} yields
    \begin{equation*}
    \abs{(\ui{0} - \uo{0})\theta} = \abs{\ui{0} - \cub} + \m{O}\big(\nu^{1/2}\big) \asymp \nu^{1/4} \quad \text{when } \dist \leq \nu^{1/4}.
  \end{equation*}
  Together with \eqref{eq:matching-off-innermost}, this implies \eqref{eq:app-inviscid-diff}.
  Now \eqref{eq:inviscid-limit} follows from \eqref{eq:true-app-diff} and \eqref{eq:app-inviscid-diff}.
\end{proof}
\begin{remark}
  By \eqref{eq:true-app-diff}, $\uapp_{[0]}$ is a better approximation of $u^\nu$ than $u^0$ alone.
\end{remark}

\subsection{Energy estimates}
We rely on an $L^2$ approach to prove Theorem~\ref{thm:main}.
We begin with the energy estimates used in the proof.
For concision, we often abbreviate $\uapp_{[K]}$ by $\uapp$ when the particular index is insignificant.
We study the evolution equation for the difference $v \coloneqq u^\nu - \uapp$:
\begin{equation}
  \label{eq:diff}
  \partial_t v = - \partial_x(\uapp v) - v\partial_x v + \nu \partial_x^2 v + E, \quad v(t_0, \anon) = 0,
\end{equation}
where $E$ is defined in \eqref{eq:error}.
Before analyzing the forcing $E$, we prove energy estimates for arbitrary $E$.
In the following, we use the shorthand
\begin{equation*}
  \norm{f}_{V_x}\!(t) \coloneqq \norm{f(t, \anon)}_V
\end{equation*}
when $V$ is a norm on functions on $\R$.
\begin{lemma}
  \label{lem:eng-est-v}
  Let $v$ satisfy \eqref{eq:diff}.
  Then for all $\eta\in [0,1)$ and $\mathfrak{t} \in [t_0, 0]$,
  \begin{equation}
    \label{eq:eng-est}
    \norm{v}^2_{L^2_x}\!(\mathfrak{t})\leq \left(\int_{t_0}^{\mathfrak{t}}\!\!\!\int_{\R} |t|^\eta E^2\d x \ds t\right)\exp\left[\int_{t_0}^{\mathfrak{t}}\big(\norm{\partial_x \uapp}_{L^\infty_x}+|t|^{-\eta}\big)\ds t\right]
  \end{equation}
  and
  \begin{equation}
    \label{eq:eng-est-der}
    \begin{aligned}
      \norm{\partial_x v}^2_{L^2_x}\!(\mathfrak{t})&\leq 
      \left(\int_{t_0}^{\mathfrak{t}}\!\!\!\int_{\R} |t|^\eta\big[(\partial_x E)^2+(\partial^2_x\uapp)^2 v^2\big]\ds x \ds t\right)\\
      &\hspace{1cm}\times \exp\left[\int_{t_0}^{\mathfrak{t}}\big(3\norm{\partial_x \uapp}_{L^\infty_x}+\nu^{-1}\norm{v}_{L^\infty_x}^2+2|t|^{-\eta}\big)\ds t\right].
    \end{aligned}
  \end{equation}
\end{lemma}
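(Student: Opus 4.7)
The plan is a standard but delicate energy argument applied to the perturbation equation \eqref{eq:diff}; the main subtlety is that naive differentiation produces a cubic term $\int (\partial_x v)^3$ which needs to be combined with another term and then absorbed by the positive viscous dissipation.

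For \eqref{eq:eng-est}, I would multiply \eqref{eq:diff} by $v$ and integrate in $x$. The conservative advection $\int v\,\partial_x(\uapp v)\,\dx$ integrates by parts to $-\tfrac{1}{2}\int (\partial_x \uapp)v^2\,\dx$, the Burgers self-advection $\int v^2 \partial_x v\,\dx = \tfrac{1}{3}\int \partial_x(v^3)\,\dx$ vanishes by decay at infinity, and the viscous term contributes $-\nu\|\partial_x v\|_{L^2_x}^2 \le 0$ which we discard. The forcing term is split with a weighted Cauchy--Schwarz inequality, $\int vE\,\dx \le \tfrac{1}{2}|t|^{-\eta}\|v\|_{L^2_x}^2 + \tfrac{1}{2}|t|^\eta \|E\|_{L^2_x}^2$, yielding
\begin{equation*}
\tfrac{\dd}{\dd t}\|v\|_{L^2_x}^2 \le \bigl(\|\partial_x \uapp\|_{L^\infty_x} + |t|^{-\eta}\bigr)\|v\|_{L^2_x}^2 + |t|^\eta \|E\|_{L^2_x}^2.
\end{equation*}
Applying Gr\"onwall's inequality on $[t_0,\mathfrak{t}]$ starting from $v(t_0,\anon) = 0$ gives \eqref{eq:eng-est}.

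For \eqref{eq:eng-est-der}, I differentiate \eqref{eq:diff} once in $x$, setting $w = \partial_x v$, to obtain
\begin{equation*}
\partial_t w = -(\partial_x^2 \uapp)v - 2(\partial_x \uapp)w - \uapp \partial_x w - w^2 - v\,\partial_x w + \nu \partial_x^2 w + \partial_x E.
\end{equation*}
Multiplying by $w$ and integrating, the term $-\int \uapp w\,\partial_x w\,\dx$ gives $\tfrac{1}{2}\int (\partial_x \uapp)w^2\,\dx$ after integration by parts, while the viscous term yields $-\nu\|\partial_x w\|_{L^2_x}^2$. The two cubic contributions $-\int w^3\,\dx$ and $-\int vw\,\partial_x w\,\dx$ are the crux. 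The identity $\int w^3\,\dx = \int (\partial_x v)w^2\,\dx = -2\int vw\,\partial_x w\,\dx$ (by one integration by parts) collapses the pair into the single term $+\int vw\,\partial_x w\,\dx$. This is the key observation: it removes the pure-$w^3$ term entirely.

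The remaining $\int vw\,\partial_x w\,\dx$ is then handled by weighted Cauchy--Schwarz and absorbed into the viscosity:
\begin{equation*}
\Bigl|\!\int vw\,\partial_x w\,\dx\Bigr| \le \|v\|_{L^\infty_x}\|w\|_{L^2_x}\|\partial_x w\|_{L^2_x} \le \tfrac{\nu}{2}\|\partial_x w\|_{L^2_x}^2 + \tfrac{1}{2\nu}\|v\|_{L^\infty_x}^2\|w\|_{L^2_x}^2,
\end{equation*}
which is precisely where the factor $\nu^{-1}\|v\|_{L^\infty_x}^2$ in \eqref{eq:eng-est-der} comes from. Treating the remaining terms $\int (\partial_x^2 \uapp)vw\,\dx$ and $\int w\,\partial_x E\,\dx$ by $|t|^{\pm\eta}$-weighted Cauchy--Schwarz, and absorbing $\tfrac{\nu}{2}\|\partial_x w\|_{L^2_x}^2$ into the diffusive $\nu\|\partial_x w\|_{L^2_x}^2$ on the left, we arrive at
\begin{equation*}
\tfrac{\dd}{\dd t}\|w\|_{L^2_x}^2 \le \bigl(3\|\partial_x \uapp\|_{L^\infty_x} + \nu^{-1}\|v\|_{L^\infty_x}^2 + 2|t|^{-\eta}\bigr)\|w\|_{L^2_x}^2 + |t|^\eta\bigl(\|(\partial_x^2\uapp)v\|_{L^2_x}^2 + \|\partial_x E\|_{L^2_x}^2\bigr),
\end{equation*}
and Gr\"onwall from $t_0$ finishes \eqref{eq:eng-est-der}. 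The only substantive obstacle was the cubic term; once the algebraic identity $\int w^3 = -2\int vw\,\partial_x w$ is in hand, everything reduces to Cauchy--Schwarz plus Gr\"onwall.
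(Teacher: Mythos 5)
Your proof is correct and follows essentially the same route as the paper's: multiply by $v$ (resp.\ $\partial_x v$), integrate by parts to expose the good viscous term, split the forcing with a $|t|^{\pm\eta}$-weighted Young inequality, and close via Gr\"onwall. The only cosmetic difference is in the cubic term: the paper integrates $-\int(\partial_x v)\,\partial_x(v\partial_x v)\,\dn x$ by parts in one step to get $\int v(\partial_x v)\partial_x^2 v\,\dn x$, whereas you expand the divergence into $-\int w^3 - \int vw\partial_x w$ and then use the identity $\int w^3 = -2\int vw\partial_x w$ to arrive at the same quantity, so the two computations coincide.
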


\begin{proof}
  Fix $\eta \in (0, 1]$ and $\f{t} \in [t_0, 0]$.
  We first prove \eqref{eq:eng-est}.
  Multiplying \eqref{eq:diff} by $v$, we find
  \begin{align*}
    \frac{1}{2}\partial_t(v^2) - \nu v \partial_x^2 v= - v\partial_x(\uapp v)-v^2\partial_x{v} + E v = - v\partial_x(\uapp v) - \frac{1}{3} \partial_x(v^3) + E v.
  \end{align*}
  We integrate over $[t_0,\mathfrak{t}]\times \R$ and integrate by parts in space:
  \begin{equation}
    \label{eq:energy-prelim}
    \frac{1}{2}\norm{v}^2_{L^2_x}\!(\mathfrak{t}) + \nu\int_{[t_0, \f{t}] \times \R} (\partial_x v)^2 \leq \int_{[t_0, \f{t}] \times \R} \left(-\frac{1}{2}v^2 \partial_x\uapp + Ev\right).
  \end{equation}
  We use Young's identity to write $2Ev \leq \abs{t}^{\eta} E^2 + \abs{t}^{-\eta} v^2$.
  Dropping the viscous term, \eqref{eq:energy-prelim} yields
  \begin{equation*}
    \norm{v}^2_{L^2_x}\!(\mathfrak{t}) \leq \int_{[t_0, \f{t}] \times \R} \abs{t}^{\eta} E^2 + \int_{t_0}^{\f{t}} \left(\norm{\partial_x \uapp}_{L_x^\infty} + \abs{t}^{-\eta}\right) \norm{v}_{L_x}^2.
  \end{equation*}
  Gr\"{o}nwall's inequality implies \eqref{eq:eng-est}.
  
  We now turn to \eqref{eq:eng-est-der}.
  Differentiating \eqref{eq:diff}, we find
  \begin{equation*}
    \partial_t \partial_x v - \nu \partial_x^3v = - \partial_x^2(\uapp v) - \partial_x(v \partial_x v) + \partial_x E.
  \end{equation*}
  We multiply by $\partial_x v$, integrate over $[t_0, \f{t}] \times \R$, and integrate by parts in space:
  \begin{equation}
    \label{eq:eng-est-deriv-prelim}
    \begin{aligned}
      \frac{1}{2}\norm{\partial_x v}_{L^2_x}^2(\mathfrak{t}) + \nu\int_{[t_0, \f{t}] \times \R} &(\partial_x^2 v)^2\\
      &= \int_{[t_0, \f{t}] \times \R} \left[ - \partial_x^2(\uapp v) - \partial_x(v \partial_x v) + \partial_x E\right]\partial_x v.
    \end{aligned}
  \end{equation}
  Expanding $\partial^2_x(\uapp v)$, we obtain
  \begin{equation*}
    \begin{aligned}
      \int_{\R}(\partial_x v)\partial^2_x(\uapp v)\d x= \frac{1}{2}\int_{\R}&\uapp\partial_x (\partial_x v)^2\d x\\
      &+ 2\int_{\R}(\partial_x \uapp)(\partial_x v)^2\d x + \int_{\R}(\partial^2_x \uapp)v\partial_x v\d x.
    \end{aligned}
  \end{equation*}
  Integrating the first term on the right by parts, this yields
  \begin{equation}
    \label{eq:eng-est-second}
    \int_{\R}(\partial_x v)\partial^2_x(\uapp v)\d x = \frac{3}{2}\int_{\R}(\partial_x \uapp)(\partial_x v)^2\d x + \int_{\R}(\partial^2_x \uapp)v \partial_x v\d x.
  \end{equation}
  Similarly, we have
  \begin{equation}
    \label{eq:eng-est-cubic}
    -\int_{\R}(\partial_x v)\partial_x(v\partial_x v)\d x = \int_{\R} v (\partial_x v) \partial^2_x v \d x.
  \end{equation}
  Combining \eqref{eq:eng-est-second} and \eqref{eq:eng-est-cubic}, \eqref{eq:eng-est-deriv-prelim} yields
  \begin{equation}
    \label{eq:eng-est-deriv-prelim2}
    \begin{aligned}
      \norm{\partial_x v}_{L^2_x}^2\!(\mathfrak{t}) + 2\nu \int_{[t_0, \f{t}] \times \R} &(\partial_x^2v)^2\leq
      \int_{[t_0, \f{t}] \times \R}\big[2|v (\partial_x v) \partial^2_x v| + 2|\partial_x v||\partial_xE|\big]\\
      &+\int_{[t_0, \f{t}] \times \R} \big[3|\partial_x \uapp| (\partial_x v)^2 + 2 |(\partial^2_x\uapp) v \partial_x v|\big].
    \end{aligned}
  \end{equation}
  Now, Young's inequality implies
  \begin{align*}
    2 |(\partial^2_x\uapp) v \partial_x v| &\leq \abs{t}^{-\eta} (\partial_x v)^2 + \abs{t}^\eta (\partial_x^2 \uapp)^2 v^2,\\
    2|v (\partial_x v) \partial^2_x v| &\leq \nu (\partial_x^2v)^2 + \nu^{-1} (\partial_x v)^2 v^2,\\
    2\abs{\partial_x v \partial_x E} &\leq \abs{t}^\eta (\partial_xE)^2 + \abs{t}^{-\eta} (\partial_xv)^2.
  \end{align*}
  Using these bounds in \eqref{eq:eng-est-deriv-prelim2} and rearranging, we find
  \begin{equation*}
    \begin{aligned}
      \norm{\partial_x v}_{L^2_x}^2\!(\mathfrak{t}) \leq
      &\int_{[t_0, \f{t}] \times \R} \abs{t}^{\eta}\left[(\partial_x^2 \uapp)^2 v^2 + (\partial_xE)^2\right]\\
      &+\int_{[t_0, \f{t}] \times \R} \left[3\abs{\partial_x \uapp} + \nu^{-1} v^2 + 2 \abs{t}^{-\eta}\right](\partial_xv)^2.
    \end{aligned}
  \end{equation*}
  Now \eqref{eq:eng-est-der} follows from Gr\"{o}nwall.
\end{proof}
\begin{remark}
  We use the viscosity to absorb an adverse term in the proof of \eqref{eq:eng-est-der}.
  This is a convenience, rather than a necessity.
  Indeed, we could take more derivatives (two suffice) and close in $L^2$, which would give pointwise control on the offending factor (namely, $\partial_x^2 v$).
  Thus, we do not need the ``good'' viscous term this section.
  We have already captured viscous effects in our construction of the inner and outer expansions; we do not need to use the viscosity further.
\end{remark}

\subsection{Forcing estimates}
In light of Lemma~\ref{lem:eng-est-v}, we need to control $E$ in $H_x^1$.
The error has a different character in the three zones $O, I,$ and $M$.
In $O$ and $I$, $E$ is the error from truncating the inner and outer expansions, respectively.
Using \eqref{eq:ukout-ev}, \eqref{eq:Ul-ev}, and \eqref{eq:Ui-convert}, we can compute
\begin{equation}
  \label{eq:E-out}
  \begin{aligned}
    E^{\text{out}} &\coloneqq -\partial_t \uo{[K]} + \nu \partial_x^2 \uo{[K]} - \uo{[K]} \partial_x\uo{[K]}\\
    &= -\frac{1}{2} \sum_{\substack{1 \leq k',k'' \leq K\\ k' + k'' > K}} \nu^{k' + k''} \partial_x\left(\uo{k'}\uo{k''}\right) + \nu^{K + 1} \partial_x^2 \uo{K}.
  \end{aligned}
\end{equation}
and
\begin{equation}
  \label{eq:E-in}
  E^{\text{in}} \coloneqq -\partial_t \ui{[K]} + \nu \partial_x^2 \ui{[K]} - \ui{[K]} \partial_x\ui{[K]} = -\frac{1}{2} \sum_{\substack{1 \leq \ell',\ell'' \leq K\\ \ell' + \ell'' > K}} \partial_x\left(\ui{\ell'}\ui{\ell''}\right).
\end{equation}

Let $E^{\text{match}}$ denote the error in the matching zone $M$, so that:
\begin{equation*}
  E(t,x) = 
  \begin{cases} 
    E^{\text{out}}& \text{in }O\\
    E^{\text{match}}& \text{in }M\\
    E^{\text{in}}& \text{in }I.
  \end{cases}
\end{equation*}
The matching error is somewhat complicated due to the cutoff.
Define the mismatch
\begin{equation*}
  h \coloneqq \ui{[K]} - \uo{[K]}.
\end{equation*}
Then we can write
\begin{equation*}
  E^{\text{match}}=E_1 + E_2 + E_3,
\end{equation*}
for
\begin{equation}
  \label{eq:E-match-structure}
  \begin{aligned}
    E_1 &\coloneqq \theta E^{\text{in}} + (1 - \theta) E^{\text{out}}\\
    E_2 &\coloneqq \big(-\partial_t \theta + \nu \partial_x^2 \theta - \uo{[K]} \partial_x \theta - \theta h\partial_x\theta\big) h,\\
    E_3 &\coloneqq \big(2 \nu \partial_x\theta + \theta(1 - \theta)h\big) \partial_xh.
  \end{aligned}
\end{equation}

To bound $E$, we must first control the derivatives of $\theta$.
\begin{lemma}
  \label{lem:size-der-theta}
  Define $\theta$ by \eqref{eq:theta-def}.
  Then for all $i, j \in \Z_{\geq 0}$, we have
  \begin{equation}
    \label{eq:theta-derivs}
    |\partial_t^i \partial_x^j \theta| \lesssim_{i, j} \nu^{-(2i + 3j)/5}.
  \end{equation}
\end{lemma}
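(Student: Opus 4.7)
The plan is to exploit the $1$-homogeneity of $\dist$ by rescaling to the natural matching scale $\lambda = \nu^{1/5}$. Concretely, I would introduce new coordinates
\begin{equation*}
  \tilde{t} \coloneqq \nu^{-2/5} t, \qquad \tilde{x} \coloneqq \nu^{-3/5} x,
\end{equation*}
which realize Definition~\ref{def:homog} with $\lambda = \nu^{1/5}$. Since $\cub$ satisfies the homogeneous cubic $t\cub - \coeff \cub^3 = x$, one checks that $\cub(t,x) = \nu^{1/5}\cub(\tilde{t},\tilde{x})$ and hence $\dist(t,x) = \nu^{1/5}\dist(\tilde{t},\tilde{x})$. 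Substituting into \eqref{eq:theta-def} with $\alpha = 1/5$ gives
\begin{equation*}
  \theta(t,x) = \vartheta\!\left(\dist(\tilde{t},\tilde{x})\right) \eqqcolon \tilde{\theta}(\tilde{t},\tilde{x}).
\end{equation*}
Thus $\theta$ becomes a \emph{$\nu$-independent} function of the rescaled variables.

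Next I would show that $\tilde\theta$ has bounded derivatives of all orders on $[t_0\nu^{-2/5}, 0) \times \R$, with bounds independent of $\nu$. The only issue is the singularity of $\dist$ at the origin, but the support of $\vartheta'$ lies in $[1,2]$, so all derivatives of $\tilde\theta$ are supported on the annulus $\{1 \leq \dist(\tilde{t},\tilde{x}) \leq 2\}$. On $\R_- \times \R$, the cubic defining $\cub$ has nonvanishing derivative $t - 3\coeff\cub^2$ in the $\cub$ variable, so the implicit function theorem makes $\cub$ (and therefore $\dist$) smooth away from the origin, in particular on this annulus. Each derivative $\partial_{\tilde{t}}^i \partial_{\tilde{x}}^j \tilde\theta$ is thus a bounded, smooth, $\nu$-independent function.

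Finally I would apply the chain rule in the opposite direction: $\partial_t = \nu^{-2/5}\partial_{\tilde{t}}$ and $\partial_x = \nu^{-3/5}\partial_{\tilde{x}}$, so
\begin{equation*}
  \partial_t^i \partial_x^j \theta(t,x) = \nu^{-(2i+3j)/5}\,\partial_{\tilde{t}}^i\partial_{\tilde{x}}^j \tilde\theta(\tilde{t},\tilde{x}),
\end{equation*}
and taking the uniform bound on the right-hand side gives \eqref{eq:theta-derivs}. There is no real obstacle; the only subtle point is bookkeeping the $1$-homogeneity of $\dist$ and ensuring that $\dist$ is smooth on the matching annulus, both of which are settled by Definition~\ref{def:cubic} and Lemma~\ref{lem:cubic}.
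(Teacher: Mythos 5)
Your proof is correct and follows essentially the same route as the paper's: both exploit the $1$-homogeneity of $\dist$ to rescale by $\lambda = \nu^{1/5}$, reduce to a $\nu$-independent function whose derivatives are supported on the annulus $\{\dist \asymp 1\}$, and then pick up the factor $\nu^{-(2i+3j)/5}$ from the chain rule. The only difference is cosmetic — you spell out via the implicit function theorem why $\dist$ is smooth on the annulus, which the paper leaves implicit.
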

\begin{proof}
  Since $\theta$ is locally constant outside $M$, we may assume that $\nu^{1/5} \leq \dist \leq 2 \nu^{1/5}$.
  Using \eqref{eq:theta-def} and the $1$-homogeneity of $\dist$, we write
  \begin{equation*}
    \theta(t, x) = \vartheta\big(\dist(\nu^{-2/5}t, \nu^{-3/5}x)\big).
  \end{equation*}
  Therefore
  \begin{equation}
    \label{eq:deriv-commute}
    (\partial_t^i \partial_x^j \theta)(t, x) = \nu^{-(2i + 3j)/5} \big[\partial_t^i \partial_x^j (\vartheta \circ \dist)\big]|_{(\nu^{-2/5}t, \nu^{-3/5}x)}.
  \end{equation}
  Since $\dist(t, x) \asymp \nu^{1/5}$, $1$-homogeneity yields $\dist(\nu^{-2/5}t, \nu^{-3/5}x) = \nu^{-1/5}\dist(t, x) \asymp 1$.
  The derivatives of $\vartheta \circ \dist$ are bounded when $\dist \asymp 1$, so \eqref{eq:theta-derivs} follows from \eqref{eq:deriv-commute}.
\end{proof}
We can now control $E$ and $\partial_x E$ pointwise.
\begin{proposition}
  \label{prop:error-pointwise}
  For all $K \in \Z_{\geq 0}$,
  \begin{equation*}
    \abs{E} \lesssim_K
    \begin{cases}
      \big(\nu \dist^{-4}\big)^{K + 1} \dist^{-1} & \text{in } O \cup M,\\
      \big(\dist \vee \nu^{1/4}\big)^K & \text{in } I \cup M
    \end{cases}
  \end{equation*}
  and
  \begin{equation*}
    \abs{\partial_x E} \lesssim_K
    \begin{cases}
      \big(\nu \dist^{-4}\big)^{K + 1} \dist^{-4} & \text{in } O \cup M,\\
      \big(\dist \vee \nu^{1/4}\big)^{K - 3} & \text{in } I \cup M.
    \end{cases}
  \end{equation*}
\end{proposition}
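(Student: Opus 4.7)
The plan is to split the bound by region: establish the pointwise estimates on $O$, $I$, and $M$ separately, then observe that the two branches of the proposition coincide on $M$, where both should be of order $\nu^{K/5}$ for $E$ and $\nu^{(K-3)/5}$ for $\partial_x E$, matching the scale $\dist \asymp \nu^{1/5}$.

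First, on the outer region $O$, I use the explicit formula \eqref{eq:E-out}. Corollary~\ref{cor:uk-der} gives $|\partial_x^i \uo k| \lesssim \dist^{-3i-4k+1}$, so the Leibniz rule yields
\[
  \bigl|\nu^{k'+k''} \partial_x(\uo{k'}\uo{k''})\bigr| \lesssim \nu^{k'+k''}\dist^{-4(k'+k'')-1} = (\nu\dist^{-4})^{k'+k''}\dist^{-1},
\]
and likewise $|\nu^{K+1}\partial_x^2 \uo K| \lesssim (\nu\dist^{-4})^{K+1}\dist^{-4}\cdot\dist^3 = (\nu\dist^{-4})^{K+1}\dist^{-1}$. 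Since $\dist \geq \nu^{1/5} > \nu^{1/4}$ on $O \cup M$, we have $\nu\dist^{-4} \lesssim 1$, so the worst term is the one with $k'+k''=K+1$, giving the claimed $(\nu\dist^{-4})^{K+1}\dist^{-1}$ bound. The estimate on $\partial_x E^{\mathrm{out}}$ is obtained by the same calculation with one extra derivative, which costs a factor of $\dist^{-3}$.

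Next, on the inner region $I$, I invoke \eqref{eq:E-in} with Corollary~\ref{cor:ul-size}: each term $\partial_x(\ui{\ell'}\ui{\ell''})$ is of size $(\dist \vee \nu^{1/4})^{\ell'+\ell''-1}$, and since $\ell'+\ell'' \geq K+1$ and $\dist \vee \nu^{1/4} \lesssim \nu^{1/5}$ in $I \cup M$, the smallest admissible $\ell'+\ell''$ is the leading one, giving $(\dist \vee \nu^{1/4})^{K}$. The $\partial_x$ bound again costs a factor of $(\dist \vee \nu^{1/4})^{-3}$.

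Finally, on the matching zone $M$, I decompose $E = E_1 + E_2 + E_3$ via \eqref{eq:E-match-structure}. The convex combination $E_1 = \theta E^{\mathrm{in}} + (1-\theta) E^{\mathrm{out}}$ is bounded by the pointwise maximum of the two previous estimates. For $E_2$ and $E_3$ the key ingredients are Lemma~\ref{lem:size-der-theta} (which supplies $|\partial_t^i \partial_x^j \theta| \lesssim \nu^{-(2i+3j)/5}$) together with Proposition~\ref{prop:matching}, which controls $h = \ui{[K]} - \uo{[K]}$ by $\dist^{K+2} + \nu^{K+1}\dist^{-4K-3}$ and $\partial_x h$ by $\dist^{K-1} + \nu^{K+1}\dist^{-4K-6}$. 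On $M$, $\dist \asymp \nu^{1/5}$ forces both pieces of $h$ to collapse to $\nu^{(K+2)/5}$ and both pieces of $\partial_x h$ to $\nu^{(K-1)/5}$. A term-by-term check then shows each contribution to $E_2$ and $E_3$ is $\lesssim \nu^{K/5}$, matching both branches of the proposition at the common boundary. The estimate on $\partial_x E$ in $M$ follows from one more differentiation, with derivatives of $h$, $\theta$, and $\uo{[K]}$ estimated by the same tools.

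The main bookkeeping obstacle is the zoo of terms in $E_2, E_3$ and their derivatives, where one must verify that every cross term involving a derivative of $\theta$, of $h$, or of $\uo{[K]}$ respects both scaling budgets simultaneously; the proof boils down to checking that each such product has total $\nu$-order at least $K/5$ (resp.\ $(K-3)/5$ for $\partial_x E$) on $M$, which is exactly the consistency afforded by the optimal choice $\alpha = 1/5$.
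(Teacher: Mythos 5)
Your proposal follows essentially the same route as the paper: the same region-by-region split, the same use of Corollaries~\ref{cor:uk-der} and~\ref{cor:ul-size} for the truncation errors $E^{\mathrm{out}}$ and $E^{\mathrm{in}}$, and the same decomposition $E = E_1 + E_2 + E_3$ on $M$ controlled by Lemma~\ref{lem:size-der-theta} and Proposition~\ref{prop:matching}. The scaling computations you record for $h$ and $\partial_x h$ in $M$ match the paper's, and your observation that every cross term must meet the $\nu^{K/5}$ (resp.\ $\nu^{(K-3)/5}$) budget is precisely the bookkeeping the paper carries out.
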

\begin{proof}
  We first consider $E^{\text{out}}$ given by \eqref{eq:E-out}.
  By Corollary~\ref{cor:uk-der},
  \begin{equation*}
    \abs{\partial_x(\uo{k'}\uo{k''})} \lesssim_{k',k''} \dist^{-4(k' + k'') - 1} \And \abs{\partial_x^2 \uo{K}} \lesssim_K \dist^{-4K - 5}.
  \end{equation*}
  By \eqref{eq:E-out},
  \begin{equation*}
    \abs{E^{\text{out}}} \lesssim_K \sum_{\substack{1 \leq k',k'' \leq K\\ k' + k'' > K}} \big(\nu \dist^{-4}\big)^{k' + k''} \dist^{-1} + \big(\nu \dist^{-4}\big)^{K + 1} \dist^{-1}.
  \end{equation*}
  On $O \cup M$, $\nu \dist^{-4} \leq \nu^{1/5} \ll 1$, so the dominant terms in the sum are those with $k' + k'' = K + 1$.
  Hence
  \begin{equation*}
    \abs{E^{\text{out}}} \lesssim_K \big(\nu \dist^{-4}\big)^{K + 1} \dist^{-1} \quad \text{in } O \cup M.
  \end{equation*}
  After a nearly identical argument for the derivative, we have the pair of estimates
  \begin{equation}
    \label{eq:E-out-est}
    \abs{E^{\text{out}}} \lesssim_K \big(\nu \dist^{-4}\big)^{K + 1} \dist^{-1} \And \abs{\partial_x E^{\text{out}}} \lesssim_K \big(\nu \dist^{-4}\big)^{K + 1} \dist^{-4} \quad \text{in } O \cup M.
  \end{equation}
  Next, consider $E^{\text{in}}$ given by \eqref{eq:E-in}.
  By Corollary~\ref{cor:ul-size}, we have
  \begin{equation*}
    \abs{\partial_x(\ui{\ell'}\ui{\ell''})} \lesssim_{\ell',\ell''} \big(\dist \vee \nu^{1/4}\big)^{\ell' + \ell'' - 1}.
  \end{equation*}
  By \eqref{eq:E-in},
  \begin{equation*}
    \abs{E^{\text{in}}} \lesssim_K \sum_{\substack{1 \leq \ell',\ell'' \leq K\\ \ell' + \ell'' > K}} \big(\dist \vee \nu^{1/4}\big)^{\ell' + \ell'' - 1}.
  \end{equation*}
  On $I \cup M$, $\dist \leq 2 \nu^{1/5} \ll 1$, so the dominant terms in the sum are those with $\ell' + \ell'' = K + 1$.
  Hence
  \begin{equation*}
    \abs{E^{\text{in}}} \lesssim_K \big(\dist \vee \nu^{1/4}\big)^K \quad \text{in } I \cup M.
  \end{equation*}
  Again, an analogous argument for the derivative yields
  \begin{equation}
    \label{eq:E-in-est}
    \abs{E^{\text{in}}} \lesssim_K \big(\dist \vee \nu^{1/4}\big)^K \And \abs{\partial_x E^{\text{in}}} \lesssim_K \big(\dist \vee \nu^{1/4}\big)^{K - 3} \quad \text{in } I \cup M.
  \end{equation}  

  Finally, we consider $E = E_1 + E_2 + E_3$ given by \eqref{eq:E-match-structure} on $M$.
  We observe that the upper bounds in \eqref{eq:E-out-est} and \eqref{eq:E-in-est} agree up to a constant in $M$, where $\dist \asymp \nu^{1/5}$.
  Thus Lemma~\ref{lem:size-der-theta}, \eqref{eq:E-out-est}, and \eqref{eq:E-in-est} imply that
  \begin{equation}
    \label{eq:E1-est}
    \abs{E_1} \lesssim_K \nu^{K/5} \And \abs{\partial_x E_1} \lesssim_K \nu^{(K - 3)/5} \quad \text{in } M.
  \end{equation}
  To control $E_2$ and $E_3$ in \eqref{eq:E-match-structure}, we use Proposition~\ref{prop:matching} to bound the mismatch $h = \ui{[K]} - \uo{[K]}$:
  \begin{equation}
    \label{eq:h-est}
    \abs{\partial_x^i h} \lesssim_{i, K} \nu^{(-3i + K + 2)/5} \quad \text{in } M.
  \end{equation}
  Also, Corollary~\ref{cor:uk-der} implies that
  \begin{equation}
    \label{eq:u-out-partial-size}
    \abs{\partial_x^i\uo{[K]}} \lesssim_{i, K} \nu^{(-3i + 1)/5} \quad \text{in } M.
  \end{equation}
  Combining Lemma~\ref{lem:size-der-theta}, \eqref{eq:h-est}, and \eqref{eq:u-out-partial-size}, we can check that
  \begin{equation*}
    \abs{E_2} \lesssim_K \nu^{K/5}, \quad \abs{\partial_x E_2} \lesssim_K \nu^{(K - 3)/5}, \quad \abs{E_3} \lesssim_K \nu^{(K + 1)/5}, \quad \abs{\partial_x E_3} \lesssim_K \nu^{(K - 2)/5}
  \end{equation*}
  in $M$.
  Incorporating \eqref{eq:E1-est}, we see that $\abs{E} \lesssim_K \nu^{K/5}$ and $\abs{\partial_x E} \lesssim_K \nu^{(K-3)/5}$ in $M$.
  The proposition follows.
\end{proof}
These pointwise bounds imply $L^2$ estimates.
\begin{lemma}
  \label{lem:error-L2}
  For all $K \in \Z_{\geq 2}$ and $t \in [t_0, 0)$,
  \begin{equation*}
    \norm{E}_{L_x^2}^2\!(t) \lesssim_K \nu^{2(K + 1)} \big(\abs{t} \vee \nu^{2/5}\big)^{-(8K + 7)/2}
  \end{equation*}
  and
  \begin{equation*}
    \norm{\partial_x E}_{L_x^2}^2\!(t) \lesssim_K \nu^{2(K + 1)} \big(\abs{t} \vee \nu^{2/5}\big)^{-(8K + 13)/2}.
  \end{equation*}
\end{lemma}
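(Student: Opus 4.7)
The plan is to upgrade the pointwise bounds of Proposition~\ref{prop:error-pointwise} to the claimed $L^2$ estimates by integrating over spatial slices. The key ingredient is the coarea-type computation
\begin{equation*}
  \int_{\R} \dist(t, x)^{-p}\ds x \asymp \abs{t}^{(3-p)/2} \quad \text{for } p > 3,
\end{equation*}
which follows from Lemma~\ref{lem:cubic}: splitting at $\abs{x} = \abs{t}^{3/2}$, the inner piece contributes $\abs{t}^{3/2}\cdot \abs{t}^{-p/2}$ and the outer piece contributes $\int_{\abs{t}^{3/2}}^\infty \abs{x}^{-p/3}\ds x \asymp \abs{t}^{(3-p)/2}$, with both of the same order. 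An analogous estimate on $\{\dist \leq R\}$ at fixed time $t$ gives $|\{\dist(t, \anon) \leq R\}| \lesssim R^3$ whenever $R \geq \abs{t}^{1/2}$.

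I will separate the two cases $\abs{t} \geq \nu^{2/5}$ and $\abs{t} \leq \nu^{2/5}$. In the first case, the spatial slice lies entirely in $O \cup M$, so Proposition~\ref{prop:error-pointwise} gives the pointwise bounds $\abs{E} \lesssim \nu^{K+1}\dist^{-(4K + 5)}$ and $\abs{\partial_x E} \lesssim \nu^{K+1}\dist^{-(4K+8)}$. Applying the coarea estimate with $p = 8K + 10$ and $p = 8K + 16$ respectively yields $\norm{E}_{L_x^2}^2(t) \lesssim \nu^{2(K+1)}\abs{t}^{-(8K+7)/2}$ and $\norm{\partial_x E}_{L_x^2}^2(t) \lesssim \nu^{2(K+1)}\abs{t}^{-(8K+13)/2}$, exactly as claimed with $\abs{t} \vee \nu^{2/5} = \abs{t}$.

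In the second case $\abs{t} \leq \nu^{2/5}$, I split the slice into $I \cup M = \{\dist \leq 2\nu^{1/5}\}$ and $O = \{\dist > 2\nu^{1/5}\}$. On $O$, the same computation applied over $\{\abs{x} \geq \nu^{3/5}\}$ (since $\dist \asymp \abs{x}^{1/3}$ there) yields the bound $\nu^{2(K+1)}\nu^{-(8K+7)/5}$ for $E$ and $\nu^{2(K+1)}\nu^{-(8K+13)/5}$ for $\partial_x E$, matching the target with $\abs{t} \vee \nu^{2/5} = \nu^{2/5}$. On $I \cup M$, the relevant integrands are $(\dist \vee \nu^{1/4})^{2K}$ for $E$ and $(\dist \vee \nu^{1/4})^{2K-6}$ for $\partial_x E$. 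For $E$ (where the exponent is nonnegative), I bound the integrand by its maximum $\nu^{2K/5}$ and multiply by the slice measure $\lesssim \nu^{3/5}$ to get $\nu^{(2K+3)/5}$, which agrees with the target.

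The main obstacle is the $\partial_x E$ estimate on $I \cup M$ when $K = 2$, where the exponent $2K - 6 = -2$ is negative and the integrand is peaked where $\dist \leq \nu^{1/4}$. I will handle this by further splitting $I \cup M$ into $\{\dist \leq \nu^{1/4}\}$ and $\{\nu^{1/4} \leq \dist \leq 2\nu^{1/5}\}$. On the former, the integrand is $\nu^{(K-3)/2}$ and the measure is $\lesssim \nu^{3/4}$ (by the $|\{\dist \leq R\}| \lesssim R^3$ estimate with $R = \nu^{1/4}$), producing $\nu^{(2K-3)/4}\cdot \nu^{1/2}$; on the latter, I use the change of variables $\dist = \abs{x}^{1/3}$ (justified for $\abs{x} \geq \abs{t}^{3/2}$, which holds here since $\abs{t}^{3/2} \leq \nu^{3/5} \cdot \nu^{3/20}$) to reduce to $\int_{\nu^{3/4}}^{\nu^{3/5}} \abs{x}^{2(K-3)/3}\ds x$. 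A direct computation shows both contributions are $\lesssim \nu^{(2K-3)/5}$ uniformly for $K \geq 2$, completing the proof. The remaining bookkeeping for $K \geq 3$ (where the exponent is nonnegative) is handled by the same ``max times measure'' argument used for $E$.
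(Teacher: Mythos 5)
Your proposal is correct and follows the same strategy as the paper: a temporal split at $\abs{t} = \nu^{2/5}$, the coarea computation $\int_\R \dist(t, x)^{-p}\ds x \lesssim \abs{t}^{-(p-3)/2}$ for the early regime, and a spatial split into $O$ and $I \cup M$ for the late regime with the pointwise bounds of Proposition~\ref{prop:error-pointwise}. The extra decomposition you introduce on $I \cup M$ for $\partial_x E$ (splitting at $\dist = \nu^{1/4}$ and converting to an $\abs{x}$-integral on the outer piece) is just a slightly more careful accounting of the same integral $\int_0^{x_M} \abs{x}^{2(K-3)/3}\ds x$ that the paper invokes directly; both use $K \geq 2$ to ensure integrability. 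One small arithmetic slip: the contribution from $\{\dist \leq \nu^{1/4}\}$ is $\nu^{(K-3)/2} \cdot \nu^{3/4} = \nu^{(2K-3)/4}$, not $\nu^{(2K-3)/4}\cdot\nu^{1/2}$, but since $(2K-3)/4 \geq (2K-3)/5$ for $K \geq 2$ the conclusion is unaffected.
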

\begin{proof}
  First suppose $t \leq -\nu^{2/5}$.
  Then the time-slice $\{t\} \times \R$ lies in $O \cup M$, so by Proposition~\ref{prop:error-pointwise} we have
  \begin{equation}
    \label{eq:error-early-slice}
    \abs{E} \lesssim_K \big(\nu \dist^{-4}\big)^{K + 1} \dist^{-1} \And \abs{\partial_x E} \lesssim_K \big(\nu \dist^{-4}\big)^{K + 1} \dist^{-4} \quad \text{on } \{t\} \times \R.
  \end{equation}
  Given $p > 3$, Lemma~\ref{lem:cubic} yields
  \begin{equation*}
    \int_{\R} \dist(t, x)^{-p} \d x \lesssim \int_0^{\abs{t}^{3/2}} \abs{t}^{-p/2} \d x + \int_{\abs{t}^{3/2}}^\infty \abs{x}^{-p/3} \d x \lesssim_p \abs{t}^{-(p - 3)/2}.
  \end{equation*}
  With $p = 8K + 10$ or $8K + 16$, \eqref{eq:error-early-slice} yields
  \begin{equation*}
    \norm{E}_{L_x^2}^2\!(t) \lesssim_K \nu^{2(K + 1)} \abs{t}^{-(8K + 7)/2} \And \norm{\partial_xE}_{L_x^2}^2\!(t) \lesssim_K \nu^{2(K + 1)} \abs{t}^{-(8K + 13)/2},
  \end{equation*}
  as desired.

  Now suppose $t \in (-\nu^{2/5}, 0)$.
  Define $x_M(t) > 0$ by the condition
  \begin{equation*}
    \dist\big(t, x_M(t)\big) = 2 \nu^{1/5}.
  \end{equation*}
  Then $(t, x) \in O$ if $\abs{x} > x_M(t)$ and $(t, x) \in I \cup M$ otherwise.
  Using $\abs{t} < \nu^{2/5}$ and Lemma~\ref{lem:cubic}, we can check that
  \begin{equation}
    \label{eq:slice-boundary}
    x_M(t) \asymp \nu^{3/5}.
  \end{equation}
  We write
  \begin{equation*}
    \norm{E}_{L_x^2}^2\!(t) = \int_{\{\abs{x} > x_M\}} \abs{E}^2 \d x + \int_{\{\abs{x} \leq x_M\}} \abs{E}^2 \d x \eqqcolon \m{I}_1 + \m{I}_2.
  \end{equation*}
  The first integral $\m{I}_1$ only involves points in $O$, so \eqref{eq:error-early-slice} holds there as well.
  Thus Lemma~\ref{lem:cubic} and \eqref{eq:slice-boundary} imply
  \begin{equation*}
    \m{I}_1 \lesssim_K \nu^{2(K + 1)}\int_{x_M}^\infty \dist^{-(8K + 10)} \d x \lesssim_K \nu^{2(K + 1)} \int_{x_M}^\infty \abs{x}^{-(8K + 10)/3} \d x \lesssim_K \nu^{(2K + 3)/5}.
  \end{equation*}
  On the other hand, Proposition~\ref{prop:error-pointwise} yields
  \begin{equation*}
    \m{I}_2 \lesssim \int_0^{x_M} (\dist \vee \nu^{1/4})^{2K} \d x \lesssim_K \nu^{K/2}x_M + \int_0^{x_M} \abs{x}^{2K/3} \d x \lesssim_K \nu^{(2K + 3)/5}.
  \end{equation*}
  Collecting these integral bounds, we find
  \begin{equation*}
    \norm{E}_{L_x^2}^2\!(t) \lesssim_K \nu^{(2K + 3)/5}
  \end{equation*}
  when $\abs{t} < \nu^{2/5}$, as desired.
  The argument for $\partial_x E$ is similar, so we omit the details.
  We only note one wrinkle: the inner integral for $\partial_x E$ involves a term of the form
  \begin{equation*}
    \int_0^{x_M} \abs{x}^{2(K - 3)/3} \d x.
  \end{equation*}
  We have assumed that $K \geq 2$ so that the integral is finite.
  When $K = 0$ or $1$, different exponents arise in the lemma.
\end{proof}

\subsection{Derivative bounds}
To deploy the energy estimates in Lemma~\ref{lem:eng-est-v}, we must also control derivatives of the approximate solution.
\begin{proposition}
  \label{prop:est-der-u0}
  For all $K \in \Z_{\geq 0}$ and $t \in [t_0, 0)$,
  \begin{equation}
    \label{eq:approx-slope}
    \big\|\partial_x \uapp_{[K]}\big\|_{L^\infty_x}(t) =
    \begin{cases}
      \abs{t}^{-1} + \m{O}_K\left(\abs{t}^{-1/2} + \nu \abs{t}^{-3}\right) & \text{if } \abs{t} \geq \nu^{1/2},\\
      \m{O}_K(\nu^{-1/2}) & \text{if } \abs{t} < \nu^{1/2}
    \end{cases}
  \end{equation}
  and
  \begin{equation}
    \label{eq:approx-second}
    \big\|\partial_x^2 \uapp_{[K]}\big\|_{L^\infty_x}(t) \lesssim_K \left(\abs{t} \vee \nu^{1/2}\right)^{-5/2}.
  \end{equation}
\end{proposition}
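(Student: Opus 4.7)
The plan is to bound $\partial_x \uapp_{[K]}$ and $\partial_x^2 \uapp_{[K]}$ pointwise in $(t,x)$ and then take the $L^\infty$ supremum in $x$, treating each of the three zones $O$, $I$, $M$ separately. The product rule applied to \eqref{eq:uapp-def} gives
\begin{equation*}
  \partial_x \uapp_{[K]} = (1-\theta)\,\partial_x \uo{[K]} + \theta\,\partial_x \ui{[K]} + (\partial_x\theta)\, h, \quad h \coloneqq \ui{[K]} - \uo{[K]},
\end{equation*}
with two additional cutoff cross terms $2(\partial_x\theta)(\partial_x h) + (\partial_x^2\theta) h$ appearing in $\partial_x^2 \uapp_{[K]}$. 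In $O$ only $\partial_x \uo{[K]}$ survives; in $I$ only $\partial_x \ui{[K]}$ survives; on $M$ all three contribute.

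In $O$, Corollary~\ref{cor:u0-out-x} supplies the sharp leading bound $\abs{\partial_x \uo{0}} \leq \abs{t}^{-1} + O(\abs{t}^{-1/2})$, and Corollary~\ref{cor:uk-der} bounds each $\nu^k \partial_x \uo{k}$ by $\nu^k \dist^{-4k-2}$; using $\dist \geq \abs{t}^{1/2}$ and $\nu\abs{t}^{-2} \leq 1$ when $\abs{t}\geq\nu^{1/2}$, these corrections sum telescopically to $O_K(\nu\abs{t}^{-3})$, matching the claimed error. In $I$, Corollary~\ref{cor:ul-size} controls each $\partial_x \ui{\ell}$ by $(\dist \vee \nu^{1/4})^{\ell-2}$, hence $\abs{\partial_x \ui{[K]}} \lesssim_K (\dist \vee \nu^{1/4})^{-2}$; this is at most $\nu^{-1/2}$ in general and collapses to $\leq \abs{t}^{-1}$ when $\abs{t}\geq\nu^{1/2}$ (so $\dist \geq \nu^{1/4}$). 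To recover the precise leading constant in that regime I would rescale Proposition~\ref{prop:Ul} (with $k=\ell=0$, $i=1$) into the original coordinates to obtain $\abs{\partial_x \ui{0} - \partial_x \cub} \lesssim \nu\dist^{-6} \leq \nu\abs{t}^{-3}$. The second-derivative estimate runs through the same decomposition with $i=2$ in both corollaries: on $O$ one gets $\dist^{-5} \leq \abs{t}^{-5/2}$, and on $I$ one gets $(\dist \vee \nu^{1/4})^{-5} \leq \nu^{-5/4}$, both consistent with $(\abs{t}\vee\nu^{1/2})^{-5/2}$.

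The matching zone is where the real work happens. Since $\dist \asymp \nu^{1/5}$ on $M$, the $O$ and $I$ bounds transfer directly to $\theta\,\partial_x \ui{[K]}$ and $(1-\theta)\,\partial_x\uo{[K]}$. For the cutoff contributions I would combine Lemma~\ref{lem:size-der-theta}, which gives $\abs{\partial_x^j \theta} \lesssim \nu^{-3j/5}$, with Proposition~\ref{prop:matching} restricted to $M$, which reads $\abs{\partial_x^i h} \lesssim_K \nu^{(K+2-3i)/5}$ (the two terms on the right-hand side of Proposition~\ref{prop:matching} balance precisely because $\alpha = 1/5$). The cross terms $(\partial_x\theta) h$, $(\partial_x^2\theta) h$, and $(\partial_x\theta)(\partial_x h)$ then come out at $\nu^{(K-1)/5}$, $\nu^{(K-4)/5}$, and $\nu^{(K-4)/5}$ respectively, each dominated by the sharp error on $M$ for every $K \geq 0$. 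The equality (not merely upper bound) in the first case of \eqref{eq:approx-slope} is obtained by evaluating at $x=0$: the dominant contribution there is $\cubder(t,0) = \abs{t}^{-1}$, or, when $(t,0) \in I$, its rescaled inner counterpart $\partial_X \Ui{0}(T,0) = -\abs{T}^{-1} + O(\abs{T}^{-3})$ extracted from Proposition~\ref{prop:Ul}.

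The main obstacle I anticipate is the matching-zone bookkeeping. The $O$ and $I$ estimates are essentially direct applications of the previous section's corollaries, but in $M$ one juggles three distinct small parameters (powers of $\nu$ from derivatives of $\theta$, from derivatives of $h$, and from the local scale $\dist \asymp \nu^{1/5}$) and must check that the cross-term exponents land strictly inside the sharp error bounds for every $K \geq 0$. This hinges on the specific choice $\alpha = 1/5$ made in Section~\ref{subsec:approx}, without which the cross terms would not be small enough.
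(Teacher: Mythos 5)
Your proposal follows essentially the same route as the paper: decompose $\partial_x\uapp_{[K]}$ via the product rule into zone contributions plus cutoff cross terms, compare to $\partial_x\cub$ using Corollary~\ref{cor:u0-out-x}, Corollary~\ref{cor:uk-der}, the rescaled Proposition~\ref{prop:Ul} (that rescaling is exactly \eqref{eq:ul-est} in the paper), and Corollary~\ref{cor:ul-size}, and control the matching-zone terms with Proposition~\ref{prop:matching} and Lemma~\ref{lem:size-der-theta}. The paper packages the three zones into the single intermediate bound $|\partial_x\uapp_{[K]}-\partial_x\cub|\lesssim_K\dist^{-1}+\nu\dist^{-6}$ on $\{\dist\geq\nu^{1/4}\}$ before taking the $x$-supremum, rather than treating $|t|\geq\nu^{1/2}$ and $|t|<\nu^{1/2}$ separately from the start, but that is a cosmetic difference in exposition, not in content; your exponent bookkeeping for the cutoff cross terms agrees with the paper's.
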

\begin{proof}
  First, we claim that
  \begin{equation}
    \label{eq:uapp-der-est}
    \big|\partial_x \uapp_{[K]} - \partial_x \cub\big| \lesssim_K \dist^{-1} + \nu \dist^{-6} \quad \text{if } \dist \geq \nu^{1/4}.
  \end{equation}
  By \eqref{eq:u0-out-precise}, we have
  \begin{equation}
    \label{eq:u0-out-est}
    \big|\partial_x \uo{0} - \partial_x\cub\big| \lesssim \dist^{-1}.
  \end{equation}
  Also, Corollary~\ref{cor:uk-der} implies that
  \begin{equation*}
    \big|\nu^k \partial_x \uo{k}\big| \lesssim_k \nu^k \dist^{-4k - 2} = (\nu^k \dist^{-4k - 1})\dist^{-1}.
  \end{equation*}
  When $\dist \gtrsim \nu^{1/5}$ and $k \geq 1$, we have $\nu^k \dist^{-4k - 1} \lesssim 1$.
  Hence
  \begin{equation}
    \label{eq:outer-der-est}
    \big|\nu^k \partial_x \uo{k}\big| \lesssim_k \dist^{-1} \quad \text{on } O \cup M.
  \end{equation}
  Combining this with \eqref{eq:u0-out-est}, we find
  \begin{equation*}
    \big|\partial_x \uo{[K]} - \partial_x \cub\big| \lesssim_K \dist^{-1} \quad \text{on } O \cup M.
  \end{equation*}
  In particular, this confirms \eqref{eq:uapp-der-est} on $O$.

  Next, we apply \eqref{eq:ul-est} and find
  \begin{equation*}
    \abs{\partial_x\ui{0} - \partial_x \cub} \lesssim \nu \dist^{-6} \quad \text{if } \dist \geq \nu^{1/4}.
  \end{equation*}
  Also, Corollary~\ref{cor:ul-size} yields $\abs{\partial_x \ui{\ell}} \lesssim_\ell \dist^{-1}$ when $\dist \lesssim 1$ and $\ell \geq 1$.
  Combining these observations, we have
  \begin{equation}
    \label{eq:inner-der-est}
    \big|\partial_x\ui{[K]} - \partial_x \cub\big| \lesssim_K \dist^{-1} + \nu \dist^{-6} \quad \text{if } \nu^{1/4} \leq \dist \leq 1.
  \end{equation}
  This confirms \eqref{eq:uapp-der-est} on $I$.
  Finally, on $M$ we have
  \begin{equation}
    \label{eq:matching-der-structure}
    \partial_x \uapp_{[K]} = \theta \partial_x \ui{[K]} + (1 - \theta) \partial_x\uo{[K]} + (\partial_x\theta) \big(\ui{[K]} - \uo{[K]}\big).
  \end{equation}
  By \eqref{eq:outer-der-est} and \eqref{eq:inner-der-est},
  \begin{equation}
    \label{eq:convex-combo}
    \big|\theta \partial_x \ui{[K]} + (1 - \theta) \uo{[K]} - \partial_x \cub\big| \lesssim_K \dist^{-1} + \nu \dist^{-6}.
  \end{equation}
  Moreover, Proposition~\ref{prop:matching} and Lemma~\ref{lem:size-der-theta} imply that
  \begin{equation}
    \label{eq:cutoff-der}
    \abs{(\partial_x\theta) \big(\ui{[K]} - \uo{[K]}\big)} \lesssim_K \nu^{(K - 1)/5} \leq \nu^{-1/5} \lesssim \dist^{-1}.
  \end{equation}
  Combining \eqref{eq:convex-combo} and \eqref{eq:cutoff-der}, \eqref{eq:matching-der-structure} yields
  \begin{equation*}
    \big|\partial_x \uapp_{[K]} - \partial_x \cub\big| \lesssim_K \dist^{-1} + \nu \dist^{-6}.
  \end{equation*}
  We have thus verified \eqref{eq:uapp-der-est}.
  Taking a supremum in $x$ and using \eqref{eq:u00-x} and ${\dist^{-1} \leq \abs{t}^{-1/2}}$ we obtain
  \begin{equation*}
    \big\|\partial_x \uapp_{[K]}\big\|_{L_x^\infty}(t) = \abs{t}^{-1} + \m{O}_K\left(\abs{t}^{-1/2} + \nu \abs{t}^{-3}\right) \For \abs{t} \geq \nu^{1/2}.
  \end{equation*}
  Now if $\dist \leq \nu^{1/4}$, $\uapp_{[K]}=\ui{[K]}$ and Corollary~\ref{cor:ul-size} implies that
  \begin{equation}
    \label{eq:der-sanctum}
    \big|\partial_x \ui{[K]}\big| \lesssim_K \nu^{-1/2}.
  \end{equation}
  Also, $\abs{\partial_x \cub} \lesssim \dist^{-2} \leq \nu^{-1/2}$ if $\dist \geq \nu^{1/4}$.
  Combining this observation with \eqref{eq:uapp-der-est} and \eqref{eq:der-sanctum}, we find
  \begin{equation*}
    \big\|\partial_x \uapp_{[K]}\big\|_{L_x^\infty}(t) \lesssim_K \nu^{-1/2} \For \abs{t} \leq \nu^{1/2}.
  \end{equation*}
  We have thus proven \eqref{eq:approx-slope}.

  We now turn to $\partial_x^2 \uapp_{[K]}$.
  Corollary~\ref{cor:uk-der} yields
  \begin{equation}
    \label{eq:outer-second}
    \big|\partial_x^2 \uo{[K]}\big| \lesssim_K \dist^{-5} \quad \text{on } O \cup M,
  \end{equation}
  while Corollary~\ref{cor:ul-size} implies
  \begin{equation}
    \label{eq:inner-second}
    \big|\partial_x^2 \ui{[K]}\big| \lesssim_K \big(\dist \vee \nu^{1/4}\big)^{-5} \quad \text{on } I \cup M.
  \end{equation}
  Finally, Proposition~\ref{prop:matching} and Lemma~\ref{lem:size-der-theta} imply that
  \begin{equation}
    \label{eq:matching-second}
    \big|\partial_x^2 \uapp_{[K]}\big| \lesssim_K \nu^{-1} \quad \text{on } M.
  \end{equation}
  Combining \eqref{eq:outer-second}--\eqref{eq:matching-second} and taking the supremum in $x,$ we obtain \eqref{eq:approx-second}.
\end{proof}

\subsection{Bootstrapping}
We now employ a bootstrap argument to show that the solution $v$ of \eqref{eq:diff} remains suitably small.
Ultimately, we wish to control $\norm{v}_{L_x^\infty}$.
To do so, we will interpolate between \eqref{eq:eng-est} and \eqref{eq:eng-est-der} in Lemma~\ref{lem:eng-est-v}.
However, the exponential in \eqref{eq:eng-est-der} depends on $\norm{v}_{L_x^\infty}$ itself.
We therefore bootstrap on $\norm{v}_{L_x^\infty}$.
Fix ${\eps \in (0, 1/100)}$.
We will work to prove:
\begin{equation}
  \label{eq:boot-assumptions}
  \tag{B}
  \norm{v}_{L^\infty_x}\!(t)\leq \nu^{-\eps + K + 1} \big(\abs{t} \vee \nu^{2/5}\big)^{-2K-3/4} \big(\abs{t} \vee \nu^{1/2}\big)^{-2}.
\end{equation}
More plainly:
\begin{equation*}
  \norm{v}_{L^\infty_x}\!(t)\leq
  \begin{cases}
    \nu^{ - \eps + K + 1} \abs{t}^{-2K-11/4} & \quad\text{if }t\in [t_0,-\nu^{2/5}),\\
    \nu^{ - \eps + (2K + 7)/10} \abs{t}^{-2} &\quad \text{if }t\in (-\nu^{2/5},-\nu^{1/2}],\\
    \nu^{ - \eps + (2K - 3)/10} &\quad \text{if }t\in (-\nu^{1/2},0).
  \end{cases}
\end{equation*}
The following proposition is our essential tool; it states that the bootstrap hypothesis is \emph{self-improving}.
\begin{proposition}
  \label{prop:boot}
  Fix $K \in \Z_{\geq 3}$ and $\eps \in (0, 1)$.
  There exists $\smallbar{\nu}(K, \eps) \in (0, 1/100)$ such that the following holds for all $\nu \in (0, \smallbar\nu]$.
  Suppose there exists $t_{\mathrm{B}} \in (t_0, 0)$ such that the solution $v$ of \eqref{eq:diff} satisfies \eqref{eq:boot-assumptions} on $[t_0, t_{\mathrm{B}}]$.
  Then for all $t\in [t_0,t_{\mathrm{B}})$,
  \begin{equation}
    \label{eq:boot-improved}
    \norm{v}_{L^\infty_x}\!(t)\leq \nu^{-\eps/2 + K + 1} \big(\abs{t} \vee \nu^{2/5}\big)^{-(8K + 3)/4} \big(\abs{t} \vee \nu^{1/2}\big)^{-2}.
  \end{equation}
\end{proposition}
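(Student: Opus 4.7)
My plan is to combine the energy estimates in Lemma~\ref{lem:eng-est-v} with the forcing bounds of Lemma~\ref{lem:error-L2}, the derivative bounds of Proposition~\ref{prop:est-der-u0}, and the one-dimensional Gagliardo--Nirenberg inequality $\|v\|_{L^\infty_x}\lesssim \|v\|_{L^2_x}^{1/2}\|\partial_x v\|_{L^2_x}^{1/2}$. The passage from $\nu^{-\eps}$ to $\nu^{-\eps/2}$ will come from the fact that the $L^2$-level energy bounds are quantitatively tighter than \eqref{eq:boot-assumptions} predicts, and interpolating to $L^\infty$ keeps half of that slack.

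I would first apply \eqref{eq:eng-est} with a fixed $\eta\in(0,1)$ (for concreteness $\eta=1/2$). By Proposition~\ref{prop:est-der-u0}, $\int_{t_0}^{\mathfrak{t}}\|\partial_x\uapp\|_{L^\infty_x}\,\ds t = \log(|t_0|/(|\mathfrak{t}|\vee \nu^{1/2}))+\m{O}(1)$, while $\int_{t_0}^{\mathfrak{t}}|t|^{-\eta}\,\ds t=\m{O}(1)$, so the exponential in \eqref{eq:eng-est} is $\m{O}((|\mathfrak{t}|\vee\nu^{1/2})^{-1})$. Together with Lemma~\ref{lem:error-L2}, a routine computation yields a bound of the form
\[
\|v\|_{L^2_x}^2(\mathfrak{t}) \lesssim_K \nu^{(2K+6)/5}(|\mathfrak{t}|\vee \nu^{1/2})^{-1}.
\]
Applying \eqref{eq:eng-est-der} in the same fashion, the forcing $\iint|t|^\eta[(\partial_x E)^2+(\partial_x^2 \uapp)^2 v^2]$ is controlled via Lemma~\ref{lem:error-L2}, Proposition~\ref{prop:est-der-u0}, and the $L^2$ bound just obtained, yielding a matching bound on $\|\partial_x v\|_{L^2_x}^2$ with extra negative powers of $|t|\vee\nu^{1/2}$ and $|t|\vee\nu^{2/5}$ corresponding to spatial differentiation. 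Taking geometric means via Gagliardo--Nirenberg at each $\mathfrak{t}\in[t_0,t_{\mathrm B}]$ and comparing exponents across the three regimes $|t|\geq \nu^{1/2}$, $\nu^{2/5}\leq|t|\leq \nu^{1/2}$, and $|t|\leq \nu^{2/5}$ should deliver \eqref{eq:boot-improved}, provided $K\geq 3$ (matching the hypothesis of Lemma~\ref{lem:error-L2}) and $\bar\nu(K,\eps)$ is chosen small.

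The hard part will be the exponential factor $\exp\!\bigl(\int_{t_0}^{\mathfrak{t}}\nu^{-1}\|v\|_{L^\infty_x}^2\,\ds t\bigr)$ in \eqref{eq:eng-est-der}. Substituting \eqref{eq:boot-assumptions} pointwise into the integrand produces a contribution concentrated around $|t|\sim\nu^{1/2}$, where \eqref{eq:boot-assumptions} permits $\|v\|_{L^\infty_x}\sim \nu^{-\eps-3/8}$; the resulting time-integral is of order $\nu^{-5/4-2\eps}$, which when exponentiated would destroy the estimate. To circumvent this I plan to bound $\|v\|_{L^\infty_x}^2$ inside the exponent not by \eqref{eq:boot-assumptions} but by the 1D Sobolev bound $C(\|v\|_{L^2_x}^2+\|\partial_x v\|_{L^2_x}^2)$, and close the resulting self-referential inequality by a two-parameter continuity argument on $(\|v\|_{L^2_x}^2,\|\partial_x v\|_{L^2_x}^2)$ leveraging the smallness of the $L^2$ bound from the first step. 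Once this is in place, the rest is systematic bookkeeping of the exponents of $\nu$ and $|t|$ across the three time regimes, which is lengthy but routine.
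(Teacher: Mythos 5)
Your overall architecture---$L^2$ estimate via \eqref{eq:eng-est}, $H^1$ estimate via \eqref{eq:eng-est-der}, forcing control from Lemma~\ref{lem:error-L2}, coefficient control from Proposition~\ref{prop:est-der-u0}, and interpolation to $L^\infty$---matches the paper's strategy through Lemmas~\ref{lem:L2-est-v}, \ref{lem:L2-est-v-der}, and the final Morrey step. But there are two genuine problems in the execution.

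First, the ``hard part'' you flag is an artifact of an arithmetic slip. The bootstrap hypothesis \eqref{eq:boot-assumptions} at $|t|\lesssim\nu^{1/2}$ reads $\|v\|_{L_x^\infty}\leq\nu^{-\eps+(2K-3)/10}$, not $\nu^{-\eps-3/8}$: you have dropped the $\nu^{K+1}$ prefactor, which for $K\geq 3$ makes the bound small rather than large. Summing the contributions from all three time regimes under \eqref{eq:boot-assumptions} yields
\begin{equation*}
  \nu^{-1}\int_{t_0}^{t_{\mathrm B}}\|v\|_{L_x^\infty}^2\,\d t \lesssim_K \nu^{(4K-11)/10-2\eps},
\end{equation*}
and the exponent is positive once $K\geq 3$ and $\eps<1/100$, so the exponential factor is $O_K(1)$; this is exactly \eqref{eq:der-exp-v} in the paper. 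Direct substitution of \eqref{eq:boot-assumptions} therefore suffices. The self-referential inequality you propose instead---bounding $\|v\|_{L_x^\infty}^2$ in the exponent by $C(\|v\|_{L_x^2}^2+\|\partial_x v\|_{L_x^2}^2)$---is both unnecessary and hazardous: it puts the unknown $\|\partial_x v\|_{L_x^2}^2$ inside its own exponential, and you have no a priori smallness on $\|\partial_x v\|_{L_x^2}^2$ from \eqref{eq:boot-assumptions} to feed a continuity argument.

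Second, fixing $\eta=1/2$ in Lemma~\ref{lem:eng-est-v} gives away too much. Your bound $\|v\|_{L^2_x}^2\lesssim\nu^{(2K+6)/5}(|\mathfrak t|\vee\nu^{1/2})^{-1}$ is indeed what $\eta=1/2$ produces for $|\mathfrak t|\lesssim\nu^{2/5}$, but it is worse than \eqref{eq:L2-improved} by roughly a factor $\nu^{-1/5+\eps/2}$ in that regime. After interpolation this deficit contributes about $\nu^{-1/10}$ to the $L^\infty$ estimate---far larger than the $\nu^{\eps/2}$ improvement the proposition demands, and fatal for small $\eps$ (e.g.\ $\eps=1/200$, used later in the paper). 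The remedy is to let the weight depend on $\eps$: take $\eta=1-\delta$ with $\delta\asymp\eps$ (the paper uses $\delta=5\eps_0/8$ in Lemma~\ref{lem:L2-est-v} and $\delta=\eps/6$ in Lemma~\ref{lem:L2-est-v-der}), so that $\int|t|^{-\eta}\,\d t=O_\delta(1)$ while the forcing weight only loses $\nu^{-O(\delta)}$; the resulting surplus $\nu^{O(\eps)}$ is used to absorb constants after taking $\nu\leq\smallbar\nu(K,\eps)$. The interpolation exponent should similarly be taken as $1/2+O(\eps)$. Without these $\eps$-dependent choices the stated self-improvement does not close.
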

\noindent
That is, we can shave $\nu^{-\eps/2}$ off the bootstrap assumption \eqref{eq:boot-assumptions}.

To prove Proposition~\ref{prop:boot}, we first control $v$ in $L_x^2$ and $H_x^1$.
\begin{lemma}
  \label{lem:L2-est-v}
  Fix $K \in \Z_{\geq 2}$ and $\eps_0 \in (0, 1)$.
  There exists $\nu_0(K, \eps_0) > 0$ such that for all $\nu \in (0, \nu_0]$ and $t \in [t_0,0)$,
  \begin{equation}
    \label{eq:L2-improved}
    \norm{v}^2_{L^2_x}\!(t) \leq \nu^{-\eps_0/2 + 2 (K + 1)} \big(\abs{\f t} \vee \nu^{2/5}\big)^{-(8K + 3)/2} \left(\abs{t} \vee \nu^{1/2}\right)^{-1}.
  \end{equation}
\end{lemma}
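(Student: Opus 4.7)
The plan is to apply the $L^2$ energy estimate \eqref{eq:eng-est} from Lemma~\ref{lem:eng-est-v} and then control its two factors separately: the forcing integral $\int_{t_0}^{\f t}\!\int_\R |t|^\eta E^2 \d x \ds t$ via Lemma~\ref{lem:error-L2}, and the exponential involving $\norm{\partial_x \uapp}_{L_x^\infty}$ via Proposition~\ref{prop:est-der-u0}. The free parameter $\eta\in[0,1)$ mediates a classical tradeoff: taking $\eta$ close to $1$ sharpens the forcing term (whose integrand decays better in $|t|$), but causes $\int_{t_0}^0 |t|^{-\eta}\d t$ in the exponential to blow up like $(1-\eta)^{-1}|t_0|^{1-\eta}$. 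The natural compromise is to fix $\eta = 1 - \eps_0/2$ (or, if $\eps_0$ is large, any convenient $\eta \in [0,1)$ with $1-\eta \leq 5\eps_0/4$), which keeps $\int_{t_0}^0 |t|^{-\eta}\d t = \m O_{\eps_0}(1)$ and produces a loss of at most $\nu^{-\eps_0/2}$ elsewhere.

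For the exponential factor, Proposition~\ref{prop:est-der-u0} gives $\norm{\partial_x \uapp}_{L_x^\infty}(t) = |t|^{-1} + \m O(|t|^{-1/2} + \nu |t|^{-3})$ when $|t| \geq \nu^{1/2}$ and $\m O(\nu^{-1/2})$ otherwise. Splitting the integral over $[t_0,\f t]$ at $-\nu^{1/2}$ turns the first range into $\log|t_0| - \log(|\f t| \vee \nu^{1/2}) + \m O(1)$ and bounds the second range by $\nu^{-1/2}\cdot\nu^{1/2} = \m O(1)$. Exponentiating gives a factor $\lesssim (|\f t| \vee \nu^{1/2})^{-1}$, which accounts exactly for the last factor in the claimed estimate.

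For the forcing integral, Lemma~\ref{lem:error-L2} yields $\norm{E}_{L_x^2}^2(t) \lesssim_K \nu^{2(K+1)}(|t|\vee\nu^{2/5})^{-(8K+7)/2}$. Because $K\geq 2$ forces $\eta - (8K+7)/2 < -1$, the integral $\int_{t_0}^{\f t}|t|^\eta (|t|\vee\nu^{2/5})^{-(8K+7)/2}\d t$ is dominated by the upper endpoint in both regimes $\f t \leq -\nu^{2/5}$ and $\f t \in (-\nu^{2/5},0)$ (in the latter, one checks that the contribution from $[-\nu^{2/5},\f t]$, where the integrand is uniformly of size $\nu^{-(8K+7)/5}|t|^\eta$, also scales as a constant times $\nu^{(2/5)[\eta-(8K+5)/2]}$). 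Hence
\begin{equation*}
  \int_{t_0}^{\f t}\!\!\int_\R |t|^\eta E^2 \d x \ds t \lesssim_K \nu^{2(K+1)}\big(|\f t|\vee\nu^{2/5}\big)^{\eta-(8K+5)/2}.
\end{equation*}
Multiplying by the exponential factor and using $(|\f t|\vee\nu^{2/5})^{\eta-1}\leq \nu^{(2/5)(\eta-1)} = \nu^{-\eps_0/5}\leq \nu^{-\eps_0/2}$ yields exactly the bound \eqref{eq:L2-improved}, after choosing $\nu_0$ small enough that the implicit constant is absorbed.

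The main (mild) obstacle is simply bookkeeping the regime change at $|t|=\nu^{2/5}$ in the forcing integral and at $|t|=\nu^{1/2}$ in the derivative integral, and in particular confirming that the bound continues smoothly across these thresholds so that the two factors $(|\f t|\vee\nu^{2/5})^{-(8K+3)/2}$ and $(|\f t|\vee\nu^{1/2})^{-1}$ arise in the correct way; no genuinely new idea is required beyond the choice of $\eta$.
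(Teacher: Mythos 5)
Your proposal follows exactly the paper's own argument: apply \eqref{eq:eng-est} with $\eta = 1-\delta$ close to $1$, integrate Lemma~\ref{lem:error-L2} to control the forcing term, use Proposition~\ref{prop:est-der-u0} and the logarithmic identity for the drift integral, and absorb the implicit constant by shrinking~$\nu_0$; the only cosmetic difference is the numerical choice of $\delta$ ($\eps_0/2$ here versus $5\eps_0/8$ in the paper), which affects nothing. This is correct and uses the same approach.
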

\begin{proof}
  Fix a small exponent $\delta \in (0, 1)$ to be determined later.
  Applying Lemma~\ref{lem:eng-est-v} with $\eta = 1 - \delta$, we find
  \begin{equation}
    \label{eq:eng-est-simple}
    \norm{v}^2_{L^2_x}\!(\f t) \lesssim_\delta \left(\int_{t_0}^{\f t} \abs{t}^{1 - \delta} \norm{E}_{L_x^2}^2\!(t) \d t\right) \exp\left(\int_{t_0}^{\f t} \norm{\partial_x \uapp}_{L_x^\infty}\!(t) \d t\right)
  \end{equation}
  for all $\f t \in (t_0, 0)$.
  If we integrate Lemma~\ref{lem:error-L2} in time, we obtain
  \begin{equation}
    \label{eq:forcing-int}
    \int_{t_0}^{\f t} \abs{t}^{1 - \delta} \norm{E}_{L_x^2}^2\!(t) \d t \lesssim_{K} \nu^{2 (K + 1)} \big(\abs{\f t} \vee \nu^{2/5}\big)^{-(8K + 3)/2 - \delta}.
  \end{equation}
  Similarly, Proposition~\ref{prop:est-der-u0} yields
  \begin{equation}
    \label{eq:exp-deriv-int}
    \int_{t_0}^{\f t} \norm{\partial_x \uapp}_{L_x^\infty}\!(t) \d t \leq \log\left(\abs{t} \vee \nu^{1/2}\right)^{-1} + \m{O}_{K, \delta}\left(1\right).
  \end{equation}
  Combining \eqref{eq:eng-est-simple}--\eqref{eq:exp-deriv-int}, we find
  \begin{equation*}
    \norm{v}^2_{L^2_x}\!(\f t) \leq C_0(K, \delta) \nu^{2 (K + 1)} \big(\abs{\f t} \vee \nu^{2/5}\big)^{-(8K + 3)/2 - \delta} \left(\abs{t} \vee \nu^{1/2}\right)^{-1}
  \end{equation*}
  for some $C_0(K, \delta) > 0$.
  Now choose $\delta = 5\eps_0/8$.
  Then
  \begin{equation*}
    \norm{v}^2_{L^2_x}\!(\f t) \leq \nu^{\eps_0/4} C_0(K, \delta) \nu^{-\eps_0/2 + 2 (K + 1)} \big(\abs{\f t} \vee \nu^{2/5}\big)^{-(8K + 3)/2} \left(\abs{t} \vee \nu^{1/2}\right)^{-1}.
  \end{equation*}
  If we define $\nu_0(K, \eps_0) \coloneqq C_0(K, \delta)^{-4/\eps_0}$ and take $\nu \in (0, \nu_0]$, then $\nu^{\eps_0/4} C_0(K, \delta)\leq 1$ and \eqref{eq:L2-improved} follows.
\end{proof}
To control $\partial_x v$ in the same manner, we need \emph{a priori} control on $\norm{v}_{L_x^\infty}$.
This is the role of the bootstrap hypothesis \eqref{eq:boot-assumptions}.
\begin{lemma}
  \label{lem:L2-est-v-der}
  Fix $K \in \Z_{\geq 3}$ and $\eps \in (0, 1/100)$.
  There exists $\nu_1(K, \eps) \in (0, 1]$ such that the following holds for all $\nu \in (0, \nu_1]$.
  Let $t_{\mathrm{B}}$ be as in Proposition~\textup{\ref{prop:boot}}.
  Then for all $t\in [t_0,t_{\mathrm{B}})$,
  \begin{equation}
    \label{eq:H1-improved}
    \norm{\partial_x v}_{L^2_x}^2\!(t)\leq \nu^{-\eps/2 + 2(K + 1)} \big(\abs{\f t} \vee \nu^{2/5}\big)^{-(8K + 3)/2} \big(\abs{\f t} \vee \nu^{1/2}\big)^{-7}.
  \end{equation}
\end{lemma}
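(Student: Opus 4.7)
The plan is to run the argument of Lemma~\ref{lem:L2-est-v}, but using the second energy estimate \eqref{eq:eng-est-der} of Lemma~\ref{lem:eng-est-v} with $\eta = 1 - \delta$ for a small $\delta > 0$ to be fixed at the end. This bounds
$$\norm{\partial_x v}_{L^2_x}^2(\f t)\leq \m{I}(\f t)\cdot \exp\!\big(\m{J}(\f t)\big),$$
where $\m{I}$ is the $(t,x)$-integral of $|t|^{1-\delta}\big[(\partial_x E)^2 + (\partial_x^2\uapp)^2 v^2\big]$ and $\m{J}$ is the $t$-integral of $3\norm{\partial_x \uapp}_{L^\infty_x} + \nu^{-1}\norm{v}_{L^\infty_x}^2 + 2|t|^{-(1-\delta)}$. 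The bootstrap hypothesis \eqref{eq:boot-assumptions} enters only through the middle term of $\m{J}$; the rest is straightforward integration of bounds already proven in the paper.

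I would first show $\exp(\m{J})\lesssim_K (|\f t|\vee\nu^{1/2})^{-3}$. Proposition~\ref{prop:est-der-u0} gives $\int_{t_0}^{\f t}\norm{\partial_x\uapp}_{L^\infty_x}\,dt = \log\tfrac{|t_0|}{|\f t|\vee\nu^{1/2}} + O_K(1)$, and $\int_{t_0}^{\f t}|t|^{-(1-\delta)}\,dt = O_\delta(1)$. The delicate ingredient is the bootstrap contribution: \eqref{eq:boot-assumptions} yields
$$\int_{t_0}^{\f t}\nu^{-1}\norm{v}_{L^\infty_x}^2\,dt \leq \nu^{-1-2\eps+2(K+1)}\int_{t_0}^{\f t}\big(|t|\vee\nu^{2/5}\big)^{-4K-3/2}\big(|t|\vee\nu^{1/2}\big)^{-4}\,dt.$$
Splitting at $|t|=\nu^{2/5}$ and $|t|=\nu^{1/2}$, a case-by-case computation shows each piece is $O_K(1)$ provided $K\geq 3$, $\eps<1/100$, and $\nu$ is small. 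This uniform bound, which fails for $K\leq 2$, is precisely why the hypothesis $K\geq 3$ appears in the statement.

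Next I would bound $\m{I}(\f t)$ in two pieces. For the $(\partial_x E)^2$ contribution, Lemma~\ref{lem:error-L2} together with a split at $|t|=\nu^{2/5}$ gives
$$\int_{t_0}^{\f t}|t|^{1-\delta}\norm{\partial_x E}_{L^2_x}^2\,dt \lesssim_K \nu^{2(K+1)}\big(|\f t|\vee\nu^{2/5}\big)^{-(8K+9)/2-\delta}.$$
For the $(\partial_x^2 \uapp)^2 v^2$ contribution, combine the pointwise bound $\norm{\partial_x^2\uapp}_{L^\infty_x}\lesssim_K (|t|\vee\nu^{1/2})^{-5/2}$ from Proposition~\ref{prop:est-der-u0} with the $L^2_x$ estimate for $v$ from Lemma~\ref{lem:L2-est-v}, applied with $\eps_0 = \eps/8$. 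A similar case split yields
$$\int_{t_0}^{\f t}|t|^{1-\delta}\norm{\partial_x^2 \uapp}_{L^\infty_x}^2\norm{v}_{L^2_x}^2\,dt \lesssim_K \nu^{-\eps/16+2(K+1)}\big(|\f t|\vee\nu^{2/5}\big)^{-(8K+3)/2}\big(|\f t|\vee\nu^{1/2}\big)^{-4-\delta},$$
and this piece dominates in $\m{I}$.

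Multiplying $\m{I}$ by $\exp(\m{J})$ produces a bound matching \eqref{eq:H1-improved} up to an overall $\nu$-loss of the form $\nu^{-\eps/16 - C\delta}$ for some $C=C(K)>0$. Choosing $\delta$ so small that $C\delta<\eps/16$ gives $\nu^{-\eps/8}$ loss relative to target; then taking $\nu_1(K,\eps)$ small enough to absorb all $K,\delta$-dependent constants into the remaining $\nu^{-3\eps/8}$ margin completes the proof. The main obstacle is the bootstrap integral $\int \nu^{-1}\norm{v}_{L^\infty_x}^2\,dt$: the exponential weight in \eqref{eq:eng-est-der} is nonlinear in $v$ (unlike \eqref{eq:eng-est}), and only the hypothesis $K\geq 3$ keeps this quantity uniformly bounded as $t\to 0$ and $\nu\to 0$.
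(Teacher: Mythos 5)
Your proposal is correct and follows essentially the same route as the paper: apply \eqref{eq:eng-est-der} with $\eta = 1 - \delta$, bound the $\partial_x E$ and $(\partial_x^2\uapp)^2 v^2$ pieces via Lemma~\ref{lem:error-L2}, Lemma~\ref{lem:L2-est-v}, and Proposition~\ref{prop:est-der-u0}, control the exponential weight by noting that the bootstrap integral $\nu^{-1}\int\norm{v}_{L^\infty_x}^2\,dt\lesssim_K\nu^{(4K-11)/10-2\eps}$ stays bounded precisely because $K\geq 3$ and $\eps<1/100$, and then tune $\delta$ and $\nu_1$ at the end. The only differences are cosmetic parameter choices (you take $\eps_0=\eps/8$ where the paper uses $\eps_0=2\delta$), which do not change the argument.
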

\begin{proof}
  Fix $\delta \in (0, 1/200)$ to be determined later.
  By Lemma~\ref{lem:eng-est-v},
  \begin{equation}
    \label{eq:eng-est-der-simple}
    \begin{aligned}
      \norm{\partial_x v}_{L_x^2}^2\!(\f t) \lesssim_\delta &\int_{t_0}^{\f t} \abs{t}^{1 - \delta} \left[\norm{\partial_xE}_{L_x^2}^2 + \norm{\partial_x^2 \uapp}_{L_x^\infty}^2 \norm{v}_{L_x^2}^2\right]\dn t\\
      &\hspace{2cm}\times\exp\left(\int_{t_0}^{\f t} \left[3 \norm{\partial_x \uapp}_{L_x^\infty} + \nu^{-1}\norm{v}_{L_x^\infty}^2\right]\dn t\right)
    \end{aligned}
  \end{equation}
  for all $\f t \in (t_0, 0)$.
  We control the four terms separately.
  Integrating Lemma~\ref{lem:error-L2} in time, we find
  \begin{equation}
    \label{eq:forcing-der-int}
    \int_{t_0}^{\f t} \abs{t}^{1 - \delta} \norm{\partial_xE}_{L_x^2}^2\!(t) \d t \lesssim_K \nu^{2(K + 1)} \big(\abs{\f t} \vee \nu^{2/5}\big)^{-(8K + 9)/2-\delta}.
  \end{equation}
  Next, we take $\eps_0 = 2\delta$ in Lemma~\ref{lem:L2-est-v} to control $\norm{v}_{L_x^2}$.
  In combination with Proposition~\ref{prop:est-der-u0}, we obtain
  \begin{equation*}
    \norm{\partial_x^2 \uapp}_{L_x^\infty}^2\!(t) \norm{v}_{L_x^2}^2\!(t) \lesssim_{K, \delta} \nu^{-\delta} \nu^{2(K + 1)} \big(\abs{t} \vee \nu^{2/5}\big)^{-(8K + 3)/2} \big(\abs{t} \vee \nu^{1/2}\big)^{-6}.
  \end{equation*}
  Therefore
  \begin{equation}
    \label{eq:v-L2-int}
    \begin{aligned}
      \int_{t_0}^{\f t} \abs{t}^{1 - \delta} &\norm{\partial_x^2 \uapp}_{L_x^\infty}^2\!(t) \norm{v}_{L_x^2}^2\!(t) \d t\\
      &\lesssim_{K, \delta} \nu^{-\delta} \nu^{2(K + 1)} \big(\abs{\f t} \vee \nu^{2/5}\big)^{-(8K + 3)/2} \big(\abs{\f t} \vee \nu^{1/2}\big)^{-4-\delta}.
    \end{aligned}
  \end{equation}
  We observe that the right side of \eqref{eq:v-L2-int} is larger than that of \eqref{eq:forcing-der-int}.
  It thus appears that $(\partial_x E)^2$ is negligible compared to $(\partial_x^2 \uapp)^2 v^2$ in \eqref{eq:eng-est-der} and \eqref{eq:eng-est-der-simple}.

  Next, we control the terms in the exponential in \eqref{eq:eng-est-der-simple}.
  Using Proposition~\ref{prop:est-der-u0}, we can check that
  \begin{equation}
    \label{eq:der-exp-deriv}
    \exp\left(3\int_{t_0}^{\f t}\norm{\partial_x \uapp}_{L_x^\infty}\!(t) \d t\right) \lesssim_{K} \big(\abs{\f t} \vee \nu^{1/2}\big)^{-3}.
  \end{equation}
  Finally, the bootstrap hypothesis \eqref{eq:boot-assumptions} yields
  \begin{equation*}
    \nu^{-1}\int_{t_0}^{\f t} \norm{v}_{L_x^\infty}^2\!(t) \d t \leq \nu^{-1}\int_{t_0}^{t_\mathrm{B}} \norm{v}_{L_x^\infty}^2\!(t) \d t \lesssim_{K} \nu^{(4K - 11)/10 - 2 \eps}
  \end{equation*}
  when $\f{t} \in [t_0, t_{\mathrm{B}})$.
  Because $K \geq 3$ and $\eps < 1/100$, we have $(4K - 11)/10 - 2 \eps > 0$.
  Since $\nu \leq 1$,
  \begin{equation}
    \label{eq:der-exp-v}
    \exp\left(\nu^{-1}\int_{t_0}^{\f t} \norm{v}_{L_x^\infty}^2\!(t) \d t \right) \lesssim_{K} 1.
  \end{equation}
  Using \eqref{eq:forcing-der-int}--\eqref{eq:der-exp-v} in \eqref{eq:eng-est-der-simple}, we obtain
  \begin{align*}
    \norm{\partial_x v}_{L_x^2}^2\!(\f t) &\lesssim_{K, \delta} \nu^{-\delta} \nu^{2(K + 1)} \big(\abs{\f t} \vee \nu^{2/5}\big)^{-(8K + 3)/2} \big(\abs{\f t} \vee \nu^{1/2}\big)^{-7-\delta}\\
                                          &\lesssim_{K, \delta} \nu^{-3\delta/2} \nu^{2(K + 1)} \big(\abs{\f t} \vee \nu^{2/5}\big)^{-(8K + 3)/2} \big(\abs{\f t} \vee \nu^{1/2}\big)^{-7}.
  \end{align*}
  Now let $\delta \coloneqq \eps/6$.
  Then there exists a constant $C_1(K, \eps) > 0$ such that
  \begin{equation}
    \label{eq:H1-improved-prelim}
    \norm{\partial_x v}_{L_x^2}^2\!(\f t) \leq \nu^{\eps/4} C_1(K, \eps) \nu^{-\eps/2 + 2(K + 1)} \big(\abs{\f t} \vee \nu^{2/5}\big)^{-(8K + 3)/2} \big(\abs{\f t} \vee \nu^{1/2}\big)^{-7}.
  \end{equation}
  We therefore need $\nu \leq C_1(K, \eps)^{-4/\eps}$.
  Moreover, because we applied Lemma~\ref{lem:L2-est-v} with $\eps_0 = 2\delta$, we need $\nu \leq \nu_0(K, \eps/3)$.
  Therefore let
  \begin{equation*}
    \nu_1(K, \eps) \coloneqq \min\left\{C_1(K, \eps)^{-4/\eps}, \, \nu_0(K, \eps/3), \, 1\right\}.
  \end{equation*}
  Then for all $\nu \in (0, \nu_1]$ and $\f t \in (t_0, t_{\mathrm{B}})$, \eqref{eq:H1-improved-prelim} implies \eqref{eq:H1-improved}.
\end{proof}
Now, the self-improvement of the bootstrap follows from interpolation.
\begin{proof}[Proof of Proposition~\textup{\ref{prop:boot}}]
  Fix $K \in \Z_{\geq 3}$ and $\eps \in (0, 1/100)$.
  Assume there exists ${t_{\mathrm{B}} \in (t_0, 0)}$ such that the solution $v$ of \eqref{eq:diff} satisfies \eqref{eq:boot-assumptions} on $[t_0, t_{\mathrm{B}})$.
  
  By Morrey's inequality, $H^s(\R) \subset L^\infty(\R)$ for all $s > 1/2$.
  Thus if $s \in (1/2, 1)$, we have
  \begin{equation}
    \label{eq:interpolation}
    \norm{v}_{L_x^\infty} \lesssim_s \norm{v}_{L_x^2}^{1 - s}\norm{\partial_x v}_{L_x^2}^s + \norm{v}_{L_x^2}.
  \end{equation}
  We take $\eps_0 = \eps$ in Lemmas~\ref{lem:L2-est-v} and \ref{lem:L2-est-v-der}:
  \begin{equation}
    \label{eq:interpolation-size}
    \norm{v}_{L_x^2}^{1 - s}\!(t)\norm{\partial_x v}_{L_x^2}^s\!(t) \lesssim_{K, \eps} \nu^{-\eps/4 + K + 1} \big(\abs{t} \vee \nu^{2/5}\big)^{-(8K + 3)/4} \big(\abs{t} \vee \nu^{1/2}\big)^{-1/2-3s}
  \end{equation}
  for all $t \in [t_0, t_{\mathrm{B}})$.
  Take $s = 1/2 + \eps/12$.
  Then we have
  \begin{equation}
    \label{eq:exponent-choice}
    \big(\abs{t} \vee \nu^{1/2}\big)^{-1/2-3s} \leq \big(\abs{t} \vee \nu^{1/2}\big)^{-2 - \eps/4} \leq \nu^{-\eps/8} \big(\abs{t} \vee \nu^{1/2}\big)^{-2}.
  \end{equation}
  Combining \eqref{eq:interpolation}--\eqref{eq:exponent-choice}, we find
  \begin{equation}
    \label{eq:boot-improved-prelim}
    \norm{v}_{L_x^\infty}\!(t) \leq \nu^{\eps/8}C(K, \eps) \nu^{-\eps/2 + K + 1} \big(\abs{t} \vee \nu^{2/5}\big)^{-(8K + 3)/4} \big(\abs{t} \vee \nu^{1/2}\big)^{-2}
  \end{equation}
  for some $C(K, \eps) > 0$.
  Define
  \begin{equation*}
    \smallbar{\nu}(K, \eps) = \min \left\{C(K, \eps)^{-8/\eps},\, \nu_0(K, \eps),\, \nu_1(K, \eps)\right\}.
  \end{equation*}
  Then if $\nu \in (0, \smallbar{\nu}]$ and $t \in [t_0, t_{\mathrm{B}})$, \eqref{eq:boot-improved-prelim} implies \eqref{eq:boot-improved}.
\end{proof}
We are nearly in a position to prove our main theorem.
The last ingredient is time continuity in $L^\infty$:
\begin{lemma}
  \label{lem:short-time-existence}
  The norm $\norm{v}_{L_x^\infty}$ is continuous in time.
\end{lemma}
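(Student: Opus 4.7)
The plan is to establish two properties of $v$ on $[t_0, 0) \times \R$: joint continuity in $(t, x)$, and uniform decay as $\abs{x} \to \infty$ on every compact time subinterval. These two properties together will force $t \mapsto v(t, \anon)$ to be continuous into $C_0(\R) \hookrightarrow L^\infty(\R)$, and the reverse triangle inequality will then yield continuity of $\norm{v(t, \anon)}_{L^\infty}$.

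Joint continuity is straightforward. The solution $u^\nu$ of \eqref{eq:Burgers-viscous} is classically smooth on $[t_0, \infty) \times \R$ by standard well-posedness for viscous Burgers with smooth bounded data. The approximate solution $\uapp_{[K]} = \theta \ui{[K]} + (1 - \theta) \uo{[K]}$ is smooth on $[t_0, 0) \times \R$ because $\theta$ is smooth, $\ui{[K]}$ is smooth by Proposition~\ref{prop:Ul}, and $\uo{[K]}$ is smooth on the support of $1 - \theta$, which for $\nu$ small avoids the spacetime origin (the only point where $\uo{[K]}$ could fail to be smooth). Hence $v = u^\nu - \uapp_{[K]}$ is smooth, in particular jointly continuous, on $[t_0, 0) \times \R$.

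The main obstacle is uniform spatial decay. For the approximate solution, Lemma~\ref{lem:compact} gives $\uo{[K]} \equiv \mr{c}$ outside $[-L, L]$, and for $\nu$ small the inner and matching regions lie inside $[-L, L]$, so $\uapp_{[K]} \equiv \mr{c}$ outside $[-L, L]$, uniformly in $t$. It remains to show $u^\nu(t, x) \to \mr{c}$ as $\abs{x} \to \infty$, uniformly on each $[t_1, t_2] \subset [t_0, 0)$. I plan to handle this via the Cole--Hopf transformation: writing $u^\nu = \mr{c} - 2\nu \partial_x \ln \psi$ for $\psi$ solving a drift-diffusion equation from bounded, positive initial data that is locally constant outside $[-\mr{L}, \mr{L}]$, classical heat-kernel estimates force $\partial_x \ln \psi \to 0$ at spatial infinity uniformly on compact time intervals. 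Once both ingredients are in hand, $v$ will extend continuously to the one-point compactification of $\R$ on each compact time slab, hence be uniformly continuous there; this yields $\norm{v(t, \anon) - v(s, \anon)}_{L^\infty} \to 0$ as $\abs{t - s} \to 0$, and the lemma follows.
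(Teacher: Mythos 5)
The paper deliberately omits the argument, saying only that it ``follows from standard parabolic theory,'' so there is no specific proof of record to compare against; your fill-in is correct and is precisely the kind of two-ingredient argument that phrase is shorthand for, namely joint smoothness of $v$ on $[t_0,0)\times\R$ together with uniform spatial decay on compact time slabs, followed by the compactification/reverse-triangle-inequality step. Two small remarks. First, the ``for $\nu$ small'' qualifier is not needed and is mildly misleading: $\uo{[K]}$ is smooth on all of $[t_0,0)\times\R$ for \emph{any} $\nu\in(0,1]$, since the only singularity of the outer terms is at $(t,x)=(0,0)$ and $t=0$ is excluded from the domain, and likewise the inner and matching zones $I\cup M$ sit inside a fixed compact $x$-window $\abs{x}\lesssim\nu^{3\alpha}\leq 1$ for all $\nu\leq 1$, so $\uapp_{[K]}\equiv\mr{c}$ outside a compact set uniformly in $\nu$. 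Second, while invoking Cole--Hopf is certainly legitimate for this isolated continuity lemma, the decay $u^\nu\to\mr{c}$ at spatial infinity can be obtained without it: $w\coloneqq u^\nu-\mr{c}$ is uniformly bounded by the maximum principle, solves the linear drift--diffusion equation $\partial_t w=\nu\partial_x^2 w-(\mr{c}+w)\partial_x w$ with bounded drift and compactly supported data, and so decays Gaussianly in $x$ on compact time intervals by a standard barrier (or Aronson-type) estimate. That route keeps the argument in line with the paper's stated aim of avoiding Burgers-specific identities, but either version closes the proof.
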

\noindent
This follows from standard parabolic theory; we omit the proof.

We can now verify the bootstrap assumption \eqref{eq:boot-assumptions} and prove Theorem~\ref{thm:main}.
\begin{proof}[Proof of Theorem~\textup{\ref{thm:main}}]
  Fix $K \in \Z_{\geq 0}$.
  We wish to control $u^\nu - \uapp_{[K]}$.
  However, the estimate \eqref{eq:boot-assumptions} is not sharp.
  We therefore work with a more accurate approximation $\uapp_{[K']}$ for some $K' > K$.
  We bound $v \coloneqq u^\nu - \uapp_{[K']}$ via \eqref{eq:boot-assumptions} and deduce \eqref{eq:pointwise-v-gen} from the triangle inequality.
  As we shall see, $K' = K + 4$ suffices.

  Fix $\eps = 1/200$ and note that $K' >  3$.
  Recalling $\smallbar{\nu}$ from Proposition~\ref{prop:boot}, fix $\nu \in (0, \smallbar{\nu}(K', \eps))$.
  In particular, $\nu < \smallbar{\nu} < 1$.
  Let $\text{\eqref{eq:boot-assumptions}}_{K'}$ denote the condition \eqref{eq:boot-assumptions} with $K'$ in place of $K$.
  Then we define
  \begin{equation*}
    t_{\mathrm{max}} \coloneqq \sup\big\{t \in [t_0,0) \mid v\textrm{ satisfies \eqref{eq:boot-assumptions}}_{K'} \text{ on } [t_0, t)\big\}.
  \end{equation*}
  Recall that $v(t_0,\anon) = 0$.
  By the continuity in Lemma~\ref{lem:short-time-existence}, $t_{\mathrm{max}}>t_0$.
  We will prove that $t_{\mathrm{max}} = 0$, so suppose for the sake of contradiction that $t_{\mathrm{max}} \in (t_0, 0)$.

  Define
  \begin{equation*}
    Z(t) \coloneqq \nu^{\eps-K' - 1} \big(\abs{t} \vee \nu^{2/5})^{(8K'+3)/4} \big(\abs{t} \vee \nu^{1/2}\big)^{2} \norm{v}_{L_x^\infty}\!(t),
  \end{equation*}
  so that $\text{\eqref{eq:boot-assumptions}}_{K'}$ is equivalent to $Z \leq 1$.
  By Proposition~\ref{prop:boot}, $Z \leq \nu^{\eps/2} < 1$ on $[t_0, t_{\mathrm{max}})$.
  But Lemma~\ref{lem:short-time-existence} implies that $Z$ is continuous on $[t_0, 0)$, so there exists $\tau > 0$ such that $Z \leq 1$ on $[t_0, t_{\mathrm{max}} + \tau)$.
  That is, $\text{\eqref{eq:boot-assumptions}}_{K'}$ holds on $[t_0, t_{\mathrm{max}} + \tau)$.
  This contradicts the definition of $t_{\mathrm{max}}$, so in fact $t_{\mathrm{max}} = 0$ and $\text{\eqref{eq:boot-assumptions}}_{K'}$ holds on $[t_0, 0)$.
  In particular, using $K' = K + 4$ and $\eps = 1/200$, we obtain
  \begin{equation}
    \label{eq:pointwise-prelim}
    \big\|u^\nu - \uapp_{[K']}\big\|_{L^\infty([t_0, 0) \times \R)} \leq C(K', \eps) \nu^{(2K' - 3)/10 - \eps} \leq C(K) \nu^{(K + 2)/5}.
  \end{equation}
  The difference $\uapp_{[K']} - \uapp_{[K]}$ should be dominated by the terms $u_{0, K+1}$ and $\nu^{K + 1}u_{K+1,0}$, which are of order $\nu^{(K + 2)/5}$ in $M$.
  We therefore expect
  \begin{equation}
    \label{eq:approx-diff}
    \big\|\uapp_{[K']} - \uapp_{[K]}\big\|_{L^\infty([t_0, 0) \times \R)} \lesssim_K \nu^{(K + 2)/5}.
  \end{equation}
  We can use Corollaries~\ref{cor:uk-der} and \ref{cor:ul-size} to confirm this reasoning.
  We omit the routine details.
  Recall that all constants in the paper implicitly depend on $\mr{u}$.
  Hence \eqref{eq:pointwise-v-gen} follows from \eqref{eq:pointwise-prelim} and \eqref{eq:approx-diff} when ${\nu \in (0, \smallbar{\nu})}$.
  
  When $\nu \in [\smallbar{\nu}, 1]$, \eqref{eq:diff} is uniformly parabolic.
  By standard parabolic estimates,
  \begin{equation*}
    \norm{v}_{L^\infty} \leq C(K) \leq C(K) \smallbar{\nu}(K + 4, 1/200)^{-(K + 2)/5} \nu^{(K + 2)/5}.
  \end{equation*}
  Absorbing the power of $\smallbar{\nu}$ in the constant, we obtain \eqref{eq:pointwise-v-gen} for all $\nu \in [\smallbar{\nu}, 1]$, and hence all $\nu \in (0, 1]$.
  \medskip

  We now turn to \eqref{eq:more-regular}.
  Fix $K = 6$ and $\eps = 1/200$, and let $\nu_1 \coloneqq \nu_1(6, 1/200)$.
  Suppose $\nu \in (0, \nu_1]$.
  Because $\eqref{eq:boot-assumptions}_{\text{6}}$ holds on $[t_0, 0)$, Lemmas~\ref{lem:L2-est-v} and \ref{lem:L2-est-v-der} imply that
  \begin{equation*}
    \big\|u^\nu - \uapp_{[6]}\big\|_{H_x^1} \leq \nu^{(4K - 21)/20 - \eps/4} \leq \nu^{1/10} \ForAll t \in [t_0, 0).
  \end{equation*}
  By Morrey's inequality,
  \begin{equation*}
    \sup_{t \in [t_0, 0)} \big\|u^\nu(t, \anon) - \uapp_{[6]}(t, \anon)\big\|_{\m{C}^{1/2}(\R)} \to 0 \quad \text{as } \nu \to 0.
  \end{equation*}
  Now, the difference $\uapp_{[6]} - \uapp_{[0]}$ should be dominated by $u_{0, 1}$ and $\nu u_{1, 0}$ in $M$.
  Using the homogeneity of these terms, it is straightforward to check that
  \begin{equation*}
    \norm{u_{0, 1}}_{\m{C}^{1/2}(M)} + \norm{\nu u_{1, 0}}_{\m{C}^{1/2}(M)} \lesssim \nu^{1/10}.
  \end{equation*}
  Rigorously, we can use Corollaries~\ref{cor:uk-der} and \ref{cor:ul-size} as well as Lemma~\ref{lem:size-der-theta} to show that
  \begin{equation*}
    \sup_{t \in [t_0, 0)}\big\|\uapp_{[6]}(t, \anon) - \uapp_{[0]}(t, \anon)\big\|_{\m{C}^{1/2}(\R)} \to 0 \quad \text{as } \nu \to 0.
  \end{equation*}
  Now, \eqref{eq:more-regular} follows from the triangle inequality.
  This completes the proof of Theorem~\ref{thm:main}.
\end{proof}

\appendix
\section{The leading inner term}
\label{sec:inner-term}
In this appendix, we examine the structure of the leading part of the inner expansion, namely $\Ui{0}$.
We show that $\Ui{0}$ admits an explicit representation via the Cole--Hopf transformation.
Moreover, the behavior of $\Ui{0}$ as $T \to \infty$ hints at a viscous shock emerging from the inner expansion when $t > 0$.
Thus, these calculations shed light on the structure of $u^\nu$ \emph{after} the shock forms.

Recall the inner coordinates $(T, X)$ defined in \eqref{eq:inner-var}.
In the proof of Proposition~\ref{prop:Ul}, we construct $\Ui{0}$ as the limit of solutions $U^{(n)}$ to the problem
\begin{equation}
  \label{eq:Burgers-early-approx}
  \partial_T U^{(n)} = \partial_X^2 U^{(n)} - U^{(n)} \partial_X U^{(n)}, \quad U^{(n)}(-n, \anon) = \Cub(-n, \anon).
\end{equation}
We recall that $\Cub$ denotes the rescaled cubic $\cub$ in the inner coordinates; see \eqref{eq:Cub}.
For convenience, we state this convergence as a lemma.
\begin{lemma}
  \label{lem:construction}
  As $n \to \infty$, $U^{(n)} \to \Ui{0}$ uniformly in $\R_- \times \R$.
\end{lemma}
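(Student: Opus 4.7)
The plan is to identify $U^{(n)}$ with an object from Proposition~\ref{prop:Ul}'s construction, then combine uniform decay at spatial infinity with convergence on compact subsets of $\R^2$.

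Setting $k = 0$ in the proof of Proposition~\ref{prop:Ul}, the function $V_0^{(n)}$ from \eqref{eq:U0-diff-ev-prelim} (with zero initial data at $T = -n$) equals $U^{(n)} - \Cub$, because $U_{[0],0} = \Cub$ and $\Cub(-n, \anon) = U^{(n)}(-n, \anon)$. Claim~\ref{claim:U0-early} with $k = i = 0$ supplies $\big|V_0^{(n)}\big| \lesssim \Dist^{-3}$ on $\big(-n, T_{0,0}^{\mathrm{early}}\big) \times \R$ uniformly in $n$. To extend this bound to $\big[T_{0,0}^{\mathrm{early}}, 0\big) \cap \m{Q}^c$, I would reproduce the late-time argument from Claim~\ref{claim:U0-est} with $V_0^*$ replaced by $V_0^{(n)}$: the uniform-in-$n$ bound at $T = T_{0,0}^{\mathrm{early}}$ lets me compare $U^{(n)} - \bar{\Cub}$ with $\pm C \e^{\lambda T}$, yielding a uniform $L^\infty$ bound, after which Lemma~\ref{lem:inner-super-sub}\ref{item:inner-super-sub-late} reinstates the $\Dist^{-3}$ decay on all of $\m{Q}^c$. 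Combined with $|\Ui{0} - \Cub| \lesssim \Dist^{-3}$ from Proposition~\ref{prop:Ul}, the triangle inequality gives
\begin{equation*}
  \big|U^{(n)} - \Ui{0}\big| \lesssim \Dist^{-3} \quad \text{on } \m{Q}^c, \text{ uniformly in } n.
\end{equation*}

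Next I would upgrade the subsequential locally uniform convergence obtained in Proposition~\ref{prop:Ul}'s proof to convergence of the full sequence on compact subsets of $\R^2$. Since both $U^{(n)}$ and $\Ui{0}$ extend smoothly across $T = 0$ by well-posedness of viscous Burgers, and higher-derivative bounds from Claim~\ref{claim:U0-early} together with interior parabolic regularity render $\{U^{(n)}\}_n$ precompact in $\m{C}^1_{\mathrm{loc}}(\R^2)$, every cluster point solves viscous Burgers and satisfies the estimates \eqref{eq:Ul-est} with $k = 0$ and $i \in \{0, 1\}$. Proposition~\ref{prop:Ul}'s uniqueness statement then identifies every such cluster point with $\Ui{0}$, so the entire sequence $U^{(n)}$ converges to $\Ui{0}$ uniformly on compact subsets of $\R^2$.

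Finally, given $\eps > 0$, I would choose $R$ so large that the uniform bound above forces $\big|U^{(n)} - \Ui{0}\big| < \eps$ on $\{\Dist > R\} \cap \m{Q}^c$. By Lemma~\ref{lem:cubic}, the complementary set $\big(\{\Dist \leq R\} \cap \m{Q}^c\big) \cup \overline{\m{Q}}$ is a compact subset of $\R^2$, and the convergence on compacta established in the previous step gives $\big|U^{(n)} - \Ui{0}\big| < \eps$ there for all $n$ sufficiently large, delivering uniform convergence on $\R_- \times \R$. I expect the main technical obstacle to be the uniform late-time extension of the $\Dist^{-3}$ bound on $V_0^{(n)}$: the proof of Claim~\ref{claim:U0-est} used boundedness of $V_0^*$ as an input, and this intermediate step must be re-derived for each $V_0^{(n)}$ with constants independent of $n$.
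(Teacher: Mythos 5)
Your argument is correct and reproduces the paper's two-step strategy: uniqueness from Proposition~\ref{prop:Ul} upgrades subsequential locally uniform convergence to full-sequence locally uniform convergence, and the uniform-in-$n$ decay $\big|U^{(n)} - \Ui{0}\big| \lesssim \Dist^{-3}$ then upgrades this to genuine uniform convergence on $\R_- \times \R$. You are more explicit than the paper about re-deriving the late-time $\Dist^{-3}$ bound for $V_0^{(n)}$ with constants independent of $n$---the paper compresses this into a single sentence invoking the sub- and supersolutions---but the underlying idea is identical.
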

\begin{proof}
  In the proof of Proposition~\ref{prop:Ul}, we establish locally uniform convergence along a subsequence.
  However, Proposition~\ref{prop:Ul} also states that $\Ui{0}$ is unique, so in fact the entire sequence converges.
  Moreover, the sub- and supersolutions used in the proof of Proposition~\ref{prop:Ul} imply that
  \begin{equation*}
    \big|\Ui{0} - U^{(n)}\big| \lesssim \Dist^{-3},
  \end{equation*}
  where $\Dist$ is the rescaled distance defined by \eqref{eq:Dist}.
  Because $\Dist^{-3} \to 0$ at infinity in $\R_- \times \R$, we can upgrade locally uniform convergence to uniform.
\end{proof}
We observe that $U^{(n)}$ satisfies viscous Burgers \eqref{eq:Burgers-early-approx} with unit viscosity.
We can explicitly solve this equation via the Cole--Hopf transformation.
Thus far, we have avoided Cole--Hopf so that our methods generalize to other scalar conservation laws \eqref{eq:SCL-viscous}.
However, there is no harm in deploying it here, because the leading part of the inner expansion is canonically viscous Burgers.
That is, every strictly convex conservation law becomes viscous Burgers to leading order in the inner expansion.

To see this, let $(w^\nu)_{\nu \geq 0}$ solve the viscous scalar conservation law \eqref{eq:SCL-viscous} with strictly convex flux $f$.
We first make a Galilean transformation to fix the inviscid shock formation at the origin with $w^0(0, 0) = 0$.
Thanks to this transformation, we can assume that $f(0) = f'(0) = 0$.
The inner expansion reflects the local structure of \eqref{eq:SCL-viscous} near the origin, where $\abs{w^\nu} \ll 1$.
Thus \eqref{eq:SCL-viscous} becomes
\begin{equation*}
  \partial_t w^\nu \approx \nu \partial_x^2 w^\nu - f''(0) w^\nu \partial_x w^\nu
\end{equation*}
near the origin.
Since $f''(0) > 0$, this is a reparameterization of viscous Burgers.
We note, however, that higher derivatives of $f$ lead to new terms in the PDE for $\Ui{\ell}$ for $\ell \geq 1$.

Using Cole--Hopf, we derive an explicit integral representation of $\Ui{0}$.
\begin{proposition}
  \label{prop:inner-explicit}
  For all $(T, X) \in \R^2$,
  \begin{equation}
    \label{eq:explicit}
    \Ui{0}(T, X) = -\frac
    {\displaystyle \int_{\R} \zeta \exp\left(\frac{1}{2} X \zeta + \frac{1}{4} T \zeta^2 - \frac{\coeff}{8}\zeta^4\right) \ds \zeta}
    {\displaystyle \int_{\R} \exp\left(\frac{1}{2} X \zeta + \frac{1}{4} T \zeta^2 - \frac{\coeff}{8}\zeta^4\right) \ds \zeta}\,.
  \end{equation}
\end{proposition}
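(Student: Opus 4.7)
The plan is to construct an explicit solution through the Cole--Hopf transformation and then identify it with $\Ui{0}$ via the uniqueness clause of Proposition~\ref{prop:Ul}. Define
\[
\Phi(T, X) \coloneqq \int_{\R} \exp\big(f(\zeta; T, X)\big)\ds\zeta, \quad f(\zeta; T, X) \coloneqq \tfrac{1}{2}X\zeta + \tfrac{1}{4}T\zeta^2 - \tfrac{\coeff}{8}\zeta^4.
\]
The quartic term in $f$ dominates at infinity in $\zeta$, so the integral and every derivative obtained by differentiation under the integral sign converge absolutely on $\R^2$; in particular, $\partial_T \Phi = \partial_X^2 \Phi = \tfrac{1}{4}\int_{\R} \zeta^2 \e^f \ds\zeta$, so $\Phi$ solves the heat equation. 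Hence the Cole--Hopf formula $U \coloneqq -2\partial_X \log \Phi$ gives a smooth solution of viscous Burgers \eqref{eq:viscous-Burgers-unit} on $\R^2$, and a direct computation identifies $U$ with the right-hand side of \eqref{eq:explicit}.

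The crux is to verify that $U$ satisfies the matching conditions demanded by Proposition~\ref{prop:Ul} with $k = \ell = 0$, namely $|U - \Cub| \lesssim \Dist^{-3}$ and $|\partial_X U - \partial_X \Cub| \lesssim \Dist^{-6}$ as $\Dist \to \infty$ in $\R_- \times \R$. For $T < 0$ the phase is strictly concave in $\zeta$, since $\partial_\zeta^2 f = T/2 - 3\coeff\zeta^2/2 < 0$, so $f(\cdot\,; T, X)$ has a unique global maximum at $\zeta_* = -\Cub(T, X)$: the critical-point equation $X + T\zeta_* - \coeff\zeta_*^3 = 0$ coincides with the defining cubic for $-\Cub$, and $\partial_\zeta^2 f(\zeta_*) = -\Dist^2/2$. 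Writing $P \coloneqq \Phi^{-1}\e^f\ds\zeta$ for the associated probability measure, one has the exact identities $U = -\langle \zeta\rangle_P$ and $\partial_X U = -\tfrac{1}{2}\operatorname{Var}_P(\zeta)$. I would then carry out Laplace's method around $\zeta_*$ with quantitative remainder control. The first-order expansion gives $\langle\zeta\rangle_P = \zeta_* + f'''(\zeta_*)/\big(2 f''(\zeta_*)^2\big) + \m{O}(\Dist^{-5})$; since $f'''(\zeta_*) = 3\coeff\Cub$ and $|\Cub| \lesssim \Dist$, the correction is $\m{O}(\Dist^{-3})$, yielding the first matching bound. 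The second-order expansion gives $\operatorname{Var}_P(\zeta) = |f''(\zeta_*)|^{-1} + \m{O}(\Dist^{-6})$, the error aggregating the contribution $f^{(4)}(\zeta_*)/|f''(\zeta_*)|^3 \asymp \Dist^{-6}$ (where $f^{(4)} \equiv -3\coeff$) and $f'''(\zeta_*)^2/|f''(\zeta_*)|^4 \lesssim \Cub^2/\Dist^8 \lesssim \Dist^{-6}$. Combined with $\partial_X \Cub = -\Dist^{-2} = -\tfrac{1}{2}|f''(\zeta_*)|^{-1}$, this yields the second matching bound.

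With both matching estimates in hand, Proposition~\ref{prop:Ul} forces $U = \Ui{0}$ on $\R_- \times \R$; since both functions are smooth solutions of \eqref{eq:viscous-Burgers-unit} on $\R^2$ agreeing on an open half-space, standard well-posedness for viscous Burgers with polynomially-bounded Cauchy data (imposed at any fixed $T < 0$) extends the identity to all of $\R^2$, giving \eqref{eq:explicit}. The hard part will be the Laplace expansion: pointwise asymptotics are classical, but the uniqueness clause demands remainder bounds that are uniform in $(T, X)$ throughout the unbounded region $\{\Dist \gg 1\} \cap (\R_- \times \R)$, so one must verify that the Gaussian width $\sqrt{2}/\Dist$ around $\zeta_*$ controls the cubic and quartic Taylor corrections to $f$ with constants that remain bounded even when the ratio $|\Cub|/\Dist$ approaches its extreme value.
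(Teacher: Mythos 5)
Your proposal is correct but takes a genuinely different route from the paper. Both invoke Cole--Hopf, but the paper obtains the integral formula \emph{constructively}: it takes the approximants $U^{(n)}$ (solutions of viscous Burgers launched from $\Cub(-n,\cdot)$ at $T=-n$), writes each via the heat kernel, performs a scaling change of variables with a renormalization $+\log n$, and passes to the limit by dominated convergence, using Lemma~\ref{lem:construction} to identify the limit with $\Ui{0}$. You instead take the candidate $\Phi(T,X)=\int_\R \e^{f}\,\d\zeta$ as the starting point, verify it solves the heat equation (justified by the dominant quartic in $f$), so $U=-2\partial_X\log\Phi$ solves \eqref{eq:viscous-Burgers-unit} on $\R^2$, and then establish the asymptotic matching \eqref{eq:Ul-est} with $k=\ell=0$, $i\in\{0,1\}$, on $\m{Q}^c$ via a uniform Laplace expansion about the saddle $\zeta_* = -\Cub$; the uniqueness clause of Proposition~\ref{prop:Ul} then forces $U = \Ui{0}$ on $\R_-\times\R$, and the Cauchy problem forward from any slice $\{T=T_0\}$, $T_0<0$ (sub-exponential Cole--Hopf data) extends the identity to $\R^2$. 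Your saddle-point algebra checks out: the critical-point equation is exactly the defining cubic for $-\Cub$, $|f''(\zeta_*)|=\Dist^2/2$, the exact identities $U=-\langle\zeta\rangle_P$ and $\partial_X U=-\tfrac12\operatorname{Var}_P(\zeta)$ hold, and the quoted first- and second-order corrections have the right size ($\lesssim\Dist^{-3}$ and $\lesssim\Dist^{-6}$ respectively, using $|\Cub|/\Dist\le(3\coeff)^{-1/2}$). The trade-off is that the paper's route is more elementary (just dominated convergence, no quantitative Laplace remainders), whereas yours requires uniform-in-$(T,X)$ two-term Laplace estimates over the unbounded $\m{Q}^c$ — a real but routine piece of additional work that you correctly flag as the crux — and in exchange avoids the renormalization device and highlights that Proposition~\ref{prop:Ul} already pins down $\Ui{0}$ from its far field.
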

\noindent
Here, $\coeff > 0$ is the constant in the cubic equation \eqref{eq:cubic} for $\cub$.
\begin{proof}
  We write $\Cub_n(X) \coloneqq \Cub(-n, X)$ and define its renormalized integral
  \begin{equation}
    \label{eq:HJE-init}
    \f{H}_n(X) \coloneqq \int_0^X \Cub_n(Z) \d Z + \log n.
  \end{equation}
  Let $H^{(n)}$ solve the following viscous Hamilton--Jacobi equation on $(-n, \infty) \times \R$:
  \begin{equation}
    \label{eq:HJE}
    \partial_T H^{(n)} = \partial_X^2 H^{(n)} - \frac{1}{2}\big(\partial_X H^{(n)}\big)^2, \quad H^{(n)}(-n, \anon) = \f{H}_n.
  \end{equation}
  Then we can differentiate \eqref{eq:HJE} to check that $\partial_X H^{(n)} = U^{(n)}$.
  Note that the renormalization constant $\log n$ in \eqref{eq:HJE-init} disappears under differentiation.
  This is a form of gauge invariance that we use to ensure that $H^{(n)}$ converges as $n \to \infty$.

  We now deploy the Cole--Hopf transform.
  Define
  \begin{equation*}
    \phi^{(n)}(T, X) \coloneqq \exp\left[-\frac{1}{2}H^{(n)}(T, X)\right].
  \end{equation*}
  Then $\phi^{(n)}$ solves the linear heat equation on $(-n, \infty) \times \R$:
  \begin{equation*}
    \partial_T \phi^{(n)} = \partial_X^2 \phi^{(n)}, \quad \phi^{(n)}(-n, \anon) = \e^{-\f{H}_n/2}.
  \end{equation*}
  We can thus express $\phi^{(n)}$ via the heat kernel:
  \begin{equation*}
    \phi^{(n)}(T, X) = \int_{\R} \exp\left[-\frac{(X - Z)^2}{4(T + n)} - \frac{1}{2} \f{H}_n(Z)\right] \frac{\dn Z}{\sqrt{4\pi(T + n)}}.
  \end{equation*}
  
  Fix $(T, X) \in \R^2$ and define
  \begin{equation*}
    \tau \coloneqq \frac{T}{n}, \quad \eta \coloneqq \frac{X}{n}, \And \zeta \coloneqq \frac{Z}{n},
  \end{equation*}
  so that $\tau, \eta = \m{O}_{T, X}\big(n^{-1}\big)$.
  The scaling properties of the inverse cubic \eqref{eq:cubic} imply that
  \begin{equation*}
    \f{H}_n(Z) = n^2 \f{H}_1\big(n^{-1/2}\zeta\big) + \log n,
  \end{equation*}
  so we have
  \begin{equation}
    \label{eq:phi-n-kernel}
    \phi^{(n)}(T, X) = \int_{\R} \exp\left[-n\frac{(\eta - \zeta)^2}{4(1 + \tau)} - \frac{1}{2}n^2\f{H}_1\big(n^{-1/2}\zeta\big)\right] \frac{\dn \zeta}{\sqrt{4\pi(1 + \tau)}}.
  \end{equation}
  Here, we have used the renormalization to produce an additional factor of $n^{-1/2}$.
  
  We now expand $\f{H}_1$ around $0$ by doing the same for its derivative $\Cub_1$.
  Applying the chain rule and Taylor's theorem to \eqref{eq:cubic}, we find
  \begin{equation*}
    \Cub_1(X) = -X + \coeff X^3 + \m{O}\big(\abs{X}^{5}\big).
  \end{equation*}
  Integrating, this becomes
  \begin{equation}
    \label{eq:HJE-quartic}
    \f{H}_1(X) = -\frac{1}{2}X^2 + \frac{\coeff}{4} X^4 + \m{O}\big(X^{6}\big).
  \end{equation}
  In particular, there exists $\delta > 0$ such that
  \begin{equation}
    \label{eq:HJE-quartic-bound}
    \f{H}_1(X) \geq -\frac{1}{2}X^2 + \frac{\coeff}{8} X^4 \quad \text{on } [-\delta, \delta].
  \end{equation}
  Moreover, \eqref{eq:cubic} implies that $\abs{\Cub_1(X)} < \abs{X}$ when $X \neq 0$.
  Since $\abs{\Cub_1(X)} = \smallO(\abs{X})$ as $\abs{X} \to \infty$, we have
  \begin{equation*}
    \sup_{[-\delta/2,\, \delta/2]^c} \frac{\abs{\Cub_1(X)}}{\abs{X}} < 1.
  \end{equation*}
  Integrating from $0$, there exists $\eps \in (0, 1)$ such that
  \begin{equation}
    \label{eq:HJE-quadratic-bound}
    \abs{\f{H}_1(X)} \leq \frac{1 - \eps}{2} X^2 \quad \text{on } [-\delta, \delta]^c.
  \end{equation}

  For convenience, let
  \begin{equation}
    \label{eq:integrand}
    \m{E}(\zeta; n, T, X) \coloneqq \frac{1}{\sqrt{4\pi(1 + \tau)}} \exp\left[-n\frac{(\eta - \zeta)^2}{4(1 + \tau)} - \frac{1}{2}n^2\f{H}_1\big(n^{-1/2}\zeta\big)\right]
  \end{equation}
  denote the integrand of \eqref{eq:phi-n-kernel}, recalling that $(\tau, \eta) = (T/n, X/n)$.
  Also, define
  \begin{equation*}
    I_1 \coloneqq \int_{\R} \m{E}(\zeta; n, T, X) \tbf{1}_{\{\abs{\zeta} \leq \delta n^{1/2}\}} \d \zeta \And I_2 \coloneqq \int_{\R} \m{E}(\zeta; n, T, X) \tbf{1}_{\{\abs{\zeta} > \delta n^{1/2}\}}\d \zeta,
  \end{equation*}
  so that \eqref{eq:phi-n-kernel} becomes $\phi^{(n)} = I_1 + I_2$.
  
  We first consider the main term $I_1$.
  Recall that $\eta$ and $\tau$ are small, so the first term in the exponential in \eqref{eq:integrand} becomes
  \begin{equation}
    \label{eq:Gaussian}
    -n\frac{(\eta - \zeta)^2}{4(1 + \tau)} = -\frac{n}{4}\zeta^2 + \frac{1}{2} X \zeta + \frac{1}{4} T \zeta^2 + \m{O}_{T, X}\big(n^{-1}(\zeta^2+1)\big).
  \end{equation}
  By \eqref{eq:HJE-quartic-bound},
  \begin{equation*}
    \frac{1}{2}n^2\f{H}_1\big(n^{-1/2}\zeta\big) \geq -\frac{n}{4}\zeta^2 + \frac{\coeff}{16}\zeta^4
  \end{equation*}
  when $\abs{\zeta} \leq \delta n^{1/2}$.
  Therefore
  \begin{equation*}
    -n\frac{(\eta - \zeta)^2}{4(1 + \tau)} - \frac{1}{2}n^2\f{H}_1\big(n^{-1/2}\zeta\big) \leq C - \frac{\coeff}{32} \zeta^4
  \end{equation*}
  for some constant $C(T, X) > 0$ independent of $\zeta$ and $n$ that we allow to change from line to line.
  That is,
  \begin{equation}
    \label{eq:majorant}
    \m{E}(\zeta; n, T, X) \tbf{1}_{\{\abs{\zeta} \leq \delta n^{1/2}\}} \lesssim_{T,X} \e^{-\coeff\zeta^4/32}.
  \end{equation}
  We use this as a majorant for dominated convergence.
  We can write \eqref{eq:HJE-quartic} as
  \begin{equation*}
    \frac{1}{2}n^2\f{H}_1\big(n^{-1/2}\zeta\big) = -\frac{n}{4}\zeta^2 + \frac{\coeff}{8}\zeta^4 + \m{O}_\zeta\big(n^{-1}\big).
  \end{equation*}  
  By \eqref{eq:Gaussian}, the leading term of the Gaussian kernel cancels that of the initial data and
  \begin{equation*}
    \lim_{n \to \infty} \m{E}(\zeta; n, T, X) = (4\pi)^{-1/2} \exp\left(\frac{1}{2} X \zeta + \frac{1}{4} T \zeta^2 - \frac{\coeff}{8}\zeta^4\right).
  \end{equation*}
  In light of \eqref{eq:majorant}, dominated convergence implies that
  \begin{equation}
    \label{eq:first-integral}
    \lim_{n \to \infty} I_1 = \int_{\R} \exp\left(\frac{1}{2} X \zeta + \frac{1}{4} T \zeta^2 - \frac{\coeff}{8}\zeta^4\right) \frac{\dn \zeta}{\sqrt{4\pi}}.
  \end{equation}

  Next, we control the remainder $I_2$.
  By \eqref{eq:HJE-quadratic-bound},
  \begin{equation*}
    -\frac{1}{2}n^2\f{H}_1\big(n^{-1/2}\zeta\big) \leq \frac{1 - \eps}{4} n\zeta^2
  \end{equation*}
  when $\abs{\zeta} > \delta n^{1/2}$.
  On the other hand, \eqref{eq:Gaussian} implies
  \begin{equation*}
    n\frac{(\eta - \zeta)^2}{4(1 + \tau)} \geq \frac{1 - \eps/2}{4} n\zeta^2 - Cn^{-1}
  \end{equation*}
  once $n \geq C$ for $C(T, X)$ sufficiently large.
  Therefore
  \begin{equation*}
    \m{E}(\zeta; n, T, X) \lesssim_{T, X} \exp\left[-\frac{\eps}{8}n\zeta^2\right]
  \end{equation*}
  when $\abs{\zeta} > \delta n^{1/2}$ and $n \geq C$.
  It follows that
  \begin{equation}
    \label{eq:second-integral}
    \lim_{n \to \infty} I_2 = 0.
  \end{equation}

  Combining \eqref{eq:first-integral} and \eqref{eq:second-integral}, we find
  \begin{equation}
    \label{eq:phi}
    \lim_{n \to \infty} \phi^{(n)}(T, X) = \int_{\R} \exp\left(\frac{1}{2} X \zeta + \frac{1}{4} T \zeta^2 - \frac{\coeff}{8}\zeta^4\right) \frac{\dn \zeta}{\sqrt{4\pi}} \eqqcolon \phi(T, X)
  \end{equation}
  pointwise in $(T, X)$.
  It is straightforward to check that this convergence is locally uniform in $(T, X)$.
  By parabolic regularity, we also have $\partial_X \phi^{(n)} \to \partial_X \phi$ locally uniformly as $n \to \infty$.
  Inverting Cole--Hopf, Lemma~\ref{lem:construction} yields
  \begin{equation*}
    \Ui{0} = \lim_{n \to \infty} U^{(n)} = -2 \lim_{n \to \infty} \frac{\partial_X \phi^{(n)}}{\phi^{(n)}} = -\frac{2 \partial_X \phi}{\phi}.
  \end{equation*}
  This is \eqref{eq:explicit}.
\end{proof}
We now briefly consider times after the shock forms.
Given $t > 0$, let $s(t)$ denote the position of the shock so long as it remains isolated.
Once the shock has positive strength, Goodman and Xin~\cite{GoodXin_1992} show that the viscous effects are confined to the narrow strip $\abs{x - s(t)} \lesssim \nu$.
There are thus three relevant coordinate systems in the problem: the original system $(u, t, x)$, the inner variables $(U, T, X)$, and the ``shock variables'' $(u, t, \xi)$ for $\xi \coloneqq (x-s)/\nu$.
We include the solution values $u$ and $U$ as ``coordinates'' to track their relative scaling.
These coordinates are linked through the following relations:
\begin{equation}
  \label{eq:coords}
  (U, T, X) = \big(\nu^{-1/4} u, \nu^{-1/2}t, \nu^{-3/4}x\big) \And (u, t, \xi) = \big(u, t, \nu^{-1}(x - s)\big).
\end{equation}

Here, we consider the manner in which our inner expansion about $(t,x) = (0, 0)$ gives way to the shock expansion about $x = s(t)$ as $t$ increases.
Using the Hopf--Lax formula~\cite{Hopf_1950,Lax_1957}, one can verify that $\abs{s(t)} \lesssim t^2$ and $\abs{\dot{s}(t)} \lesssim t$.
When $0 < t \ll 1$, this is sufficiently small that we can safely approximate $s(t)$ by $0$.
If we let
\begin{equation*}
  h(t) \coloneqq \frac{u^0(t, s(t)-) - u^0(t, s(t)+)}{2}
\end{equation*}
denote (half) the shock strength, we can check that
\begin{equation}
  \label{eq:height}
  h(t) \sim \sqrt{\frac{t}{\coeff}} \quad \text{as } t \to 0^+.
\end{equation}
Indeed, to leading order, $u^0$ should still resemble a solution of the cubic equation $tu^0 - \coeff (u^0)^3 = x$.
The height \eqref{eq:height} is the positive root at $x = 0$.
Thus when $t \ll 1$, $u^0$ has a weak, nearly stationary shock.

Near the shock, the viscous solution $u^\nu$ should resemble a ``viscous shock profile'' $\varphi(t, \xi)$ solving
\begin{equation*}
  \partial_\xi^2\varphi - \varphi \partial_\xi\varphi = 0, \quad \varphi(t,-\infty) = u^0(t, s(t)-), \enspace  \varphi(t,+\infty) = u^0(t, s(t)+).
\end{equation*}
At fixed $t$, all such profiles are reparameterizations of
\begin{equation}
  \label{eq:viscous-shock}
  \sigma(\xi) \coloneqq -\tanh \frac{\xi}{2}.
\end{equation}
The stationary shock of strength $h$ is $h \sigma(h\xi)$.
By \eqref{eq:height}, we therefore expect
\begin{equation*}
  u^\nu(t, \nu \xi) \sim \varphi(t, \xi) \sim \sqrt{\frac{t}{\coeff}} \sigma\left(\sqrt{\frac{t}{\coeff}}\xi\right)
\end{equation*}
when $\nu \ll 1$, $t \ll 1$, and $\abs{\xi} \lesssim 1$.
On the other hand, if our inner expansion remains valid at small positive times, we should have
\begin{equation*}
  u^\nu(t, x) \sim \ui{0}(t, x) = \nu^{1/4}\Ui{0}\left(\nu^{-1/2}t, \nu^{-3/4} x\right)
\end{equation*}
for $\nu \ll 1$, $t \ll 1$, $\abs{x} \ll 1$.
If these asymptotics for $u^\nu$ are simultaneously valid, there must be a relationship between the inner solution $\Ui{0}$ and the viscous shock profile $\sigma$.
Changing to inner coordinates via \eqref{eq:coords}, we expect the following:
\begin{proposition}
  For all $\xi \in \R$,
  \begin{equation*}
    \Ui{0}\big(T, T^{-1/2} \xi\big) = \sqrt{\frac{T}{\coeff}}\sigma\left(\frac{\xi}{\sqrt{\coeff}}\right) \left[1 + \m{O}_\xi\big(T^{-2}\big)\right] \quad \text{as } T \to \infty.
  \end{equation*}
\end{proposition}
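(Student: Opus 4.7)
The plan is to exploit the explicit Cole--Hopf representation from Proposition~\ref{prop:inner-explicit} and apply Laplace's method to both integrals as $T \to \infty$. After substituting $X = T^{-1/2}\xi$, the exponent becomes
\begin{equation*}
\Phi(\zeta; T, \xi) = \tfrac{1}{2}T^{-1/2}\xi\zeta + \tfrac{1}{4}T\zeta^2 - \tfrac{\coeff}{8}\zeta^4.
\end{equation*}
A direct calculation shows that the quartic portion $\frac{1}{4}T\zeta^2 - \frac{\coeff}{8}\zeta^4$ has two global maxima at $\zeta_\pm = \pm\sqrt{T/\coeff}$ and a local minimum at $\zeta = 0$, with common peak value $T^2/(8\coeff)$; the linear term shifts the peak values by $\pm\xi/(2\sqrt{\coeff})$. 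The region away from $\zeta_\pm$ contributes exponentially smaller amounts because the exponent there is at most $O_\xi(1)$ rather than $\asymp T^2/(8\coeff)$, so after cutting off a small neighborhood of the two peaks the tails are negligible to all orders.

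Next, I localize via $\zeta = \zeta_\pm + w/\sqrt{T}$. Taylor expansion yields
\begin{equation*}
\Phi = \frac{T^2}{8\coeff} \pm \frac{\xi}{2\sqrt{\coeff}} - \frac{w^2}{2} + R_\pm(w; T, \xi),
\quad
R_\pm = \pm\frac{\xi w}{2T} \mp \frac{\sqrt{\coeff}\, w^3}{2T} - \frac{\coeff\, w^4}{8T^2}.
\end{equation*}
Expanding $e^{R_\pm}$ and integrating against $e^{-w^2/2}$, one obtains for the denominator
\begin{equation*}
D \sim \frac{2\sqrt{2\pi}}{\sqrt{T}}\, e^{T^2/(8\coeff)}\, \cosh\!\left(\tfrac{\xi}{2\sqrt{\coeff}}\right)\bigl[1 + O_\xi(T^{-2})\bigr],
\end{equation*}
while the numerator picks up an additional factor $\zeta = \zeta_\pm + w/\sqrt{T}$ and reduces to
\begin{equation*}
N \sim -\frac{2\sqrt{2\pi}}{\sqrt{\coeff}}\, e^{T^2/(8\coeff)}\, \sinh\!\left(\tfrac{\xi}{2\sqrt{\coeff}}\right)\bigl[1 + O_\xi(T^{-2})\bigr].
\end{equation*}
Taking the ratio $\Ui{0}(T,T^{-1/2}\xi) = N/D$ and using $\sigma(y) = -\tanh(y/2)$ yields the claim.

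The main technical point, and the reason the error is $O(T^{-2})$ rather than the naive $O(T^{-1})$, is an odd-parity cancellation: every $O(T^{-1})$ contribution in $R_\pm$ is an odd polynomial in $w$ (both $\xi w$ and $w^3$), so it integrates to zero against the even Gaussian kernel $e^{-w^2/2}$. The first surviving corrections arise from the quartic term $-\coeff w^4/(8T^2)$ and from squaring the odd $1/T$ contributions upon expanding $e^{R_\pm}$; both sit at order $T^{-2}$. Similarly, the extra factor $w/\sqrt{T}$ appearing in the numerator contributes odd moments that vanish at leading order and therefore push its first correction to the same relative order $O(T^{-2})$. Combining the $\zeta_+$ and $\zeta_-$ contributions linearly, with the signs $\pm$ producing $2\cosh$ and $2\sinh$ respectively, preserves this error rate in the quotient.

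The remaining work is bookkeeping: making precise the cutoff region $\{\abs{\zeta - \zeta_\pm} \leq \delta\sqrt{T}\}$ and bounding everything outside by a uniform multiple of $e^{T^2/(8\coeff) - \eta T^2}$ for some $\eta > 0$, exactly as in the proof of Proposition~\ref{prop:inner-explicit}. The degenerate case $\xi = 0$ is automatic: the integrand in the numerator is odd in $\zeta$, so $\Ui{0}(T, 0) = 0$, and the identity holds trivially. Throughout, the implicit constants in $O_\xi(T^{-2})$ depend polynomially on $\xi$ (via $\cosh$ and its derivatives), which is consistent with the $\xi$-dependence permitted in the statement.
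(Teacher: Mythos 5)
Your proof follows the same route the paper takes: apply Laplace's method to the Cole--Hopf representation from Proposition~\ref{prop:inner-explicit}, localize at the two symmetric peaks, and invoke odd-parity cancellation to obtain the relative error $\m{O}_\xi(T^{-2})$ rather than the naive $\m{O}_\xi(T^{-1})$; the paper first substitutes $\theta = T^{-1/2}\zeta$ to put the integrals in the standard Laplace form with large parameter $T^2$ and fixed peaks $\theta_\pm = \pm\coeff^{-1/2}$, but that is equivalent to your direct localization at $\zeta_\pm = \pm\sqrt{T/\coeff}$. One harmless sign typo: the $\xi w/(2T)$ contribution in $R_-$ should enter with a $+$ sign, since it arises from the linear term $\tfrac{1}{2}T^{-1/2}\xi\,\big(\zeta_\pm + w/\sqrt{T}\big)$ and hence is peak-independent; this does not affect your conclusion because that term is odd in $w$ and enters the $T^{-2}$ correction only through its sign-independent square.
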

\begin{proof}
  Fix $\xi \in \R$ and consider $T > 0$.
  By Proposition~\ref{prop:inner-explicit}, $\Ui{0} = -\frac{2 \partial_X \phi}{\phi}$ with $\phi$ given in \eqref{eq:phi}.
  Using \eqref{eq:phi}, we can write
  \begin{equation*}
    \phi\big(T, T^{-1/2}\xi\big) = \sqrt{\frac{T}{4 \pi}} \int_{\R} \e^{\xi\theta/2} \exp\left[T^2 \left(\frac{1}{4} \theta^2 - \frac{\coeff}{8} \theta^4\right)\right] \ds \theta
  \end{equation*}
  for $\theta \coloneqq T^{-1/2} \zeta$.
  We view this as the Laplace transform of an unnormalized measure that is growing ever more concentrated around the maxima of
  \begin{equation*}
    \Lambda(\theta) \coloneqq \frac{1}{4} \theta^2 - \frac{\coeff}{8} \theta^4.
  \end{equation*}
  The function $\Lambda$ is simultaneously maximized at $\theta = \pm \theta_*$ for $\theta_* \coloneqq \coeff^{-1/2}$.
  Moreover,
  \begin{equation}
    \label{eq:Lambda-stats}
    \Lambda(\theta_*) = \frac{1}{8 \coeff} \And \Lambda''(\theta_*) = -1.
  \end{equation}
  Using Laplace's method, we obtain
  \begin{equation*}
    \phi\big(T, T^{-1/2}\xi\big) = \sqrt{\frac{T}{4 \pi}} \sqrt{\frac{2\pi}{T^2\abs{\Lambda''(\theta_*)}}}\left(\e^{\xi\theta_*/2} + \e^{-\xi \theta_*/2}\right)\exp\left[T^2 \Lambda(\theta_*)\right]\left[1 + \m{O}_\xi\big(T^{-2}\big)\right]
  \end{equation*}
  as $T \to \infty$.
  Simplifying, \eqref{eq:Lambda-stats} yields
  \begin{equation}
    \label{eq:phi-asymp}
    \phi\big(T, T^{-1/2}\xi\big) = \sqrt{\frac{2}{T}} \cosh\left(\frac{\xi}{2 \sqrt{\coeff}}\right)\exp\left(\frac{T^2}{8\coeff}\right) \left[1 + \m{O}_\xi\big(T^{-2}\big)\right].
  \end{equation}
  Similarly,
  \begin{equation*}
    \partial_X \phi\big(T, T^{-1/2}\xi\big) = \frac{T}{2\sqrt{4 \pi}} \int_{\R} \theta \e^{\xi \theta/2} \exp\left[T^2 \Lambda(\theta)\right] \ds \theta
  \end{equation*}
  and
  \begin{equation}
    \label{eq:phi-deriv-asymp}
    \partial_X \phi\big(T, T^{-1/2}\xi\big) = \frac{1}{\sqrt{2 \coeff}} \sinh\left(\frac{\xi}{2 \sqrt{\coeff}}\right) \exp\left(\frac{T^2}{8\coeff}\right) \left[1 + \m{O}_\xi\big(T^{-2}\big)\right]
  \end{equation}
  as $T \to \infty$.
  Since $\Ui{0} = -2(\partial_x \phi)/\phi$, \eqref{eq:phi-asymp} and \eqref{eq:phi-deriv-asymp} imply that
  \begin{equation*}
    \Ui{0}\big(T, T^{-1/2} \xi\big) = -\sqrt{\frac{T}{\coeff}} \tanh\left(\frac{\xi}{2 \sqrt{\coeff}}\right)\left[1 + \m{O}_\xi\big(T^{-2}\big)\right]
  \end{equation*}
  as $T \to \infty$.
  In light of \eqref{eq:viscous-shock}, the proposition follows.
\end{proof}

\printbibliography
\end{document}